\documentclass [12pt]{amsart}
\usepackage[T1]{fontenc}
\usepackage[utf8]{inputenc}
\usepackage[margin=1in]{geometry}
\usepackage{graphicx}
\usepackage{amssymb}
\usepackage{amsthm}
\usepackage{mathtools}
\usepackage{hyperref}
\usepackage{graphicx}
\usepackage{parskip}
\usepackage{multirow}
\usepackage{polynom}
\usepackage{centernot}
\usepackage[usenames,dvipsnames]{color}
\usepackage{chngcntr}
\counterwithout{figure}{section}

\usepackage{amsmath,amssymb,amsthm,amsfonts}
\usepackage{mathtools}%

\usepackage{comment}
\usepackage{tikz,graphicx,color}
\usepackage{tikz-cd}%
\usepackage{tikz-3dplot}%
\usetikzlibrary{calc}%
\usetikzlibrary{arrows}%
\usetikzlibrary{shapes}%
\usetikzlibrary{patterns}%
\usetikzlibrary{positioning}%
\usetikzlibrary{arrows.meta}
\usetikzlibrary{decorations.markings}
\usepackage{epstopdf}%
\pdfpageattr{/Group <</S /Transparency /I true /CS /DeviceRGB>>}

\usepackage{array}

\usepackage[margin=1in]{geometry}

\usepackage{soul}
\usepackage{accents}

\usepackage{xspace}

\usetikzlibrary{knots}
\usepackage{enumitem}

\usepackage{letltxmacro}
\usepackage{thmtools,etoolbox}

\def\myarabic#1{\normalfont(\roman{#1})}
\newlist{theoremlist}{enumerate}{1}
\setlist[theoremlist]{label=\myarabic{theoremlisti},ref={\myarabic{theoremlisti}},itemindent=0pt,labelindent=0pt,
leftmargin=*,noitemsep}

\makeatletter
\renewcommand{\p@theoremlisti}{\perh@ps{\thetheorem}}
\protected\def\perh@ps#1#2{\textup{#1#2}}
\newcommand{\itemrefperh@ps}[2]{\textup{#2}}
\newcommand{\itemref}[1]{\begingroup\let\perh@ps\itemrefperh@ps\ref{#1}\endgroup}
\makeatother

\usepackage{nameref,hyperref}
\usepackage[capitalize]{cleveref}

\protected\def\ignorethis#1\endignorethis{}
\let\endignorethis\relax

\addtotheorempostheadhook[theorem]{\crefalias{theoremlisti}{theorem}}
\addtotheorempostheadhook[lemma]{\crefalias{theoremlisti}{lemma}}
\addtotheorempostheadhook[proposition]{\crefalias{theoremlisti}{proposition}}
\addtotheorempostheadhook[corollary]{\crefalias{theoremlisti}{corollary}}

\def\Acal{\mathcal{A}}\def\Dcal{\mathcal{D}}\def\Ical{\mathcal{I}}\def\Mcal{\mathcal{M}}\def\Rcal{\mathcal{R}}\def\Xcal{\mathcal{X}}

\def\C{\mathbb{C}}
\def\R{\mathbb{R}}

\def\Z{\mathbb{Z}}

\newcommand\parr[1]{{({#1})}}
\def\<{{\langle}}
\def\>{{\rangle}}

\def\la{{\lambda}}

\def\multiset#1#2{\left(\!\left({#1\atopwithdelims..#2}\right)\!\right)}

\def\Gr{\operatorname{Gr}}
\def\Grkn{\Gr(k,n)}

\def\j{{\mathbf{j}}}

\def\Racc{R^\circ}
\def\Rich_#1^#2{\Racc_{#1,#2}}
\def\Richcl_#1^#2{R_{#1,#2}}

\def\bw{{\mathbf{w}}}
\def\bv{{\mathbf{v}}}

\def\F{{\mathbb{F}}}

\def\Fl{{\operatorname{Fl}}}

\def\Fl{{\operatorname{Fl}}}

\def\Pio{\Pi^\circ}

\def\Saff{\widetilde{S}_n}
\def\Saffk{\widetilde{S}_{n}^{(k)}}
\def\Saffz{\widetilde{S}_{n}^{(0)}}
\def\Anaff{\widetilde{\mathcal{A}_{n-1}}}

\def\BND{\mathbf{B}}

\numberwithin{equation}{section}

\def\Richafftp_#1^#2{{\Rcal_{#1,#2}^{>0}}}

\def\Richaff_#1^#2{\accentset{\circ}{\mathcal{R}}_{#1,#2}}

\def\Dyck{\operatorname{Dyck}}

\def\Dyckxx(#1,#2){\Dyck_{#1,#2}}

\def\Pit_#1{\Xcal^\circ_{#1}}

\def\Ical{\mathcal{I}}

\def\Povar_#1{\accentset{\circ}{\Pi}_{#1}}
\def\Povarcl_#1{\Pi_{#1}}
\def\RPovar_#1{\accentset{\circ}{\Pi}^\R_{#1}}
\def\RPovarcl_#1{\Pi^\R_{#1}}
\def\Povtp_#1{\Pi_{#1}^{>0}}
\def\Povtnn_#1{\Pi_{#1}^{\geq0}}

\def\sGr{\overleftarrow{\Ical}}
\def\sGrI_#1{\overleftarrow{I_{#1}}}
\def\tGr{\overrightarrow{\Ical}}
\def\tGrI_#1{\overrightarrow{I_{#1}}}

\def\GL{\operatorname{GL}}
\def\Fl{\operatorname{Fl}}

\def\Gr{\operatorname{Gr}}

\def\KLR_#1^#2{R_{#1,#2}(q)}

\def\sclbx{1}
\def\sclbxbigdeo{0.9}
\def\ybscl{0.9}

\def\op{1}

\def\gridop{0.4}
\newcommand{\be}{\begin{enumerate}[a)]}
\newcommand{\ee}{\end{enumerate}}
\def\bal#1\eal{\begin{align*}#1\end{align*}}
\allowdisplaybreaks

\newcommand{\floor}[1]{\left\lfloor #1 \right\rfloor}

\newtheorem*{thm*}{Theorem}
\newtheorem*{conj*}{Conjecture}

\newtheorem{theorem}{Theorem}[section]

\newtheorem{problem}[theorem]{Problem}
\newtheorem{lemma}[theorem]{Lemma}

\newtheorem{proposition}[theorem]{Proposition}
\newtheorem{remark}[theorem]{Remark}

\newtheorem{definition}[theorem]{Definition}

\newtheorem{corollary}[theorem]{Corollary}

\newcommand{\pn}[1]{\left( #1 \right)}
\newcommand{\lra}[1]{\langle #1 \rangle}

\def\multiset#1#2{\ensuremath{\left(\kern-.3em\left(\genfrac{}{}{0pt}{}{#1}{#2}\right)\kern-.3em\right)}}

\newcommand{\backvec}[1]{\reflectbox{$\vec{\reflectbox{\!$#1$}}$}}

\makeatletter
\renewcommand*\env@matrix[1][*\c@MaxMatrixCols c]{%
  \hskip -\arraycolsep
  \let\@ifnextchar\new@ifnextchar
  \array{#1}}
\makeatother

\newcommand*\Heq{\ensuremath{\overset{\kern2pt H}{=}}}

\def\Inv{\operatorname{inv}}

\newcommand{\Bkn}{\mathbf{B}_{k,n}}

\newcommand{\affdeomax}[2]{\operatorname{Deo}^{\max}_{#1,#2}}
\newcommand{\affdeo}[2]{\operatorname{Deo}_{#1,#2}}

\newcommand{\affdeop}[1]{\operatorname{Deo}_{#1}}
\newcommand{\deomax}[1]{\operatorname{Deo}^{\max}_{#1}}
\newcommand{\deo}[1]{\operatorname{Deo}_{#1}}

\newcommand{\antiaffdeo}[2]{\widetilde{\operatorname{Deo}_{#1,#2}}}

\newcommand{\antideomax}[1]{\widetilde{\operatorname{Deo}^{\max}_{#1}}}
\newcommand{\antideo}[1]{\widetilde{\operatorname{Deo}_{#1}}}
\newcommand{\Pkn}{\mathcal{P}_{k,n}}

\newcommand{\av}{\operatorname{av}}

\def\drawbox(#1,#2){
\draw[black!50,dashed] (#1-0.5,#2-0.5) rectangle (#1+0.5,#2+0.5);
}

\def\drawgrid#1#2#3#4{
\foreach\i in {#1,...,#3}{
\foreach \j in {#2,...,#4}{
\drawbox(\i,\j)
}
}
}

\def\crossing{\scalebox{0.4}{\begin{tikzpicture}[baseline=(ZUZU.base)]
\coordinate(ZUZU) at (0,-0.5);\drawgrid{0}{0}{0}{0}\draw[line width=3pt, rounded corners=12] (0.00,-0.50)--(0.00,0.50);\draw[line width=3pt, rounded corners=12] (-0.50,0.00)--(0.50,0.00);
\end{tikzpicture}}\xspace} 
\def\elbow{\scalebox{0.4}{\begin{tikzpicture}[baseline=(ZUZU.base),xscale=-1]
\coordinate(ZUZU) at (0,-0.5);\drawgrid{0}{0}{0}{0}\draw[line width=3pt, rounded corners=12] (0.00,-0.50)--(0.00,0.00)--(0.50,0.00);\draw[line width=3pt, rounded corners=12] (-0.50,0.00)--(0.00,0.00)--(0.00,0.50);
\end{tikzpicture}}\xspace}

\def\labelelbow{\scalebox{0.4}{\begin{tikzpicture}[baseline=(ZUZU.base),xscale=-1]
\coordinate(ZUZU) at (0,-0.5);
\drawgrid{0}{0}{0}{0}\draw[line width=3pt, rounded corners=12] (0.00,-0.50)--(0.00,0.00)--(0.50,0.00);
\draw[line width=3pt, rounded corners=12] (-0.50,0.00)--(0.00,0.00)--(0.00,0.50);
\node[anchor=west,inner sep=1pt] at (1,0) {$j_1$}; 
\node[anchor=north,inner sep=1pt] at (0,1) {$j_2$}; 
\end{tikzpicture}}\xspace}

\newcommand{\bottomcross}{
    \draw[line width=2pt,opacity=\op] (-0.5,0) -- (-0.5,-0.1) ..controls(-0.5,-0.5) and (0.5,-0.5).. (0.5,-0.9) -- (0.5,-1);
    \draw[line width=2pt,opacity=\op] (0.5,0) -- (0.5,-0.1) ..controls(0.5,-0.5) and (-0.5,-0.5).. (-0.5,-0.9) -- (-0.5,-1);
}

\newcommand{\crosscross}{
\draw[line width=2pt, opacity=\op] (-0.5,0) -- (-0.5,1);
\draw[line width=2pt, opacity=\op] (0.5,0) -- (0.5,1);
\draw[line width=2pt, opacity=\op] (-1,0.5) -- (0,0.5);
\draw[line width=2pt, opacity=\op] (0,0.5) -- (1,0.5);
}

\newcommand{\topcross}{
\draw[line width=2pt,opacity=\op] (-0.5,1) -- (-0.5,1.1) ..controls(-0.5,1.5) and (0.5,1.5).. (0.5,1.9) -- (0.5,2);
\draw[line width=2pt,opacity=\op] (0.5,1) -- (0.5,1.1) ..controls(0.5,1.5) and (-0.5,1.5).. (-0.5,1.9) -- (-0.5,2);
}

\newcommand{\elbowcross}{
\draw[line width=2pt, opacity=\op] (0,0.5) -- (1,0.5);
\draw[line width=2pt, opacity=\op] (0.5,0) -- (0.5,1);
\draw[line width=2pt,opacity=\op] (-0.5,0) to[out=90,in=0] (-1,0.5);
\draw[line width=2pt,opacity=\op] (-0.5,1) to[out=-90,in=180] (0,0.5);
}

\newcommand{\crosselbow}{
\draw[line width=2pt, opacity=\op] (0,0.5) -- (-1,0.5);
\draw[line width=2pt, opacity=\op] (-0.5,0) -- (-0.5,1);
\draw[line width=2pt,opacity=\op] (0.5,0) to[out=90,in=0] (0,0.5);
\draw[line width=2pt,opacity=\op] (0.5,1) to[out=-90,in=180] (1,0.5);
}

\newcommand{\elbowelbow}{
\draw[line width=2pt,opacity=\op] (0.5,0) to[out=90,in=0] (0,0.5);
\draw[line width=2pt,opacity=\op] (0.5,1) to[out=-90,in=180] (1,0.5);
\draw[line width=2pt,opacity=\op] (-0.5,0) to[out=90,in=0] (-1,0.5);
\draw[line width=2pt,opacity=\op] (-0.5,1) to[out=-90,in=180] (0,0.5);
}

\newcommand{\bottomelbow}{
\draw[line width=2pt,opacity=\op] (-0.5,0) -- (-0.5,-0.1) ..controls(-0.5, -0.25) and (-0.1,-0.25).. (-0.1, -0.5);
\draw[line width=2pt,opacity=\op]  (-0.5,-1) -- (-0.5, -0.9) ..controls(-0.5, -0.75) and (-0.1,-0.75).. (-0.1, -0.5);

\draw[line width=2pt,opacity=\op] (0.5,0) -- (0.5,-0.1) ..controls(0.5, -0.25) and (0.1,-0.25).. (0.1, -0.5);
\draw[line width=2pt,opacity=\op]  (0.5,-1) -- (0.5, -0.9) ..controls(0.5, -0.75) and (0.1,-0.75).. (0.1, -0.5);
}

\newcommand{\topelbow}{
\draw[line width=2pt,opacity=\op] (-0.5,2) -- (-0.5,1.9) ..controls(-0.5, 1.75) and (-0.1,1.75).. (-0.1, 1.5);
\draw[line width=2pt,opacity=\op]  (-0.5,1) -- (-0.5, 1.1) ..controls(-0.5, 1.25) and (-0.1,1.25).. (-0.1, 1.5);

\draw[line width=2pt,opacity=\op] (0.5,2) -- (0.5,1.9) ..controls(0.5, 1.75) and (0.1,1.75).. (0.1, 1.5);
\draw[line width=2pt,opacity=\op]  (0.5,1) -- (0.5, 1.1) ..controls(0.5, 1.25) and (0.1,1.25).. (0.1, 1.5);
}

\def\deog#1#2#3#4#5{
\scalebox{\sclbx}{
\begin{tikzpicture}[xscale=0.5,yscale=0.5,baseline ={(0,.67)}]
\foreach \x/\y in {#1}{
  \draw[line width=1pt] (\x,\y-0.035) -- (\x,\y+1.035);
}
\foreach \x/\y in {#2}{
  \draw[line width=1pt] (\x,\y) -- (\x+1,\y);
}
\foreach \x/\y/\z in {#3}{
  \node[scale=0.50,inner sep=1pt] at (\x,\y) {$\z$};
}
\foreach \x/\y in {#4}{
  \draw[line width=0.5pt,dashed,opacity=\gridop] (\x,\y) -- (\x+1,\y);
  \draw[line width=0.5pt,dashed,opacity=\gridop] (\x+1,\y) -- (\x+1,\y+1);
  \draw[line width=2pt,opacity=\op] (\x+0.5,\y) to[out=90,in=0] (\x,\y+0.5);
  \draw[line width=2pt,opacity=\op] (\x+0.5,\y+1) to[out=-90,in=180] (\x+1,\y+0.5);
}
\foreach \x/\y in {#5}{
  \draw[line width=0.5pt,dashed,opacity=\gridop] (\x,\y) -- (\x+1,\y);
  \draw[line width=0.5pt,dashed,opacity=\gridop] (\x+1,\y) -- (\x+1,\y+1);
  \draw[line width=2pt,opacity=\op] (\x+0.5,\y) -- (\x+0.5,\y+1);
  \draw[line width=2pt,opacity=\op] (\x,\y+0.5) -- (\x+1,\y+0.5);
}
\end{tikzpicture}
}
}

\def\deogcolored#1#2#3#4#5#6{
\scalebox{\sclbx}{
\begin{tikzpicture}[xscale=0.5,yscale=0.5,baseline ={(0,.66)}]
\foreach \x/\y in {#1}{
  \draw[line width=1pt] (\x,\y-0.035) -- (\x,\y+1.035);
}
\foreach \x/\y in {#2}{
  \draw[line width=1pt] (\x,\y) -- (\x+1,\y);
}
\foreach \x/\y/\z in {#3}{
  \node[scale=0.50,inner sep=1pt] at (\x,\y) {$\z$};
}
\foreach \x/\y in {#4}{
  \draw[line width=0.5pt,dashed,opacity=\gridop] (\x,\y) -- (\x+1,\y);
  \draw[line width=0.5pt,dashed,opacity=\gridop] (\x+1,\y) -- (\x+1,\y+1);
  \draw[line width=2pt,opacity=\op] (\x+0.5,\y) to[out=90,in=0] (\x,\y+0.5);
  \draw[line width=2pt,opacity=\op] (\x+0.5,\y+1) to[out=-90,in=180] (\x+1,\y+0.5);
}
\foreach \x/\y in {#5}{
  \draw[line width=0.5pt,dashed,opacity=\gridop] (\x,\y) -- (\x+1,\y);
  \draw[line width=0.5pt,dashed,opacity=\gridop] (\x+1,\y) -- (\x+1,\y+1);
  \draw[line width=2pt,opacity=\op] (\x+0.5,\y) -- (\x+0.5,\y+1);
  \draw[line width=2pt,opacity=\op] (\x,\y+0.5) -- (\x+1,\y+0.5);
}
\foreach \x/\y in {#6}{
    \draw[line width=0pt,fill=red!40,opacity=\gridop] (\x,\y) rectangle (1+\x,1+\y);
}
\end{tikzpicture}
}
}

\def\bigaffdeog#1#2#3#4#5#6#7{
\scalebox{\sclbxbigdeo}{
\begin{tikzpicture}[xscale=0.45,yscale=0.45,baseline={(0,0.5)}]
\foreach \x/\y in {#1}{
  \draw[line width=1pt] (\x,\y-0.035) -- (\x,\y+1.035);
}
\foreach \x/\y in {#2}{
  \draw[line width=1pt] (\x,\y) -- (\x+1,\y);
}
\foreach \x/\y/\z in {#3}{
  \node[scale=0.40,inner sep=1pt] at (\x,\y) {$\z$};
}
\foreach \x/\y in {#4}{
  \draw[line width=0.5pt,dashed,opacity=\gridop] (\x,\y) -- (\x+1,\y);
  \draw[line width=0.5pt,dashed,opacity=\gridop] (\x+1,\y) -- (\x+1,\y+1);
  \draw[line width=2pt,opacity=\op] (\x+0.5,\y) to[out=90,in=0] (\x,\y+0.5);
  \draw[line width=2pt,opacity=\op] (\x+0.5,\y+1) to[out=-90,in=180] (\x+1,\y+0.5);
}
\foreach \x/\y in {#5}{
  \draw[line width=0.5pt,dashed,opacity=\gridop] (\x,\y) -- (\x+1,\y);
  \draw[line width=0.5pt,dashed,opacity=\gridop] (\x+1,\y) -- (\x+1,\y+1);
  \draw[line width=2pt,opacity=\op] (\x+0.5,\y) -- (\x+0.5,\y+1);
  \draw[line width=2pt,opacity=\op] (\x,\y+0.5) -- (\x+1,\y+0.5);
}
\foreach \x/\y in {#6}{
    \draw[line width=0pt,fill=blue!40,opacity=\gridop] (\x,\y) rectangle (1+\x,1+\y);
  \draw[line width=0.5pt,dashed,opacity=\gridop] (\x,\y) -- (\x+1,\y);
  \draw[line width=0.5pt,dashed,opacity=\gridop] (\x+1,\y) -- (\x+1,\y+1);
  \draw[line width=2pt,opacity=\op] (\x+0.5,\y) to[out=90,in=0] (\x,\y+0.5);
  \draw[line width=2pt,opacity=\op] (\x+0.5,\y+1) to[out=-90,in=180] (\x+1,\y+0.5);
}
\foreach \x/\y in {#7}{
 \draw[line width=0pt,fill=blue!40,opacity=\gridop] (\x,\y) rectangle (1+\x,1+\y);
  \draw[line width=0.5pt,dashed,opacity=\gridop] (\x,\y) -- (\x+1,\y);
  \draw[line width=0.5pt,dashed,opacity=\gridop] (\x+1,\y) -- (\x+1,\y+1);
  \draw[line width=2pt,opacity=\op] (\x+0.5,\y) -- (\x+0.5,\y+1);
  \draw[line width=2pt,opacity=\op] (\x,\y+0.5) -- (\x+1,\y+0.5);
}
\end{tikzpicture}
}
}

\def\bigaffdeogcolored#1#2#3#4#5#6#7#8#9{
\scalebox{\sclbxbigdeo}{
\begin{tikzpicture}[xscale=0.45,yscale=0.45,baseline={(0,0.5)}]
\foreach \x/\y in {#1}{
  \draw[line width=1pt] (\x,\y-0.035) -- (\x,\y+1.035);
}
\foreach \x/\y in {#2}{
  \draw[line width=1pt] (\x,\y) -- (\x+1,\y);
}
\foreach \x/\y/\z in {#3}{
  \node[scale=0.40,inner sep=1pt] at (\x,\y) {$\z$};
}
\foreach \x/\y in {#4}{
  \draw[line width=0.5pt,dashed,opacity=\gridop] (\x,\y) -- (\x+1,\y);
  \draw[line width=0.5pt,dashed,opacity=\gridop] (\x+1,\y) -- (\x+1,\y+1);
  \draw[line width=2pt,opacity=\op] (\x+0.5,\y) to[out=90,in=0] (\x,\y+0.5);
  \draw[line width=2pt,opacity=\op] (\x+0.5,\y+1) to[out=-90,in=180] (\x+1,\y+0.5);
}
\foreach \x/\y in {#5}{
  \draw[line width=0.5pt,dashed,opacity=\gridop] (\x,\y) -- (\x+1,\y);
  \draw[line width=0.5pt,dashed,opacity=\gridop] (\x+1,\y) -- (\x+1,\y+1);
  \draw[line width=2pt,opacity=\op] (\x+0.5,\y) -- (\x+0.5,\y+1);
  \draw[line width=2pt,opacity=\op] (\x,\y+0.5) -- (\x+1,\y+0.5);
}
\foreach \x/\y in {#6}{
  \draw[line width=0.5pt,dashed,opacity=\gridop] (\x,\y) -- (\x+1,\y);
  \draw[line width=0.5pt,dashed,opacity=\gridop] (\x+1,\y) -- (\x+1,\y+1);
  \draw[red,line width=2pt,opacity=\op] (\x+0.5,\y) -- (\x+0.5,\y+1);
  \draw[red,line width=2pt,opacity=\op] (\x,\y+0.5) -- (\x+1,\y+0.5);
}
\foreach \x/\y in {#7}{
  \draw[line width=0.5pt,dashed,opacity=\gridop] (\x,\y) -- (\x+1,\y);
  \draw[line width=0.5pt,dashed,opacity=\gridop] (\x+1,\y) -- (\x+1,\y+1);
  \draw[red,line width=2pt,opacity=\op] (\x+0.5,\y) to[out=90,in=0] (\x,\y+0.5);
  \draw[line width=2pt,opacity=\op] (\x+0.5,\y+1) to[out=-90,in=180] (\x+1,\y+0.5);
}
\foreach \x/\y in {#8}{
  \draw[line width=0.5pt,dashed,opacity=\gridop] (\x,\y) -- (\x+1,\y);
  \draw[line width=0.5pt,dashed,opacity=\gridop] (\x+1,\y) -- (\x+1,\y+1);
  \draw[line width=2pt,opacity=\op] (\x+0.5,\y) to[out=90,in=0] (\x,\y+0.5);
  \draw[red,line width=2pt,opacity=\op] (\x+0.5,\y+1) to[out=-90,in=180] (\x+1,\y+0.5);
}
\foreach \x/\y in {#9}{
  \draw[line width=0pt,fill=red!40,opacity=\gridop] (\x,\y) rectangle (1+\x,1+\y);
  \draw[red,line width=2pt,opacity=\op] (\x+0.5,\y) to[out=90,in=0] (\x,\y+0.5);
  \draw[red,line width=2pt,opacity=\op] (\x+0.5,\y+1) to[out=-90,in=180] (\x+1,\y+0.5);
}
\end{tikzpicture}
}
}

\title{The Combinatorics of Affine Deodhar Diagrams}

\author{Thomas C. Martinez}
\address{Department of Mathematics, University of California, Los Angeles, 520 Portola Plaza,
Los Angeles, CA 90025, USA}
\email{\href{mailto:thomasmart@ucla.edu}{thomasmart@ucla.edu}}

\thanks{}

\date\today

\begin{document}

\makeatletter
\@namedef{subjclassname@2020}{%
  \textup{2020} Mathematics Subject Classification}
\makeatother

\subjclass[2020]{ 
  Primary:
  05E14 %
  Secondary:
  05A19,
  14M15, %
  20F55. %
}

\keywords{Affine Deodhar diagrams, Deodhar decompositions, open positroid varieties, bounded affine permutations, affine Richardson varieties, rational Dyck paths, finite-field point counts.}

\begin{abstract}
Deodhar diagrams give a combinatorial way to compute point counts of open positroid varieties over finite fields. We introduce affine Deodhar diagrams, which extend this construction to affine patches of open positroid varieties. These diagrams are indexed by a bounded affine permutation together with a lattice path, and this extra flexibility makes them especially useful for recursive bijections.

Motivated by connections with Dyck paths, open positroid varieties, and their cluster structure, we construct bijections between several classes of affine Deograms. These bijections give combinatorial proofs of point-count identities that were previously known from geometric isomorphisms.
\end{abstract}
\maketitle
\setcounter{tocdepth}{1}
\tableofcontents

\section{Introduction}\label{sec:intro}

\textbf{Open positroid varieties}, $\Pio_f\subset\Grkn$, defined by Knutson--Lam--Speyer \cite{KLS13}, are smooth irreducible subvarieties of the Grassmannian $\operatorname{Gr}(k,n)$ isomorphic to certain Richardson varieties in the flag variety. Since their inception, positroid varieties have been studied extensively due to their deep connections with total positivity \cite{kodwil,marsh}, cluster algebras \cite{galashin2023positroid,muller,muller2017twist,serhiyenko2019cluster}, and Catalan combinatorics \cite{positroid,gallam2024}, to name a few. 

Open positroid varieties $\Pio_f$ are indexed by $(k,n)$-bounded affine permutations $f:\Z\to\Z$, which satisfy $\sum_{i=1}^n (f(i)-i) = kn$ and for all $i\in\Z$, $f(i+n)=f(i)+n$ and $i\leq f(i)\leq i+n$. We write $\Bkn$ for the set of such permutations. We also write $\overline f\in S_n$ for the finite permutation obtained by reducing values modulo $n$, and $c(\overline f)$ for its number of cycles. The length $\ell(f)$ is the Coxeter length in the affine Coxeter group of type $\Anaff$, after the standard identification of the average-$k$ component with the affine Weyl group; see Section~\ref{sec:background}.

For $I\in\binom{[n]}k$, let $\Delta_I$ be the corresponding Pl\"ucker coordinate. We consider the affine patch
\[
    \Pio_{f,I}:=\Pio_f\cap\{\Delta_I\neq0\}.
\]
By a theorem of Snider \cite{snider}, when $\Pio_{f,I}$ is non-empty, it is isomorphic to an open Richardson variety in the affine flag variety. The index set $I$ may equivalently be encoded by a lattice path $P$ from $(0,0)$ to $(n-k,k)$.

We are now ready to state the main object of the paper. For each lattice path $P$, periodically extend it to an infinite up/right lattice path in $\Z^2$, invariant under translation by $(n-k,k)$. An \textbf{$(f,P)$-affine Deogram} is a periodic filling of the diagonal strip between $P$ and $P+(0,k)$ by crossings \crossing and elbows \elbow such that:
\begin{enumerate}[label=(\alph*)]
\item \label{cond1} the resulting strand permutation equals $f$. That is, the strands connect the index $i$ on the northern boundary to the index $f(i)$ on the southern boundary, and
\item \label{disting} counting from the northern boundary, if a pair of strands has crossed an odd number of times, they cannot form an elbow until they cross again. 
\end{enumerate}
See Figure~\ref{fig:bigaffdeo} for an example. Condition \ref{disting} is Deodhar's distinguished condition \cite{deodhar}.

\begin{figure}[t]
\centering
 
\begin{tabular}{cc}
\bigaffdeog{-5/-5,7/5,1/5,-5/0,7/10,3/1,-3/-4,9/6,3/6,-3/1,9/11,5/2,-1/-3,11/7,5/7,-1/2,11/12,6/3,0/-2,12/8,6/8,0/3,6/4,0/-1,12/9,6/9,0/4,1/0,-5/-5,7/5,1/5,-5/0,7/10,3/1,-3/-4,9/6,3/6,-3/1,9/11,5/2,-1/-3,11/7,5/7,-1/2,11/12,6/3,0/-2,12/8,6/8,0/3,6/4,0/-1,12/9,6/9,0/4}{0/0,-6/-5,6/5,0/5,-6/0,6/10,1/1,-5/-4,7/6,1/6,-5/1,7/11,2/1,-4/-4,8/6,2/6,-4/1,8/11,3/2,-3/-3,9/7,3/7,-3/2,9/12,4/2,-2/-3,10/7,4/7,-2/2,10/12,5/3,-1/-2,11/8,5/8,-1/3,11/13}{0.500000000000000/-0.250000000000000/1,0.500000000000000/5.25000000000000/1,-5.50000000000000/-5.25000000000000/-10,-5.50000000000000/0.250000000000000/-10,6.50000000000000/4.75000000000000/12,6.50000000000000/10.2500000000000/12,1.25000000000000/0.500000000000000/2,0.750000000000000/5.50000000000000/2,7.25000000000000/5.50000000000000/13,6.75000000000000/10.5000000000000/13,1.50000000000000/0.750000000000000/3,1.50000000000000/6.25000000000000/3,-4.50000000000000/-4.25000000000000/-8,-4.50000000000000/1.25000000000000/-8,7.50000000000000/5.75000000000000/14,7.50000000000000/11.2500000000000/14,2.50000000000000/0.750000000000000/4,2.50000000000000/6.25000000000000/4,-3.50000000000000/-4.25000000000000/-7,-3.50000000000000/1.25000000000000/-7,8.50000000000000/5.75000000000000/15,8.50000000000000/11.2500000000000/15,3.25000000000000/1.50000000000000/5,2.75000000000000/6.50000000000000/5,-2.75000000000000/-3.50000000000000/-6,-3.3000000000000/1.60000000000000/-6,-4.75000000000000/-4.50000000000000/-9,-5.3000000000000/0.60000000000000/-9,9.25000000000000/6.50000000000000/16,8.75000000000000/11.5000000000000/16,3.50000000000000/1.75000000000000/6,3.50000000000000/7.25000000000000/6,-2.50000000000000/-3.25000000000000/-5,-2.50000000000000/2.25000000000000/-5,9.50000000000000/6.75000000000000/17,9.50000000000000/12.2500000000000/17,4.50000000000000/1.75000000000000/7,4.50000000000000/7.25000000000000/7,-1.50000000000000/-3.25000000000000/-4,-1.50000000000000/2.25000000000000/-4,10.5000000000000/6.75000000000000/18,10.5000000000000/12.2500000000000/18,5.25000000000000/2.50000000000000/8,4.75000000000000/7.50000000000000/8,-0.750000000000000/-2.50000000000000/-3,-1.3000000000000/2.60000000000000/-3,11.2500000000000/7.50000000000000/19,10.7500000000000/12.5000000000000/19,5.50000000000000/2.75000000000000/9,5.50000000000000/8.25000000000000/9,-0.500000000000000/-2.25000000000000/-2,-0.500000000000000/3.25000000000000/-2,11.5000000000000/7.75000000000000/20,11.5000000000000/13.2500000000000/20,6.25000000000000/3.50000000000000/10,5.75000000000000/8.50000000000000/10,0.250000000000000/-1.50000000000000/-1,-0.30000000000000/3.60000000000000/-1,12.2500000000000/8.50000000000000/21,6.25000000000000/4.50000000000000/11,5.75000000000000/9.50000000000000/11,0.250000000000000/-0.500000000000000/0,-0.250000000000000/4.50000000000000/0,12.2500000000000/9.50000000000000/22}{-6/-1,6/9,-4/-1,8/9,-1/-1,11/9,-5/-2,7/8,-6/-3,6/7,-3/-3,9/7,-5/-4,7/6,-6/-5,6/5,11/11,11/12,10/10,-1/1,-1/2,-2/0,-5/0,7/10}{-2/-2,10/8,-6/-4,6/6,-5/-1,7/9,-3/-1,9/9,-2/-1,-6/-2,6/8,-4/-2,8/8,-3/-2,9/8,-1/-2,11/8,-5/-3,7/7,-4/-3,8/7,-2/-3,10/7,-4/-4,8/6,-3/1,9/11,-2/1,10/11,-4/0,8/10,-3/0,9/10,10/9,-1/0,11/10}{0/4,2/4,5/4,1/3,0/2,3/2,1/1,0/0,4/5,5/6,5/7,1/5}{0/1,1/4,3/4,4/4,0/3,2/3,3/3,5/3,1/2,2/2,4/2,2/1,3/6,4/6,2/5,3/5,5/5,4/3}
 
&
 
\bigaffdeogcolored{1/0,-5/-5,7/5,1/5,-5/0,7/10,3/1,-3/-4,9/6,3/6,-3/1,9/11,5/2,-1/-3,11/7,5/7,-1/2,11/12,6/3,0/-2,12/8,6/8,0/3,6/4,0/-1,12/9,6/9,0/4,1/0,-5/-5,7/5,1/5,-5/0,7/10,3/1,-3/-4,9/6,3/6,-3/1,9/11,5/2,-1/-3,11/7,5/7,-1/2,11/12,6/3,0/-2,12/8,6/8,0/3,6/4,0/-1,12/9,6/9}{0/0,-6/-5,6/5,0/5,-6/0,6/10,1/1,-5/-4,7/6,1/6,-5/1,7/11,2/1,-4/-4,8/6,2/6,-4/1,8/11,3/2,-3/-3,9/7,3/7,-3/2,9/12,4/2,-2/-3,10/7,4/7,-2/2,10/12,5/3,-1/-2,11/8,5/8,-1/3,11/13}{0.500000000000000/-0.250000000000000/1,0.500000000000000/5.25000000000000/1,-5.50000000000000/-5.25000000000000/-10,-5.50000000000000/0.250000000000000/-10,6.50000000000000/4.75000000000000/12,6.50000000000000/10.2500000000000/12,1.25000000000000/0.500000000000000/2,0.750000000000000/5.50000000000000/2,7.25000000000000/5.50000000000000/13,6.75000000000000/10.5000000000000/13,1.50000000000000/0.750000000000000/3,1.50000000000000/6.25000000000000/3,-4.50000000000000/-4.25000000000000/-8,-4.50000000000000/1.25000000000000/-8,7.50000000000000/5.75000000000000/14,7.50000000000000/11.2500000000000/14,2.50000000000000/0.750000000000000/4,2.50000000000000/6.25000000000000/4,-3.50000000000000/-4.25000000000000/-7,-3.50000000000000/1.25000000000000/-7,8.50000000000000/5.75000000000000/15,8.50000000000000/11.2500000000000/15,3.25000000000000/1.50000000000000/5,2.75000000000000/6.50000000000000/5,-2.75000000000000/-3.50000000000000/-6,-3.3000000000000/1.60000000000000/-6,-4.75000000000000/-4.50000000000000/-9,-5.3000000000000/0.60000000000000/-9,9.25000000000000/6.50000000000000/16,8.75000000000000/11.5000000000000/16,3.50000000000000/1.75000000000000/6,3.50000000000000/7.25000000000000/6,-2.50000000000000/-3.25000000000000/-5,-2.50000000000000/2.25000000000000/-5,9.50000000000000/6.75000000000000/17,9.50000000000000/12.2500000000000/17,4.50000000000000/1.75000000000000/7,4.50000000000000/7.25000000000000/7,-1.50000000000000/-3.25000000000000/-4,-1.50000000000000/2.25000000000000/-4,10.5000000000000/6.75000000000000/18,10.5000000000000/12.2500000000000/18,5.25000000000000/2.50000000000000/8,4.75000000000000/7.50000000000000/8,-0.750000000000000/-2.50000000000000/-3,-1.3000000000000/2.60000000000000/-3,11.2500000000000/7.50000000000000/19,10.7500000000000/12.5000000000000/19,5.50000000000000/2.75000000000000/9,5.50000000000000/8.25000000000000/9,-0.500000000000000/-2.25000000000000/-2,-0.500000000000000/3.25000000000000/-2,11.5000000000000/7.75000000000000/20,11.5000000000000/13.2500000000000/20,6.25000000000000/3.50000000000000/10,5.75000000000000/8.50000000000000/10,0.250000000000000/-1.50000000000000/-1,-0.30000000000000/3.60000000000000/-1,12.2500000000000/8.50000000000000/21,6.25000000000000/4.50000000000000/11,5.75000000000000/9.50000000000000/11,0.250000000000000/-0.500000000000000/0,-0.250000000000000/4.50000000000000/0,12.2500000000000/9.50000000000000/22}{1/5,-6/-1,6/9,-4/-1,8/9,-1/-1,11/9,-5/-2,7/8,-6/-3,-3/-3,9/7,-5/-4,7/6,-6/-5,10/10,-2/0,0/4,2/4,5/4,1/3,0/2,3/2,1/1,4/5,-5/0,7/10}{-2/-2,4/3,10/8,-6/-4,-5/-1,7/9,-3/-1,9/9,-2/-1,-6/-2,6/8,-4/-2,8/8,-3/-2,9/8,-1/-2,11/8,-5/-3,7/7,-4/-3,8/7,-2/-3,10/7,-4/-4,8/6,-3/1,9/11,-2/1,10/11,-4/0,8/10,-3/0,9/10,10/9,-1/0,11/10,1/4,3/4,4/4,2/3,3/3,1/2,2/2,4/2,2/1,3/6,4/6,2/5,3/5,5/5,5/3,0/3}{5/7,11/12,-1/2}{0/2,1/1,6/7,7/6}{5/6,6/5,11/11,-1/1,0/0}{6/6,0/1}
 
\\
Example & Non-example
\end{tabular}
 
\caption{Left: an $(f,P)$-affine Deogram for $P=\text{RURRURRURUU}$ and $f=[4,10,7,9,11,8,12,14,13,16,17]$. The blue region is a representative region; in later figures we display only this region. Right: a periodic filling which fails condition~\ref{disting} at the highlighted box.}
\label{fig:bigaffdeo}
\end{figure}

We also consider \textbf{anti-distinguished} affine Deograms, where condition~\ref{disting} is imposed by reading from the southern boundary instead. Specializing $P$ to a canonical lattice path associated to $f$ recovers usual $f$-\textbf{Deograms}, as further explained in Section~\ref{sec:affdeo}. The Deodhar decomposition \cite{deodhar} expresses the point count of $\Pio_f$ over $\F_q$ as a weighted sum over the set $\deo{f}$ of ordinary $f$-Deograms. This allows us to interpret geometric isomorphisms between open positroid varieties as equalities between different sets of Deograms.

\subsection{Main Results} Let $f_{k,n}:\Z\to\Z$ denote the affine permutation $f_{k,n}(i)=i+k$. A (affine) Deogram is \textbf{maximal} if it has precisely $n-c(\overline{f})$ many elbows (in a representative region). Galashin--Lam~\cite{positroid} showed that, when $\gcd(k,n)=1$, the set of maximal $f_{k,n}$-Deograms $\deomax{f_{k,n}}$ has cardinality equal to the rational Catalan number $C_{k,n-k}=\frac1n\binom nk$. This is also the cardinality of the set $\Dyck_{k,n}$ of rational Dyck paths in a $k\times(n-k)$ rectangle; see Figure~\ref{fig:deodyckex}.

The following identities all admit known geometric proofs.

\begin{figure}[t]
\centering
\begin{tabular}{cccc}  
\setlength{\tabcolsep}{0pt}
\deog{0/0,0/1,0/2,5/0,5/1,5/2,0/0,0/1,0/2,5/0,5/1,5/2}{0/3,1/3,2/3,3/3,4/3,0/0,1/0,2/0,3/0,4/0}{-0.350000000000000/0.500000000000000/1,-0.350000000000000/1.50000000000000/2,-0.350000000000000/2.50000000000000/3,0.500000000000000/3.35000000000000/4,1.50000000000000/3.35000000000000/5,2.50000000000000/3.35000000000000/6,3.50000000000000/3.35000000000000/7,4.50000000000000/3.35000000000000/8,0.500000000000000/-0.350000000000000/1,1.50000000000000/-0.350000000000000/2,2.50000000000000/-0.350000000000000/3,3.50000000000000/-0.350000000000000/4,4.50000000000000/-0.350000000000000/5,5.35000000000000/0.500000000000000/6,5.35000000000000/1.50000000000000/7,5.35000000000000/2.50000000000000/8}{2/2,4/2,1/1,3/1,0/0,3/0,4/0}{0/2,1/2,3/2,0/1,2/1,4/1,1/0,2/0}&\deog{0/0,0/1,0/2,5/0,5/1,5/2,0/0,0/1,0/2,5/0,5/1,5/2}{0/3,1/3,2/3,3/3,4/3,0/0,1/0,2/0,3/0,4/0}{-0.350000000000000/0.500000000000000/1,-0.350000000000000/1.50000000000000/2,-0.350000000000000/2.50000000000000/3,0.500000000000000/3.35000000000000/4,1.50000000000000/3.35000000000000/5,2.50000000000000/3.35000000000000/6,3.50000000000000/3.35000000000000/7,4.50000000000000/3.35000000000000/8,0.500000000000000/-0.350000000000000/1,1.50000000000000/-0.350000000000000/2,2.50000000000000/-0.350000000000000/3,3.50000000000000/-0.350000000000000/4,4.50000000000000/-0.350000000000000/5,5.35000000000000/0.500000000000000/6,5.35000000000000/1.50000000000000/7,5.35000000000000/2.50000000000000/8}{2/2,4/2,0/1,3/1,0/0,1/0,4/0}{0/2,1/2,3/2,1/1,2/1,4/1,2/0,3/0}&\deog{0/0,0/1,0/2,5/0,5/1,5/2,0/0,0/1,0/2,5/0,5/1,5/2}{0/3,1/3,2/3,3/3,4/3,0/0,1/0,2/0,3/0,4/0}{-0.350000000000000/0.500000000000000/1,-0.350000000000000/1.50000000000000/2,-0.350000000000000/2.50000000000000/3,0.500000000000000/3.35000000000000/4,1.50000000000000/3.35000000000000/5,2.50000000000000/3.35000000000000/6,3.50000000000000/3.35000000000000/7,4.50000000000000/3.35000000000000/8,0.500000000000000/-0.350000000000000/1,1.50000000000000/-0.350000000000000/2,2.50000000000000/-0.350000000000000/3,3.50000000000000/-0.350000000000000/4,4.50000000000000/-0.350000000000000/5,5.35000000000000/0.500000000000000/6,5.35000000000000/1.50000000000000/7,5.35000000000000/2.50000000000000/8}{2/2,3/2,4/2,1/1,4/1,0/0,3/0}{0/2,1/2,0/1,2/1,3/1,1/0,2/0,4/0}&\deog{0/0,0/1,0/2,5/0,5/1,5/2,0/0,0/1,0/2,5/0,5/1,5/2}{0/3,1/3,2/3,3/3,4/3,0/0,1/0,2/0,3/0,4/0}{-0.350000000000000/0.500000000000000/1,-0.350000000000000/1.50000000000000/2,-0.350000000000000/2.50000000000000/3,0.500000000000000/3.35000000000000/4,1.50000000000000/3.35000000000000/5,2.50000000000000/3.35000000000000/6,3.50000000000000/3.35000000000000/7,4.50000000000000/3.35000000000000/8,0.500000000000000/-0.350000000000000/1,1.50000000000000/-0.350000000000000/2,2.50000000000000/-0.350000000000000/3,3.50000000000000/-0.350000000000000/4,4.50000000000000/-0.350000000000000/5,5.35000000000000/0.500000000000000/6,5.35000000000000/1.50000000000000/7,5.35000000000000/2.50000000000000/8}{1/2,4/2,1/1,2/1,3/1,0/0,3/0}{0/2,2/2,3/2,0/1,4/1,1/0,2/0,4/0}\\
\end{tabular}
 
\begin{tabular}{ccc}
\deog{0/0,0/1,0/2,5/0,5/1,5/2,0/0,0/1,0/2,5/0,5/1,5/2}{0/3,1/3,2/3,3/3,4/3,0/0,1/0,2/0,3/0,4/0}{-0.350000000000000/0.500000000000000/1,-0.350000000000000/1.50000000000000/2,-0.350000000000000/2.50000000000000/3,0.500000000000000/3.35000000000000/4,1.50000000000000/3.35000000000000/5,2.50000000000000/3.35000000000000/6,3.50000000000000/3.35000000000000/7,4.50000000000000/3.35000000000000/8,0.500000000000000/-0.350000000000000/1,1.50000000000000/-0.350000000000000/2,2.50000000000000/-0.350000000000000/3,3.50000000000000/-0.350000000000000/4,4.50000000000000/-0.350000000000000/5,5.35000000000000/0.500000000000000/6,5.35000000000000/1.50000000000000/7,5.35000000000000/2.50000000000000/8}{1/2,4/2,0/1,3/1,0/0,1/0,2/0}{0/2,2/2,3/2,1/1,2/1,4/1,3/0,4/0}&\deog{0/0,0/1,0/2,5/0,5/1,5/2,0/0,0/1,0/2,5/0,5/1,5/2}{0/3,1/3,2/3,3/3,4/3,0/0,1/0,2/0,3/0,4/0}{-0.350000000000000/0.500000000000000/1,-0.350000000000000/1.50000000000000/2,-0.350000000000000/2.50000000000000/3,0.500000000000000/3.35000000000000/4,1.50000000000000/3.35000000000000/5,2.50000000000000/3.35000000000000/6,3.50000000000000/3.35000000000000/7,4.50000000000000/3.35000000000000/8,0.500000000000000/-0.350000000000000/1,1.50000000000000/-0.350000000000000/2,2.50000000000000/-0.350000000000000/3,3.50000000000000/-0.350000000000000/4,4.50000000000000/-0.350000000000000/5,5.35000000000000/0.500000000000000/6,5.35000000000000/1.50000000000000/7,5.35000000000000/2.50000000000000/8}{0/2,3/2,4/2,1/1,4/1,0/0,2/0}{1/2,2/2,0/1,2/1,3/1,1/0,3/0,4/0}&\deog{0/0,0/1,0/2,5/0,5/1,5/2,0/0,0/1,0/2,5/0,5/1,5/2}{0/3,1/3,2/3,3/3,4/3,0/0,1/0,2/0,3/0,4/0}{-0.350000000000000/0.500000000000000/1,-0.350000000000000/1.50000000000000/2,-0.350000000000000/2.50000000000000/3,0.500000000000000/3.35000000000000/4,1.50000000000000/3.35000000000000/5,2.50000000000000/3.35000000000000/6,3.50000000000000/3.35000000000000/7,4.50000000000000/3.35000000000000/8,0.500000000000000/-0.350000000000000/1,1.50000000000000/-0.350000000000000/2,2.50000000000000/-0.350000000000000/3,3.50000000000000/-0.350000000000000/4,4.50000000000000/-0.350000000000000/5,5.35000000000000/0.500000000000000/6,5.35000000000000/1.50000000000000/7,5.35000000000000/2.50000000000000/8}{0/2,1/2,4/2,1/1,3/1,0/0,2/0}{2/2,3/2,0/1,2/1,4/1,1/0,3/0,4/0}\\
\end{tabular}
\vspace{2mm}
 
\begin{tabular}{cccc}
\scalebox{\sclbx}{
\begin{tikzpicture}[xscale=0.5,yscale=0.5]
 
    \draw[black] (0,0) grid (5,3);
 
    \draw[red,line width=1.5pt] (0,0) -- (5,3);
    
    \draw[orange, thick,line width=3pt](0,0)--(0,3)--(5,3);
    
\end{tikzpicture}
}
&
\scalebox{\sclbx}{
\begin{tikzpicture}[xscale=0.5,yscale=0.5]
 
    \draw[black] (0,0) grid (5,3);
 
    \draw[red,line width=1.5pt] (0,0) -- (5,3);
    
    \draw[orange, thick,line width=3pt](0,0)--(0,2)--(1,2)--(1,3)--(5,3);
    
\end{tikzpicture}
}
&
\scalebox{\sclbx}{
\begin{tikzpicture}[xscale=0.5,yscale=0.5]
 
    \draw[black] (0,0) grid (5,3);
 
    \draw[red,line width=1.5pt] (0,0) -- (5,3);
    
    \draw[orange, thick,line width=3pt](0,0)--(0,2)--(2,2)--(2,3)--(5,3);
    
\end{tikzpicture}
}
 
&
\scalebox{\sclbx}{
\begin{tikzpicture}[xscale=0.5,yscale=0.5]
 
    \draw[black] (0,0) grid (5,3);
 
    \draw[red,line width=1.5pt] (0,0) -- (5,3);
    
    \draw[orange, thick,line width=3pt](0,0)--(0,2)--(3,2)--(3,3)--(5,3);
    
\end{tikzpicture}
}
\end{tabular}
 
\begin{tabular}{ccc}
\scalebox{\sclbx}{
\begin{tikzpicture}[xscale=0.5,yscale=0.5]
 
    \draw[black] (0,0) grid (5,3);
 
    \draw[red,line width=1.5pt] (0,0) -- (5,3);
    
    \draw[orange, thick,line width=3pt](0,0)--(0,1)--(1,1)--(1,3)--(5,3);
    
\end{tikzpicture}
}
&
\scalebox{\sclbx}{
\begin{tikzpicture}[xscale=0.5,yscale=0.5]
 
    \draw[black] (0,0) grid (5,3);
 
    \draw[red,line width=1.5pt] (0,0) -- (5,3);
    
    \draw[orange, thick,line width=3pt](0,0)--(0,1)--(1,1)--(1,2)--(2,2)--(2,3)--(5,3);
    
\end{tikzpicture}
}
&
\scalebox{\sclbx}{
\begin{tikzpicture}[xscale=0.5,yscale=0.5]
 
    \draw[black] (0,0) grid (5,3);
 
    \draw[red,line width=1.5pt] (0,0) -- (5,3);
    
    \draw[orange, thick,line width=3pt](0,0)--(0,1)--(1,1)--(1,2)--(3,2)--(3,3)--(5,3);
    
\end{tikzpicture}
}
\end{tabular}
\caption{The sets $\Dyck_{k,n}$ and $\deomax{f_{k,n}}$ for $k=3$, $n=8$.}
\label{fig:deodyckex}
\end{figure}
 
\begin{theorem}\label{thm:geometric}
\begin{enumerate}[label=(\arabic*)]
\item \label{part1} \cite{positroid} Let $\gcd(k,n)=1$. Then the sets $\deomax{f_{k,n}}$ and $\Dyck_{k,n}$ have the same cardinality. They are both equal to $C_{k,n-k} = \frac1n \binom{n}{k}$, a rational Catalan number.
\item \label{decouplepart} \cite{gallam2024} Suppose $\overline{f}=(a^\parr1_1\cdots a^\parr1_{n_1})(a^\parr2_1\cdots a^\parr2_{n_2})\cdots(a^\parr r_1\cdots a^\parr r_{n_r})$
is the cycle decomposition of $\overline f$. For each $j\in[r]$, let $S_j$ be the set of all integers congruent modulo $n$ to one of $a^\parr j_1,\dots,a^\parr j_{n_j}$, and let $f|_{S_j}\in\BND_{k_j,n_j}$ be the restriction of $f$ to $S_j$. Then,
\[
	\#\deomax{f} = \prod_{j=1}^r \#\deomax{f|_{S_j}}.
\] 
\item \label{part2} \cite{KLS13} Let $\sigma f:\Z\to\Z$ denote the cyclic shift of $f$, given by $(\sigma f)(i)=f(i-1)+1$ for all $i\in\Z$. Then $\Pi_f^{\circ}\cong\Pi_{\sigma f}^{\circ}$. In particular,
\begin{equation}\label{eq:cyclic}
\#\deo{f}=\#\deo{\sigma f}.
\end{equation}
\item \label{part3} \cite{galashin2022twist,muller2017twist} For any $f\in\Bkn$, let $\antideo{f}$ denote the set of anti-distinguished $f$-Deograms. Then, 
\begin{equation}\label{eq:anti}
\#\deo{f}=\#\antideo{f}.
\end{equation} 
\item \label{part4} \cite{fraser2022positroid,muller} If $\ell(f)=\ell(s_ifs_i)$, then $\Pi_f^{\circ} \cong \Pi_{s_ifs_i}^{\circ}$. In particular,
\begin{equation}\label{eq:ceq}
\#\deo{f} = \#\deo{s_ifs_i}.
\end{equation}
\item \label{part5} \cite{muller} If $\ell(f)+2 = \ell(s_ifs_i)$, $\Pio_f$ decomposes into a closed subvariety isomorphic to $\Pio_{s_ifs_i}\times\C$ and its open complement, isomorphic to $\Pio_{s_if}\times\C^*$. In particular,
\begin{equation}\label{eq:double}
	\#\deo{f} = \#\deo{s_if} + \,\#\deo{s_ifs_i}.
\end{equation}
\end{enumerate}
\end{theorem}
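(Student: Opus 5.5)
Since Theorem~\ref{thm:geometric} collects results with known geometric proofs, the plan is to derive each part from the cited geometry, with the Deodhar decomposition as the bridge between varieties and Deograms. The bridge is this. Let $\Pio_f\cong$ an open Richardson variety. Deodhar's decomposition \cite{deodhar} then gives
\[
\#\Pio_f(\F_q)=\sum_{D\in\deo{f}} (q-1)^{e(D)}q^{c(D)},
\]
where $e(D)$ is the number of elbows and $c(D)$ is the number of crossings of a certain type. The same sum holds over $\antideo{f}$ by the opposite-direction Deodhar decomposition. In these formulas $\#\deo{f}$ is recovered as the value of the (polynomial) point count at $q=2$. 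To see this, note that $(q-1)^{e}q^{c}$ at $q=2$ equals $2^{c}$. That is not $1$, so I must refine: I would instead use the normalization in which each Deogram contributes $(q-1)^{e}$ times a power of $q$ that is uniform after dividing by $q^{(\ell-\text{const})/2}$, and specialize $q\to 1$ appropriately. If that specialization only counts maximal Deograms, I would fall back on comparing the full $R$-polynomial-type generating functions, which are determined by the variety. So the counting identities (3)--(5) reduce to identities of point counts, i.e.\ of $\F_q$-isomorphism classes, once I fix a statistic-preserving normalization.

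With the bridge in hand, the individual parts go as follows.
\begin{itemize}
\item Part (3) follows from the cyclic-shift automorphism of $\Grkn$, which sends $\Pio_f$ to $\Pio_{\sigma f}$ \cite{KLS13}.
\item Part (4) follows because the twist map is an isomorphism between the two Deodhar-type decompositions \cite{galashin2022twist,muller2017twist}; equivalently, both sums compute the same point count.
\item Part (5) uses the length-preserving conjugation isomorphism \cite{fraser2022positroid,muller}.
\item Part (6) uses the decomposition of $\Pio_f$ into $\Pio_{s_ifs_i}\times\C$ and $\Pio_{s_if}\times\C^*$. Point counts add over this stratification, and each term factors, giving $q\,\#$ and $(q-1)\,\#$ respectively. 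Translating back into the chosen Deogram statistic yields \eqref{eq:double}.
\end{itemize}

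For parts (1) and (2), maximal Deograms are exactly those with $e(D)=n-c(\overline f)$, the top power of $(q-1)$. Thus $\#\deomax{f}$ is the leading coefficient of the point count, written as a polynomial in $(q-1)$ after normalizing by the relevant power of $q$.
\begin{itemize}
\item For Part (1), I would cite \cite{positroid}, where this coefficient for $f_{k,n}$ with $\gcd(k,n)=1$ is identified with the top mixed-Hodge / $q,t$-Catalan specialization and equals $\frac1n\binom nk$. The equality with $\#\Dyck_{k,n}$ is the classical rational Catalan count (Bizley), proved by the cycle lemma on cyclic shifts of lattice paths using coprimality.
\item For Part (2), I would use \cite{gallam2024}. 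There the leading coefficient is shown to be multiplicative over the cycles of $\overline f$, reflecting a decoupling of the top-degree cohomology into a product over components.
\end{itemize}

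The main obstacle is the normalization step. The quantity $\#\deo{f}$ is an unweighted count, and it is not literally a specialization of the point count without care about the $q$-exponents. I would try first to check that, when both sides are written as $\sum_D (q-1)^{e(D)}q^{(\ell(f)-e(D))/2}$ (the standard form for positroid Deograms), the substitution $q^{1/2}\mapsto 1$ (so $q-1\mapsto 0$) fails to count everything. So instead I would substitute $q=1$ in the variable $t=q^{-1/2}(q-1)$-expansion, which counts all $D$ with weight $1$. Isomorphism invariance of the point count then gives the equalities (3)--(6) directly.
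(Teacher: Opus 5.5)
Your strategy matches the paper's. The paper only cites (1)--(6), and the transfer from geometry to Deogram counts that it carries out explicitly (Section~\ref{subsec:antitodistgeo}, for (4)) is exactly your bridge through Deodhar's decomposition. But the bridge is wrong as written, and you rightly flag it as the crux.

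\emph{The normalization.} A Deogram has $\ell(w)$ boxes, so by Theorem~\ref{thm:deodharorig} its weight is $(q-1)^{e(D)}q^{(d-e(D))/2}$, where $d=\ell(w)-\ell(v)=\dim\Pio_f=k(n-k)-\ell(f)$. It is not $q^{(\ell(f)-e(D))/2}$. Hence $R_f(q)=q^{d/2}G_f(t)$, with $G_f(t)=\sum_{D\in\deo{f}}t^{e(D)}$ and $t=q^{1/2}-q^{-1/2}$. Isomorphic varieties have the same $d$ and the same point counts. Since $t$ takes infinitely many values, they therefore have the same $G$, and $\#\deo{f}=G_f(1)$.

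The substitution is $t=1$, not $q=1$: setting $q=1$ forces $t=0$ and counts only elbow-free Deograms. The paper phrases the same step as linear independence of the polynomials $(q-1)^{\ell(w)-\ell(v)-2a}q^a$.

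\emph{Part (6).} The bookkeeping you skipped is exactly where the normalization matters. The dimension drops by $2$ for $s_ifs_i$ and by $1$ for $s_if$. So $q\,R_{s_ifs_i}=q^{d/2}G_{s_ifs_i}(t)$ and $(q-1)R_{s_if}=q^{d/2}\,t\,G_{s_if}(t)$. This gives $G_f=G_{s_ifs_i}+tG_{s_if}$, and evaluating at $t=1$ yields \eqref{eq:double}.

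\emph{Maximal Deograms.} The number $n-c(\overline f)$ is the \emph{minimum} number of elbows (Remark~\ref{rem:maxideo}). So maximal Deograms contribute the \emph{lowest} power of $t$, not the top power of $(q-1)$. Concretely, $\#\deomax{f}$ is the coefficient of $t^{n-c(\overline f)}$ in $G_f$, i.e.\ $R_f(q)/(q-1)^{n-c(\overline f)}$ evaluated at $q=1$. The leading coefficient instead counts Deograms with the most elbows. You prove (1) and (2) purely by citation, so this does not break the argument. But what Galashin--Lam show to be multiplicative is this lowest coefficient, so your description of the cited result should be corrected.

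\emph{Part (4).} There is no free-standing ``opposite-direction Deodhar decomposition'' of $\Pio_f$. Anti-distinguished $v$-subwords of $\mathbf w$ are the distinguished $v^{-1}$-subwords of $\mathbf w^{\mathrm{rev}}$ (Proposition~\ref{prop:antirev}). These index Deodhar's decomposition of $\mathring{R}_{v^{-1},w^{-1}}$, not of $\mathring{R}_{v,w}$. You therefore need an isomorphism $\mathring{R}_{v,w}\cong\mathring{R}_{v^{-1},w^{-1}}$: in the paper this is $\chi_{v^{-1}}\circ\iota$, and in your references it is the twist. That isomorphism is the real content of (4). Your claim that the same sum holds over $\antideo{f}$ must be derived from it, not assumed.
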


The main results of this paper present combinatorial proofs of \Cref{thm:geometric}~\labelcref{decouplepart,part3,part5} and cases of \Cref{thm:geometric}~\ref{part4}. Additionally, we reduce the problem of a combinatorial proof of \Cref{thm:geometric}~\ref{part1} to one of \Cref{thm:geometric}~\ref{part2}; see Theorem \ref{thm:problem}.

\begin{figure}

\centering
\setlength{\tabcolsep}{6pt}
\renewcommand{\arraystretch}{1.3}
\begin{tabular}{r@{\hskip 12pt}ccc}
$\deomax{f}:$ &
\deog{0/0,0/1,0/2,3/0,4/1,4/2,0/0,0/1,0/2,3/0,4/1,4/2}{0/3,1/3,2/3,3/3,0/0,1/0,2/0,3/1}{-0.35/0.5/1,-0.35/1.5/2,-0.35/2.5/3,0.5/3.35/4,1.5/3.35/5,2.5/3.35/6,3.5/3.35/7,0.5/-0.3/1,1.5/-0.3/2,2.5/-0.3/3,3.3/0.5/4,3.5/0.7/5,4.3/1.5/6,4.3/2.5/7}{1/2,3/2,0/1,2/1,0/0,1/0}{0/2,2/2,1/1,3/1,2/0}
&
\deog{0/0,0/1,0/2,3/0,4/1,4/2,0/0,0/1,0/2,3/0,4/1,4/2}{0/3,1/3,2/3,3/3,0/0,1/0,2/0,3/1}{-0.35/0.5/1,-0.35/1.5/2,-0.35/2.5/3,0.5/3.35/4,1.5/3.35/5,2.5/3.35/6,3.5/3.35/7,0.5/-0.3/1,1.5/-0.3/2,2.5/-0.3/3,3.3/0.5/4,3.5/0.7/5,4.3/1.5/6,4.3/2.5/7}{0/2,2/2,3/2,1/1,3/1,0/0}{1/2,0/1,2/1,1/0,2/0}
&
\deog{0/0,0/1,0/2,3/0,4/1,4/2,0/0,0/1,0/2,3/0,4/1,4/2}{0/3,1/3,2/3,3/3,0/0,1/0,2/0,3/1}{-0.35/0.5/1,-0.35/1.5/2,-0.35/2.5/3,0.5/3.35/4,1.5/3.35/5,2.5/3.35/6,3.5/3.35/7,0.5/-0.3/1,1.5/-0.3/2,2.5/-0.3/3,3.3/0.5/4,3.5/0.7/5,4.3/1.5/6,4.3/2.5/7}{0/2,1/2,3/2,1/1,2/1,0/0}{2/2,0/1,3/1,1/0,2/0}
\\[10pt]
$\deomax{\sigma f}:$ &
\deog{0/0,0/1,0/2,4/0,4/1,4/2,0/0,0/1,0/2,4/0,4/1,4/2}{0/3,1/3,2/3,3/3,0/0,1/0,2/0,3/0}{-0.35/0.5/1,-0.35/1.5/2,-0.35/2.5/3,0.5/3.35/4,1.5/3.35/5,2.5/3.35/6,3.5/3.35/7,0.5/-0.3/1,1.5/-0.3/2,2.5/-0.3/3,3.5/-0.3/4,4.3/0.5/5,4.3/1.5/6,4.3/2.5/7}{1/2,2/2,2/1,3/1,0/0,3/0}{0/2,3/2,0/1,1/1,1/0,2/0}
&
\deog{0/0,0/1,0/2,4/0,4/1,4/2,0/0,0/1,0/2,4/0,4/1,4/2}{0/3,1/3,2/3,3/3,0/0,1/0,2/0,3/0}{-0.35/0.5/1,-0.35/1.5/2,-0.35/2.5/3,0.5/3.35/4,1.5/3.35/5,2.5/3.35/6,3.5/3.35/7,0.5/-0.3/1,1.5/-0.3/2,2.5/-0.3/3,3.5/-0.3/4,4.3/0.5/5,4.3/1.5/6,4.3/2.5/7}{1/2,2/2,0/1,3/1,0/0,2/0}{0/2,3/2,1/1,2/1,1/0,3/0}
&
\deog{0/0,0/1,0/2,4/0,4/1,4/2,0/0,0/1,0/2,4/0,4/1,4/2}{0/3,1/3,2/3,3/3,0/0,1/0,2/0,3/0}{-0.35/0.5/1,-0.35/1.5/2,-0.35/2.5/3,0.5/3.35/4,1.5/3.35/5,2.5/3.35/6,3.5/3.35/7,0.5/-0.3/1,1.5/-0.3/2,2.5/-0.3/3,3.5/-0.3/4,4.3/0.5/5,4.3/1.5/6,4.3/2.5/7}{0/2,2/2,2/1,3/1,0/0,1/0}{1/2,3/2,0/1,1/1,2/0,3/0}
\\[10pt]
$\antideomax{f}:$ &
\deog{0/0,0/1,0/2,3/0,4/1,4/2,0/0,0/1,0/2,3/0,4/1,4/2}{0/3,1/3,2/3,3/3,0/0,1/0,2/0,3/1}{-0.35/0.5/1,-0.35/1.5/2,-0.35/2.5/3,0.5/3.35/4,1.5/3.35/5,2.5/3.35/6,3.5/3.35/7,0.5/-0.3/1,1.5/-0.3/2,2.5/-0.3/3,3.3/0.5/4,3.5/0.7/5,4.3/1.5/6,4.3/2.5/7}{1/2,3/2,0/1,2/1,0/0,1/0}{0/2,2/2,1/1,3/1,2/0}
&
\deog{0/0,0/1,0/2,3/0,4/1,4/2,0/0,0/1,0/2,3/0,4/1,4/2}{0/3,1/3,2/3,3/3,0/0,1/0,2/0,3/1}{-0.35/0.5/1,-0.35/1.5/2,-0.35/2.5/3,0.5/3.35/4,1.5/3.35/5,2.5/3.35/6,3.5/3.35/7,0.5/-0.3/1,1.5/-0.3/2,2.5/-0.3/3,3.3/0.5/4,3.5/0.7/5,4.3/1.5/6,4.3/2.5/7}{2/2,3/2,1/1,3/1,0/0,2/0}{1/2,0/1,2/1,1/0,0/2}
&
\deog{0/0,0/1,0/2,3/0,4/1,4/2,0/0,0/1,0/2,3/0,4/1,4/2}{0/3,1/3,2/3,3/3,0/0,1/0,2/0,3/1}{-0.35/0.5/1,-0.35/1.5/2,-0.35/2.5/3,0.5/3.35/4,1.5/3.35/5,2.5/3.35/6,3.5/3.35/7,0.5/-0.3/1,1.5/-0.3/2,2.5/-0.3/3,3.3/0.5/4,3.5/0.7/5,4.3/1.5/6,4.3/2.5/7}{2/0,1/2,3/2,1/1,2/1,0/0}{2/2,0/1,3/1,1/0,0/2}
\\
\end{tabular}

\caption{Examples of \Cref{thm:geometric}~\labelcref{part2,part3} for $f = [3,5,6,7,8,9,10]$.}
\end{figure}

\begin{theorem}\label{thm:decoupling_intro}
Let $f\in\Bkn$ and $P\in\Pkn$. Suppose
\[
\overline{f}=(a^\parr1_1\cdots a^\parr1_{n_1})(a^\parr2_1\cdots a^\parr2_{n_2})\cdots(a^\parr r_1\cdots a^\parr r_{n_r})
\]
is the cycle decomposition of $\overline f$. For each $j\in[r]$, let $S_j$ be the set of all integers congruent modulo $n$ to one of $a^\parr j_1,\dots,a^\parr j_{n_j}$, and let
\[
	f_j:=f|_{S_j},\qquad P_j:=P|_{S_j},\qquad k_j:=\av(f_j),
\]
where $f_j$ is the restriction of $f$ to $S_j$, viewed as an $n_j$-periodic affine permutation via the order-preserving identification $S_j\cong\Z$, and $P_j$ is the path obtained by restricting $P$ to the steps indexed by $S_j$. Then $f_j\in\BND_{k_j,n_j}$, and:
\begin{enumerate}
\item if $P_j\notin\mathcal P_{k_j,n_j}$ for some $j\in[r]$, then $\affdeomax{f}{P}=\varnothing$;
\item otherwise, there is a bijection
\[
    \affdeomax{f}{P}
    \longrightarrow
    \prod_{j=1}^r \affdeomax{f_j}{P_j}.
\]
\end{enumerate}
\end{theorem}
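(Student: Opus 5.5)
The plan is to show that in a maximal affine Deogram, strands from different cycles of $\overline f$ only ever cross and never form elbows with each other. Once that holds, the strands belonging to each $S_j$ can be read off as an independent diagram on the restricted path $P_j$.

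\textbf{Counting step.} First I set up the bookkeeping. In a representative region, each box of the strip between $P$ and $P+(0,k)$ is the meeting point of exactly two strands, labelled $j_1,j_2$ as in \labelelbow. Each box is therefore assigned to an unordered pair of residue classes, and so to a pair of cycles $\{S_a,S_b\}$, where $a=b$ is allowed.

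Next I bound the elbows. For each $j$, the sub-diagram formed by the boxes where both strands lie in $S_j$ should carry at most $n_j-1$ elbows. This is the one-cycle case of the general bound: an affine Deogram whose strand permutation has $c(\overline f)$ cycles has at most $n-c(\overline f)$ elbows.

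The general bound itself should follow from the usual Deodhar argument. Read the boxes in order from the northern boundary and track the partial products in $\Saff$. Each elbow merges two cycles of the running permutation. By condition~\ref{disting}, an elbow can only occur between strands that are currently "uncrossed," and this is what forces the merge to be monotone in the relevant statistic. A telescoping argument on the number of cycles of the finite reduction then gives the bound.

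Applying the same argument to mixed pairs, an elbow between strands of $S_a$ and $S_b$ with $a\neq b$ would have to be "undone" later, since the final permutation keeps $S_a$ and $S_b$ in separate cycles. The count then shows that total elbows $=n-c(\overline f)$ forces every elbow to lie between strands of the same cycle, with exactly $n_j-1$ elbows for each $j$.

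\textbf{Restriction.} Given this, I define the map. Delete every mixed box; each such box is a crossing, so deleting it just lets the two strands pass through each other. What remains of the strands indexed by $S_j$ should form a periodic filling of the strip between $P_j$ and $P_j+(0,k_j)$.

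To see this, note that the northern boundary steps of $P$ indexed by $S_j$ are exactly the steps of $P_j$. The boxes where two $S_j$-strands meet then sit in the strip of $P_j$ in the same relative order. This is the usual "delete rows and columns" compatibility of wiring diagrams.

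The distinguished condition is inherited, because deleting crossings with other strands does not change the parity of crossings between two $S_j$-strands. The strand permutation becomes $f_j$, and there are $n_j-1$ elbows, so the result lies in $\affdeomax{f_j}{P_j}$.

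Part (1) comes out of the same analysis. If $P_j$ has the wrong number of up-steps relative to $k_j=\av(f_j)$, then the $S_j$-strands cannot connect north to south with the required offset. So no filling realizes $f$ at all, and in particular $\affdeomax{f}{P}=\varnothing$.

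\textbf{Inverse and main obstacle.} For bijectivity I construct the inverse. Given a tuple of diagrams, superimpose them, placing a crossing at every box of $P$'s strip where strands from distinct cycles meet. Then check that the result satisfies condition~\ref{disting}. A mixed pair never forms an elbow, and a same-cycle pair has the same crossing history as in its component diagram.

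I expect the main obstacle to be the counting step: rigorously proving the inequality $\#\text{elbows}\le n-c(\overline f)$ with equality forcing mixed pairs to be crossings, in the affine setting with an arbitrary path $P$. My first attempt would be induction on boxes along $P$, using the Deodhar length argument. In that argument each elbow either joins two cycles or, between strands already crossed an even number of times, cannot separate them.
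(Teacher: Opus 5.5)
Your overall architecture is the paper's: show no elbow joins strands of different cycles, restrict to each $S_j$, and invert by superposing with crossings at mixed boxes. The counting step meant to deliver the key fact is wrong, however.

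\textbf{The inequality runs the other way.} By Remark~\ref{rem:maxiaffdeo}, every affine Deogram has \emph{at least} $n-c(\overline f)$ elbows, and the maximal ones are exactly those attaining this minimum. An upper bound fails. Take $n=4$, $k=2$, $f(i)=i+2$ (so $\overline f=(1\,3)(2\,4)$) and $P=\mathrm{URUR}$. The all-elbow filling is then an $(f,P)$-affine Deogram with four elbows, while $n-c(\overline f)=2$.

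\textbf{The mechanism is also off.} Pass from the all-crossing filling, whose strand permutation $\tau_P$ is the identity mod $n$, to $A$ by turning crossings into elbows one at a time. Each step multiplies the strand permutation by a transposition of two residues, so it changes the number of cycles by $\pm1$. Nothing forces a merge; in the example above one of the four steps splits a cycle. Condition~\ref{disting} plays no role in this count.

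\textbf{Even granting your bound, it cannot conclude.} A per-cycle upper bound gives ``same-cycle elbows $\le n-c(\overline f)=$ total''. That only yields $\#\{\text{mixed elbows}\}\ge 0$, so it cannot force mixed elbows to vanish.

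\textbf{The missing idea is the equality case of the lower bound.} The paper phrases this via the Deograph $G(A)$ (Proposition~\ref{prop:forest}). Insert the $m=n-c(\overline f)$ elbows of $A$ in reading order. This shows $\overline f$ is a product of the residue transpositions given by the edges of $G(A)$, so each cycle of $\overline f$ lies inside one connected component. On the other hand, a graph on $n$ vertices with $m$ edges has at least $n-m=c(\overline f)$ components. Hence the components are exactly the cycle supports and $G(A)$ is a forest. This rules out mixed elbows, and the tree count gives exactly $n_j-1$ elbows on $S_j$, which you need for maximality of the restrictions.

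\textbf{Part~(1) overclaims.} Your assertion that no filling at all realizes $f$ is false. In the example above, $P|_{S_1}=\mathrm{UU}$ has two up-steps while $k_1=1$, yet $\affdeo{f}{P}\neq\varnothing$. The claim holds only for maximal Deograms, and must be derived after the no-mixed-elbows fact. Restrict a maximal $A$ to $S_j$ to get a filling of the $P_j$-strip with strand permutation $f_j$. Then use that any periodic filling of a strip with $u$ up-steps has average $u$. This holds because each crossing-to-elbow change multiplies by an affine transposition $t_{a,b}$ with $a\not\equiv b \pmod n$, which has average $0$.
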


Theorem~\ref{thm:decoupling_intro} is proved in Section~\ref{sec:affdeo}. Using the correspondence between ordinary Deograms and affine Deograms highlighted in Section~\ref{sec:affdeo}, this recovers \Cref{thm:geometric}~\labelcref{decouplepart}. As a consequence, the enumeration of maximal affine Deograms reduces to the case where $\overline{f}$ is a single cycle. 

\begin{theorem}\label{thm:dist_to_anti}
For any $f\in\Bkn$, $P\in\Pkn$ and a reduced word $\mathbf{f}$ for $f$, we have a bijection
\[
	\phi_{{\mathbf{f}}}:  \antiaffdeo{f}{P} \to \affdeo{f}{P}.
\]
\end{theorem}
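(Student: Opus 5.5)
We construct $\phi_{\mathbf f}$ by reducing to a local statement about subwords of a word. First we reorganize an $(f,P)$-affine Deogram as a subword of a fixed word. Reading the boxes of a representative region of the strip between $P$ and $P+(0,k)$ in a fixed linear order compatible with the strand flow (north to south) gives a word $\mathbf{w}_P$ in the affine simple reflections. A filling by crossings and elbows is then a choice of which letters are crossings. Condition \ref{cond1} says the crossing subword has Demazure-type product $f$ in the unrestricted sense: the product of the crossing letters equals $f$. Condition \ref{disting} is Deodhar's distinguished condition for the partial products $w_0=\mathrm{id}, w_1,\dots,w_m$: whenever $w_{j-1}s_{i_j}<w_{j-1}$ (the two strands at box $j$ have crossed an odd number of times), box $j$ must be a crossing. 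The anti-distinguished condition is the same statement for the word read backwards, i.e.\ for the suffix products.

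Thus $\affdeo{f}{P}$ and $\antiaffdeo{f}{P}$ are the sets of distinguished and anti-distinguished subexpressions for $f$ in $\mathbf w_P$. For ordinary finite Deodhar theory these sets are equinumerous, since both count $\mathbb F_q$-points of the same Richardson cell. We want an explicit bijection that depends on the choice of a reduced word $\mathbf f$ for $f$.

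The construction goes by induction on the length of $\mathbf w_P$, peeling off the last letter $s=s_{i_m}$. For a distinguished subexpression we look at the final box.

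If the box is a crossing with $fs<f$, removing it gives a distinguished subexpression for $fs$ in the shorter word.

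If it is an elbow, we get a distinguished subexpression for $f$ with $fs>f$, or with $fs<f$ when the elbow is forced-free.

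For anti-distinguished subexpressions the first letter plays this role instead. So we match the recursion on both sides, Deodhar-style, via the standard recursion
\[
R_{u,\mathbf w s}=\begin{cases}R_{us,\mathbf w} & us<u,\\ R_{us,\mathbf w}\sqcup R_{u,\mathbf w}\ (\text{weighted}) & us>u,\end{cases}
\]
whose two-sided analogue holds at both ends. The reduced word $\mathbf f$ is used to fix, at each step, which letter of $f$ is being stripped. Concretely, we write $f=s_{j_1}\cdots s_{j_\ell}$ and process strands in that order, sending the unique crossing realizing $s_{j_\ell}$ in a distinguished Deogram to the corresponding first-occurrence crossing in the anti-distinguished one. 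Different reduced words may give different bijections, which explains the subscript in $\phi_{\mathbf f}$.

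Periodicity on the affine strip needs separate care. Since $P$ is $(n-k,k)$-periodic, a representative region is a finite word, and both conditions are checked on the infinite periodic lift. Because $f$ is bounded, any two strands cross only finitely often, so the parity conditions are determined within finitely many translates. We therefore work with a finite window of the word that is long enough to contain all crossings between each relevant pair of strands, and we check that the recursive moves commute with translation by $(n-k,k)$.

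The main obstacle is proving that the local moves, chosen using $\mathbf f$, preserve distinguishedness away from the modified box. Flipping a crossing to an elbow changes the parities seen by all later boxes. We would handle this by composing with strand-swapping: after the flip, relabel the two strands below that box so that parity data downstream is unchanged. This is the standard trick in proofs that distinguished subexpressions are equinumerous with positive-distinguished ones, and we would adapt it inductively on $\ell(f)$ using the first letter of $\mathbf f$.
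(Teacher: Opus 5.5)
Your proposal has a genuine gap: it never defines $\phi_{\mathbf f}$, and the three mechanisms it sketches do not fit together.

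\emph{The recursions do not match.} The two recursions you want to match shorten the word at opposite ends. A distinguished subword of $\mathbf w$ splits off its \emph{last} letter, while an anti-distinguished one splits off its \emph{first} letter. So an inductive hypothesis comparing the two kinds of subwords of a common shorter word never applies. The recursion is also misstated: a final crossing is allowed whether $fs<f$ or $fs>f$, while a final elbow is allowed only when $fs>f$, so ``an elbow with $fs<f$'' never occurs.

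This route could be repaired by peeling both ends at once, $\mathbf w=s\mathbf w't$. Both sides then reduce to unions of anti-distinguished subwords of $\mathbf w'$ indexed by $sft,sf,ft,f$. One matches them using exchange-condition facts, for instance that $sf>f$ and $ft>f$ force either $sf=ft$ or $sft>sf,ft$. All of this should be done on the finite affine word $\mathbf{\tau_P}$ in $\hat T_n$, not on a truncated window of the strip, since peeling an end box of a window does not commute with translation. None of this is in your text.

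\emph{The step using $\mathbf f$ is not well defined.} The crossings of a Deogram form a generally non-reduced subword of $\mathbf{\tau_P}$, so there is no ``unique crossing realizing $s_{j_\ell}$''.

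\emph{The strand-swapping fix is unspecified.} This remedy for your ``main obstacle'' is exactly the missing idea. Flipping one tile exchanges the downstream routes of two strands. You give no argument that only one pair's condition changes, or that the result is again distinguished.

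The paper supplies two ingredients you lack.

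\emph{Twisted reformulation.} Lemma~\ref{lemma:antikey} shows that an $f$-subword is anti-distinguished if and only if it is $f^{-1}$-distinguished, so $\antiaffdeo{f}{P}=\affdeo{f}{P}^{(f^{-1})}$. The southern condition becomes a northern one with a twist. The twisted sets $\affdeo{f}{P}^{(g)}$ then provide the intermediate stages that your ``induction on $\ell(f)$'' needs.

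\emph{Foata moves.} For $s_ig>g$, Lemma~\ref{lemma:gentwistedinjection} gives a move $F_i:\affdeo{f}{P}^{(s_ig)}\to\affdeo{f}{P}^{(g)}$. It rewrites only the elbow/crossing string of the single pair of strands $s_ig(i),s_ig(i+1)$: put a bar before each crossing and reverse each section. The two twists differ by an adjacent transposition, so no other pair's condition changes; this is the precise answer to your obstacle. By Corollary~\ref{cor:gentwistedbij}, $F_i$ is bijective when $gf>s_igf$, because that pair then crosses an odd number of times and its last meeting is a crossing.

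\emph{Chaining along $\mathbf f$.} Write $\mathbf f=s_{i_1}\cdots s_{i_\ell}$, so $f^{-1}=s_{i_\ell}\cdots s_{i_1}$. Reducedness of $\mathbf f$ gives $s_{i_{r-1}}\cdots s_{i_1}f>s_{i_r}\cdots s_{i_1}f$ for every $r$. Composing these Foata bijections strips the twist one letter at a time. This is the precise sense in which $\phi_{\mathbf f}$ depends on $\mathbf f$.
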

 
Theorem \ref{thm:dist_to_anti} provides a combinatorial proof of \Cref{thm:geometric}~\labelcref{part3}. We discuss the combinatorial ideas behind Theorem \ref{thm:dist_to_anti} in Section \ref{sec:antitodist} and the geometric motivation in Section \ref{sec:geometry}.
 
\begin{theorem}\label{thm:eqmoves}
Let $f\in\Bkn$, $P\in\Pkn$ and suppose $\ell(s_ifs_i) = \ell(f)+2$. If $s_iP=P$ or $P_i=R,P_{i+1}=U$, then we have a bijection
\[
	\affdeo{f}{P} \to  \affdeo{s_if}{s_iP}\sqcup \affdeo{s_ifs_i}{s_iP}.
\]
Otherwise, if $P_i = U,P_{i+1}=R$, then we instead have a bijection
\[
	\affdeo{f}{P} \to \affdeo{fs_i}{s_iP}\sqcup \affdeo{s_ifs_i}{s_iP}.
\]
Suppose $\ell(f)=\ell(s_ifs_i)$ with $fs_i>f$ and $s_if<f$. If $s_iP=P$ or $P_i=R,P_{i+1}=U$, then we have a bijection
\[
	\affdeo{f}{P}\to \affdeo{s_ifs_i}{s_iP}.
\]
\end{theorem}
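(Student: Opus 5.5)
We propose to prove \Cref{thm:eqmoves} by a local surgery on the strip between $P$ and $P+(0,k)$. In all cases, the boxes affected by passing from $P$ to $s_iP$ are those where the strands with boundary labels $i$ and $i+1$ (and their translates by $n$) enter or exit the strip. The plan has three steps: first write $f$ in a suitable form, then identify a distinguished box whose content splits the Deograms into two classes, and finally remove or relocate that box.

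\emph{First step.} In the case $P_i=R$, $P_{i+1}=U$ (and similarly $s_iP=P$), the path $s_iP$ is obtained by changing the corner $RU$ to $UR$. This adds one box (per period) to the strip at the northern boundary, at the position where strands $i$ and $i+1$ enter; on the southern boundary $P+(0,k)$ it removes a box. We will use $\ell(s_ifs_i)=\ell(f)+2$, via the standard exchange property, to write $f=s_i g s_i$-type factorizations in the relevant direction. We then show that in any $(f,P)$-affine Deogram, the strands starting at $i$ and $i+1$ are adjacent where they exit the region adjacent to the corner. Concretely, we will take a reduced word $\mathbf{f}$ of the form $s_i\,\mathbf{g}\,s_i$ read along the strip, so that the extreme box on each end corresponds to the letter $s_i$.

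\emph{Second step.} Each Deogram $D$ for $(f,P)$ is sorted according to the content of the box $B$ at the end of the strip that corresponds to the final $s_i$. Deleting $B$ and shifting the strip to $s_iP$ should produce a filling whose strand permutation is either $s_if$ or $s_ifs_i$, depending on whether $B$ is an elbow or a crossing.
\begin{itemize}
\item If $B$ is a crossing, removing it changes the strand permutation by a factor $s_i$ on both sides after the reindexing coming from $s_iP$. The result is an $(s_ifs_i,s_iP)$-Deogram.
\item If $B$ is an elbow, the distinguished condition forces the pair of strands at $B$ to have crossed an even number of times. Removing the elbow gives an $(s_if,s_iP)$-Deogram; in the case $P_i=U$, $P_{i+1}=R$, the box sits on the other side, so the result is $fs_i$ instead.
\end{itemize}
The inverse map reinserts the box: as a crossing in the first case and as an elbow in the second. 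The length hypothesis guarantees that both the parity condition and the distinguished condition \ref{disting} are preserved, since the parity of the two strands entering $B$ is determined by whether $s_i$ is a descent.

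\emph{Third step: the equal-length case.} Suppose $\ell(s_ifs_i)=\ell(f)$, $fs_i>f$ and $s_if<f$. Here $f$ has a left descent at $i$ but no right descent, so in any Deogram the corner box must be a crossing by distinguishedness. Moving the corner therefore transports this forced crossing from one end of the strip to the other. This gives a bijection with $(s_ifs_i,s_iP)$-Deograms and no second summand.

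\emph{Main obstacle.} The hardest part is verifying that condition \ref{disting} is preserved under the surgery. Removing or adding a box at the boundary changes the crossing parities of all later box pairs only through the single pair $\{i,i+1\}$. We will need a lemma that the parity of this pair after the modified box equals the value predicted by the descent structure of $f$. We would first try to prove this by comparing subwords of $\mathbf{f}$, using the standard correspondence between Deograms and distinguished subexpressions.
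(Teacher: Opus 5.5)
Your first two steps (sort by the tile in the corner box, delete it, and re-insert a box at the opposite end of the strip for $s_iP$) are the box relocation the paper uses in Lemmas~\ref{lemma:boxchangesbij} and~\ref{lemma:boxchangesbijcase2}. Your bookkeeping of strand permutations is also right: a crossing gives $s_ifs_i$, and an elbow gives $s_if$ (respectively $fs_i$).

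\textbf{The gap.} You claim that condition~\ref{disting} survives the surgery. It does not, and no parity lemma can rescue it. Take $P_i=\mathrm R$, $P_{i+1}=\mathrm U$.
\begin{itemize}
\item The new north-west corner box is filled with a crossing. It acts on the strands with north labels $i,i+1$ (and their translates) before they reach any other box.
\item Every other pair keeps its crossing history, but this pair acquires one extra crossing in front of everything.
\item Each elbow this pair forms in the remaining region was legal in $D$, i.e.\ preceded by an even number of the pair's crossings. After the surgery it is preceded by an odd number and violates~\ref{disting}.
\end{itemize}
The descent data of $f$ control only the parity of the pair's total crossing count, not its parity at each intermediate elbow. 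In Figure~\ref{fig:boxaddremove} the relocated crossing is followed by an elbow of the same pair, and that interior tile has to change. What your surgery actually produces is an element of the $s_i$-twisted set $\Acal^{(s_i)}_{g,s_iP}$ with $g\in\{s_if,s_ifs_i\}$. For $P_i=\mathrm U$, $P_{i+1}=\mathrm R$ the same happens on the crossing branch, since deleting a north-west crossing has the same effect.

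\textbf{The missing idea} is the Foata map of Lemma~\ref{lemma:twistedinjection}. Write the pair's word in $E$ and $C$, put a bar before each $C$, and reverse each block. This converts ``elbows only after an odd number of the pair's crossings'' into ``elbows only after an even number''. It leaves all other tiles, the number of elbows, and the strand permutation unchanged. By Corollary~\ref{cor:twistedbij} it is a bijection when $s_ig<g$. This is where $\ell(s_ifs_i)=\ell(f)+2$ is really used: it gives $s_i(s_if)=f<s_if$ and $s_i(s_ifs_i)=fs_i<s_ifs_i$.

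\textbf{The equal-length step} has two problems.
\begin{itemize}
\item Distinguishedness forces a crossing in the south-east corner box (the last letter when $P_i=\mathrm R$, $P_{i+1}=\mathrm U$) exactly when $fs_i<f$. A left descent $s_if<f$ says nothing about that tile, and under $fs_i>f$ an elbow there is legal.
\item Even a forced crossing, once transported to the north-west corner, yields only a twisted diagram. It again needs the Foata step, as in the one-term case of Lemma~\ref{lemma:boxchangesbij}.
\end{itemize}

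\textbf{The case $s_iP=P$} is not ``similar''. The path does not change, so there is no corner box to move and your surgery is vacuous. The paper uses a different mechanism there: phantom tiles in positions $i,i+1$, pushed through the diagram by the braid-move bijections of Lemma~\ref{lemma:localmoves}. This is the Zipper of Lemma~\ref{lemma:zipperfull} and Corollaries~\ref{cor:zipperbij} and~\ref{cor:zipperbij2}, again finished by Foata.
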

 
Theorem \ref{thm:eqmoves} provides a combinatorial proof of \Cref{thm:geometric}~\labelcref{part5} and cases of \Cref{thm:geometric}~\labelcref{part4}. We prove Theorem \ref{thm:eqmoves} in Section \ref{sec:affdeo}. The remaining length-preserving case of item~\labelcref{part4} is not covered by Theorem~\ref{thm:eqmoves}; it is related to the cluster structure of open positroid varieties and appears in item (5') of Theorem \ref{thm:problem} below. We highlight this phenomenon in Section \ref{sec:geometry}, where we also prove explicit isomorphisms of \textbf{affine patches of open positroid varieties}, which induce the equalities stated in Theorems \ref{thm:eqmoves} and \ref{thm:problem}.

For $i\in[n]$, let $\sGrI_i(f) = \{ a\,|\,a < i, f(a) \geq i\} \bmod n$ denote the $i$th element of the \textbf{source Grassmann necklace} of $f\in \Bkn$.

\begin{theorem}\label{thm:problem}
Constructing a bijection realizing either of the following cardinality equalities would give a bijective proof of $\#\deomax{f_{k,n}}=\#\Dyck_{k,n}$.
\begin{enumerate}
\item[(2')] \label{part2prob} For any $f\in\Bkn$, $\#\affdeo{f}{P_{\sGrI_i(f)}} = \#\affdeo{f}{P_{\sGrI_{i+1}(f)}}$.
\item[(5')] \label{part4prob} For any $f,s_ifs_i\in\mathbf{B}_{k,n}$ with $\ell(f)=\ell(s_ifs_i)$,
\[
	\#\affdeo{f}{P_{\sGrI_{i+1}(f)}} = \#\affdeo{s_ifs_i}{P_{\sGrI_{i+1}(s_ifs_i)}}.
\]
\end{enumerate}
\end{theorem}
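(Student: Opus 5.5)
Theorem~\ref{thm:problem} is a reduction. The plan is to run, on the level of sets of affine Deograms, the known inductive argument behind Theorem~\ref{thm:geometric}~\ref{part1}, namely the Galashin--Lam recursion relating $\deomax{f_{k,n}}$ to rational Dyck paths. Every step of that recursion except the cyclic-shift / length-preserving conjugation step already has a bijective counterpart. The double-move identity \eqref{eq:double} is handled by Theorem~\ref{thm:eqmoves}. The decoupling into cycles is handled by Theorem~\ref{thm:decoupling_intro}. The passage between ordinary Deograms and affine Deograms with $P=P_{\sGrI_1(f)}$ is the correspondence of Section~\ref{sec:affdeo}. So whichever of (2') or (5') is supplied bijectively, the whole chain consists of explicit bijections.

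\textbf{Step 1: the recursion.} I would write down the standard recursion for maximal Deograms of $f_{k,n}$. Its terms come from conjugating $f_{k,n}$ by simple reflections $s_i$.
\begin{itemize}
\item When $\ell(s_ifs_i)=\ell(f)+2$, one applies the splitting \eqref{eq:double}.
\item When $\ell(s_ifs_i)=\ell(f)$, one needs \eqref{eq:ceq}.
\item Where needed, one rotates by $\sigma$ to bring a convenient index into position $i$.
\end{itemize}
Restricting to maximal Deograms, the term $\deo{s_if}$ (one fewer cycle count change) drops out or feeds into a decoupling. Theorem~\ref{thm:decoupling_intro} then splits $\overline f$ into cycles, which are smaller instances of the same type. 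Iterating produces the recursion $C_{k,n-k}$ satisfies, which is matched on Dyck paths by cutting along the first return to the diagonal. At each step I would track the lattice path $P$. By choosing $P=P_{\sGrI_{i+1}(f)}$ we land in one of the two hypotheses of Theorem~\ref{thm:eqmoves} ($s_iP=P$, or $P_i=R,P_{i+1}=U$). The Grassmann necklace path changes under $s_i$-conjugation exactly as $s_iP$, so the outputs are again necklace paths.

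\textbf{Step 2: the uncovered steps.} Two moves in this recursion are not provided bijectively.
\begin{itemize}
\item The move from one necklace index to the next, $P_{\sGrI_i(f)}\to P_{\sGrI_{i+1}(f)}$. This is exactly (2'), and it substitutes for the cyclic shift \eqref{eq:cyclic}: changing the patch by one index is equivalent to rotating $f$ by $\sigma$ while keeping the base patch.
\item The length-preserving conjugations $\ell(f)=\ell(s_ifs_i)$ not covered by the third part of Theorem~\ref{thm:eqmoves}. These are exactly (5').
\end{itemize}
Then I would show that (2') implies (5'), or conversely, so that either one suffices. Given (2'), one can move the patch to an index where Theorem~\ref{thm:eqmoves}'s length-preserving case applies ($fs_i>f$, $s_if<f$, with a compatible $P$), apply that bijection, and move back. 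Conversely, given (5'), rotation is a product of length-preserving conjugations in the extended affine group, so it can be realized by them.

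\textbf{Main obstacle.} I expect the hardest part to be verifying that the lattice paths stay compatible throughout, i.e.\ that $s_i P_{\sGrI_{i+1}(f)}$ equals the appropriate necklace path of $s_if$ or $s_ifs_i$. This requires a careful case analysis of how the source Grassmann necklace changes under left/right multiplication by $s_i$. I would carry it out by comparing the sets $\{a<i+1: f(a)\ge i+1\}$ directly.
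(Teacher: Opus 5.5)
Your architecture matches the paper's. You pass to $\affdeomax{f_{k,n}}{P_{k,n}}$ via Corollary~\ref{cor:affdeocorrespondence} and run the Galashin--Lam recursion \cite[Proposition 3.2]{gallam2024}. You use Theorem~\ref{thm:decoupling_intro} for cycles and Theorem~\ref{thm:eqmoves} for double moves, and (2') or (5') supplies the missing c-equivalence steps.

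\textbf{The gap is in keeping the patches on necklace paths.} You take $P=P_{\sGrI_{i+1}(f)}$ for a move at $i$. From the definition, $i\in\sGrI_{i+1}(f)$ iff $f(i)\neq i$, and $i+1\in\sGrI_{i+1}(f)$ iff $f(i+1)=i+1+n$. So once $\overline f$ has no fixed points (which decoupling lets you assume), this path always has $(P_i,P_{i+1})=(U,R)$. In that configuration the double move of Theorem~\ref{thm:eqmoves} outputs $fs_i$ rather than $s_if$, and its length-preserving case is available in only one orientation.

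More seriously, $i+1$ is exactly the index at which the necklace does \emph{not} transform by $s_i$. Corollary~\ref{cor:isoforprob} gives $\sGrI_{i+1}(s_ifs_i)=t_{f^{-1}(i),f^{-1}(i+1)}\sGrI_{i+1}(f)$. In Figure~\ref{fig:weaksep} ($i=0$), $s_0\sGrI_1(f)=\{1,4,7\}$ but $\sGrI_1(s_0fs_0)=\{6,7,8\}$. So the target $\affdeo{s_ifs_i}{s_iP}$ is not a necklace patch, and the check you flag as your main obstacle is false. This failure is precisely why (5') is an open problem rather than a corollary of Theorem~\ref{thm:eqmoves}.

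The paper instead works at an index $j\not\equiv i+1$. There $s_i$ preserves $\{a<j\}$, so $\sGrI_j(s_if)=\sGrI_j(s_ifs_i)=s_i\sGrI_j(f)$. Absence of fixed points of $\overline f$ guarantees some such $j$ with $s_i\sGrI_j(f)=\sGrI_j(f)$ or a compatible up/right pattern. Statement (2') is used exactly to move to that $j$ and back.

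\textbf{Second, you try to derive one of (2'), (5') from the other, which the paper never does.} The paper argues that each one, on its own, provides the c-equivalence steps that Theorem~\ref{thm:eqmoves} cannot, and the recursion (c-equivalence, double moves, decoupling) needs nothing more. Your (5')$\Rightarrow$(2') step is not sound as written. It rests on the unproved claim that $f\mapsto\sigma f$ factors through length-preserving simple conjugations inside $\Bkn$. Even granting that, each use of (5') is tied to necklace index $i+1$ for a conjugation at $i$. Chaining conjugations at different $i$ therefore requires changing the necklace index, which is (2') again.

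\textbf{Third, the Dyck side cannot be matched by a first-return decomposition.} For $\gcd(k,n)=1$, a rational Dyck path touches the diagonal only at its endpoints. Also, the Deogram recursion passes through general cyclic $f$, not only through $f_{k',n'}$. The paper pairs it instead with the recursion of \cite[Proposition 6.5]{gallam2024}.
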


We prove Theorem \ref{thm:problem} in Section \ref{sec:problems}. We also provide a bijection between $\deomax{f_{k,n}}$ and $\Dyck_{k,n}$ for $k=2$ and $k=n-2$ in Section \ref{sec:problems}.

Finally, in Section~\ref{sec:invariance}, we give a purely combinatorial proof that the point count of the Pl\"ucker chart $C_I=\{x\in\Grkn\,|\,\Delta_I(x)\neq0\}$ is independent of $I$. While the identity $\#C_I(\F_q)=q^{k(n-k)}$ follows from the geometry of the Grassmannian, it specializes to a non-trivial identity between affine Deodhar diagrams.

\subsection{Acknowledgements}

The author would like to thank Pavel Galashin for introducing the problem and for their guidance and Olha Shevchenko for helpful comments.

\section{Background}\label{sec:background}

\subsection{Bounded Affine Permutations}

An ($n$-periodic) affine permutation is a bijection $f:\Z\to\Z$ which is periodic, i.e., $f(i+n)=f(i)+n$. Let $\Saff$ denote the group of all $n$-periodic affine permutations, typically referred to as the extended affine Weyl group of type $A$. We may define the set of inversions of $f$ as $\Inv(f):=\{(i,j)\,|\,i<j,f(i)>f(j),i\in[n],j\in\Z\}$, and the length $\ell(f):=\#\Inv(f)$. Throughout the paper, products of affine permutations are read from left to right: $(fg)(a)=g(f(a))$. 

Define
\[
	\Saffk = \{f \in \Saff\,|\,\av(f)=k\} \subset \Saff,
\]
where $\av(f)=\frac1n\sum_{i=1}^n(f(i)-i)$. Let $\Lambda\in \Saff$ be the affine permutation where $\Lambda(i)=i+1$ for all $i\in\Z$. We have a canonical isomorphism $\Saffk \cong \Saffz$, given by $f\mapsto f\circ \Lambda^{-k}$ where $\Saffz$ is simply the Weyl group of type $\Anaff$. This isomorphism allows us to use the Bruhat order on $\Saffk$, where for $f,g\in\Saffk$, we say $f\leq g$ if $f\circ \Lambda^{-k}\leq g \circ \Lambda^{-k}$.

A bounded affine permutation is an affine permutation $f\in\Saff$ additionally satisfying $i\leq f(i)\leq i+n$. We let $\Bkn\subset \Saffk$ denote the set of bounded affine permutations inside $\Saffk$, referred to as the set of $(k,n)$-bounded affine permutations. Given $f\in\Saff$, define the cyclic shift $\sigma f\in\Saff$ by
\begin{equation}\label{eq:cyclicshift}
(\sigma f)(i)=f(i-1)+1 \quad \text{ for all } i\in\Z.
\end{equation}
In other words, $\sigma f=\Lambda^{-1} f\Lambda$. Note that $\sigma$ preserves $\Saffk$ and $\Bkn$.

For $i\in\Z$, let $s_i\in\Saff$ be the simple transposition where $i\mapsto i+1$, $i+1\mapsto i$, and $j\mapsto j$ for $j\neq i,i+1$, where the indices are taken modulo $n$. For $f\in\Bkn$ and $i\in\Z$, we have $\ell(s_if)=\ell(f)\pm1$ and $\ell(fs_i)=\ell(f)\pm1$. We say $f<fs_i$ if $\ell(f)<\ell(fs_i)$.

\subsection{Open Positroid Varieties and Their Affine Patches}

For a $k\times n$ matrix $M$ and a $k$-subset $I\in\binom{[n]}k$, let $\Delta_I(M)$ denote the $I$th Pl\"ucker coordinate of $M$, i.e., the determinant of the $k\times k$ submatrix of $M$ in columns $I$. If two matrices $M$ and $M'$ have the same row span, then they are related by left multiplication by an element of $\GL_k$. Therefore, their Pl\"ucker coordinates coincide up to a common scalar multiple. 

Let $\Grkn$ denote the Grassmannian of $k$-dimensional subspaces of $\C^n$. The Pl\"ucker embedding $\Grkn\hookrightarrow \mathbb{P}^{\binom{n}k -1}$ realizes $\Grkn$ as a projective variety. This embedding sends a point $x\in\Grkn$ to the list of Pl\"ucker coordinates $(\Delta_I(x))_{I\in\binom{[n]}k}\in\mathbb{P}^{\binom{n}k-1}$, where $\Delta_I(x)$ is defined as the Pl\"ucker coordinate of any $k\times n$ matrix representative for $x$.

For every bounded affine permutation $f\in\Bkn$, we define the \textbf{source Grassmann necklace} $\sGr(f) = (\sGrI_1(f),\dots,\sGrI_n(f))$ where
\[
    \sGrI_i(f) = \{ a \,| \, a < i, f(a) \geq i\} \bmod n,
\]
and the \textbf{target Grassmann necklace} $\tGr(f) = (\tGrI_1(f),\dots,\tGrI_n(f))$ where
\[
    \tGrI_i(f) =  \{ f(a) \,| \, a < i, f(a) \geq i\} \bmod n.
\]
For $f\in\Bkn$, we have $\sGrI_i(f) ,\tGrI_i(f) \in\binom{[n]}k$ for all $i\in[n]$. For any $i\in[n]$, let $\leq_i$ denote the order on $[n]$ in which $i$ is smallest and $i-1$ is largest, i.e. $i <_i i+1 <_i\dots <_i i-1$. For a pair of subsets $I =\{ i_1 <_i\dots <_i i_k\}$, $J = \{j_1 <_i\dots <_i j_k\}$, we say that $I\leq_i J$ if $i_r \leq_i j_r$ for all $r$.

The \textbf{positroid envelope} of $f$, $\Mcal(f)$, is then read off from either the source Grassmann necklace or the target Grassmann necklace. This construction is known as Oh's Theorem \cite{oh2011positroids}:
\[
\Mcal(f) = \left\{I\in\binom{[n]}k \,\big|\,I \geq_i \tGrI_i \text{ for } i\in[n] \right\} = \left\{I\in\binom{[n]}k \,|\,I \leq_i \sGrI_i \text{ for } i\in[n] \right\}.
\]
The open positroid variety $\Pio_f\subset \Grkn$ is then:
\[
    \Pio_f = \{x\in\Grkn\,|\,\Delta_I(x) = 0 \text{ for all } I\not\in\Mcal(f) \text{ and } \Delta_I(x)\neq0 \text{ for all } I \in \tGr(f)\}.
\]
Equivalently (see, for example, \cite{fraser2022positroid}),
\[
    \Pio_f = \{x\in\Grkn\,|\,\Delta_I(x) = 0 \text{ for all } I\not\in\Mcal(f) \text{ and } \Delta_I(x)\neq0 \text{ for all } I \in \sGr(f)\}.
\]

This paper primarily concerns itself with affine patches of open positroid varieties:
\[
    \Pio_{f,I} = \Pio_f \cap\{M\in\Grkn\,|\,\Delta_I(M)\neq 0\}.
\]
For $I\in\binom{[n]}k$, let $t_I \in\Saffk$ be the bounded affine permutation where 
\[
    t_I(i) = \begin{cases} i+n, & i \bmod n \in I,\\ i, & i \bmod n\not\in I.\end{cases}
\]
The following theorem by Snider \cite{snider} shows that affine patches of open positroid varieties are isomorphic to open affine Richardson varieties.
\begin{theorem}[\cite{snider}]\label{thm:snideriso}
For $f\in\Bkn$ and $I\in\binom{[n]}{k}$, with $f\leq t_I$ in the Bruhat order, we have an isomorphism
\[
    \mathring{\Rcal}_f^{t_I} \xrightarrow{\sim}  \Pio_{f,I}.
\]
\end{theorem}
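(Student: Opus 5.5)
The plan is to follow Snider's strategy: embed $\Grkn$ into the affine flag variety of $\GL_n$ so that positroid strata become intersections with affine Schubert cells, and so that the Pl\"ucker chart $\{\Delta_I\neq0\}$ becomes an intersection with an opposite affine Schubert cell indexed by $t_I$.

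\textbf{Step 1: the embedding.} To a point $x\in\Grkn$, represented by a $k\times n$ matrix $M$, I associate a periodic lattice flag in $\C((t))^n$. Concretely, I take the periodic sequence of lattices obtained by adjoining to the standard lattice the rows of $M$ twisted by $t^{-1}$, and indexing by $i\in\Z$ via the shifts $\Lambda$. The image of $x$ is fixed by neither Iwahori in general. However, the relative position of this flag with respect to the standard Iwahori $\widetilde B$ is governed by the ranks of cyclic intervals of columns of $M$. These ranks are exactly the data encoded by the Grassmann necklace $\tGr(f)$, equivalently by the bounded affine permutation $f$.

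\textbf{Step 2: positroid strata as Schubert cells.} Using the Knutson--Lam--Speyer description of $\Pio_f$ via cyclic rank conditions, I identify the image of $\Pio_f$ with the intersection of the image of $\Grkn$ and the affine Schubert cell $\widetilde B f\widetilde B/\widetilde B$. Together with Oh's theorem, this should follow from the rank conditions and the nonvanishing of the necklace minors.

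\textbf{Step 3: the chart as an opposite cell.} Here I use the opposite Iwahori $\widetilde B_-$. First I show that the translation point $t_I$ is the image of the coordinate subspace $\operatorname{span}(e_i:i\in I)$. Then I show that the $\widetilde B_-$-orbit of $t_I$, namely the opposite cell $\widetilde B_- t_I\widetilde B/\widetilde B$, meets the image of $\Grkn$ exactly in the locus $\{\Delta_I\neq0\}$. The argument goes through matrices in reduced form: on the chart, $M$ has the identity in columns $I$. This gives an affine space of dimension $k(n-k)=\ell(t_I)$ in the appropriate normalization, and row-reduction by lower-triangular loop matrices shows the flag is $\widetilde B_-$-conjugate to $t_I$. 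In the other direction, any point of the opposite cell coming from $\Grkn$ has a nonzero $I$-minor, because the $I$-minor is the relevant matrix coefficient and is nonvanishing on that cell.

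\textbf{Step 4: the isomorphism.} Intersecting Steps 2 and 3 identifies $\Pio_{f,I}$ with the part of $\mathring{\Rcal}_f^{t_I}$ lying in the image of $\Grkn$. It remains to show that this part is all of $\mathring{\Rcal}_f^{t_I}$. For this I show that every point of the opposite cell of $t_I$ lying in $\overline{\widetilde B f\widetilde B}$, for $f$ bounded, already lies in the image of $\Grkn$. Boundedness $i\leq f(i)\leq i+n$ forces the lattices to sit between $L$ and $t^{-1}L$, which is exactly the condition cutting out the image of $\Grkn$. Once this is established, the embedding restricted to $\Pio_{f,I}$ is a closed immersion onto $\mathring{\Rcal}_f^{t_I}$, hence an isomorphism. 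The hypothesis $f\leq t_I$ is precisely the nonemptiness condition for this Richardson variety.

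I expect Step 3, together with the boundedness argument in Step 4, to be the main obstacle. The difficulty is showing that the opposite cell meets the image of $\Grkn$ in exactly $\{\Delta_I\neq0\}$, and not in a larger set inside the ambient affine flag variety. The first approach I would try is an explicit matrix normal form: a Gaussian-elimination argument with loop matrices, tracking the powers of $t$ column by column.
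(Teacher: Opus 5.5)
The paper quotes this theorem from Snider and gives no proof of its own, so the question is whether your outline would work. As written it would not, because the roles of the Iwahori and its opposite are reversed.

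With the quotient by $\widetilde B$ as you write it, the orbit $\widetilde B g\widetilde B/\widetilde B\cong\mathbb{A}^{\ell(g)}$ is finite-dimensional, while $\widetilde B_- g\widetilde B/\widetilde B$ has codimension $\ell(g)$. This breaks each of Steps 2--4:
\begin{enumerate}
\item Step 2 cannot hold for an injective map. We have $\dim\Pio_f=k(n-k)-\ell(f)$. For $f=f_{k,n}$, which has length $0$, the cell $\widetilde Bf\widetilde B/\widetilde B$ is a single point, while $\Pio_f$ is the top-dimensional stratum.
\item In Step 3, $\widetilde B_-t_I\widetilde B/\widetilde B$ has codimension $k(n-k)$. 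The affine space of dimension $\ell(t_I)$ you describe is the finite-dimensional cell $\widetilde Bt_I\widetilde B/\widetilde B$. The opposite cell of $t_I$ should meet the image only in the zero-dimensional stratum $\Pio_{t_I}$, the coordinate point indexed by $I$, not in the chart.
\item The variety $\widetilde Bf\widetilde B/\widetilde B\cap\widetilde B_-t_I\widetilde B/\widetilde B$ you intersect in Step 4 is nonempty only if $t_I\le f$. Together with your hypothesis $f\le t_I$, this forces $f=t_I$.
\end{enumerate}
The correct dictionary is the other way round. The chart $\{\Delta_I\neq0\}\cong\mathbb{A}^{k(n-k)}$ corresponds to $\widetilde Bt_I\widetilde B/\widetilde B$. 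The stratum $\Pio_f$ corresponds to $\widetilde B_-f\widetilde B/\widetilde B$, so the cyclic rank conditions must become relative-position conditions with respect to the opposite Iwahori.

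Even after this swap, the mechanism of Step 4 fails. You want one global embedding $\nabla$ of $\Grkn$ that restricts on every $\Pio_{f,I}$ to an isomorphism onto $\mathring{\Rcal}_f^{t_I}$, and this cannot hold for all $I$ at once.
\begin{enumerate}
\item Take $f=f_{k,n}$. Then $\nabla(\Grkn)$ would contain $\mathring{\Rcal}_{f_{k,n}}^{t_I}$, which is open dense in $\widetilde Bt_I\widetilde B/\widetilde B$.
\item Since $\nabla(\Grkn)$ is closed, irreducible, and of dimension $k(n-k)=\ell(t_I)$, it would equal $\overline{\widetilde Bt_I\widetilde B/\widetilde B}$.
\item This would hold for every $I$. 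But the $t_I$ are distinct elements of the same length, so these Schubert varieties are pairwise distinct: $t_J\widetilde B$ lies in the one for $t_I$ only if $t_J\le t_I$.
\end{enumerate}
The identification must therefore be built chart by chart. For each $I$ you need a map from the normal forms $M$ with $M_I=\mathrm{Id}_k$ onto $\widetilde Bt_I\widetilde B/\widetilde B$. You then have to verify that it carries the necklace (cyclic rank) conditions cutting out $\Pio_f$ exactly to relative position $f$ with respect to the opposite Iwahori.

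That verification is the real content of Snider's theorem, and your proposal does not supply it. Your boundedness argument in Step 4 aims to show that the relevant Richardson pieces lie inside the image of $\Grkn$. Its corrected analogue would put the whole cell $\widetilde Bt_I\widetilde B/\widetilde B$ inside that image, which is exactly the impossible conclusion shown above.
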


\subsection{Deograms}\label{subsec:deogram}

We say that a permutation $w \in S_n$ is $k$-Grassmannian if $w(1)<w(2)<\dots<w(k)$ and $w(k+1)<\dots<w(n)$. We denote by $\leq$ the Bruhat order on $S_n$. Let $\mathbf{Q}_{k,n}$ denote the set of pairs $(v, w)$ of permutations such that $v \leq w$ and $w$ is $k$-Grassmannian. We state a few results about open positroid varieties useful for our discussion of $f$-Deograms. For $f\in\Bkn$, let $\overline{f}\in S_n$ be the permutation $\overline{f}(i) = f(i) \bmod n$.

\begin{proposition}[{\cite[Proposition 3.15]{KLS13}}]\label{prop:v_w}
There exists a bijection $(v,w)\mapsto f_{v,w}$ between $\mathbf{Q}_{k,n}$ and $\mathbf{B}_{k,n}$ such that for every $f=f_{v,w}\in \mathbf{B}_{k,n}$, we have $\overline{f}=w^{-1}v$ and $f=w^{-1}\tau_{k,n}v$.
\end{proposition}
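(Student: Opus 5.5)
We prove this by writing the map down explicitly and checking that it is invertible. Let $\tau_{k,n}$ be the bounded affine permutation $t_{[k]}$, so $\tau_{k,n}(i)=i+n$ if $i \bmod n\in[k]$ and $\tau_{k,n}(i)=i$ otherwise. View $v,w\in S_n$ as $n$-periodic affine permutations. Products are read left to right, so $f:=w^{-1}\tau_{k,n}v$ is
\[
f(a)=v\bigl(\tau_{k,n}(w^{-1}(a))\bigr).
\]
Concretely, for $a\in[n]$ put $b=w^{-1}(a)$. Then $f(a)=v(b)+n$ if $b\le k$, and $f(a)=v(b)$ if $b>k$. Three properties are immediate:
\begin{itemize}
\item $f$ is a periodic bijection;
\item $\overline f=w^{-1}v$;
\item $\sum_{a=1}^n (f(a)-a)=kn$, since exactly $k$ of the $b$'s receive the extra $n$.
\end{itemize}
So $f\in\Saffk$, and the only real content is boundedness together with bijectivity.

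\textbf{Boundedness.} Substituting $a=w(b)$, the condition $a\le f(a)\le a+n$ becomes
\[
v(b)\le w(b)\ \text{ for } b\le k, \qquad v(b)\ge w(b)\ \text{ for } b>k. \tag{$\ast$}
\]
Call these the \emph{pointwise conditions}. The key lemma is that, for $k$-Grassmannian $w$, $v\le w$ in Bruhat order if and only if $(\ast)$ holds.

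I would prove the lemma with the tableau criterion: $v\le w$ if and only if, for every $i$, the sorted list of $v(1),\dots,v(i)$ is entrywise $\le$ the sorted list of $w(1),\dots,w(i)$. Equivalently, for all $i,j$,
\[
\#\{b\le i: v(b)\ge j\}\le \#\{b\le i: w(b)\ge j\}.
\]
Since $w$ is increasing on $[1,k]$ and on $[k+1,n]$, the sorted prefixes of $w$ have a simple form. For $i\le k$ the criterion for all such $i$ then collapses to $v(b)\le w(b)$ for each $b\le k$. For $i>k$, I would apply the same argument to complements (suffixes), which reduces the criterion to $v(b)\ge w(b)$ for $b>k$.

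\textbf{Inverse map.} Given $f\in\Bkn$, define $S=\{a\in[n]: f(a)>n\}$. Boundedness forces $f(a)>n$ exactly when $b\le k$, so $|S|=k$ and $w([k])=S$. Since $w$ is $k$-Grassmannian, it is the unique such permutation with $w([k])=S$. Then set $v=w\,\overline f$, i.e.\ $v(b)=f(w(b)) \bmod n$, taking representatives in $[n]$. By construction $f=w^{-1}\tau_{k,n}v$. Because $f$ is bounded, the pointwise conditions $(\ast)$ hold, and the lemma gives $v\le w$, so $(v,w)\in\mathbf Q_{k,n}$. The two constructions are visibly mutually inverse, since $w$ is recovered from the positions where $f$ exceeds $n$.

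\textbf{Main obstacle.} The hard step is the lemma, specifically the direction that $(\ast)$ implies $v\le w$, and making the reduction for prefixes with $i>k$ rigorous. The tableau inequalities mix entries from both increasing runs of $w$, so the count has to be organised carefully. My first attempt would pass to the equivalent rank-matrix criterion and check it separately in the regions $i\le k$ and $i>k$, using that the entries of $w$ in each region are increasing. If that becomes messy, I would instead induct on $\ell(w)$, moving the set $w([k])$ by simple transpositions.
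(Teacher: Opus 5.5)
Your proof is correct and essentially complete. The paper gives no argument for this statement; it imports it from \cite{KLS13}. So there is no route to compare against. Translating boundedness into the pointwise conditions $(\ast)$ and then invoking the tableau criterion is a self-contained proof.

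Two points need fixing.

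First, you have the difficulty of the key lemma backwards. The direction $(\ast)\Rightarrow v\le w$ holds for every $w\in S_n$, and no mixing of runs ever occurs. Complementing shows that $\#\{b\le i: v(b)\ge j\}-\#\{b\le i: w(b)\ge j\}$ equals $\#\{b>i: v(b)<j\}-\#\{b>i: w(b)<j\}$. So the prefix and suffix forms of the criterion are equivalent at each $i$. For $i\le k$, the prefix inequality follows from the inclusion $\{b\le i: v(b)\ge j\}\subseteq\{b\le i: w(b)\ge j\}$. For $i\ge k$, the suffix inequality follows from the analogous inclusion. The Grassmannian hypothesis is needed only for $v\le w\Rightarrow(\ast)$. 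For $b\le k$, take $i=b$ and $j=w(b)+1$ in the prefix form. For $b>k$, take $i=b-1$ and $j=w(b)$ in the suffix form. In each case the count for $w$ is $0$ because $w$ is increasing on the relevant run, which forces $v(b)\le w(b)$, respectively $v(b)\ge w(b)$.

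Second, in the inverse map you justify $|S|=k$ through $b$ and $w$, but $w$ has not been constructed yet, so that step is circular as written. Argue directly instead. For $a\in[n]$ you have $1\le f(a)\le 2n$, so $f(a)=\overline f(a)+n\mathbf 1_{a\in S}$. Summing over $a$ and using $\av(f)=k$ gives $n|S|=kn$.

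With these two repairs, your fallback plans (rank matrices, induction on $\ell(w)$) are unnecessary.
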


Let $G = SL_n(\C)$. Let $B$ and $B_-$ be the subgroup of invertible upper (resp. lower) triangular matrices, and let $N\subset B$ and $N_-\subset B_-$, be the subset of upper (resp. lower) unitriangular matrices. We then define Richardson varieties, indexed by $v\leq w$, as:
\begin{equation}\label{eq:richardson}
	\mathring{R}_{v,w} = (B_-wB_- \cap Bv B_-) / B_- \subset G/B_- = \Fl(n).
\end{equation}

\begin{proposition}[{\cite[Theorem 5.9]{KLS13}}]\label{prop:richardson}
For each $f=f_{v,w}\in\Bkn$, the natural projection map $\Fl(n)\to\Grkn$ restricts to an isomorphism $\mathring{R}_{v,w}\cong\Pio_f$. Thus, open positroid varieties are special cases of open Richardson varieties.
\end{proposition}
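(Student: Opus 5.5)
The plan is to factor the projection $\pi:\Fl(n)=G/B_-\to\Grkn$ through the Schubert cell indexed by $w$. Since $w$ is $k$-Grassmannian, it is the minimal-length representative of its coset $wW_P$, where $W_P=S_k\times S_{n-k}$ is the parabolic subgroup with $G/P_-\cong\Grkn$. The first step is to show that $\pi$ restricts to an isomorphism
\[
B_-wB_-/B_-\xrightarrow{\sim}B_-wP_-/P_-.
\]
I would argue this from the standard fact that, for a minimal coset representative $w$, the Bruhat cell $B_-wB_-/B_-$ is isomorphic to an affine space $\C^{\ell}$ of the same dimension as its image cell. Concretely, the unipotent group parametrizing the cell meets $wP_-w^{-1}$ only in the subgroup already acting trivially. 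Since $\mathring R_{v,w}\subset B_-wB_-/B_-$ is locally closed, it follows that $\pi|_{\mathring R_{v,w}}$ is an isomorphism onto its image, which is locally closed in $\Grkn$.

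The second step is to identify the image $\pi(\mathring R_{v,w})$ with $\Pio_f$ for $f=f_{v,w}$, using the description by Oh's theorem in terms of the Grassmann necklace and $\Mcal(f)$. I would do this in two inclusions.
\begin{itemize}
\item[(i)] For a point $x=\pi(gB_-)$ with $gB_-\in\mathring R_{v,w}$, I compute the ranks of the column sets of $x$ on all cyclic intervals. The relative positions of the flag $gB_-$ to the standard flag (given by $v$) and to the opposite flag (given by $w$) determine the ranks on initial and final intervals. Combined with $f=w^{-1}\tau_{k,n}v$ from Proposition~\ref{prop:v_w}, this shows that these ranks are exactly those prescribed by $f$, namely $\#\{a\in[i,j]\,|\,f(a)\le j\}$-type counts. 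This forces $\Delta_I(x)=0$ for $I\notin\Mcal(f)$ and $\Delta_{\sGrI_i(f)}(x)\neq0$.
\item[(ii)] Conversely, for $x\in\Pio_f$, I reconstruct a unique flag over $x$ lying in $B_-wB_-/B_-$. It is unique by step one, and I then check that its position relative to $B$ is $v$.
\end{itemize}

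The main obstacle is (i)/(ii) for the \emph{cyclic} intervals that wrap around. A flag only sees intervals $[1,j]$ and $[j,n]$ directly, whereas $\Mcal(f)$ involves all $n$ cyclic orders $\le_i$. To handle these I would use that both sides are intersections of $n$ cyclically rotated Schubert cells. The Richardson side gives two of the Schubert conditions directly. The remaining cyclic rank conditions I would deduce by induction on $\ell(w)-\ell(v)$: multiplying $v$ by simple reflections corresponds to known changes of $f$, and I would check that both sides transform compatibly. Finally, a dimension count (both sides have dimension $\ell(w)-\ell(v)=k(n-k)-\ell(f)$) together with the irreducibility of $\mathring R_{v,w}$ upgrades the inclusion of (i) to equality, since $\Pio_f$ is irreducible of that dimension.
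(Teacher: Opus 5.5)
The paper gives no argument of its own here; it cites \cite[Theorem 5.9]{KLS13}. So your outline has to stand on its own, and it has two genuine gaps.

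Your first step is fine. Since $w$ is minimal in its coset $w(S_k\times S_{n-k})$, $\pi$ maps $B_-wB_-/B_-$ isomorphically onto $B_-wP_-/P_-$. Hence $\pi|_{\mathring{R}_{v,w}}$ is an isomorphism onto a locally closed image.

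\textbf{The closing step is invalid.} An inclusion $\pi(\mathring{R}_{v,w})\subseteq\Pio_f$ of irreducible varieties of equal dimension only shows that the image is open and dense in $\Pio_f$ (compare $\C^*\subset\C$). Nothing you say rules out a nonempty complement. In addition, the irreducibility and dimension of $\Pio_f$ (as cut out by the necklace conditions) are normally obtained \emph{from} this identification, so relying on them is close to circular.

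The standard finish needs neither input:
\begin{itemize}
\item The sets $\pi(\mathring{R}_{v,w})$, $(v,w)\in\mathbf Q_{k,n}$, partition $\Grkn$. This follows from your first step, the decomposition of $\Grkn$ into $B_-$-orbits indexed by $k$-Grassmannian $w$, and $B_-wB_-/B_-=\bigsqcup_{v\le w}\mathring{R}_{v,w}$.
\item The sets $\Pio_f$, $f\in\Bkn$, also partition $\Grkn$, since every point has a unique Grassmann necklace.
\item The map $(v,w)\mapsto f_{v,w}$ is a bijection by Proposition~\ref{prop:v_w}.
\end{itemize}
Hence a one-sided inclusion for every pair already forces equality.

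\textbf{Neither inclusion is actually proved.} The wrap-around conditions are the entire content, and the proposed induction on $\ell(w)-\ell(v)$ has no mechanism. The strata $\mathring{R}_{v,w}$ and $\mathring{R}_{vs,w}$ are related only through closures, and there is no map between them compatible with $\pi$. So ``both sides transform compatibly'' is not a checkable statement. Likewise, describing the Richardson side as an intersection of $n$ rotated Schubert cells presupposes the result.

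The missing idea is that your premise ``a flag only sees $[1,j]$ and $[j,n]$'' fails here. Because $w$ is a minimal coset representative, the unique flag $F_\bullet$ over $x$ in $B_-wB_-/B_-$ is the one induced by the $B_-$-stable coordinate flag $E^-_\bullet$. Concretely, $F_j=x\cap E^-_{m}$ for $j\le k$ and $F_j=x+E^-_{m}$ for $j\ge k$, where $m=m(j)$ is the appropriate jump index.

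Consequently, the numbers $\dim(F_j\cap E_p)$ that encode $v$ are dimensions of $x\cap\operatorname{span}(e_a,\dots,e_b)$, or of sums $x+\operatorname{span}(e_S)$ with $S$ a cyclic interval. These dimensions are governed by the ranks of $x$ on complements of coordinate intervals, and those complements include cyclic intervals wrapping around $n\to 1$.

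This makes your direction (ii) concrete. For $x\in\Pio_f$, every cyclic interval rank is the one prescribed by $f$. From this you can read off that $x\in B_-wP_-/P_-$, and compute the relative position of this canonical flag to $E_\bullet$ directly from $f$. Checking, via $f=w^{-1}\tau_{k,n}v$, that this relative position equals $v$ is a finite combinatorial verification. That inclusion, together with the partition argument, proves the proposition without any induction, dimension count, or irreducibility input.
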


The set of $k$-Grassmannian permutations in $S_n$ is well known to be in bijection with the set of Young diagrams that fit inside a $k\times (n-k)$-rectangle. Let $\la$ be such a Young diagram. We are going to consider fillings of boxes of $\la$ with \emph{crossings}~\crossing and \emph{elbows}~\elbow. Each such filling $D$  gives rise to a permutation $w_D$, obtained as follows. Consider paths labeled by $1,2,\dots,n$ entering from the northwest boundary of $\la$, where the labels increase in the northeast direction. The paths follow crossings and elbows until they exit through the southeast boundary of $\la$. Recording the positions of outgoing edges, one obtains the permutation $w_D$. Observe that when a Deogram $D$ of shape $\lambda$ consists entirely of crossings, the permutation $w_D = w$ indeed is $k$-Grassmannian: we have $w(1)<w(2)<\dots<w(k)$ and $w(k+1)<\dots<w(n)$. We denote this correspondence by $\lambda_w :=\la$.

\begin{definition}\label{def:deogram}
For any $f=f_{v,w}\in\Bkn$, we define an $f$-Deogram as a filling of the shape $\la_w$ with crossings and elbows such that, starting from the northwest boundary,
\begin{enumerate}
\item the resulting strand permutation equals $v$,
\item (distinguished) if two strands have crossed an odd number of times, they cannot form an elbow.
\end{enumerate}

We let $\deo{f}$ denote the set of all $f$-Deograms. An $f$-Deogram is maximal if it contains precisely $n-c(\overline{f})$ elbows, where $c(\overline{f})$ is the number of cycles of $\overline{f}$. We let $\deomax{f}$ denote the set of all maximal $f$-Deograms. 
\end{definition}

\begin{remark}\label{rem:maxideo}
Any $f$-Deogram must have at least $n-c(\overline{f})$ elbows. One can see this by iteratively replacing bottom-rightmost elbows with crossings to obtain a Deogram with all crossings, which only has fixed points. Each elbow then corresponds to resolving a crossing, which either adds or removes a cycle. So, to obtain $c(\overline{f})$ many cycles, we require at least $n-c(\overline{f})$ many elbows.
\end{remark}

One of the key uses of $f$-Deograms is that they readily provide a decomposition of open positroid varieties (more generally, open Richardson varieties) into simpler pieces.

\begin{theorem}[{\cite{deodhar}}]\label{thm:deodharorig}
For $f=f_{v,w}\in\Bkn$, 
\[
\Pio_f = \bigsqcup_{D\in \deo{f}} (\F^*)^{\#\text{elbows}(D)}\times \F^{(\#\text{crossings}(D)-\ell(v))/2}.
\]
\end{theorem}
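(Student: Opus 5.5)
By \Cref{prop:richardson}, $\Pio_f\cong\mathring R_{v,w}$ for $f=f_{v,w}$. It therefore suffices to decompose the open Richardson variety $\mathring R_{v,w}=(B_-wB_-\cap BvB_-)/B_-$ into pieces indexed by fillings of $\lambda_w$, with the stated shape for each piece. The plan follows Deodhar's original argument, in the explicit form due to Marsh--Rietsch.

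\textbf{Reduced word and flag sequences.} Read the boxes of $\lambda_w$ in a fixed linear order compatible with the strand picture, for instance row by row from the northwest corner. Box $j$ then carries a simple reflection $s_{i_j}$, and $\mathbf w=s_{i_1}\cdots s_{i_m}$ is a reduced word for $w$ with $m=|\lambda_w|=\ell(w)$. A point $F\in B_-wB_-/B_-$ is the same as a unique chain of flags $F_0=B_-,F_1,\dots,F_m=F$ in which consecutive flags are in relative position $s_{i_j}$. Given $F_{j-1}$, the admissible $F_j$ form a copy of $\mathbb A^1$, namely a $\mathbb P^1$ with the point $F_{j-1}$ removed.

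\textbf{Subexpression from a flag chain.} To such a chain, attach the sequence $v_{(j)}\in S_n$ recording the $B$-orbit (the relative position to the standard flag) of $F_j$. One checks that $v_{(0)}=e$ and $v_{(j)}\in\{v_{(j-1)},\,v_{(j-1)}s_{i_j}\}$. Moreover, if $v_{(j-1)}s_{i_j}<v_{(j-1)}$, every choice of $F_j$ lies in the orbit $v_{(j-1)}s_{i_j}$; this is the distinguished condition.

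Record the step at box $j$ as follows:
\begin{itemize}
\item put a crossing if $v_{(j)}=v_{(j-1)}s_{i_j}$, and an elbow if $v_{(j)}=v_{(j-1)}$;
\item the product of the partial word read off by the strands through the first $j$ boxes is then $v_{(j)}$;
\item the condition $F\in BvB_-$ becomes $v_{(m)}=v$, which is condition (1) of \Cref{def:deogram}.
\end{itemize}
In strand language, $v_{(j-1)}s_{i_j}<v_{(j-1)}$ says exactly that the two strands entering box $j$ have crossed an odd number of times so far. Hence the forced-descent rule is condition (2).

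\textbf{Counting each step.} Analyse the $\mathbb A^1$ of choices at each box:
\begin{itemize}
\item if $v_{(j-1)}s_{i_j}>v_{(j-1)}$, exactly one choice produces the ascent (a crossing, contributing a point), and the remaining $\F^*$ choices stay at $v_{(j-1)}$ (an elbow, contributing $\F^*$);
\item if $v_{(j-1)}s_{i_j}<v_{(j-1)}$, the whole $\mathbb A^1$ gives the descent (a crossing, contributing $\F$).
\end{itemize}
Let $a$ and $d$ be the numbers of ascending and descending crossings. Since $a-d=\ell(v)$ and $a+d=\#\text{crossings}(D)$, we get $d=(\#\text{crossings}(D)-\ell(v))/2$. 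So the stratum of $D$ is an iterated fibration with fibres $\F^*$, $\F$ or a point, in the right numbers.

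\textbf{Main obstacle: triviality of the fibrations.} It remains to show these fibrations are globally trivial, so that each stratum is literally $(\F^*)^{\#\text{elbows}}\times\F^{d}$. For this I would use the Marsh--Rietsch parametrization. Each stratum is the image of
\[
g=g_1g_2\cdots g_m,
\]
where $g_j=y_{i_j}(t_j)$ with $t_j\in\F^*$ at elbows, $g_j=\dot s_{i_j}$ at ascending crossings, and $g_j=x_{i_j}(t_j)\dot s_{i_j}^{-1}$ with $t_j\in\F$ at descending crossings. The key check is that $g\mapsto gB_-$ is injective on this parameter set with image exactly the stratum. This is done by induction on $j$, using that the parameter $t_j$ is recovered from the $j$-th flag $F_j$. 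The identities of $\mathrm{SL}_2$ together with the distinguished condition guarantee that the orbit bookkeeping matches at each step. Finally, the strata are pairwise disjoint because a flag determines its chain $(F_j)$ uniquely, and they cover $\mathring R_{v,w}$. Together these give the stated decomposition of $\Pio_f$.
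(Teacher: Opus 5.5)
Your outline is the standard proof of Deodhar's decomposition: the flag-chain analysis of \cite{deodhar}, with global triviality supplied by the Marsh--Rietsch parametrization. The paper does not prove this statement itself; it only cites \cite{deodhar}, so your route is essentially the intended one. The rank-one orbit analysis, the identification of the forced descent with condition (2), and the count $d=(\#\text{crossings}(D)-\ell(v))/2$ are all correct.

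The one step that fails as written is the explicit parametrization. Marsh--Rietsch work in $G/B$ with opposite cells $B_-vB$, whereas you (following the paper) work in $G/B_-$ with cells $BvB_-$. In your setting $y_{i_j}(t_j)\in B_-$, so at an elbow $g_1\cdots g_jB_-=g_1\cdots g_{j-1}B_-$. The flag does not move, $F_{j-1}$ and $F_j$ are in relative position $e$ rather than $s_{i_j}$, and $t_j$ is lost, so the map $g\mapsto gB_-$ is not injective.

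You need the mirrored factors:
\begin{itemize}
\item $g_j=x_{i_j}(t_j)$ with $t_j\in\F^*$ at elbows;
\item $g_j=\dot s_{i_j}$ at ascending crossings;
\item $g_j=y_{i_j}(t_j)\dot s_{i_j}^{-1}$ with $t_j\in\F$ at descending crossings.
\end{itemize}
Equivalently, transport the Marsh--Rietsch result through an automorphism of $G$ exchanging $B$ and $B_-$.

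The induction should then carry the invariant $g_1\cdots g_j\in B\,\dot v_{(j)}$. Write $u=v_{(j-1)}$. In the ascent case, conjugation by $\dot u$ sends $x_{i_j}(t)$ into the root subgroup $U_{u\alpha_{i_j}}\subset N$. In the descent case, it sends $y_{i_j}(t)$ into $U_{-u\alpha_{i_j}}\subset N$. This is exactly what makes your orbit bookkeeping and the recovery of $t_j$ from $F_j$ go through.
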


In particular, the point count of $\Pio_f$ over $\F_q$ is the $R$-polynomial
\[
    R_f(q):=\#\Pio_f(\F_q)=\sum_{D\in\deo{f}}(q-1)^{\#\text{elbows}(D)}\,q^{(\#\text{crossings}(D)-\ell(v))/2},
\]
a Kazhdan--Lusztig $R$-polynomial in the sense of \cite{kl1}. 

\subsection{Words, Subwords, and Twists}\label{subsec:words}

A \textit{word} is any finite sequence $\bw=(s_{i_1},s_{i_2},\dots,s_{i_m})$ of elements in $T_n := \{s_1,\dots,s_{n-1}\}$, with repetition allowed. If $w=s_{i_1}s_{i_2}\dots s_{i_m}$, we call $\bw$ a $w$-word, and if $m=\ell(w)$, we say it is reduced. A subword of $\bw$ is a sequence $\bv=(v_1,v_2,\dots,v_m)$ where $v_j\in\{s_{i_j},e\}$ for all $j$. For any such sequence, we set $\bv_{(j)}=v_1v_2\dots v_j$. If $\bv_{(m)}=v$, we refer to $\bv$ as a $v$-subword of $\bw$.

\begin{definition}{\cite{deodhar,marsh}}
For $v\in S_n$, we say that a $v$-subword $\bv$ of $\bw$ is \textit{distinguished} if $\bv_{(j)}\leq \bv_{(j-1)}s_{i_j}$ for all $j$. We write $\Dcal_{v,\bw}$ for the set of distinguished $v$-subwords of $\bw$. Additionally, for any $v$-subword $\bv$ of $\bw$, we write
\[
	e_{\bv}:=\#\{j\in[m]\,|\,v_j=e\}.
\]
We write $\Dcal^r_{v,\bw}:=\{\bv\in\Dcal_{v,\bw}\,|\,e_{\bv}=r\}$.
\end{definition}

To each Deogram $D\in \deo{f}=\deo{f_{v,w}}$, we may associate a distinguished subword $\mathbf{v}\in \Dcal_{v,\bw}$, so that the indices $j$ such that $v_j=s_{i_j}$ correspond to the crossings in $D$. This correspondence is easily seen to be bijective, and we note that $e_{\bv}$ equals the number of elbows in our Deogram $D$. We also define \textit{twisted} words.

\begin{definition}{\cite{nathan}}
We say that a subword $\mathbf{v}$ of a word $\mathbf{w}$ is $u$-distinguished if $u\bv_{(j)}\leq u\bv_{(j-1)}s_{i_j}$ for each $j\in[m]$. We write $\Dcal_{v,\bw}^{(u)}$ for the set of $u$-distinguished $v$-subwords of $\mathbf{w}$. Similarly to the previous case, we write $\Dcal_{v,\bw}^{(u),r}\subseteq \Dcal_{v,\bw}^{(u)}$ for the subset of elements $\mathbf{v}$ such that $e_{\mathbf{v}}=r$.
\end{definition}

\section{Affine Deograms}\label{sec:affdeo}

\begin{definition} Let $\Pkn$ denote the set of lattice paths from $(0,0)$ to $(n-k,k)$. For $P\in \Pkn$, let $\tau_P$ denote the bounded affine permutation obtained by filling the (periodic) region cut out by $P$ entirely with crossings.

For any $P\in\Pkn$, we let $P_i\in\{\text{R},\text{U}\}$ denote whether the $i$th step of $P$ is a right or up-step.

Define $s_iP$ as the path obtained by swapping $P_i$ and $P_{i+1}$, where the indices are taken mod $n$. Let $P_U  = \{i\in[n]\,|\,P_i=\text{U}\}$. Finally, for $I\in\binom{[n]}{k}$, let $P_I\in\Pkn$ denote the unique lattice path with $(P_I)_U=I$; thus $\tau_{P_I}=t_I$.
\end{definition}

\begin{definition}
Recall the definition of an $(f,P)$-affine Deogram, stated in Section~\ref{sec:intro}; see Figure \ref{fig:bigaffdeo} for an example. We let $\affdeo{f}{P}$ denote the set of all $(f,P)$-affine Deograms. An $(f,P)$-affine Deogram is \emph{maximal} if it contains precisely $n-c(\overline{f})$ elbows, counting just the columns with indices $1\leq i \leq n$. We let $\affdeomax{f}{P}$ denote the set of all maximal $(f,P)$-affine Deograms. 

Finally, we let $\affdeop{P}= \bigsqcup_{f\in \Bkn}\affdeo{f}{P}$.
\end{definition}

\begin{remark}\label{rem:maxiaffdeo}
As in the case of $f$-Deograms, every $(f,P)$-affine Deogram requires at least $n-c(\overline{f})$ elbows; the argument is identical to that of Remark \ref{rem:maxideo}. It may be the case that $\affdeo{f}{P}$ is non-empty but contains no $(f,P)$-affine Deograms with precisely $n-c(\overline{f})$ elbows.
\end{remark}

Generalizing Theorem \ref{thm:deodharorig} of \cite{deodhar}, affine Deograms decompose affine patches of open positroid varieties into simpler pieces. This generalization is due to Billig--Dyer \cite{billig}.

\begin{theorem}[\cite{billig}]
\label{thm:billig}
For $f\in\Bkn$ and $P\in\Pkn$ with $f\leq \tau_P$, 
\[
\Pio_{f,P_U} = \bigsqcup_{A\in \affdeo{f}{P}} (\F^*)^{\#\text{elbows}(A)}\times \F^{(\#\text{crossings}(A)-\ell(f))/2}.
\]
\end{theorem}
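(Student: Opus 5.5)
The plan is to combine Snider's isomorphism (Theorem~\ref{thm:snideriso}) with Deodhar's decomposition of open Richardson varieties. Deodhar's decomposition holds verbatim in any Kac--Moody flag variety, since it only uses Bott--Samelson charts and the Bruhat cell structure. The affine Deogram then plays the role of a distinguished subword of a reduced word for $\tau_P$.

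Fix $P\in\Pkn$ with $I=P_U$, so that $\tau_P=t_I$. By Theorem~\ref{thm:snideriso}, it suffices to decompose the open affine Richardson variety $\mathring{\Rcal}_f^{\tau_P}$, which is nonempty precisely when $f\le\tau_P$.

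\textbf{Step 1: a reduced word for $\tau_P$ from the strip.} Take one representative period of the strip between $P$ and $P+(0,k)$. Order its boxes by any linear extension of the northwest-to-southeast partial order. Label each box by the simple reflection $s_j$, where $j$ is the box's diagonal content modulo $n$; this is the index of the two boundary strands it swaps. Call the resulting word $\mathbf{t}_P$.
\begin{itemize}
\item When every box is a crossing, the strands realize $\tau_P$, because each strand with $i\bmod n\in I$ travels across one full period. Hence $\mathbf{t}_P$ is a $\tau_P$-word.
\item To see that it is reduced, check that each pair of strands crosses at most once in the all-crossing filling, so the number of boxes per period equals $\#\Inv(\tau_P)=\ell(\tau_P)$.
\item Different linear extensions differ by commutations of boxes with non-adjacent contents. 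Therefore the construction does not depend on the chosen order.
\end{itemize}

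\textbf{Step 2: affine Deograms are distinguished subwords.} Send a periodic filling $A$ to the subword $\bv$ of $\mathbf{t}_P$ that keeps $s_j$ at crossings and puts $e$ at elbows. With the left-to-right product convention, the strand permutation of $A$ is $\bv_{(m)}$, so condition~\ref{cond1} says exactly that $\bv$ is an $f$-subword. For condition~\ref{disting}, note that the pair of strands meeting at box $j$ has crossed an odd number of times so far exactly when $\bv_{(j-1)}s_{i_j}<\bv_{(j-1)}$. Forbidding an elbow there is therefore the distinguished condition $\bv_{(j)}\le\bv_{(j-1)}s_{i_j}$. This gives a bijection $\affdeo{f}{P}\cong\Dcal_{f,\mathbf{t}_P}$, under which elbows correspond to letters $e$ and crossings to letters $s_{i_j}$.

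\textbf{Step 3: apply Deodhar/Billig--Dyer and count dimensions.}
\begin{itemize}
\item Deodhar's theorem in the Kac--Moody setting, in the form due to Dyer and Billig--Dyer, writes $\mathring{\Rcal}_f^{\tau_P}$ as a disjoint union of pieces indexed by $\Dcal_{f,\mathbf{t}_P}$. The piece for $\bv$ is $(\F^*)^{e_\bv}\times\F^{d_\bv}$, where $d_\bv$ is the number of crossings at which the length goes down, i.e.\ $\bv_{(j)}<\bv_{(j-1)}$.
\item Let $c$ be the number of crossings, and write $c=u+d$ with $u$ length-increasing and $d$ length-decreasing crossings. Then $\ell(f)=u-d$, so $d=(c-\ell(f))/2$. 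This gives the stated exponent.
\end{itemize}
Transporting the decomposition through Snider's isomorphism yields the formula for $\Pio_{f,P_U}$.

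The main obstacle is Step~3 together with the identification in Step~1. One must check that Snider's isomorphism carries the Bott--Samelson/Deodhar charts for the affine flag variety to the affine patch compatibly, using $t_I$ in the extended affine Weyl group $\Saffk$ and the normalization $f\mapsto f\Lambda^{-k}$. One must also verify carefully that the strip word is reduced, i.e.\ that no two strands cross twice in the all-crossing filling. I would first do the reducedness check by tracking pairs of strands through the period, and then treat Deodhar's theorem for $\Saffz$ as a black box applied to $\tau_P\Lambda^{-k}$.
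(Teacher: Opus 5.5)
Your proposal is correct and matches the argument the paper relies on: the paper gives no proof of its own but cites Billig--Dyer, combining Snider's isomorphism $\mathring{\Rcal}_f^{t_I}\cong\Pio_{f,I}$ with the bijection between $(f,P)$-affine Deograms and distinguished $f$-subwords of the reading-order word $\mathbf{\tau_P}$, exactly as in your Steps 1--3. Two small remarks: the letter attached to a box is indexed by the positions of the two strands meeting there (its content), not by their labels; and the compatibility check you flag as the main obstacle is unnecessary, since an isomorphism carries any decomposition into locally closed pieces $(\F^*)^a\times\F^b$ over unchanged, so you only need to normalize $f$ and $\tau_P$ into $\Saffz$ in the same way.
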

This implies the following corollary.
\begin{corollary}
For $f\in\Bkn$ and $P\in\Pkn$ with $f\leq \tau_P$ and $q$ a prime power, the point count of $\Pio_{f,P_U}$ over the finite field $\F_q$ is computed as
\begin{equation}\label{eq:rftau}
R_{f,\tau_P}(q) := \#\Pio_{f,P_U}(\F_q) = \sum_{A\in \affdeo{f}{P}} (q-1)^{\#\text{elbows}(A)} q^{(\#\text{crossings}(A)-\ell(f))/2}.
\end{equation}
\end{corollary}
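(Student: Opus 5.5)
The plan is to derive the point-count formula directly from Theorem~\ref{thm:billig}, by counting $\F_q$-points of each cell in the decomposition and adding up the results. No extra combinatorics should be needed, since the decomposition is a disjoint union of locally closed pieces defined over $\F_q$.

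First, I would check that Theorem~\ref{thm:billig} holds over $\F=\F_q$ for every prime power $q$. The Deodhar/Billig--Dyer decomposition is built from Bruhat-cell intersections in the affine flag variety, and these are defined over $\Z$. Each cell isomorphism $(\F^*)^{a}\times\F^{b}$ comes from explicit parametrizations by root subgroup coordinates, with elbows giving torus factors and paired crossings giving affine factors. These parametrizations are defined over any field. I would therefore invoke Theorem~\ref{thm:billig} with $\F=\F_q$ and the hypothesis $f\le\tau_P$, together with Theorem~\ref{thm:snideriso} identifying $\Pio_{f,P_U}$ with the affine Richardson variety $\mathring{\Rcal}_f^{\tau_P}$. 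Since $\tau_P=t_{P_U}$, this identification applies.

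Next, I would count points. A disjoint union of sets gives $\#\Pio_{f,P_U}(\F_q)=\sum_{A}\#\big((\F_q^*)^{e(A)}\times\F_q^{(c(A)-\ell(f))/2}\big)$, where $e(A)$ and $c(A)$ are the numbers of elbows and crossings in a representative region. Using $\#\F_q^*=q-1$ and $\#\F_q=q$, each summand equals $(q-1)^{e(A)}q^{(c(A)-\ell(f))/2}$, and this is \eqref{eq:rftau}.

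The main obstacle is making sure the exponent $(c(A)-\ell(f))/2$ is a nonnegative integer, so that the count is meaningful. I would argue this through the subword interpretation. In a representative region, reading the boxes of the strip between $P$ and $P+(0,k)$ gives a reduced word for $\tau_P$ in the affine Weyl group. An affine Deogram $A$ is then a distinguished $f$-subword in which the crossings form a possibly non-reduced expression for $f$. Parity of length gives $c(A)\equiv\ell(f)\pmod 2$, and $c(A)\ge\ell(f)$. These facts are already implicit in the statement of Theorem~\ref{thm:billig}, so I would simply cite it.
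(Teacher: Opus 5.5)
Your proposal is correct and is the same argument as the paper, which presents the corollary as an immediate consequence of Theorem~\ref{thm:billig}: each cell $(\F_q^*)^{\#\text{elbows}(A)}\times\F_q^{(\#\text{crossings}(A)-\ell(f))/2}$ contributes $(q-1)^{\#\text{elbows}(A)}q^{(\#\text{crossings}(A)-\ell(f))/2}$ points, and these contributions are summed. Your extra remarks are sound but not needed beyond what the theorem already asserts. These are the field of definition, and the integrality and nonnegativity of the exponent via parity of expressions for $f$. Likewise, the appeal to Theorem~\ref{thm:snideriso} is redundant, since Theorem~\ref{thm:billig} is already stated directly for $\Pio_{f,P_U}$.
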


\subsection{Affine Words, Subwords, and Twists}

We extend the definitions of Section \ref{subsec:words} to the affine setting. A \textit{word} is any finite sequence $\mathbf{g}=(s_{i_1},s_{i_2},\dots,s_{i_m})$ of elements in $\hat{T}_n := \{s_0,s_1,\dots,s_{n-1}\}$, with repetition allowed. If $g=s_{i_1}s_{i_2}\dots s_{i_m}$, we call $\mathbf{g}$ a $g$-word, and if $m=\ell(g)$, we say it is reduced. A subword of $\mathbf{g}$ is a sequence $\mathbf{f}=(f_1,f_2,\dots,f_m)$ where $f_j\in\{s_{i_j},e\}$ for all $j$. For any such sequence, we set $\mathbf{f}_{(j)}=f_1f_2\dots f_j$. If $\mathbf{f}_{(m)}=f$, we refer to $\mathbf{f}$ as a $f$-subword of $\mathbf{g}$.

\begin{definition}
For $f\in \tilde{S}_n$, we say that a $f$-subword $\mathbf{f}$ of $\mathbf{g}$ is \textit{distinguished} if $\mathbf{f}_{(j)}\leq \mathbf{f}_{(j-1)}s_{i_j}$ for all $j$. We write $\Acal_{f,\mathbf{g}}$ for the set of distinguished $f$-subwords of $\mathbf{g}$. Additionally, for any $f$-subword $\mathbf{f}$ of $\mathbf{g}$, we write
\[
	e_{\mathbf{f}}:=\#\{j\in[m]\,|\,f_j=e\}.
\]
We write $\Acal^r_{f,\mathbf{g}}:=\{\mathbf{f}\in\Acal_{f,\mathbf{g}}\,|\,e_{\mathbf{f}}=r\}$.
\end{definition}

To each affine Deogram $A\in \affdeo{f}{P}$, we may associate a distinguished subword $\mathbf{f}\in \Acal_{f,\mathbf{\tau_P}}$, so that the indices $j$ such that $f_j=s_{i_j}$ correspond to the crossings in $A$. As before, this correspondence is easily seen to be bijective, and we note that $e_{\mathbf{f}}$ equals the number of elbows inside any representative of our affine Deogram $A$.

\begin{definition}
We say that a subword $\mathbf{f}$ of a word $\mathbf{g}$ is $h$-distinguished if $h\mathbf{f}_{(j)}\leq h\mathbf{f}_{(j-1)}s_{i_j}$ for each $j\in[m]$. We write $\Acal_{f,\mathbf{g}}^{(h)}$ for the set of $h$-distinguished $f$-subwords of $\mathbf{g}$. Similarly to the previous case, we write $\Acal_{f,\mathbf{g}}^{(h),r}\subseteq \Acal_{f,\mathbf{g}}^{(h)}$ for the subset of elements $\mathbf{f}$ such that $e_{\mathbf{f}}=r$.
\end{definition}

Fix $P\in\Pkn$ and let $B_P$ be the set of boxes of a fundamental domain of the strip determined by $P$, partially ordered by $b\preceq b'$ if $b$ lies weakly northwest of $b'$. A \emph{reading order} of $B_P$ is a linear extension of $\preceq$. Reading the boxes of $B_P$ in a reading order and recording, for each box, the simple generator indexed by the two strands meeting in it, produces a reduced word $\mathbf{\tau_P}$ in $\hat T_n$ with $k(n-k)$ letters. Two reading orders differ by a sequence of transpositions of $\preceq$-incomparable boxes, so the corresponding words differ by commutation moves, and the resulting sets $\Acal_{f,\mathbf{\tau_P}}$ are canonically identified. We fix one reading order once and for all, and abbreviate $\Acal^{r}_{f,P}:=\Acal^{r}_{f,\mathbf{\tau_P}}$ and $\Acal^{(h),r}_{f,P}:=\Acal^{(h),r}_{f,\mathbf{\tau_P}}$; likewise $\affdeo{f}{P}^{(h)}$ denotes the set of fillings corresponding to $\Acal^{(h)}_{f,P}$, i.e.\ the set of \emph{$h$-twisted} $(f,P)$-affine Deograms.

\subsection{Correspondence between Deograms and Affine Deograms}

Let $\tau_{k,n} \in \Saffk$ be the affine permutation defined by
\[
	\tau_{k,n}(i) = \begin{cases} i+n, & 1 \leq i \leq k, \\ i, & k+1 \leq i \leq n, \end{cases}
\]
where the indices are taken mod $n$; equivalently, $\tau_{k,n}=t_{[k]}$.

Throughout, words and subwords for elements of $\Saffk$ are taken with respect to the bijection
\[
	\iota_k:\Saffk\xrightarrow{\ \sim\ }\Saffz,\qquad \iota_k(f)=\Lambda^{-k}f,
\]
which satisfies $\ell(\iota_k(f))=\ell(f)$ and $\iota_k(fu)=\iota_k(f)\,u$ for all $u\in\Saffz$; in particular $\iota_k$ preserves right descents and is compatible with right multiplication by simple generators. By a \emph{reduced word for $f\in\Saffk$} we mean a reduced word for $\iota_k(f)$, and by an \emph{$f$-subword} of a word $\mathbf g$ we mean a subword whose product equals $\iota_k(f)$. (The identification $f\mapsto f\Lambda^{-k}$ of Section~\ref{sec:background} induces the same Bruhat order, since the two differ by conjugation by $\Lambda^{k}$, a length-preserving automorphism of $\Saffz$; but that automorphism sends $s_i\mapsto s_{i-k}$ and is therefore not compatible with right multiplication.)

Let $J=\{s_1,\dots,s_{n-1}\}$, so that $W_J:=\langle J\rangle\cong S_n$, and write $(\Saffk)^J=\{x\in\Saffk\mid xs>x\ \text{for all } s\in J\}$ for the set of minimal-length representatives of the cosets $xW_J$. For every $k$-Grassmannian permutation $w\in S_n$, define $x_w=w^{-1}\tau_{k,n}$ and let $\mathbf{x_w}$ be a reduced word for $x_w$.

\begin{lemma}\label{lemma:xwhelper}
	Let $w \in S_n$ be $k$-Grassmannian with $x_w\neq e$. Then the right descent set of $x_w$ is exactly $\{s_0\}$. In particular $x_w \in (\Saffk)^{J}$, i.e.\ $x_w$ is the unique element of minimal length in the coset $x_wW_J$.
\end{lemma}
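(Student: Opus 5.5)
The plan is to compute the right descents of $x_w$ directly from the window notation, using the left-to-right product convention $(fg)(a)=g(f(a))$. First, $\iota_k(x)=\Lambda^{-k}x$ satisfies $\iota_k(x)(a)=x(a-k)$: it only reindexes positions and leaves values unchanged. Since $\iota_k(xu)=\iota_k(x)u$ and $\iota_k$ preserves length, right descents may be computed on $x_w$ itself.

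Next I need a descent criterion for values. Right multiplication by $s_j$ acts on values: $(xs_j)(a)=s_j(x(a))$. So $xs_j$ is obtained from $x$ by swapping the positions of the values $j+n\Z$ and $j+1+n\Z$. Counting inversions via $\Inv$, this gives
\[
xs_j<x \iff x^{-1}(j)>x^{-1}(j+1),
\]
where $s_0$ corresponds to the values $n,n+1$. I would verify this quickly by noting that the swap changes exactly one inversion class, just as in the finite case.

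Now compute $x_w^{-1}=\tau_{k,n}^{-1}w$, i.e.\ $x_w^{-1}(b)=w(\tau_{k,n}^{-1}(b))$, with $w$ extended $n$-periodically. Here $\tau_{k,n}^{-1}(b)=b-n$ if $b \bmod n\in[k]$, and $\tau_{k,n}^{-1}(b)=b$ otherwise. I would check each $j\in\{0,1,\dots,n-1\}$:
\begin{itemize}
\item If $1\le j<k$ or $k<j<n$, then $x_w^{-1}(j)$ and $x_w^{-1}(j+1)$ are $w(j),w(j+1)$, shifted by the same amount. The $k$-Grassmannian condition gives $w(j)<w(j+1)$, so $s_j$ is not a descent.
\item If $j=k$, then $x_w^{-1}(k)=w(k)-n<w(k+1)=x_w^{-1}(k+1)$, so $s_k$ is not a descent.
\item If $j=0$, i.e.\ the values $n,n+1$, then $x_w^{-1}(n)=w(n)$ and $x_w^{-1}(n+1)=w(1)+n-n=w(1)$. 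So $s_0$ is a descent iff $w(n)>w(1)$.
\end{itemize}
(The cases $k=0$ and $k=n$ force $w=e$ and $x_w=e$ after $\iota_k$, so they are excluded.)

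The main obstacle is the case $j=0$: showing that $w(n)<w(1)$ forces $x_w=e$. For $0<k<n$, $w(n)<w(1)$ together with the Grassmannian condition means every value $w(k+1),\dots,w(n)$ is below every value $w(1),\dots,w(k)$. Hence $w(\{1,\dots,k\})=\{n-k+1,\dots,n\}$, and $w$ is the maximal $k$-Grassmannian permutation. In that case I would compute directly that $x_w(a)=a+k$ for all $a$, i.e.\ $x_w=\Lambda^k$. Then $\iota_k(x_w)=e$, which is the identity of $\Saffk$ under our convention. This is excluded by hypothesis, so when $x_w\neq e$ we get $w(n)>w(1)$ and the descent set is exactly $\{s_0\}$.

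Finally, since $x_ws>x_w$ for every $s\in J$, $x_w$ lies in $(\Saffk)^J$. By the standard theory of parabolic cosets, transported through $\iota_k$, which is compatible with right multiplication, $x_w$ is the unique minimal-length element of $x_wW_J$.
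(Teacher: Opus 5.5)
Your proposal is correct and follows essentially the same route as the paper: compute $x_w^{-1}=\tau_{k,n}^{-1}w$ in window notation, use the value-side criterion $x_ws_j<x_w\iff x_w^{-1}(j)>x_w^{-1}(j+1)$ to rule out $s_1,\dots,s_{n-1}$ (with $w(k)-n\le 0<w(k+1)$ at $j=k$), and show that $w(n)<w(1)$ forces $x_w=\Lambda^k$, i.e.\ $x_w=e$ in $\Saffz$. Your extra checks, namely the justification of the descent criterion via inversion counting and the separate handling of $k\in\{0,n\}$, are details the paper leaves implicit and do not change the argument.
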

\begin{proof}
Recall $x_w^{-1}=\tau_{k,n}^{-1}w$, so that
\[
	x_w^{-1}(i) = \begin{cases} w(i)-n, & 1\le i\le k,\\ w(i), & k+1\le i\le n,\end{cases}
	\qquad x_w^{-1}(n+1)=w(1).
\]
We first show $x_ws_0<x_w$, which is equivalent to $x_w^{-1}(n)>x_w^{-1}(n+1)$, i.e.\ to $w(n)>w(1)$. Since $w$ is $k$-Grassmannian, $w(1)$ is the smallest value in $\{w(1),\dots,w(k)\}$ and $w(n)$ is the largest value in $\{w(k+1),\dots,w(n)\}$. If $w(n)<w(1)$, then every element of $\{w(k+1),\dots,w(n)\}$ is less than every element of $\{w(1),\dots,w(k)\}$, forcing $w=(n-k+1,\dots,n,1,\dots,n-k)$ and hence $x_w=\Lambda^k$, i.e.\ $x_w=e$ in $\Saffz$, contrary to assumption. Thus $w(n)>w(1)$ and $s_0$ is a right descent.

Now let $1\le i\le n-1$; we claim $x_w^{-1}(i)<x_w^{-1}(i+1)$, so that $x_ws_i>x_w$. For $1\le i\le k-1$ and for $k+1\le i\le n-1$ this is immediate from $w(i)<w(i+1)$. For $i=k$ we have $x_w^{-1}(k)=w(k)-n\le 0<w(k+1)=x_w^{-1}(k+1)$.

Hence the right descent set of $x_w$ is $\{s_0\}$, and $x_w\in(\Saffk)^J$ by \cite[Proposition~2.4.4]{bjorner}.
\end{proof}

\begin{corollary}\label{cor:addition}
	For any $v, w \in S_n$ with $w$ $k$-Grassmannian, we have $\ell(x_w v) = \ell(x_w) + \ell(v)$.
\end{corollary}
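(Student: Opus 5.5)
This follows from Lemma~\ref{lemma:xwhelper} and the standard parabolic factorization in Coxeter groups. If $x_w=e$, then $\ell(x_w v)=\ell(v)=\ell(x_w)+\ell(v)$ holds trivially. So we may assume $x_w\neq e$. The lemma then says that $x_w\in(\Saffk)^J$ is the minimal-length representative of its coset $x_wW_J$, where $W_J=\langle s_1,\dots,s_{n-1}\rangle\cong S_n$.

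Next, transport everything to the honest Coxeter group $\Saffz$ via $\iota_k$. This map preserves length and satisfies $\iota_k(fu)=\iota_k(f)u$ for $u\in\Saffz$. Hence $\iota_k(x_w)$ has right descent set $\{s_0\}$, so it lies in $(\Saffz)^J$. Moreover, $\iota_k(x_wv)=\iota_k(x_w)v$, where $v\in S_n=W_J$ is viewed inside $\Saffz$ as the element generated by $s_1,\dots,s_{n-1}$. Since $S_n$ sits as the finite parabolic subgroup $W_J$, the length of $v$ as an element of $S_n$ agrees with its length in $\Saffz$. This holds because the inversion count of an affine permutation that fixes each window block $\{1,\dots,n\}+n\Z$ setwise reduces to the finite inversion count.

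Now invoke the parabolic decomposition \cite[Proposition~2.4.4]{bjorner}. Every $g\in W$ factors uniquely as $g=g^Jg_J$ with $g^J\in W^J$ and $g_J\in W_J$, and $\ell(g)=\ell(g^J)+\ell(g_J)$. Apply this to $g=\iota_k(x_w)v$. Since $\iota_k(x_w)\in W^J$ and $v\in W_J$, uniqueness forces $g^J=\iota_k(x_w)$ and $g_J=v$. Therefore $\ell(x_wv)=\ell(\iota_k(x_w)v)=\ell(x_w)+\ell(v)$.

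The only step needing care is the compatibility of conventions. Products are read left to right, $(fg)(a)=g(f(a))$, so we must check that "right multiplication by $v$" in the statement is the same operation as right multiplication in $\Saffz$ after applying $\iota_k$. This is exactly the property of $\iota_k$ recorded in the paper. Beyond that, the argument is a direct citation.
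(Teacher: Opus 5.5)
Your proof is correct and follows the paper's route: Lemma~\ref{lemma:xwhelper} places $x_w$ in $(\Saffk)^J$, and the parabolic factorization of \cite[Proposition~2.4.4]{bjorner} then gives $\ell(x_wv)=\ell(x_w)+\ell(v)$. Your treatment of the case $x_w=e$ (excluded by the lemma's hypothesis but passed over in the paper's proof), your transfer through $\iota_k$, and your check that the length of $v$ in $S_n$ agrees with its length in $\Saffz$ make explicit what the paper leaves implicit.
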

\begin{proof}
	By Lemma~\ref{lemma:xwhelper}, $x_w \in (\Saffk)^J$. Since $v \in S_n \cong (\Saffk)_{J}$, the product $x_w v$ is a parabolic decomposition. By \cite[Proposition~2.4.4]{bjorner}, we conclude that $\ell(x_w v) = \ell(x_w) + \ell(v)$.
\end{proof}

\begin{lemma}\label{lemma:useallbox}
	Let $v, w \in S_n$ be such that $v \leq w$ and $w$ is $k$-Grassmannian. Then for every $x_wv$-subword $\mathbf{u}$ of $\mathbf{x_ww}$, we have $u_j = s_{i_j}$ for $1 \leq j \leq \ell(x_w)$.
\end{lemma}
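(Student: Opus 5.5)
The plan is a coset-and-length argument in $\Saffz$. It uses the parabolic structure from Lemma~\ref{lemma:xwhelper} and the fact that the second block of the word involves only letters from $J$. Throughout I work via $\iota_k$, so that words, subwords and products are taken in $\Saffz$. This is harmless because $\iota_k$ preserves length and commutes with right multiplication by elements of $W_J$. In particular $\iota_k$ maps the coset $x_wW_J$ onto $\iota_k(x_w)W_J$, and $\iota_k(x_w)$ is still the minimal-length element of its coset. If $x_w=e$ there is nothing to prove, so assume $x_w\neq e$.

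Write the word $\mathbf{x_ww}$ as the concatenation of a reduced word $\mathbf{x_w}=(s_{i_1},\dots,s_{i_m})$ with $m=\ell(x_w)$, followed by a reduced word $\mathbf{w}$ for $w\in S_n$. The letters of $\mathbf{w}$ all lie in $J=\{s_1,\dots,s_{n-1}\}$.

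Given an $x_wv$-subword $\mathbf{u}$, split it as $\mathbf{u}=(\mathbf{a},\mathbf{b})$, where $\mathbf{a}$ is the subword of $\mathbf{x_w}$ and $\mathbf{b}$ is the subword of $\mathbf{w}$. Let $a$ and $b$ be the respective products, so that $ab=x_wv$. Since $\mathbf{b}$ uses only letters of $J$, we have $b\in W_J$. Since $v\in S_n=W_J$ as well,
\[
a = x_w v b^{-1}\in x_wW_J.
\]
By Lemma~\ref{lemma:xwhelper}, $x_w$ is the unique element of minimal length in $x_wW_J$. Hence $\ell(a)\ge \ell(x_w)=m$.

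On the other hand, $a$ is the product of the letters actually used in $\mathbf{a}$, so
\[
\ell(a)\le \#\{j\le m \mid u_j=s_{i_j}\}\le m .
\]
Combining the two inequalities forces equality throughout. In particular $u_j=s_{i_j}$ for every $1\le j\le m$, which is the claim. As a by-product, $a=x_w$ and therefore $b=v$. The hypothesis $v\le w$ is needed only to ensure that such subwords exist; the argument itself does not use it.

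The only delicate point is the bookkeeping with $\iota_k$ versus the identification $f\mapsto f\Lambda^{-k}$. One must make sure that "coset $x_wW_J$" and "minimal length" refer to the same normalization in which the subword products are computed. The paper's convention, that $\iota_k$ is compatible with right multiplication, is exactly what makes this work. I expect this to be the main, though mild, obstacle; the rest is immediate from Lemma~\ref{lemma:xwhelper}.
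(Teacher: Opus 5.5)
Your proposal is correct and follows essentially the same route as the paper: you split $\mathbf{u}$ at position $\ell(x_w)$, observe that the product $a$ of the selected letters from $\mathbf{x_w}$ lies in $x_wW_J$, and then play the minimal-length property from Lemma~\ref{lemma:xwhelper} against the bound $\ell(a)\le\#\{j\le m\mid u_j=s_{i_j}\}\le m$. Your separate treatment of the vacuous case $x_w=e$ and your remark on the $\iota_k$ normalization are sound, minor additions to the same argument.
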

\begin{proof}
Set $m=\ell(x_w)$, let $a=\mathbf u_{(m)}$ be the product of the letters selected from $\mathbf{x_w}$, and let $b$ be the product of the letters selected from $\mathbf{w}$, so that $ab=x_wv$. Every letter of $\mathbf{w}$ lies in $J$, so $b\in W_J$ and hence
\[
	a=(x_wv)b^{-1}\in x_wvW_J=x_wW_J.
\]
On the other hand $a$ is the product of a subword of the reduced word $\mathbf{x_w}$, so $a\le x_w$ and $\ell(a)\le \#\{j\le m\mid u_j\neq e\}\le m$. By Lemma~\ref{lemma:xwhelper}, $x_w$ is the unique element of minimal length in $x_wW_J$, so $\ell(a)\ge m$. Therefore $\ell(a)=m$, which together with $a\le x_w$ forces $a=x_w$ and forces every one of the first $m$ letters to be selected: $u_j=s_{i_j}$ for $1\le j\le m$.
\end{proof}

\begin{lemma}\label{lemma:xww}
	Let $w\in S_n$ be $k$-Grassmannian and let $P_w\in\Pkn$ be the lattice path with $(P_w)_U=w([k])=\{w(1),\dots,w(k)\}$ (equivalently, the boundary path of $\lambda_w$). Then
	\[
		x_ww=\tau_{P_w},
	\]
	and the concatenation $\mathbf{x_w}\mathbf{w}$ is a reduced word for $\tau_{P_w}$, of length $k(n-k)$.
\end{lemma}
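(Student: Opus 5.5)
The plan is to check the identity $x_ww=\tau_{P_w}$ pointwise. After that, reducedness will follow from a length count. The identity is almost immediate from the definitions: $x_w=w^{-1}\tau_{k,n}$, so $x_ww=w^{-1}\tau_{k,n}w$, read with the paper's left-to-right convention $(fg)(a)=g(f(a))$. For $a\in\Z$ this gives
\[
	(x_ww)(a)=w\bigl(\tau_{k,n}(w^{-1}(a))\bigr),
\]
where $w$ is extended periodically, i.e.\ as an element of $W_J$ acting on $\Z$ with $w(a+n)=w(a)+n$. Put $b=w^{-1}(a)$. If $b \bmod n\in[k]$, then $\tau_{k,n}(b)=b+n$ and the image is $a+n$. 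Otherwise $\tau_{k,n}(b)=b$ and the image is $a$. Since $b\bmod n\in[k]$ exactly when $a\bmod n\in w([k])=(P_w)_U$, this gives $x_ww=t_{w([k])}=\tau_{P_{w([k])}}=\tau_{P_w}$, using $\tau_{P_I}=t_I$. I will need to be careful with the composition convention and with which of $w$ or $w^{-1}$ appears. If the convention comes out the other way, the same computation gives $t_{w^{-1}([k])}$, and I would then recheck against Proposition~\ref{prop:v_w}, where $f=w^{-1}\tau_{k,n}v$ and $\overline f=w^{-1}v$. The fact that the $k$ up-steps of the boundary of $\lambda_w$ sit at positions $w(1),\dots,w(k)$ is the standard encoding of $k$-Grassmannian permutations by Young diagrams, and is how $P_w$ is defined.

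For reducedness, Corollary~\ref{cor:addition} with $v=w$ gives $\ell(x_ww)=\ell(x_w)+\ell(w)$. So the concatenation $\mathbf{x_w}\mathbf{w}$ of reduced words is a reduced word for $x_ww$. Its length is then $\ell(\tau_{P_w})$, and it remains to show this equals $k(n-k)$. I plan to compute $\ell(t_I)$ directly from the inversion count $\Inv(f)=\{(i,j): i<j,\ f(i)>f(j),\ i\in[n]\}$. Take $i\in[n]$ with $i\in I$, so $t_I(i)=i+n$. Then the inversions $(i,j)$ are exactly the pairs with $j\notin I \bmod n$ and $i<j<i+n$, giving $n-k$ choices of $j$. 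If $i\notin I$, then $t_I(i)=i$ and any $j>i$ has $t_I(j)\ge j>i$, so there are none. Hence $\ell(t_I)=k(n-k)$. I should confirm this is the same length used under the identification $\iota_k$; the paper states that $\iota_k$ preserves length, so this is fine.

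The main obstacle is keeping the composition conventions consistent. One is the left-to-right product convention; the other is the identification of words for elements of $\Saffk$ through $\iota_k(f)=\Lambda^{-k}f$. I need the reduced word $\mathbf{x_w}$ of $\iota_k(x_w)$ followed by $\mathbf{w}$ to multiply to $\iota_k(x_ww)$. This holds because $\iota_k(x_ww)=\iota_k(x_w)w$, which is exactly the compatibility of $\iota_k$ with right multiplication noted in the text. So the concatenation is a genuine word for $\tau_{P_w}$, and the length additivity transfers through $\iota_k$.
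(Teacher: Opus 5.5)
Your proposal is correct and follows the paper's proof. The identity $x_ww=w^{-1}t_{[k]}w=t_{w([k])}$ is the conjugation rule $u^{-1}t_Iu=t_{u(I)}$, which you verify pointwise under the left-to-right convention, and reducedness comes from Corollary~\ref{cor:addition} exactly as in the paper. Your explicit inversion count giving $\ell(t_I)=k(n-k)$, and your check that $\iota_k(x_ww)=\iota_k(x_w)\,w$, supply details the paper only asserts.
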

\begin{proof}
	Conjugation acts on the elements $t_I$ by $u^{-1}t_Iu=t_{u(I)}$, so $x_ww=w^{-1}t_{[k]}w=t_{w([k])}=\tau_{P_w}$. By Corollary~\ref{cor:addition} the concatenation has $\ell(x_w)+\ell(w)=\ell(x_ww)$ letters, hence is reduced, and $\ell(\tau_{P_w})=k(n-k)$.
\end{proof}

From this, we obtain the following bijection; see Figure~\ref{fig:affdeocorrespondence}.

\begin{corollary}\label{cor:affdeocorrespondence}
	Let $v,w\in S_n$ with $v\le w$ and $w$ $k$-Grassmannian, and let $f=f_{v,w}\in\Bkn$. Then for any $r$, prepending the all-crossings filling indexed by $\mathbf{x_w}$ gives a bijection
	\[
		\mathcal{D}^r_{v,\mathbf{w}} \;\xrightarrow{\ \sim\ }\; \mathcal{A}^r_{x_wv,\,\mathbf{x_w}\mathbf{w}}=\mathcal{A}^r_{f,\,\mathbf{\tau_{P_w}}},
	\]
	which restricts to a bijection $\deomax{f}\to\affdeomax{f}{P_w}$. Moreover $(P_w)_U=w([k])=\sGrI_1(f)$.
\end{corollary}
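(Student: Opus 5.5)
The plan is to define the map directly on subwords and check the three assertions in turn: it lands in the right set, it preserves elbow counts (hence maximality), and the identity $(P_w)_U=\sGrI_1(f)$. By Lemma~\ref{lemma:xww}, $\mathbf{x_w}\mathbf{w}$ is a reduced word for $\tau_{P_w}$. Since reading orders only change words by commutations, $\mathcal{A}^r_{f,\mathbf{\tau_{P_w}}}$ may be identified with $\mathcal{A}^r_{x_wv,\mathbf{x_w}\mathbf{w}}$. Proposition~\ref{prop:v_w} gives $f=w^{-1}\tau_{k,n}v=x_wv$, which explains the equality of the two target sets.

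Given $\bv\in\mathcal{D}^r_{v,\mathbf w}$, send it to the concatenation $\mathbf u=(s_{i_1},\dots,s_{i_m})\bv$, where $m=\ell(x_w)$ and the first block consists of all letters of $\mathbf{x_w}$.

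\textbf{Well-definedness.} The product of $\mathbf u$ is $x_wv$, and $e_{\mathbf u}=e_{\bv}=r$. On the first block the distinguished condition $\mathbf u_{(j)}\le \mathbf u_{(j-1)}s_{i_j}$ holds trivially, since there $\mathbf u_{(j)}=\mathbf u_{(j-1)}s_{i_j}$. For $j>m$ we have $\mathbf u_{(j)}=x_w\bv_{(j-m)}$, so the condition to check is
\[
x_w\bv_{(j')}\le x_w\bv_{(j'-1)}s \quad\text{given}\quad \bv_{(j')}\le \bv_{(j'-1)}s.
\]
The only nontrivial case is an elbow, i.e.\ $\bv_{(j')}=\bv_{(j'-1)}$ with $\bv_{(j'-1)}<\bv_{(j'-1)}s$; all elements involved lie in $W_J$. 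Corollary~\ref{cor:addition} gives $\ell(x_wy)=\ell(x_w)+\ell(y)$ for every $y\in W_J$, so $x_wy<x_wys$ exactly when $y<ys$. Distinguishedness therefore transfers in both directions. This length-additivity argument is the only step with real content, and it is the one I expect to be the main obstacle: I would phrase distinguishedness for elbows as the length inequality above, so that Corollary~\ref{cor:addition} applies directly.

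\textbf{Bijectivity.} By Lemma~\ref{lemma:useallbox}, every $x_wv$-subword of $\mathbf{x_w}\mathbf{w}$ selects all of the first $m$ letters. Deleting this prefix is therefore inverse to the map, and the converse direction of the length argument shows the remaining subword is distinguished with product $v$. Since $r$ is preserved and maximality means $r=n-c(\overline f)$ in both settings, the bijection restricts to $\deomax{f}\to\affdeomax{f}{P_w}$.

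\textbf{Necklace identity.} I would compute directly:
\[
\sGrI_1(f)=\{a<1: f(a)\ge1\}\bmod n=\{a\in[n]: f(a)\ge n+1\}.
\]
Since $f=w^{-1}\tau_{k,n}v$ with products read left to right, $f(a)=v(\tau_{k,n}(w^{-1}(a)))$. The values of $f(a)$ exceeding $n$ are exactly those for which $w^{-1}(a)\in[k]$, because $\tau_{k,n}$ adds $n$ precisely there and $v$ is a finite permutation that preserves the window. Hence $\sGrI_1(f)=w([k])=(P_w)_U$. The sign conventions (left-to-right products, $w^{-1}$ versus $w$) need a careful check against Proposition~\ref{prop:v_w}.
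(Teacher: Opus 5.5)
Your proposal is correct and is essentially the paper's proof: Lemma~\ref{lemma:useallbox} forces every letter of the $\mathbf{x_w}$ prefix to be a crossing, and the length additivity of Corollary~\ref{cor:addition} (which the paper phrases as $u\mapsto x_wu$ being a Bruhat-poset isomorphism $W_J\to x_wW_J$) transfers the elbow condition $y<ys$ to $x_wy<x_wys$ and back. Your direct computation $\sGrI_1(f)=\{a\in[n]: f(a)\ge n+1\}=w([k])$ via $f(a)=v(\tau_{k,n}(w^{-1}(a)))$ is also correct, and it supplies the ``Moreover'' clause that the paper's proof leaves unverified.
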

\begin{proof}
	By Lemma~\ref{lemma:useallbox} every $f$-subword of $\mathbf{x_w}\mathbf{w}$ selects all letters of $\mathbf{x_w}$, and these letters impose no condition. Since $x_w\in(\Saffk)^J$ we have $\ell(x_wu)=\ell(x_w)+\ell(u)$ for all $u\in W_J$, so $u\mapsto x_wu$ is an isomorphism of Bruhat posets $W_J\to x_wW_J$; hence the $x_w$-twisted distinguished condition on a subword of $\mathbf w$ agrees with the distinguished condition in $S_n$.
\end{proof}

\begin{figure}
\centering
\begin{tabular}{ccc}
\deog{0/0,0/1,0/2,0/3,0/4,4/0,4/1,5/2,6/3,6/4,0/0,0/1,0/2,0/3,0/4,4/0,4/1,5/2,6/3,6/4}{0/5,1/5,2/5,3/5,4/5,5/5,0/0,1/0,2/0,3/0,4/2,5/3}{-0.350000000000000/0.500000000000000/1,-0.350000000000000/1.50000000000000/2,-0.350000000000000/2.50000000000000/3,-0.350000000000000/3.50000000000000/4,-0.350000000000000/4.50000000000000/5,0.500000000000000/5.35000000000000/6,1.50000000000000/5.35000000000000/7,2.50000000000000/5.35000000000000/8,3.50000000000000/5.35000000000000/9,4.50000000000000/5.35000000000000/10,5.50000000000000/5.35000000000000/11,0.500000000000000/-0.300000000000000/1,1.50000000000000/-0.300000000000000/2,2.50000000000000/-0.300000000000000/3,3.50000000000000/-0.300000000000000/4,4.30000000000000/0.500000000000000/5,4.30000000000000/1.50000000000000/6,4.50000000000000/1.70000000000000/7,5.30000000000000/2.50000000000000/8,5.50000000000000/2.70000000000000/9,6.30000000000000/3.50000000000000/10,6.30000000000000/4.50000000000000/11}{2/4,4/4,5/4,3/3,5/3,0/2,0/0,1/0}{0/4,1/4,3/4,0/3,1/3,2/3,4/3,1/2,2/2,3/2,4/2,0/1,1/1,2/1,3/1,2/0,3/0}

&
$\rightarrow$
&

\deog{4/0,4/5,4/1,4/6,5/2,5/7,6/3,6/4,4/0,4/5,4/1,4/6,5/2,5/7,6/3,6/4,0/4,0/3}{0/0,0/5,1/0,1/5,2/0,2/5,3/0,3/5,4/2,4/7,5/3,5/8}{0.500000000000000/-0.350000000000000/1,0.500000000000000/5.35000000000000/1,1.50000000000000/-0.350000000000000/2,1.50000000000000/5.35000000000000/2,2.50000000000000/-0.350000000000000/3,2.50000000000000/5.35000000000000/3,3.50000000000000/-0.350000000000000/4,3.50000000000000/5.35000000000000/4,4.25000000000000/ 0.500000000000000/5, 3.75000000000000/5.50000000000000/5, 4.25000000000000/1.50000000000000/6,3.75000000000000/6.50000000000000/6,4.50000000000000/1.65000000000000/7,4.50000000000000/7.35000000000000/7,5.25000000000000/2.50000000000000/8,4.75000000000000/7.50000000000000/8,5.50000000000000/2.65000000000000/9,5.50000000000000/8.35000000000000/9,6.35000000000000/3.50000000000000/10,6.35000000000000/4.50000000000000/11,-0.25000000000000/4.50000000000000/0,-0.35000000000000/3.50000000000000/-1}{2/4,4/4,5/4,3/3,5/3,0/2,0/0,1/0}{0/4,1/4,3/4,0/3,1/3,2/3,4/3,1/2,2/2,3/2,4/2,5/7,0/1,1/1,2/1,3/1,4/6,5/6,2/0,3/0,4/5,5/5}
\end{tabular}

\caption{Example of the correspondence between Deograms and affine Deograms for $f = [8,5,7,9,12,17,10,13,11]$.}
\label{fig:affdeocorrespondence}
\end{figure}

\subsection{Deographs and Decoupling}

The goal of this subsection is to prove Theorem~\ref{thm:decoupling_intro}. To do so, we associate a finite multigraph to each affine Deogram.

\begin{definition}
Fix $f\in\Bkn$ and $P\in\Pkn$. For any affine Deogram $A\in\affdeo{f}{P}$, define the multigraph $G(A)=(V,E)$ as follows. Its vertex set is $V=[n]$, and each elbow of $A$ in a representative region contributes one edge joining the labels modulo $n$ of the two strands forming that elbow. We call $G(A)$ the \emph{Deograph} of $A$. See Figure~\ref{fig:deograph}.
\end{definition}

\begin{figure}

\begin{tabular}{cc}
\deog{2/0,2/5,5/1,5/6,6/2,6/3,6/4,2/0,2/5,5/1,5/6,6/2,6/3,6/4,0/4,0/3,0/2}{0/0,0/5,1/0,1/5,2/1,2/6,3/1,3/6,4/1,4/6,5/2,5/7}{0.500000000000000/-0.350000000000000/1,0.500000000000000/5.35000000000000/1,1.50000000000000/-0.350000000000000/2,1.50000000000000/5.35000000000000/2,2.25000000000000/0.500000000000000/3,1.75000000000000/5.50000000000000/3,2.50000000000000/0.650000000000000/4,2.50000000000000/6.35000000000000/4,3.50000000000000/0.650000000000000/5,3.50000000000000/6.35000000000000/5,4.50000000000000/0.650000000000000/6,4.50000000000000/6.35000000000000/6,5.25000000000000/1.50000000000000/7,4.75000000000000/6.50000000000000/7,5.50000000000000/1.65000000000000/8,5.50000000000000/7.35000000000000/8,6.35000000000000/2.50000000000000/9,-.35000000000000/2.50000000000000/-2,6.35000000000000/3.50000000000000/10,-.35000000000000/3.50000000000000/-1,6.35000000000000/4.50000000000000/11,-.35000000000000/4.50000000000000/0}{0/4,2/4,0/3,2/3,5/3,0/2,1/2,3/2,4/1,0/1,4/5,3/5,4/4,4/2}{1/4,3/4,5/4,1/3,3/3,4/3,2/2,5/2,2/1,3/1,5/6,0/0,1/0,2/5,5/5,1/1}&

\begin{tikzpicture}[thick,baseline={(0,-1)}]
\def \number {11}
\def \radius {17mm}
\def \degree {360/\number}

\node[draw,circle,inner sep = 2pt] at ({\degree * -(1 -1)}:\radius) (1) {$1$};
\node[draw,circle,inner sep = 2pt] at ({\degree * -(2 -1)}:\radius) (2) {$2$};
\node[draw,circle,inner sep = 2pt] at ({\degree * -(3 -1)}:\radius) (3) {$3$};
\node[draw,circle,inner sep = 2pt] at ({\degree * -(4 -1)}:\radius) (4) {$4$};
\node[draw,circle,inner sep = 2pt] at ({\degree * -(5 -1)}:\radius) (5) {$5$};
\node[draw,circle,inner sep = 2pt] at ({\degree * -(6 -1)}:\radius) (6) {$6$};
\node[draw,circle,inner sep = 2pt] at ({\degree * -(7 -1)}:\radius) (7) {$7$};
\node[draw,circle,inner sep = 2pt] at ({\degree * -(8 -1)}:\radius) (8) {$8$};
\node[draw,circle,inner sep = 2pt] at ({\degree * -(9 -1)}:\radius) (9) {$9$};
\node[draw,circle,inner sep = 1pt] at ({\degree * -(10 -1)}:\radius) (10) {$10$};
\node[draw,circle,inner sep = 1pt] at ({\degree * -(11 -1)}:\radius) (11) {$11$};

\draw (1) -- (11);
\draw (1) to[out=60,in=-40] (11);
\draw (1) -- (4);
\draw (1) -- (8);
\draw (2) -- (10);
\draw (2) -- (3);
\draw (3) to[out=150,in=20] (5);
\draw (3) -- (9);
\draw (3) -- (4);
\draw (4) -- (5);
\draw (5) -- (6);
\draw (7) to[out=30,in=230] (9);
\draw (9) -- (10);
\draw (10) -- (11);

\end{tikzpicture}
\end{tabular}
\caption{An $(f,P)$-affine Deogram with its associated Deograph for $f=[8, 5, 7, 9, 11, 14, 12, 10, 17, 13, 15]$ and $P=\text{RRURRRURUUU}$.}
\label{fig:deograph}
\end{figure}

We prove the following fact, which will be useful for analyzing the combinatorial moves on affine Deograms in Section \ref{subsec:moves}.

\begin{proposition}\label{prop:forest}
For $f\in\Bkn$, $P\in\Pkn$, and $A\in\affdeomax{f}{P}$, the Deograph $G(A)$ is a forest. Moreover, its connected components are precisely the supports of the cycles of $\overline f$.
\end{proposition}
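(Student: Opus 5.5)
We prove both claims together by tracking how the strand permutation changes as elbows are introduced one at a time, and comparing this with the growth of a graph whose vertices are the residues in $[n]$. The argument is the refinement of Remark~\ref{rem:maxiaffdeo}. Starting from $A$, we repeatedly replace a bottom-rightmost elbow (with respect to the fixed reading order) by a crossing, working periodically so that all translates of that elbow are replaced at once. After all $e=n-c(\overline f)$ elbows of the representative region are replaced, we reach the all-crossing filling. Its strand permutation is $\tau_P$, and $\overline{\tau_P}$ is the identity, so it has $n$ cycles.

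Read backwards, this gives a sequence of affine permutations
\[
\tau_P=g_0,\ g_1,\ \dots,\ g_e=f, \qquad g_t=g_{t-1}\cdot(\text{transposition}).
\]
Resolving the crossing at the $t$-th elbow composes the current permutation with the transposition of the two strands meeting in that box. Modulo $n$, this multiplies $\overline{g_{t-1}}$ by a transposition $(a\,b)$ of $[n]$, where $a,b$ are exactly the residues joined by the corresponding edge of $G(A)$.

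The key step is to identify the labels $a,b$ of the edge with the residues being transposed. Here one must be careful about the convention for the labels of strands at an elbow, namely the northern labels versus the positions. Under the convention that makes the Deograph edges match the transpositions, the factorization
\[
\overline f=\overline{\tau_P}\,(a_1\,b_1)\cdots(a_e\,b_e)
\]
is a product of $e$ transpositions whose edges are exactly the edges of $G(A)$, listed in the order the elbows are reinstated. This bookkeeping is where I expect the main difficulty. One has to check that reinstating elbows in the order from northwest to southeast, with the strands below the box already fixed, makes each elbow act as right multiplication by the transposition of the two strand labels read at the northern boundary. The multigraph, which may contain repeated edges such as the double edge $1$--$11$ in Figure~\ref{fig:deograph}, must also be handled correctly. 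A repeated edge corresponds to multiplying by the same transposition twice in that list, possibly with other factors in between.

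Given the factorization, we use the standard fact that multiplying a permutation by a transposition $(a\,b)$ changes its number of cycles by exactly $\pm1$. The count goes up by one if $a,b$ lie in the same cycle, so that cycle splits, and down by one if they lie in different cycles, so those cycles merge. Starting from $n$ cycles, we must reach $c(\overline f)=n-e$ cycles in $e$ steps, so every step must be a merge. Hence each transposition joins two distinct cycles of the current partial product.

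Next we run an induction on $t$. We claim that the connected components of the graph formed by the first $t$ edges coincide with the cycle supports of $\overline{g_t}$, and that this graph is a forest. The base case $t=0$ is the graph with no edges, whose components are singletons, matching the fixed points of the identity. For the inductive step, a merging transposition joins two distinct components, so the new edge creates no cycle and the two components fuse into exactly the support of the merged cycle. In particular, a repeated edge can never occur, since it would join vertices already in the same component.

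At $t=e$ this gives both claims: $G(A)$ is a forest, and its components are the supports of the cycles of $\overline f$.
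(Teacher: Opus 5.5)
Your proof is correct and uses the same mechanism as the paper's. In both, the elbows of $A$ are inserted into the all-crossing filling in reading order, which writes $\overline f$ as a product of the $n-c(\overline f)$ transpositions given by the edges of $G(A)$.

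The step you flag as the main difficulty takes one line, and no choice of convention is involved. When the $t$-th elbow is inserted, every box preceding it in the reading order already agrees with $A$. So the two strands entering that box carry exactly the northern labels they carry in $A$, and these are the labels defining $G(A)$. Nothing downstream of the box changes, so turning its crossing into an elbow just exchanges the exits of those two strands. That is, the strand permutation is composed with the periodic transposition of those labels.

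You differ from the paper only in the final count. The paper argues globally. Since $\overline f$ lies in the subgroup generated by the edge transpositions, and each of these preserves every connected component, the number $d$ of components satisfies $d\le c(\overline f)$. On the other hand, a multigraph with $n$ vertices and $n-c(\overline f)$ edges has $d\ge c(\overline f)$, with equality exactly when it is a forest.

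Your stepwise version instead uses the fact that multiplying by a transposition changes the cycle count by $\pm1$: every factor must merge two cycles, and you then induct on the number of edges. This gives a slightly finer conclusion. For every prefix of the elbow sequence, the components of the partial Deograph are the cycle supports of the corresponding partial product.
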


\begin{proof}
Let $m$ be the number of elbows of $A$ in a representative region. Since $A$ is maximal, $m=n-c(\overline f).$

Order the elbows according to the reading order of the boxes, starting from the boundary path $P$. Begin with the all-crossing filling and replace the tiles at these positions by elbows, one at a time, in this order. Immediately before a given elbow is inserted, all boxes preceding it agree with $A$. Thus the two strands entering that box have the same labels as the two strands forming the corresponding elbow of $A$.

Replacing the crossing at that box by an elbow changes the strand permutation by the transposition of these two labels; modulo $n$, this is the transposition of their residues. The all-crossing filling has strand permutation $\tau_P=t_{P_U}$, whose reduction modulo $n$ is the identity. Hence $\overline f$ lies in the subgroup of $S_n$ generated by the transpositions corresponding to the edges of $G(A)$.

Let $d$ be the number of connected components of $G(A)$. Every such transposition preserves each connected component, and hence $\overline f$ preserves each connected component. Consequently, every cycle of $\overline f$ is contained in a connected component of $G(A)$, so $d\leq c(\overline f)$.

On the other hand, $G(A)$ has $n$ vertices and $m$ edges (parallel edges included), so it has at least $n-m$ connected components. Therefore $d\geq n-m=c(\overline f)$, whence $d=c(\overline f)$. Equality in the bound $d\ge n-m$ holds precisely when the multigraph is a forest, so $G(A)$ is a forest.

Finally, every cycle of $\overline f$ is contained in a connected component, and the numbers of cycles and connected components agree. Thus each connected component is exactly the support of one cycle of $\overline f$.
\end{proof}

From this, we obtain the following corollary immediately.

\begin{corollary}\label{cor:foresthelp}
Fix $f\in\Bkn$, $P\in\Pkn$, and $A\in \affdeomax{f}{P}$. If the paths labeled $i$ and $j$ modulo $n$ form an elbow, then $i$ and $j$ are in the same cycle of $\overline{f}$.
\end{corollary}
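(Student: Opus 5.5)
This follows directly from Proposition~\ref{prop:forest}, so the plan is simply to unwind the definition of the Deograph. Let $A\in\affdeomax{f}{P}$, and suppose the strands labeled $i$ and $j$ modulo $n$ form an elbow in some box of $A$.

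First, reduce to a representative region. By periodicity of the filling, translating the box by a multiple of $(n-k,k)$ gives an elbow in a box of the fixed representative region. The labels of the two strands in the translated box are shifted by the same multiple of $n$, so they are still $i$ and $j$ modulo $n$. By the definition of $G(A)$, this elbow contributes an edge joining $i \bmod n$ and $j \bmod n$. One degenerate case needs care: if $i\equiv j \pmod n$, the edge is a loop, and then $i$ and $j$ are trivially in the same cycle of $\overline f$.

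Second, apply Proposition~\ref{prop:forest}. Since $i$ and $j$ are joined by an edge, they lie in the same connected component of $G(A)$. That proposition says the connected components of $G(A)$ are exactly the supports of the cycles of $\overline f$. Hence $i$ and $j$ lie in the same cycle of $\overline f$.

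The only step that requires attention is the bookkeeping in the first step: checking that an elbow appearing anywhere in the infinite periodic filling, not only in the chosen representative region, still yields an edge of $G(A)$. Periodicity handles this, so I expect no real obstacle; the result is essentially a restatement of Proposition~\ref{prop:forest}.
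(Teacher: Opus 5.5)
Your proposal is correct and matches the paper's argument, which states the corollary as an immediate consequence of Proposition~\ref{prop:forest}: an elbow gives an edge of $G(A)$, and the connected components of $G(A)$ are exactly the supports of the cycles of $\overline{f}$. Your periodicity bookkeeping and your handling of the loop case $i\equiv j \pmod n$ are harmless extra care that the paper leaves implicit.
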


\begin{lemma}\label{lem:restricted-path-compatible}
Use the notation of Theorem~\ref{thm:decoupling_intro}. If $\affdeomax{f}{P}\neq\varnothing$, then $P_j$ has exactly $k_j$ up-steps for every $j\in[r]$.
\end{lemma}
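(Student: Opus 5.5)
Fix $A\in\affdeomax{f}{P}$. The plan is to count up-steps of $P$ in $S_j$ by tracking the strands of $A$ whose labels lie in $S_j$ and comparing their total displacement with $\av(f_j)$.

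First, each strand starts on the northern boundary at an index $a$ and ends on the southern boundary at index $f(a)$. By Corollary~\ref{cor:foresthelp}, elbows occur only between strands in the same cycle class $S_j$. At a crossing the strands pass through unchanged. At an elbow between two strands of $S_j$, the two outgoing directions are simply exchanged among strands of $S_j$. Hence the multiset of step directions used by the strands labelled in $S_j$, within one period, is the same as in the all-crossing filling. This is the step I expect to need the most care. Concretely, I would assign to each box of a fundamental domain the two unit edges through which strands exit. In the all-crossing filling $\tau_P$, a strand labelled $a$ goes straight, either down its column or across its row. It contributes displacement $n$ if $P_a=\text{U}$ and $0$ if $P_a=\text{R}$, matching $t_{P_U}$.

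Next, make the bookkeeping precise. Define $\Phi_j(A)=\sum_{a\in S_j\cap[1,n]}(f(a)-a)$, the total displacement of the strands of $S_j$ per period. By definition, $k_j=\av(f_j)=\frac{1}{n}\Phi_j(A)$. The identification $S_j\cong\Z$ rescales periods: $n$ becomes $n_j$, and an index shift of $n$ becomes a shift of $n_j$. Therefore $k_j$ counts how many strands of $S_j$ wrap around by one full period.

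Now insert the elbows in reading order, as in the proof of Proposition~\ref{prop:forest}, starting from $\tau_P$. Each insertion multiplies the current strand permutation by a transposition swapping the endpoints of two strands. By Corollary~\ref{cor:foresthelp}, both strands have labels in the same $S_j$, up to translation by multiples of $n$. A swap of endpoints between two strands of $S_j$ leaves $\sum_{a\in S_j\cap[1,n]}(g(a)-a)$ unchanged for the current permutation $g$. The swapped values are exchanged among elements of $S_j+n\Z=S_j$, so the sum over one period is invariant. Periodicity ensures that a translate of a strand contributes the same amount.

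This invariance gives $\Phi_j(A)=\Phi_j(\tau_P)=n\cdot\#\{a\in S_j\cap[n]:P_a=\text{U}\}$. Dividing by $n$ shows that the number of up-steps of $P_j$ equals $k_j$, as claimed. The main obstacle is making the invariance under translates rigorous. When the elbow involves strand $a$ and strand $b+mn$, the transposition exchanges the values $g(a)$ and $g(b)+mn$, together with their periodic copies. The sum changes by $m n-mn=0$ once the contributions of $a$ and $b$ are combined, and I would verify this identity carefully.
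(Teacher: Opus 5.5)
Your argument is correct in outline, but it is organized differently from the paper's.

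\textbf{Comparison with the paper.} The paper uses Corollary~\ref{cor:foresthelp} to delete every strand outside $S_j$. This produces a periodic filling $B$ of the $P_j$-strip whose strand permutation is $f_j$ in standardized labels. The paper then proves a general fact: any periodic filling of a strip with $u$ up-steps per period has strand permutation of average $u$, because each elbow insertion is an affine transposition between incongruent residues.

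You never build $B$. You stay in the ambient $n$-periodic diagram and show that the partial sum $\Phi_j(g)=\sum_{a\in S_j\cap[1,n]}(g(a)-a)$ is unchanged by each insertion. This holds because each insertion swaps the endpoints of two strands from a common $S_{j'}$, using the observation in the proof of Proposition~\ref{prop:forest} that the labels entering a box at insertion time are those of the final elbow. Your route handles all $j$ at once and avoids checking that the restricted diagram really is a filling of the $P_j$-strip with permutation exactly $f_j$. The cost is that you must convert the ambient sum back into $\av(f_j)$, which the paper gets for free by working in standardized coordinates.

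\textbf{The unjustified step.} That conversion is the one step you have not proved. The identity $\av(f_j)=\frac1n\Phi_j(f)$ does not hold ``by definition'', because displacements are not proportional strand by strand. For example, with $n=4$, $f=[2,5,3,8]$ and $S_1=\{1,2\}+4\Z$, the displacement $f(2)-2=3$ is not a multiple of $n/n_1=2$. Your phrase ``counts how many strands wrap around by one full period'' is only a heuristic.

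A quick proof: let $S$ be an $N$-periodic set and $g$ an $N$-periodic bijection of $S$. Expand $g(a)-a=\sum_{c\in\Z}\bigl([a\le c<g(a)]-[g(a)\le c<a]\bigr)$ and use periodicity to get
\[
\frac1N\sum_{a\in S\cap[1,N]}(g(a)-a)=\#\{a\in S: a\le c<g(a)\}-\#\{a\in S: g(a)\le c<a\}
\]
for every $c$. The right side is independent of $c$ because $g$ permutes $S$. It also depends only on the order of $S$, so it takes the same value for $f|_{S_j}$ with period $n$ and for $f_j$ with period $n_j$.

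\textbf{A smaller point.} Your cancellation $mn-mn=0$ presupposes $a\not\equiv b\pmod n$. This does hold: the two strands entering a box cross consecutive steps of an intermediate periodic cut. Their labels are therefore values of an $n$-periodic bijection at consecutive integers, hence incongruent.

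With these two points filled in, your proof is complete.
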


\begin{proof}
Let $A\in\affdeomax{f}{P}$. By Corollary~\ref{cor:foresthelp}, every elbow of $A$ joins two strands whose labels belong to the same set $S_j$; equivalently, every box in which two strands with labels in different sets meet is a crossing.

Fix $j$ and let $u_j$ be the number of up-steps of $P_j$. Delete from $A$ all strands whose labels do not belong to $S_j$, suppress the corresponding boundary steps, and standardize the remaining labels. Since all boxes between strands of different sets are crossings, the result is a periodic filling $B$ of the strip determined by $P_j$, with strand permutation $f_j$.

It remains to see that any periodic filling $B$ of the strip determined by a path $Q$ with $n'$ steps, of which $u$ are up-steps, has strand permutation of average $u$. Let $g$ be that strand permutation; by the geometry of the strip, $a\le g(a)\le a+n'$ for all $a$, so no two strands with congruent labels meet in a box. Passing from the all-crossing filling of the $Q$-strip to $B$ by replacing crossings with elbows one at a time therefore multiplies the strand permutation by affine transpositions $t_{a,b}$ with $a\not\equiv b \pmod{n'}$, each of which has average $0$. Since $\tau_Q=t_{Q_U}$ has average $u$, so does $g$.

Applying this to $B$ gives $k_j=\av(f_j)=u_j$, i.e.\ $P_j\in\mathcal P_{k_j,n_j}$.
\end{proof}

\begin{definition}\label{def:coloring}
Suppose $\overline{f} = (a_1^{(1)}\dots a_{n_1}^{(1)})\dots(a_1^{(r)}\dots a_{n_r}^{(r)})$ and let $S_j$ be as in Theorem~\ref{thm:decoupling_intro}. We define the coloring map
\[
	c_{f,P}: \affdeomax{f}{P} \to \prod_{j=1}^r \affdeomax{f|_{S_j}}{P|_{S_j}}
\]
as follows. For any $A\in \affdeomax{f}{P}$, color the wires in the following fashion:
\begin{enumerate}
\item Choose any elbow, and color both wires the same color. If wire $a$ forms an elbow with colored wire $b$, color $a$ the same color as $b$.
\item Continue this process until every elbow is monochromatic or uncolored, and then choose a new uncolored elbow and start this process again with a new color until every elbow is colored.
\item Color any remaining uncolored straight path with a new color until all wires have a color.
\item For each color, restrict $A$ to boxes with wires of the same color to obtain a product of affine Deograms. The result is $c_{f,P}(A)$. See Figure \ref{fig:coloring}.
\end{enumerate}
\end{definition}

\begin{figure}
\begin{tabular}{ccc}

\scalebox{\sclbx}{
\begin{tikzpicture}[xscale=0.5,yscale=0.5,baseline = {(0,1)}]
\foreach \x/\y in {3/0,3/3,5/1,5/2,0/1,0/2}{
  \draw[line width=1pt] (\x,\y-0.035) -- (\x,\y+1.035);
}
\foreach \x/\y in {0/0,0/3,1/0,1/3,2/0,2/3,3/1,3/4,4/1,4/4}{
  \draw[line width=1pt] (\x,\y) -- (\x+1,\y);
}
\foreach \x/\y/\z in {0.500000000000000/-0.2500000000000/1,0.500000000000000/3.250000000000/1,1.50000000000000/-0.2500000000000/2,1.50000000000000/3.250000000000/2,2.50000000000000/-0.2500000000000/3,2.50000000000000/3.250000000000/3,3.250000000000/0.500000000000000/4,2.8000000000000/3.50000000000000/4,3.50000000000000/0.650000000000000/5,3.50000000000000/4.250000000000/5,4.50000000000000/0.650000000000000/6,4.50000000000000/4.250000000000/6,5.250000000000/1.50000000000000/7,5.250000000000/2.50000000000000/8,-0.25000000000000/2.50000000000000/0,-0.5000000000000/1.50000000000000/-1}{
  \node[scale=0.50,inner sep=1pt] at (\x,\y) {$\z$};
}
\foreach \x/\y in {0/0}{
  \draw[line width=0.5pt,dashed,opacity=\gridop] (\x,\y) -- (\x+1,\y);
  \draw[line width=0.5pt,dashed,opacity=\gridop] (\x+1,\y) -- (\x+1,\y+1);
  \draw[blue,line width=2pt,opacity=\op] (\x+0.5,\y) to[out=90,in=0] (\x,\y+0.5);
  \draw[blue,line width=2pt,opacity=\op] (\x+0.5,\y+1) to[out=-90,in=180] (\x+1,\y+0.5);
}
\foreach \x/\y in {2/1,4/1}{
  \draw[line width=0.5pt,dashed,opacity=\gridop] (\x,\y) -- (\x+1,\y);
  \draw[line width=0.5pt,dashed,opacity=\gridop] (\x+1,\y) -- (\x+1,\y+1);
  \draw[red,line width=2pt,opacity=\op] (\x+0.5,\y) to[out=90,in=0] (\x,\y+0.5);
  \draw[red,line width=2pt,opacity=\op] (\x+0.5,\y+1) to[out=-90,in=180] (\x+1,\y+0.5);
}
\foreach \x/\y in {1/2,3/2}{
  \draw[line width=0.5pt,dashed,opacity=\gridop] (\x,\y) -- (\x+1,\y);
  \draw[line width=0.5pt,dashed,opacity=\gridop] (\x+1,\y) -- (\x+1,\y+1);
  \draw[green!80!black,line width=2pt,opacity=\op] (\x+0.5,\y) to[out=90,in=0] (\x,\y+0.5);
  \draw[green!80!black,line width=2pt,opacity=\op] (\x+0.5,\y+1) to[out=-90,in=180] (\x+1,\y+0.5);
}
\foreach \x/\y in {2/0,4/3}{
  \draw[line width=0.5pt,dashed,opacity=\gridop] (\x,\y) -- (\x+1,\y);
  \draw[line width=0.5pt,dashed,opacity=\gridop] (\x+1,\y) -- (\x+1,\y+1);
  \draw[red,line width=2pt,opacity=\op] (\x+0.5,\y) -- (\x+0.5,\y+1);
  \draw[blue,line width=2pt,opacity=\op] (\x,\y+0.5) -- (\x+1,\y+0.5);
}
\foreach \x/\y in {0/1}{
  \draw[line width=0.5pt,dashed,opacity=\gridop] (\x,\y) -- (\x+1,\y);
  \draw[line width=0.5pt,dashed,opacity=\gridop] (\x+1,\y) -- (\x+1,\y+1);
  \draw[blue,line width=2pt,opacity=\op] (\x+0.5,\y) -- (\x+0.5,\y+1);
  \draw[red,line width=2pt,opacity=\op] (\x,\y+0.5) -- (\x+1,\y+0.5);
}
\foreach \x/\y in {2/2,4/2}{
  \draw[line width=0.5pt,dashed,opacity=\gridop] (\x,\y) -- (\x+1,\y);
  \draw[line width=0.5pt,dashed,opacity=\gridop] (\x+1,\y) -- (\x+1,\y+1);
  \draw[red,line width=2pt,opacity=\op] (\x+0.5,\y) -- (\x+0.5,\y+1);
  \draw[green!80!black,line width=2pt,opacity=\op] (\x,\y+0.5) -- (\x+1,\y+0.5);
}
\foreach \x/\y in {1/1,3/1}{
  \draw[line width=0.5pt,dashed,opacity=\gridop] (\x,\y) -- (\x+1,\y);
  \draw[line width=0.5pt,dashed,opacity=\gridop] (\x+1,\y) -- (\x+1,\y+1);
  \draw[green!80!black,line width=2pt,opacity=\op] (\x+0.5,\y) -- (\x+0.5,\y+1);
  \draw[red,line width=2pt,opacity=\op] (\x,\y+0.5) -- (\x+1,\y+0.5);
}
\foreach \x/\y in {0/2}{
  \draw[line width=0.5pt,dashed,opacity=\gridop] (\x,\y) -- (\x+1,\y);
  \draw[line width=0.5pt,dashed,opacity=\gridop] (\x+1,\y) -- (\x+1,\y+1);
  \draw[blue,line width=2pt,opacity=\op] (\x+0.5,\y) -- (\x+0.5,\y+1);
  \draw[green!80!black,line width=2pt,opacity=\op] (\x,\y+0.5) -- (\x+1,\y+0.5);
}
\foreach \x/\y in {1/0,3/3}{
  \draw[line width=0.5pt,dashed,opacity=\gridop] (\x,\y) -- (\x+1,\y);
  \draw[line width=0.5pt,dashed,opacity=\gridop] (\x+1,\y) -- (\x+1,\y+1);
  \draw[green!80!black,line width=2pt,opacity=\op] (\x+0.5,\y) -- (\x+0.5,\y+1);
  \draw[blue,line width=2pt,opacity=\op] (\x,\y+0.5) -- (\x+1,\y+0.5);
}
\end{tikzpicture}
}
&
$\xrightarrow{c_{f,P}}$
&
\scalebox{\sclbx}{
\begin{tikzpicture}[xscale=0.5,yscale=0.5,baseline = {(0,-0.5)}]
\foreach \x/\y in {0/0,1/0}{
  \draw[line width=1pt] (\x,\y-0.035) -- (\x,\y+1.035);
}
\foreach \x/\y in {0/0,0/1}{
  \draw[line width=1pt] (\x,\y) -- (\x+1,\y);
}
\foreach \x/\y/\z in {0.500000000000000/-0.2500000000000/1,0.500000000000000/1.250000000000/1,1.250000000000/0.500000000000000/4,-0.5000000000000/0.50000000000000/-4}{
  \node[scale=0.50,inner sep=1pt] at (\x,\y) {$\z$};
}
\foreach \x/\y in {0/0}{
  \draw[line width=0.5pt,dashed,opacity=\gridop] (\x,\y) -- (\x+1,\y);
  \draw[line width=0.5pt,dashed,opacity=\gridop] (\x+1,\y) -- (\x+1,\y+1);
  \draw[blue,line width=2pt,opacity=\op] (\x+0.5,\y) to[out=90,in=0] (\x,\y+0.5);
  \draw[blue,line width=2pt,opacity=\op] (\x+0.5,\y+1) to[out=-90,in=180] (\x+1,\y+0.5);
}

\end{tikzpicture}
}
\scalebox{\sclbx}{
\begin{tikzpicture}[xscale=0.5,yscale=0.5,baseline = {(0,3/10)}]
\foreach \x/\y in {0/0,2/0}{
  \draw[line width=1pt] (\x,\y-0.035) -- (\x,\y+1.035);
}
\foreach \x/\y in {0/0,0/1,1/0,1/1}{
  \draw[line width=1pt] (\x,\y) -- (\x+1,\y);
}
\foreach \x/\y/\z in {0.500000000000000/-0.2500000000000/3,0.500000000000000/1.250000000000/3,1.500000000000000/-0.2500000000000/6,1.500000000000000/1.250000000000/6,2.250000000000/0.500000000000000/7,-0.5000000000000/0.50000000000000/-1}{
  \node[scale=0.50,inner sep=1pt] at (\x,\y) {$\z$};
}
\foreach \x/\y in {0/0,1/0}{
  \draw[line width=0.5pt,dashed,opacity=\gridop] (\x,\y) -- (\x+1,\y);
  \draw[line width=0.5pt,dashed,opacity=\gridop] (\x+1,\y) -- (\x+1,\y+1);
  \draw[red,line width=2pt,opacity=\op] (\x+0.5,\y) to[out=90,in=0] (\x,\y+0.5);
  \draw[red,line width=2pt,opacity=\op] (\x+0.5,\y+1) to[out=-90,in=180] (\x+1,\y+0.5);
}

\end{tikzpicture}
}
\scalebox{\sclbx}{
\begin{tikzpicture}[xscale=0.5,yscale=0.5,baseline = {(0,1)}]
\foreach \x/\y in {0/0,2/0}{
  \draw[line width=1pt] (\x,\y-0.035) -- (\x,\y+1.035);
}
\foreach \x/\y in {0/0,0/1,1/0,1/1}{
  \draw[line width=1pt] (\x,\y) -- (\x+1,\y);
}
\foreach \x/\y/\z in {0.500000000000000/-0.2500000000000/2,0.500000000000000/1.250000000000/2,1.500000000000000/-0.2500000000000/5,1.500000000000000/1.250000000000/5,2.250000000000/0.500000000000000/8,-0.25000000000000/0.50000000000000/0}{
  \node[scale=0.50,inner sep=1pt] at (\x,\y) {$\z$};
}
\foreach \x/\y in {0/0,1/0}{
  \draw[line width=0.5pt,dashed,opacity=\gridop] (\x,\y) -- (\x+1,\y);
  \draw[line width=0.5pt,dashed,opacity=\gridop] (\x+1,\y) -- (\x+1,\y+1);
  \draw[green!80!black,line width=2pt,opacity=\op] (\x+0.5,\y) to[out=90,in=0] (\x,\y+0.5);
  \draw[green!80!black,line width=2pt,opacity=\op] (\x+0.5,\y+1) to[out=-90,in=180] (\x+1,\y+0.5);
}
\end{tikzpicture}
}
\end{tabular}

\caption{Example of the coloring map.}

\label{fig:coloring}
\end{figure}

To prove Theorem~\ref{thm:decoupling_intro}, we show $c_{f,P}$ is a well-defined bijection.

\begin{proof}[Proof of Theorem~\ref{thm:decoupling_intro}]
If $P_j$ does not have exactly $k_j$ up-steps for some $j$, then Lemma~\ref{lem:restricted-path-compatible} gives $\affdeomax{f}{P}=\varnothing$. We may therefore assume that $P_j\in\mathcal P_{k_j,n_j} \qquad\text{for every }j\in[r]$.

We first verify that the coloring map of Definition~\ref{def:coloring} has the stated codomain. Let $A\in\affdeomax{f}{P}$ and write $c_{f,P}(A)=(A_1,\dots,A_r)$. By construction, $A_j$ has boundary path $P_j$ and strand permutation $f_j$. Moreover, deleting strands of other colors does not change the number of crossings between any fixed pair of remaining strands. Hence the distinguished condition for $A$ implies the distinguished condition for $A_j$.

By Proposition~\ref{prop:forest}, the component of $G(A)$ with vertex set $S_j\cap[n]$ is a tree on $n_j$ vertices. It therefore has $n_j-1$ edges. These edges are precisely the elbows of $A_j$, so $A_j$ has $n_j-1$ elbows. Since $\overline{f_j}$ is a single cycle, $n_j-1=n_j-c(\overline{f_j}),$
and therefore $A_j\in\affdeomax{f_j}{P_j}$. Thus $c_{f,P}$ is well-defined.

We now construct its inverse. Given $(A_1,\dots,A_r) \in \prod_{j=1}^r\affdeomax{f_j}{P_j}$, first replace the labels of $A_j$ so that its strands are labeled by the elements of $S_j$. Interleave these diagrams along the boundary path $P$ as follows. Scan the boxes of the ambient periodic strip in reading order. At each box, the labels of the two incoming strands are already determined.

If the two labels belong to different sets $S_j$, place a crossing. If they both belong to $S_j$, place the tile prescribed by $A_j$ after the strands of all other colors are deleted. Denote the resulting periodic filling by $M_{f,P}(A_1,\dots,A_r)$.

By construction, the restriction of $M_{f,P}(A_1,\dots,A_r)$ to the strands labeled by $S_j$ is $A_j$, so its strand permutation agrees with $f$ on $S_j$ for every $j$. Since the sets $S_j$ partition $\Z$, the strand permutation of $M_{f,P}(A_1,\dots,A_r)$ equals $f$.

Every elbow lies within one of the diagrams $A_j$. For the two strands forming such an elbow, their crossing history in the full diagram is the same as their crossing history in $A_j$. Hence the resulting filling is distinguished. It has
\[
\sum_{j=1}^r(n_j-1) = n-r = n-c(\overline f)
\]
elbows, and is therefore maximal. Thus $M_{f,P}:\prod_{j=1}^r\affdeomax{f_j}{P_j} \longrightarrow \affdeomax{f}{P}$ is well-defined.

It is immediate from the construction that $c_{f,P}\circ M_{f,P}=\operatorname{id}$.
Conversely, if $A\in\affdeomax{f}{P}$, then Corollary~\ref{cor:foresthelp} implies that every interaction between strands belonging to different cycle supports is a crossing. Therefore $A$ is recovered uniquely from its restrictions $c_{f,P}(A)$, and hence $M_{f,P}\circ c_{f,P}=\operatorname{id}$.
\end{proof}

\begin{remark}
Normalize the identification $S_j\cong\Z$ so that $1\in\Z$ corresponds to $\min(S_j\cap\Z_{\ge1})$. Since $S_j$ is $f$-stable, for $a\in S_j$ the condition $a<1\le f(a)$ is preserved by this identification, so
\[
	\sGrI_1(f_j)=\sGrI_1(f)\cap S_j
\]
after standardizing. As the up-steps of $P_{\sGrI_1(f)}$ are indexed by $\sGrI_1(f)$, this gives
\[
\left.P_{\sGrI_1(f)}\right|_{S_j} = P_{\sGrI_1(f_j)},
\]
and in particular the hypothesis of \Cref{thm:decoupling_intro}~(2) holds for $P=P_{\sGrI_1(f)}$. Combining Theorem~\ref{thm:decoupling_intro} with Corollary~\ref{cor:affdeocorrespondence} gives a combinatorial proof of \Cref{thm:geometric}~\labelcref{decouplepart}, originally proved in \cite{gallam2024}.
\end{remark}

\subsection{Combinatorial Moves on Affine Deograms}\label{subsec:moves}

We wish to prove Theorem~\ref{thm:eqmoves} combinatorially, through the use of three moves: \textit{Foata}, \textit{Box Addition/Removal}, and \textit{Zipper}.

We first construct the \textit{Foata} bijection on affine Deograms, which is an important tool for both Box Addition/Removal and Zipper. Let $\Acal^{(u)}_{f,\tau_P}$ be the set of $u$-distinguished $f$-subwords of $\tau_P$.

\begin{lemma}\label{lemma:twistedinjection} For any $f \in \Bkn$ and $i\in[n]$, we have an injection
\[
	F_i: \affdeo{f}{P}^{(s_i)} \to\affdeo{f}{P}.
\]
\end{lemma}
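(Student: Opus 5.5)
We work at the level of subwords: $\affdeo{f}{P}^{(s_i)}$ is identified with $\Acal^{(s_i)}_{f,P}$ and $\affdeo{f}{P}$ with $\Acal_{f,P}$. The map $F_i$ is modeled on Foata-type switching for twisted Deodhar subwords, as in Marsh--Rietsch and Galashin--Lam.

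Fix $\mathbf f\in\Acal^{(s_i)}_{f,\mathbf{\tau_P}}$ with partial products $g_j:=\mathbf f_{(j)}$. The $s_i$-twisted condition says $s_ig_j\le s_ig_{j-1}s_{i_j}$. It agrees with the untwisted condition $g_j\le g_{j-1}s_{i_j}$ at every step $j$ where $s_i$ is not a left descent of $g_{j-1}$ or $g_{j-1}s_{i_j}$ in a way that flips the comparison. We compare the two conditions using the lifting property: if $s_ig_{j-1}>g_{j-1}$ and $s_ig_{j-1}s_{i_j}>g_{j-1}s_{i_j}$, then the two inequalities coincide. So the trajectory $(g_j)$ splits into maximal intervals on which $s_i$ is, or is not, a left descent of $g_j$. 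Since $g_0=e$ and $s_i\cdot e>e$, the walk starts in the ``ascent'' regime.

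Define $F_i(\mathbf f)$ as follows.
\begin{enumerate}[label=(\roman*)]
\item If $\mathbf f$ is already distinguished, set $F_i(\mathbf f)=\mathbf f$.
\item Otherwise, let $j_0$ be the first index where the untwisted condition fails. At that step the twisted condition forces $f_{j_0}=e$ (an elbow) where distinguishedness would demand a crossing. Flip that letter to a crossing.
\item Flipping changes every later partial product from $g_j$ to $g_{j_0-1}s_{i_{j_0}}\cdots$. To restore the final product $f$, locate the next step $j_1$ at which the trajectory re-enters the region where the two conditions agree. Concretely, this is the first later step at which the resulting word would have $s_i$ as a left ascent again, and by the exchange condition it is a position where the letter acts by the same reflection. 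Flip that letter from crossing to elbow.
\item Repeat this pairing down the word.
\end{enumerate}
In Deogram language, this swaps an elbow/crossing pair along the two strands $i,i+1$ (mod $n$) of the strand diagram. That is the geometric picture: $s_i$ twisting only changes how strands $i$ and $i+1$ are compared.

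We then verify three things.
\begin{enumerate}[label=(\arabic*)]
\item The output is an $f$-subword. Each paired flip multiplies by the same reflection twice, via $g_{j_0-1}s_{i_{j_0}}\cdots = t\,g_{j_0-1}\cdots$ for the reflection $t$ fixed at $j_0$, so the two factors cancel.
\item The output is distinguished. Before $j_0$ and after $j_1$ the conditions coincide by the lifting property. In between, the new trajectory is the old one multiplied on the left by $t$, and the twisted condition for the old trajectory translates into the untwisted condition for the new one.
\item $F_i$ is injective. The positions $j_0$ and $j_1$ can be read off from the image, as the first step where the image differs from what the twisted rule would permit. Hence the inverse operation on the image is well defined.
\end{enumerate}
The elbow count is preserved, one elbow removed and one added, which is useful later for maximality.

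The main obstacle is step (2). We must show that the intermediate segment, after multiplying by $t$, satisfies the untwisted condition at every step, and that the partner $j_1$ always exists. Existence should follow because the final product $f$ lies in $\Bkn$, so $s_i$-descents must eventually resolve. To handle this we would first try to prove that $t$ equals the reflection $g_{j_0-1}^{-1}s_ig_{j_0-1}$ conjugated appropriately, reducing every comparison to the standard fact that left multiplication by $s_i$ is a Bruhat-order isomorphism between the ascent and descent halves. Periodicity of the affine diagram is not an issue, since everything is phrased in the fixed finite reduced word $\mathbf{\tau_P}$.
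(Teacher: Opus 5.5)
There is a genuine gap, and it lies in the pairing itself, not only in the existence question you flag. Under the $s_i$-twist, the twisted and untwisted conditions disagree only at the boxes where strands $i$ and $i+1$ meet. At those boxes, the twisted rule allows an elbow exactly when the two strands have crossed an odd number of times, so in particular their first meeting is forced to be a crossing. The tiles at the successive meetings of this pair therefore read $C\,E^{a_1}\,C\,C\,E^{a_3}\,C\,C\cdots$, whereas distinguished words need $E^{b_0}\,C\,C\,E^{b_2}\,C\,C\cdots$.

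Your procedure turns the first offending elbow $j_0$ into a crossing and then looks for a \emph{later} crossing to turn into an elbow. That partner need not exist. In the word $s_is_{i+1}s_i$, the subword $(s_i,e,e)$ is $s_i$-distinguished with product $s_i$, the first failure is at the last letter, and nothing comes after it. The unique distinguished $s_i$-subword is $(e,e,s_i)$, obtained by moving the crossing from the \emph{first} position to the last; the same happens whenever the pair tiles read $C,E$.

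More generally, suppose the pair sequence ends in a run of elbows. Any later crossing available to you lies before that run, so after your two flips the run sits at its original odd count again. Hence no choice of $j_1>j_0$ yields a distinguished subword. The heuristic that ``$s_i$-descents must eventually resolve'' is false: when $s_if<f$ the two strands finish inverted. The choice of $j_0$ is also off. If you paired $j_0$ with the preceding crossing instead, $C\,E^{a}$ with $a\ge2$ would become $E\,C\,E^{a-1}$, leaving $a-1$ elbows at odd count.

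The missing idea is the paper's Foata rearrangement. Cut the pair sequence before every $C$ and reverse each section, so that $C\,E^{a}\mapsto E^{a}\,C$. Equivalently, pair each run of pair-elbows at odd count with the pair-crossing \emph{immediately preceding} it, which exists because $S_1=C$. Turn that crossing into an elbow and the \emph{last} elbow of the run into a crossing.

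With this pairing your step (2) goes through:
\begin{itemize}
\item Between the two flipped positions, the new partial products are $s_i$ times the old ones. So the twisted condition for the old word is literally the untwisted condition for the new one.
\item Outside these segments, the only meetings of the pair are crossings, which impose no condition.
\item Steps involving other pairs are unaffected. The paper phrases this as: $i$ and $i+1$ are consecutive labels, so comparisons with any third label do not change.
\end{itemize}
The number of crossings of the pair is preserved, so the product is still $f$. Injectivity is immediate, because each section $E^{a}C$ of the image determines the section $CE^{a}$ it came from.
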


\begin{proof}
Let $A\in\affdeo{f}{P}^{(s_i)} $ and define $\mathbf{S}_A := (S_1,S_2,\dots,S_r)$, with $S_j\in\{E,C\}$, the sequence of elbows and crossings of $i$ and $i+1$ in $A$ (ordering from northwest to southeast). Notice that $S_1=C$ for every $A\in \affdeo{f}{P}^{(s_i)}$. 

We define $F_i(A)$ as follows: 
\begin{enumerate}[label=\arabic*.]
\item Add a bar in front of each $S_j$ such that $S_j=C$. 
\item In each section, reverse the string. Label the resulting string $\tilde{\mathbf{S}}_A$.
\item Construct an affine Deogram $\tilde{A}$ identical to $A$, but with the new sequence of crossings and elbows for the paths labeled $i$ and $i+1$ corresponding to $\tilde{\mathbf{S}}_A$.
\end{enumerate}

Then, $\tilde{A}\in\affdeo{f}{P}$, as for any elbow in $A$, if $j_1$ is below $j_2$, then either $j_1 = i+1$ and $j_2=i$ or $j_1<j_2$. The map on $\mathbf{S}_A$ ensures that we swap the condition for elbows labeled by $i$ and $i+1$. Set $F_i(A)=\tilde{A}$. From the construction, it is clear that $F_i$ is injective.
\end{proof}

We observe that for any $A\in \affdeo{f}{P}^{(s_i)}$, $\mathbf{S}_{F_i(A)}$ has $S_r=C$.

\begin{corollary}\label{cor:twistedbij}
If $f>s_if$, $F_i$ is a bijection.
\end{corollary}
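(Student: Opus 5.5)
Since $F_i$ is already injective by Lemma~\ref{lemma:twistedinjection}, it suffices to show surjectivity when $f>s_if$: every $B\in\affdeo{f}{P}$ arises as $F_i(A)$ for some $A\in\affdeo{f}{P}^{(s_i)}$. My plan is to construct the inverse map explicitly. Given $B$, read off its sequence $\mathbf{S}_B=(S_1,\dots,S_r)$ of elbows and crossings between the strands $i$ and $i+1$, in reading order. I will apply the reverse of the Foata-type operation: place a bar \emph{after} each $C$, so that each section ends in a crossing, reverse each section, and reinsert the resulting string into $B$ to obtain $A$. On strings, this operation and the one defining $F_i$ are mutually inverse precisely on the strings that end in $C$ (for the inverse) and begin with $C$ (for $F_i$). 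This matches the observation after Lemma~\ref{lemma:twistedinjection} that $\mathbf{S}_{F_i(A)}$ always has $S_r=C$.

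\textbf{Key step: show $S_r=C$ for every $B\in\affdeo{f}{P}$ when $f>s_if$.} Since $i$ and $i+1$ enter adjacently, the parity of crossings accumulated between them determines their relative order at each moment. The descent condition $s_if<f$ says that the strands starting at $i$ and $i+1$ end in inverted relative position, so they have crossed an odd number of times overall. By the distinguished condition, once they have crossed an odd number of times they cannot form an elbow until they cross again. Hence the last interaction must be a crossing, since otherwise the final parity would be even. I should double-check the left/right convention here, namely whether the condition is $s_if<f$ or $fs_i<f$ under the paper's convention $(fg)(a)=g(f(a))$ and the identification $\iota_k$. If it is the wrong side, I will replace the pair of strands labeled $i,i+1$ by the pair of strands entering at adjacent boundary positions, as the lemma's construction implicitly uses. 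The same parity argument shows that $\mathbf{S}_A$ must begin with $C$ for $A\in\affdeo{f}{P}^{(s_i)}$. This is consistent with the twist by $s_i$ reversing the initial relative order.

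\textbf{Verify $A\in\affdeo{f}{P}^{(s_i)}$.} Reinsertion changes only the tiles where $i$ and $i+1$ meet. The number of $C$'s is unchanged and the final strand permutation is unchanged, so $A$ still realizes $f$. For pairs other than $\{i,i+1\}$ the distinguished condition is unaffected, exactly as argued in the proof of Lemma~\ref{lemma:twistedinjection}. For the pair $\{i,i+1\}$, the $s_i$-twisted condition forbids an elbow exactly when they have crossed an even number of times. The section reversal converts \enquote{elbows appear only after an even number of crossings} into \enquote{elbows appear only after an odd number of crossings}. I will check this directly on the barred blocks $E^aC\mapsto CE^a$.

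\textbf{Main obstacle.} I expect the main difficulty to be the rigorous justification that the distinguished conditions for the other pairs are untouched. Swapping the tiles of $i$ and $i+1$ relabels which of the two strands continues where, and this alters the crossing histories of other strands with $i$ and $i+1$. I would handle this as the lemma does: in terms of subwords, the only changed prefix products $\mathbf f_{(j)}$ differ by left multiplication by $s_i$ between consecutive $C$'s. I would then use that $u\mapsto s_iu$ reverses comparisons only for the relevant letters, which reduces the check to the pair $\{i,i+1\}$.
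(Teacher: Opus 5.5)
Your proposal is correct and matches the paper's proof, which consists entirely of your key step: $f>s_if$ forces strands $i$ and $i+1$ to cross an odd number of times, so the distinguished condition gives $S_r=C$ for every element of $\affdeo{f}{P}$, and these are exactly the diagrams in the image of $F_i$. Your explicit inverse (bar after each $C$, then reverse each section) and your check that left-multiplying the prefixes by $s_i$ only affects the pair $\{i,i+1\}$ spell out what the paper leaves implicit from Lemma~\ref{lemma:twistedinjection}.
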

\begin{proof}
Since $f>s_if$, $\#\{S_j\in \mathbf{S}_A\,|\,S_j=C\}$ is odd, so $S_r=C$ for every $A\in\affdeo{f}{P}$.
\end{proof}

\begin{remark}
We call this map the Foata bijection due to its similarity to the maps $\gamma_x$ introduced by Foata \cite{foata}, used to bijectively prove that the major index and inversion number are equidistributed over $S_n$.
\end{remark}

In the case $f < s_if$, we have a stronger injection, now from a disjoint union.
\begin{corollary}\label{cor:twistedbijcase2}
If $f < s_if$, we have an injection
\[
	U_i: \affdeo{f}{P}^{(s_i)} \sqcup \affdeo{s_if}{P} \to \affdeo{f}{P}.
\]
\end{corollary}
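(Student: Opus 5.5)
The plan is to define $U_i$ on each piece of the disjoint union separately, and then check that the two images are disjoint. The tool is the sequence $\mathbf{S}_A=(S_1,\dots,S_r)$ of elbows and crossings between the strands labeled $i$ and $i+1$, read from northwest to southeast, as in the proof of Lemma~\ref{lemma:twistedinjection}.

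\textbf{The twisted piece.} On $\affdeo{f}{P}^{(s_i)}$ we set $U_i:=F_i$. This is injective by Lemma~\ref{lemma:twistedinjection}. By the observation following that lemma, every image $F_i(A)$ satisfies $S_r=C$: its last interaction between the strands $i$ and $i+1$ is a crossing.

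\textbf{The piece $\affdeo{s_if}{P}$.} Given $B\in\affdeo{s_if}{P}$, look at its sequence $\mathbf{S}_B$. Since $f<s_if$, the strands $i$ and $i+1$ end in the opposite relative order in $s_if$ compared with $f$. So $\mathbf{S}_B$ has a parity of crossings different from that required for $f$. If $\mathbf{S}_B$ is nonempty, define $U_i(B)$ by changing its last entry: replace $S_r=C$ by an elbow, or $S_r=E$ by a crossing. This swaps the endpoints of the two strands and so turns $s_if$ into $f$.

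I then need to check three things.
\begin{enumerate}
\item The result is still distinguished. Changing the last interaction affects no later condition, since no later box involves this pair.
\item If I change $C$ to $E$, the pair has crossed an even number of times before that box, so the elbow is allowed.
\item The image always has $S_r=E$, which makes it disjoint from the image of $F_i$.
\end{enumerate}
If instead I change $E$ to $C$, the image would have $S_r=C$ and could collide with the image of $F_i$. So the real constraint is to show that $\mathbf{S}_B$ always ends in $C$. Equivalently, $B$ has an odd number of crossings between $i$ and $i+1$, because the distinguished condition forbids an elbow right after an odd number of crossings, which would otherwise force the last entry to be an elbow. I would derive this from $s_if>f$: the strands $i,i+1$ are inverted in $s_if$, so their total number of crossings is odd, and then the last entry must be $C$. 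This parity claim, together with nonemptiness of $\mathbf{S}_B$, is the step I expect to be the main obstacle.

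\textbf{Nonemptiness and bookkeeping.} The strands $i$ and $i+1$ must meet at least once. For that I would use that $s_if$ is bounded and $\le\tau_P$, so the pair occurs in the word $\mathbf{\tau_P}$. Injectivity of the second piece is clear, since the change can be undone by reverting the last entry. Disjointness then follows from comparing the last entries: images of $F_i$ end in $C$, images of the second piece end in $E$. I will take extra care with the convention that products are read left to right, because $s_if$ means $s_i$ acts first, i.e.\ it relabels the northern strands $i,i+1$. That is why the relevant pair of strands is indexed by $i$ and $i+1$ at the top boundary, consistently with the indexing in Lemma~\ref{lemma:twistedinjection}.
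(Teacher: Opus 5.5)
Your proposal is correct and is essentially the paper's proof: $U_i$ is $F_i$ on the twisted piece, whose images end in $C$. On $\affdeo{s_if}{P}$ you turn the last interaction of strands $i,i+1$ into an elbow; that interaction must be a crossing because $s_if>f$ forces an odd number of crossings, so these images end in $E$ and the two pieces are disjoint. Two small simplifications: the parity argument already gives nonemptiness of $\mathbf{S}_B$ (an odd number is at least one), so you need no appeal to $s_if\le\tau_P$. And distinguishedness after the flip holds because downstream the labels $i,i+1$ are simply swapped, which changes no relative order with any third strand since $i$ and $i+1$ are consecutive integers.
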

\begin{proof}
By Lemma \ref{lemma:twistedinjection}, we know $F_i: \affdeo{f}{P}^{(s_i)} \to  \affdeo{f}{P}$ is an injection, and we see that for all $A\in \affdeo{f}{P}^{(s_i)}$, $F_i(A)$ has $S_r = C$. 

We construct a map $G_i: \affdeo{s_if}{P} \to \affdeo{f}{P}$. As $s_if>f$, we note that for all $A\in\affdeo{s_if}{P}$, $\#\{S_j\in \mathbf{S}_A\,|\,S_j=C\}$ is odd, and in particular, $S_r = C$. We define $G_i$ by $A\mapsto \tilde{A}$, where $\mathbf{S}_{\tilde{A}} = (S_1,S_2,\dots,S_{r-1},E)$ and $\tilde{A}$ is otherwise identical to $A$. By the same argument as Lemma \ref{lemma:twistedinjection}, $\tilde{A}$ remains distinguished and thus $\tilde{A}\in\affdeo{f}{P}$, with $S_r=E$.
\end{proof}

We first state a lemma to help aid our analysis for the future combinatorial bijections.

\begin{lemma}\label{lemma:cornerbox}
	Let $P\in\Pkn$ with $P_i=\mathrm R$, $P_{i+1}=\mathrm U$. Then the box of $B_P$ at the corner formed by the steps $i,i+1$ is $\preceq$-maximal, and the two strands meeting in it are $i$ and $i+1$; hence $\mathbf{\tau_P}$ may be chosen with last letter $s_i$. Dually, if $P_i=\mathrm U$ and $P_{i+1}=\mathrm R$, then this box is $\preceq$-minimal and $\mathbf{\tau_P}$ may be chosen with first letter $s_i$. Removing this box identifies $B_P\setminus\{b\}$ with $B_{s_iP}\setminus\{b'\}$, where $b'$ is the corresponding corner box of $s_iP$.
\end{lemma}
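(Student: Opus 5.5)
The plan is to prove Lemma~\ref{lemma:cornerbox} directly from the geometry of the periodic strip between $P$ and $P+(0,k)$ and the rule by which strands enter it. I will first fix coordinates. Step $j$ of $P$ is a unit edge on the lower-left boundary of the strip, and the strand labeled $j$ enters the strip through that edge. For an R-step the strand travels upward, and for a U-step it travels rightward. The boxes of the strip are the unit squares lying weakly northeast of $P$ and strictly southwest of $P+(0,k)$.

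Consider the case $P_i=\mathrm R$, $P_{i+1}=\mathrm U$. Steps $i$ and $i+1$ form a concave corner of $P$ pointing up-right: the R-step runs along the bottom of a unit box $b$, and the U-step runs along its right side. Strand $i$ enters $b$ from below and strand $i+1$ enters it from the right. Read in the strip's orientation, these two strands meet in $b$, so the letter recorded for $b$ is $s_i$.

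Next I will check that $b$ is $\preceq$-extremal. This comes down to verifying that $b$ has no neighbor on the relevant side inside the strip. For the convention that makes the statement correct, $P$ itself bounds $b$ on two sides, so there is no box of the strip adjacent to $b$ on those sides. Since $\preceq$ is generated by these adjacencies, $b$ is an extremal element of $B_P$. In particular $b$ has no predecessor among its neighbors, so the two strands meeting in $b$ are exactly the boundary strands $i$ and $i+1$ and have not been relabeled by earlier boxes.

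Any extremal element can be placed at the corresponding end of some linear extension. By the paper's remark that different reading orders give words related by commutation moves, we may therefore choose $\mathbf{\tau_P}$ to end (respectively begin) with $s_i$. The dual case $P_i=\mathrm U$, $P_{i+1}=\mathrm R$ is the mirror situation: the corner now points the other way and $b$ lies at the opposite extreme. One can also reduce this case to the first by $180^\circ$ rotation of the strip, which reverses $\preceq$.

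Finally, for the identification, note that $s_iP$ differs from $P$ only by swapping the corner. The regions between $P$ and $P+(0,k)$ and between $s_iP$ and $s_iP+(0,k)$ then differ by exactly one box at each end. Near step $i$ the box $b$ is added or removed, and near the translate on the top boundary the box $b'$ is exchanged; both are taken periodically. All other boxes coincide, and their relative positions, hence $\preceq$, are unchanged. This gives an order-preserving bijection $B_P\setminus\{b\}\cong B_{s_iP}\setminus\{b'\}$.

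The main obstacle is bookkeeping of orientation: deciding whether the corner with $P_i=\mathrm R,P_{i+1}=\mathrm U$ is $\preceq$-maximal or minimal in the paper's convention. Under the literal "weakly northwest" reading, the box sitting on $P$ looks minimal, so I expect the strip to be read with $P$ on the upper-left or with labels reversed. I would first settle this with the example in Figure~\ref{fig:bigaffdeo}, and then confirm on it that the strands meeting at such a corner are $i,i+1$ rather than their translates.
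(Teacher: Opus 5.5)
There is a genuine gap. It is the point you defer as the ``main obstacle'': which extreme the corner box is. That is most of the content of the lemma. You leave it to ``the convention that makes the statement correct,'' and the fix you expect (reading the strip with $P$ on the upper-left, or reversing labels) is both unnecessary and wrong.

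\textbf{The correct orientation.} In the paper, strands enter through the northern boundary $P+(0,k)$, exit through the southern boundary $P$, and move down and right. This is what makes the all-crossing filling equal to $t_{P_U}$, with $j\mapsto j+n$ on U-steps. So $\preceq$ (``weakly northwest'') is simply the direction of flow.

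\textbf{Your setup is inconsistent.} You have strands entering through $P$, moving up at R-steps but \emph{right} at U-steps. That cannot happen, because the strip lies to the left of every U-step of $P$.

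\textbf{Maximality under the literal definition.} Suppose $P_i=\mathrm R$ and $P_{i+1}=\mathrm U$. Then $P$ runs along the south and east sides of $b$. Since $P$ is an up-right path, every other box weakly southeast of $b$ lies below $P$. So $b$ has no \emph{successor} and is $\preceq$-maximal, and nothing needs reinterpreting.

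\textbf{Your justification fails here.} You argue that $b$ ``has no predecessor,'' so the strands meeting in it are the unrelabeled boundary strands. This is false in this case: the boxes west and north of $b$ generally lie in the strip and precede $b$. Carried through, your argument would make $b$ minimal, the opposite of the claim.

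\textbf{The right argument for the letter.} It is the time-reverse of yours. A reading order is a sweep from $P+(0,k)$ down to $P$. Each box flips steps $\mathrm U,\mathrm R$ at positions $j,j+1$ of the current path into $\mathrm R,\mathrm U$ and records $s_j$. Because $b$ lies in the strip, $s_iP$ lies weakly between $P$ and $P+(0,k)$, so a sweep can pass through $s_iP$. The final flip, at $b$, then produces $P$. This gives last letter $s_i$, and the two strands in $b$ exit directly through steps $i,i+1$ of $P$.

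\textbf{The dual case.} You must also say which box is meant. The box at the $\mathrm U,\mathrm R$ corner of $P$ itself lies below $P$, outside the strip. The relevant box is the one at the corresponding corner of $P+(0,k)$. The northern boundary bounds it on the north and west, so it is minimal and is entered directly by the boundary strands $i,i+1$. Here your no-predecessor argument is the correct one. Your $180^\circ$-rotation reduction works only because rotation exchanges the two boundaries, and you should say so.

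\textbf{The identification.} State it precisely: the strip of $s_iP$ is the strip of $P$ with $b$ removed and $b'=b+(0,k)$ added, periodically. This $b'$ is exactly the minimal corner box of $s_iP$. The common boxes form the region between $P+(0,k)$ and $s_iP$, read with the same letters. Hence $\mathbf{\tau_P}=\mathbf{w}\,s_i$ and $\mathbf{\tau_{s_iP}}=s_i\,\mathbf{w}$ for a single word $\mathbf{w}$, which is the form Lemma~\ref{lemma:boxchangesbij} uses.
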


We may now define Box Addition/Removal on affine Deograms. See Figure \ref{fig:boxaddremove} for an example.

\begin{lemma}\label{lemma:boxchangesbij}
For $f,s_if,fs_i,s_ifs_i\in \Bkn$ with $s_if>f$ and $P,s_iP\in\Pkn$ with $P_i = R$ and $P_{i+1}=U$, we have, for every $r$, bijections
\[
	\Acal^r_{f,P} \cong\begin{cases} \Acal^{r}_{s_ifs_i,s_iP}, & fs_i<f;\\[2pt]  \Acal^{r-1}_{s_if,s_iP} \sqcup \Acal^{r}_{s_ifs_i,s_iP},& fs_i>f.\end{cases}
\]
\end{lemma}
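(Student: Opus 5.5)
Since $P_i=\mathrm R$ and $P_{i+1}=\mathrm U$, Lemma~\ref{lemma:cornerbox} lets us fix a reading order in which the corner box $b$ comes last. Then $\mathbf{\tau_P}=\mathbf{g}\,s_i$, where $\mathbf g$ is a reduced word read from $B_P\setminus\{b\}$. Write $g$ for its product, so $\tau_P=g s_i$ and $g<\tau_P$. The same lemma identifies $B_P\setminus\{b\}$ with $B_{s_iP}\setminus\{b'\}$, and $b'$ is $\preceq$-minimal for $s_iP$. Reading $b'$ first gives $\mathbf{\tau_{s_iP}}=s_i\,\mathbf{g}'$, where $\mathbf g'$ is $\mathbf g$ with each strand label conjugated by $s_i$. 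Equivalently, $\tau_{s_iP}=s_i\tau_Ps_i=s_i g$. The plan is to split an element of $\Acal^r_{f,P}$ into its last letter and the subword of $\mathbf g$, and to do the same for $s_iP$ using the first letter.

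\textbf{Decomposing $\Acal^r_{f,P}$.} The last box either contributes $s_i$, so that the $\mathbf g$-part is an $fs_i$-subword, or contributes $e$, so that the $\mathbf g$-part is an $f$-subword. The distinguished condition at the last step says that an elbow is allowed only if $\mathbf f_{(m-1)}s_i>\mathbf f_{(m-1)}$.

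If $fs_i<f$, then an elbow at $b$ would need $f<fs_i$, which is false, so $b$ is forced to be a crossing. Hence $\Acal^r_{f,P}\cong\Acal^{r}_{fs_i,\mathbf g}$, untwisted. If $fs_i>f$, then both options occur: a crossing at $b$ requires only $fs_i\le fs_i\cdot s_i$-type compatibility, which holds automatically, and an elbow is allowed. So
\[
\Acal^r_{f,P}\cong \Acal^{r}_{fs_i,\mathbf g}\sqcup\Acal^{r-1}_{f,\mathbf g}.
\]

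\textbf{Decomposing $\Acal^r_{h,s_iP}$.} Now the first box $b'$ is forced to be a crossing, since $s_i\le e\cdot s_i$ holds and an elbow at the first step is never distinguished. This also matches the observation that $S_1=C$. Hence $\Acal^r_{h,s_iP}$ is the set of $s_i$-twisted distinguished $s_ih$-subwords of $\mathbf g'$, that is, $\Acal^{(s_i),r}_{s_ih,\mathbf g'}$.

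After relabeling the strands by $s_i$, which converts $\mathbf g'$ back into $\mathbf g$ and conjugates the target, these become $s_i$-twisted subwords of $\mathbf g$. This is exactly the setting of the Foata map $F_i$ of Lemma~\ref{lemma:twistedinjection}. Taking $h=s_ifs_i$ and $h=s_if$ identifies the right-hand sides with twisted sets whose targets are $fs_i$ and $f$ respectively.

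\textbf{Matching the pieces.} By Corollary~\ref{cor:twistedbij}, $F_i$ gives a bijection from twisted to untwisted subwords whenever the target $x$ satisfies $s_ix<x$. It preserves the number of elbows, because it only permutes the sequence $\mathbf S_A$.

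In the case $fs_i<f$, I would check that $s_i(fs_i)<fs_i$ using the hypothesis $s_if>f$ and the length bookkeeping. Then $\Acal^{(s_i),r}_{fs_i,\mathbf g}\cong\Acal^r_{fs_i,\mathbf g}$, which gives the first case.

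In the case $fs_i>f$, I would apply the same bijection on both pieces: $\Acal^{r}_{s_ifs_i,s_iP}$ matches the crossing-at-$b$ piece, and $\Acal^{r-1}_{s_if,s_iP}$ matches the elbow-at-$b$ piece. This relies on the elbow count being preserved, with the removed elbow at $b$ accounting for $r-1$.

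\textbf{Main obstacle.} The hardest step is verifying the descent conditions needed for Corollary~\ref{cor:twistedbij} in each case. Where a descent condition fails, one instead needs the disjoint-union injection $U_i$ of Corollary~\ref{cor:twistedbijcase2} together with a surjectivity argument. That argument would compare the condition $S_r=C$ versus $S_r=E$, using the extra letter $s_i$ as the final entry of $\mathbf S_A$. Checking that the conjugation-by-$s_i$ relabeling of strands interacts correctly with the Bruhat comparisons, via $\iota_k$ and its compatibility with right multiplication, is the other point I would verify carefully.
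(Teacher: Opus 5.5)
Your decomposition of $\Acal^r_{f,P}$ by the last letter is correct. The decomposition of the $s_iP$ side is not, and this is a genuine gap.

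\textbf{The first box is not forced.} In an untwisted distinguished subword of $\mathbf{\tau_{s_iP}}=s_i\mathbf g$, the first box $b'$ is \emph{not} forced to be a crossing. An elbow there only requires $e\le e\cdot s_i$, which always holds, because the two strands meeting at $b'$ have crossed zero times. Lemma~\ref{lemma:changingbox}(2) produces exactly such fillings. The fact $S_1=C$ holds only for $s_i$-twisted fillings, where the elbow condition would read $s_i\le e$. The correct decomposition is $\Acal^r_{h,s_iP}\cong\Acal^{(s_i),r}_{s_ih,\mathbf g}\sqcup\Acal^{r-1}_{h,\mathbf g}$, so you have dropped the second piece.

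\textbf{The descent conditions fail.} Consistently with this, the descent conditions you need for Corollary~\ref{cor:twistedbij} on $\mathbf g$ go the wrong way.
\begin{enumerate}
\item In the case $fs_i<f$ you need $s_i(fs_i)<fs_i$. But if $fs_i=s_iw'$ with $\ell(w')=\ell(fs_i)-1$, then $s_if=s_i(fs_i)s_i=w's_i$ has length at most $\ell(f)-1$. This contradicts $s_if>f$.
\item In the case $fs_i>f$, the elbow-at-$b$ piece would need $s_if<f$, which contradicts the hypothesis outright.
\end{enumerate}
So on $\mathbf g$ the map $F_i$ is only an injection. Its image misses the fillings with $S_r=E$, and those are exactly the dropped elbow-first piece. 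Your closing remark about $U_i$ points in this direction. However, that route would require proving $U_i$ surjective, which the paper never does and which is avoidable.

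\textbf{The repair.} The paper's proof puts the twist on the $s_iP$ side, with the box $b'$ included, rather than on $\mathbf g$. No relabeling of $\mathbf g$ is needed, since $\tau_{s_iP}=s_ig$ with the same $g$.
\begin{enumerate}
\item For $u\in\{f,fs_i\}$, the map $\mathbf u\mapsto(s_i,\mathbf u)$ is an elbow-preserving bijection $\Acal^r_{u,\mathbf g}\to\Acal^{(s_i),r}_{s_iu,\mathbf{\tau_{s_iP}}}$. In the $s_i$-twisted setting the first letter really must be $s_i$. The remaining twisted conditions $s_is_i\mathbf u_{(j-1)}\le s_is_i\mathbf u_{(j-2)}s_{i_j}$ are just distinguishedness of $\mathbf u$.
\item Then apply Corollary~\ref{cor:twistedbij} on the full word $\mathbf{\tau_{s_iP}}$ with target $s_iu$. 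This requires $u<s_iu$, which is exactly what holds. We have $f<s_if$ by hypothesis. We have $fs_i<s_ifs_i$ by the length argument above in the first case, and from $\ell(s_ifs_i)=\ell(f)+2$ in the second.
\end{enumerate}
Combined with your last-letter decomposition of $\Acal^r_{f,P}$, this gives both cases. The elbow at $b$ accounts for the shift $r\mapsto r-1$.
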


\begin{proof}
Throughout this proof, we let $\mathbf{f}$ denote a distinguished $f$-subword in $\mathbf{\tau_P}$ and $\mathbf{sfs}$ denote a (possibly $s_i$-twisted) distinguished $s_ifs_i$-subword in $\mathbf{\tau_{s_iP}}$.

Suppose $fs_i<f$. Then any $\mathbf{f}\in\Acal^r_{f,P}$, cannot satisfy $\mathbf{f}_{(k(n-k)-1)}= \mathbf{f}_{(k(n-k))}=f$, so it must satisfy $\mathbf{f}_{(k(n-k)-1)}=fs_i$. This gives a bijection $\Acal^r_{f,P}\cong\Acal^{(s_i),r}_{s_ifs_i,s_iP}$, where we require $\mathbf{sfs}_{(1)}=s_i$. However, applying Corollary \ref{cor:twistedbij}, as we must have $s_ifs_i>fs_i$ as well, gives the desired bijection.

Suppose $fs_i>f$. Then, $s_ifs_i>s_if>f$ and $s_ifs_i>fs_i>f$. Note that for every $\mathbf{f}\in\Acal^r_{f,P}$, either $\mathbf{f}_{(k(n-k)-1)} = f$, in which case $\mathbf{f}_{(k(n-k))} = f$, or $\mathbf{f}_{(k(n-k)-1)} = fs_i$, in which case $\mathbf{f}_{(k(n-k))} = f$ as well. This provides bijections $\Acal^r_{f,P}\cong \Acal^{(s_i),r-1}_{s_if,s_iP}\sqcup \Acal^{(s_i),r}_{s_ifs_i,s_iP}$, where we require $\mathbf{sf}_{(1)}=s_i$ and $\mathbf{sfs}_{(1)}=s_i$. However, since $s_ifs_i>fs_i$ and $s_if>f$, applying Corollary \ref{cor:twistedbij} gives the desired bijection. 
\end{proof}

\begin{figure}
\centering

\begin{tabular}{ccccc}

\deogcolored{2/0,2/5,4/1,4/6,6/2,6/3,6/4,2/0,2/5,4/1,4/6,6/2,6/3,6/4,0/4,0/2,0/3}{0/0,0/5,1/0,1/5,2/1,2/6,3/1,3/6,4/2,4/7,5/2,5/7}{0.500000000000000/-0.350000000000000/1,0.500000000000000/5.35000000000000/1,1.50000000000000/-0.350000000000000/2,1.50000000000000/5.35000000000000/2,2.25000000000000/0.500000000000000/3,1.75000000000000/5.50000000000000/3,2.50000000000000/0.650000000000000/4,2.50000000000000/6.35000000000000/4,3.50000000000000/0.650000000000000/5,3.50000000000000/6.35000000000000/5,4.25000000000000/1.50000000000000/6,3.75000000000000/6.50000000000000/6,4.50000000000000/1.65000000000000/7,4.50000000000000/7.35000000000000/7,5.50000000000000/1.65000000000000/8,5.50000000000000/7.35000000000000/8,6.35000000000000/2.50000000000000/9,6.35000000000000/3.50000000000000/10,6.35000000000000/4.50000000000000/11,-.25000000000000/4.50000000000000/0,-.35000000000000/3.50000000000000/-1,-.35000000000000/2.50000000000000/-2}{5/4,1/3,2/2,3/2,4/2,0/1,3/1,1/0,4/6}{0/4,1/4,2/4,3/4,4/4,0/3,2/3,3/3,4/3,5/3,0/2,1/2,5/2,1/1,2/1,5/6,0/0,2/5,3/5,4/5,5/5}{3/1,3/6}

&
$\rightarrow$

&

\deogcolored{2/0,2/5,3/1,3/6,6/2,6/3,6/4,2/0,2/5,3/1,3/6,6/2,6/3,6/4,0/4,0/2,0/3}{0/0,0/5,1/0,1/5,2/1,2/6,3/2,3/7,4/2,4/7,5/2,5/7}{0.500000000000000/-0.350000000000000/1,0.500000000000000/5.35000000000000/1,1.50000000000000/-0.350000000000000/2,1.50000000000000/5.35000000000000/2,2.25000000000000/0.500000000000000/3,1.75000000000000/5.50000000000000/3,2.50000000000000/0.650000000000000/4,2.50000000000000/6.35000000000000/4,3.25000000000000/1.50000000000000/5,2.75000000000000/6.50000000000000/5,3.50000000000000/1.65000000000000/6,3.50000000000000/7.35000000000000/6,4.50000000000000/1.65000000000000/7,4.50000000000000/7.35000000000000/7,5.50000000000000/1.65000000000000/8,5.50000000000000/7.35000000000000/8,6.35000000000000/2.50000000000000/9,6.35000000000000/3.50000000000000/10,6.35000000000000/4.50000000000000/11,-.25000000000000/4.50000000000000/0,-.35000000000000/3.50000000000000/-1,-.35000000000000/2.50000000000000/-2}{5/4,1/3,2/2,3/2,4/2,0/1,1/0,4/6}{0/4,1/4,2/4,3/4,4/4,0/3,2/3,3/3,4/3,5/3,0/2,1/2,5/2,1/1,2/1,5/6,0/0,2/5,3/5,4/5,5/5,3/6}{4/2,3/6}

&
$\rightarrow$

&

\deog{2/0,2/5,3/1,3/6,6/2,6/3,6/4,2/0,2/5,3/1,3/6,6/2,6/3,6/4,0/4,0/2,0/3}{0/0,0/5,1/0,1/5,2/1,2/6,3/2,3/7,4/2,4/7,5/2,5/7}{0.500000000000000/-0.350000000000000/1,0.500000000000000/5.35000000000000/1,1.50000000000000/-0.350000000000000/2,1.50000000000000/5.35000000000000/2,2.25000000000000/0.500000000000000/3,1.75000000000000/5.50000000000000/3,2.50000000000000/0.650000000000000/4,2.50000000000000/6.35000000000000/4,3.25000000000000/1.50000000000000/5,2.75000000000000/6.50000000000000/5,3.50000000000000/1.65000000000000/6,3.50000000000000/7.35000000000000/6,4.50000000000000/1.65000000000000/7,4.50000000000000/7.35000000000000/7,5.50000000000000/1.65000000000000/8,5.50000000000000/7.35000000000000/8,6.35000000000000/2.50000000000000/9,6.35000000000000/3.50000000000000/10,6.35000000000000/4.50000000000000/11,-.25000000000000/4.50000000000000/0,-.35000000000000/3.50000000000000/-1,-.35000000000000/2.50000000000000/-2}{5/4,1/3,2/2,3/2,0/1,1/0,4/6,3/6}{0/4,1/4,2/4,3/4,4/4,0/3,2/3,3/3,4/3,5/3,0/2,1/2,5/2,1/1,2/1,5/6,0/0,2/5,3/5,4/5,5/5,4/2}

\\

\deogcolored{2/0,2/5,4/1,4/6,6/2,6/3,6/4,2/0,2/5,4/1,4/6,6/2,6/3,6/4,0/4,0/2,0/3}{0/0,0/5,1/0,1/5,2/1,2/6,3/1,3/6,4/2,4/7,5/2,5/7}{0.500000000000000/-0.350000000000000/1,0.500000000000000/5.35000000000000/1,1.50000000000000/-0.350000000000000/2,1.50000000000000/5.35000000000000/2,2.25000000000000/0.500000000000000/3,1.75000000000000/5.50000000000000/3,2.50000000000000/0.650000000000000/4,2.50000000000000/6.35000000000000/4,3.50000000000000/0.650000000000000/5,3.50000000000000/6.35000000000000/5,4.25000000000000/1.50000000000000/6,3.75000000000000/6.50000000000000/6,4.50000000000000/1.65000000000000/7,4.50000000000000/7.35000000000000/7,5.50000000000000/1.65000000000000/8,5.50000000000000/7.35000000000000/8,6.35000000000000/2.50000000000000/9,6.35000000000000/3.50000000000000/10,6.35000000000000/4.50000000000000/11,-.25000000000000/4.50000000000000/0,-.35000000000000/3.50000000000000/-1,-.35000000000000/2.50000000000000/-2}{5/4,1/3,0/2,3/2,4/2,0/1,2/1,1/0,4/6}{0/4,1/4,2/4,3/4,4/4,0/3,2/3,3/3,4/3,5/3,1/2,2/2,5/2,1/1,3/1,5/6,0/0,2/5,3/5,4/5,5/5}{3/1,3/6}
&
$\rightarrow$
&

\deogcolored{2/0,2/5,3/1,3/6,6/2,6/3,6/4,2/0,2/5,3/1,3/6,6/2,6/3,6/4,0/4,0/2,0/3}{0/0,0/5,1/0,1/5,2/1,2/6,3/2,3/7,4/2,4/7,5/2,5/7}{0.500000000000000/-0.350000000000000/1,0.500000000000000/5.35000000000000/1,1.50000000000000/-0.350000000000000/2,1.50000000000000/5.35000000000000/2,2.25000000000000/0.500000000000000/3,1.75000000000000/5.50000000000000/3,2.50000000000000/0.650000000000000/4,2.50000000000000/6.35000000000000/4,3.25000000000000/1.50000000000000/5,2.75000000000000/6.50000000000000/5,3.50000000000000/1.65000000000000/6,3.50000000000000/7.35000000000000/6,4.50000000000000/1.65000000000000/7,4.50000000000000/7.35000000000000/7,5.50000000000000/1.65000000000000/8,5.50000000000000/7.35000000000000/8,6.35000000000000/2.50000000000000/9,6.35000000000000/3.50000000000000/10,6.35000000000000/4.50000000000000/11,-.25000000000000/4.50000000000000/0,-.35000000000000/3.50000000000000/-1,-.35000000000000/2.50000000000000/-2}{5/4,1/3,0/2,3/2,4/2,0/1,2/1,1/0,4/6}{0/4,1/4,2/4,3/4,4/4,0/3,2/3,3/3,4/3,5/3,1/2,2/2,5/2,1/1,3/6,5/6,0/0,2/5,3/5,4/5,5/5}{4/2,3/6}

&
$\rightarrow$
&

\deog{2/0,2/5,3/1,3/6,6/2,6/3,6/4,2/0,2/5,3/1,3/6,6/2,6/3,6/4,0/4,0/2,0/3}{0/0,0/5,1/0,1/5,2/1,2/6,3/2,3/7,4/2,4/7,5/2,5/7}{0.500000000000000/-0.350000000000000/1,0.500000000000000/5.35000000000000/1,1.50000000000000/-0.350000000000000/2,1.50000000000000/5.35000000000000/2,2.25000000000000/0.500000000000000/3,1.75000000000000/5.50000000000000/3,2.50000000000000/0.650000000000000/4,2.50000000000000/6.35000000000000/4,3.25000000000000/1.50000000000000/5,2.75000000000000/6.50000000000000/5,3.50000000000000/1.65000000000000/6,3.50000000000000/7.35000000000000/6,4.50000000000000/1.65000000000000/7,4.50000000000000/7.35000000000000/7,5.50000000000000/1.65000000000000/8,5.50000000000000/7.35000000000000/8,6.35000000000000/2.50000000000000/9,6.35000000000000/3.50000000000000/10,6.35000000000000/4.50000000000000/11,-.25000000000000/4.50000000000000/0,-.35000000000000/3.50000000000000/-1,-.35000000000000/2.50000000000000/-2}{5/4,1/3,0/2,3/2,3/6,0/1,2/1,1/0,4/6}{0/4,1/4,2/4,3/4,4/4,0/3,2/3,3/3,4/3,5/3,1/2,2/2,5/2,1/1,4/2,5/6,0/0,2/5,3/5,4/5,5/5}

\end{tabular}
\caption{Visualization of the Box Addition/Removal move for $f = [5,10,13,6,7,9,12,11,15,14,19]$, $i=5$, and $r=9$. The boxes highlighted in red are those which change from one step to the next.}
\label{fig:boxaddremove}
\end{figure}

We have a similar bijection in the case $P_i=U$ and $P_{i+1}=R$.

\begin{lemma}\label{lemma:boxchangesbijcase2}
For $f,s_if,fs_i,s_ifs_i\in \Bkn$ with $fs_i>f$ and $P,s_iP\in\Pkn$ with $P_i = U$ and $P_{i+1}=R$, we have, for every $r$, bijections
\[
	\Acal^r_{f,P} \cong\begin{cases} \Acal^{r}_{s_ifs_i,s_iP}, & s_if<f;\\[2pt]  \Acal^{r-1}_{fs_i,s_iP} \sqcup \Acal^{r}_{s_ifs_i,s_iP},& s_if>f.\end{cases}
\]
\end{lemma}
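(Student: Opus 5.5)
We mirror the proof of Lemma~\ref{lemma:boxchangesbij}, with the corner box now at the beginning of the reading order rather than at the end. By Lemma~\ref{lemma:cornerbox}, since $P_i=\mathrm U$ and $P_{i+1}=\mathrm R$, the corner box $b$ formed by steps $i,i+1$ is $\preceq$-minimal. The strands meeting in it are $i$ and $i+1$, so we may take $\mathbf{\tau_P}=(s_i)\cdot\mathbf{\tau'}$. Here $\mathbf{\tau'}$ is the word of $B_P\setminus\{b\}$, which Lemma~\ref{lemma:cornerbox} identifies with $B_{s_iP}\setminus\{b'\}$. In $s_iP$ the steps read $\mathrm R,\mathrm U$, so $b'$ is $\preceq$-maximal and $\mathbf{\tau_{s_iP}}=\mathbf{\tau'}\cdot(s_i)$ after the evident relabeling of strands. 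The plan is to condition on the tile in $b$, transport what remains into a word for $s_iP$ (possibly twisted), and then untwist using the Foata map of Corollary~\ref{cor:twistedbij}.

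Concretely, a subword $\mathbf f$ of $(s_i)\mathbf{\tau'}$ with product $f$ starts either with the letter $s_i$ (a crossing in $b$) or with $e$ (an elbow in $b$). The distinguished condition at the first step reads $\mathbf f_{(1)}\le s_i$, so both choices are allowed. In the crossing case, the remaining subword is an $s_i$-twisted distinguished subword of $\mathbf{\tau'}$ with product $s_if$ and $r$ elbows. In the elbow case, the remaining subword is an untwisted distinguished subword of $\mathbf{\tau'}$ with product $f$ and $r-1$ elbows. Reading the diagram from the $s_iP$ side, where $\mathbf{\tau'}$ is followed by the final corner letter $s_i$, the products become right multiples. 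The crossing case then corresponds to an element of $\Acal^{(s_i),r}_{s_ifs_i,s_iP}$ ending in a crossing. The elbow case corresponds to an element of $\Acal^{r-1}_{fs_i,s_iP}$ (twisted, or untwisted with a forced last letter) ending in an elbow. The hypothesis $fs_i>f$ is what makes the final letter legal in these reinterpretations: it plays the role that $s_if>f$ played in Lemma~\ref{lemma:boxchangesbij}. The situation is the left–right mirror, which is why $fs_i$ replaces $s_if$ in the statement.

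Now split on $s_if$ versus $f$. If $s_if<f$, the elbow option in $b$ is incompatible with the distinguished condition further along the $i,i+1$ strand pair. The parity count of crossings between strands $i$ and $i+1$, as used in Corollary~\ref{cor:twistedbij}, forces $b$ to be a crossing. This leaves only $\Acal^{(s_i),r}_{s_ifs_i,s_iP}$, and since $s_i(s_ifs_i)=fs_i<s_ifs_i$, Corollary~\ref{cor:twistedbij} (the Foata map $F_i$) turns it into $\Acal^r_{s_ifs_i,s_iP}$. If $s_if>f$, both options occur. Applying $F_i$ to the twisted pieces, as permitted because $s_ifs_i>fs_i$ and $s_i(fs_i)>fs_i$ (and hence $fs_i>s_ifs_i\cdot$ fails) is checked via the length relations $s_ifs_i>s_if,fs_i>f$, yields $\Acal^{r-1}_{fs_i,s_iP}\sqcup\Acal^r_{s_ifs_i,s_iP}$.

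The main obstacle is the bookkeeping in the transport step. Moving the corner letter from the front of the word to the back changes left multiplication into right multiplication. One must then check that the twisted distinguished condition on $\mathbf{\tau'}$ is exactly the condition for the relevant $s_i$-twisted (or untwisted) sets on $s_iP$, with the correct descent hypotheses making $F_i$ a bijection. The alternative I would fall back on is a symmetry argument: apply the $180^\circ$ rotation/duality exchanging northwest and southeast boundaries, which swaps $\mathrm{UR}$ corners with $\mathrm{RU}$ corners and $f$ with $f^{-1}$. This reduces the statement to Lemma~\ref{lemma:boxchangesbij}, though it would require relating distinguished and anti-distinguished diagrams via Theorem~\ref{thm:dist_to_anti}.
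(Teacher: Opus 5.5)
Your outline for the case $s_if>f$ is essentially the paper's proof. The case $s_if<f$ has a genuine gap, and the case $s_if>f$ has two slips.

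\textbf{The case $s_if<f$.} This case fails for two reasons.

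First, the elbow option at $b$ is never ruled out. The box $b$ is the first meeting of strands $i$ and $i+1$; equivalently $\mathbf f_{(1)}=e\le s_i$, as you yourself note. The subwords with an elbow at $b$ are exactly $(e)\cdot\mathbf g$ with $\mathbf g$ any distinguished $f$-subword of $\mathbf{\tau'}$. These exist as soon as $f\le\tau'$, and nothing about $s_if<f$ prevents that. The parity argument you borrow from Corollary~\ref{cor:twistedbij} and Lemma~\ref{lemma:boxchangesbij} forbids an elbow at the \emph{last} meeting of a pair that crosses an odd number of times overall, not at the first. Already at the level of words the claim fails. Take $(s_1,s_2,s_1,s_3,s_2)$ in $S_4$ and $f=s_1s_2$, so that $s_1f<f<fs_1$. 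The unique distinguished $f$-subword is $(e,e,s_1,e,s_2)$, which has an elbow in the corner position, and the crossing option contributes nothing.

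Second, the descent you invoke is backwards. We have $\ell(s_i\cdot fs_i)=\ell(f)+1\pm1$ and $\ell(s_if\cdot s_i)=\ell(f)-1\pm1$. Together these force $\ell(s_ifs_i)=\ell(f)$, hence $s_ifs_i<fs_i$. So $g:=s_ifs_i$ satisfies $g<s_ig$, and Corollary~\ref{cor:twistedbij} does not untwist $\Acal^{(s_i),r}_{g,s_iP}$.

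The paper handles this case by observing that $(s_ifs_i,s_iP)$ satisfies the hypotheses of the first case of Lemma~\ref{lemma:boxchangesbij}. Namely, $s_i(s_ifs_i)=fs_i>s_ifs_i$, $(s_ifs_i)s_i=s_if<s_ifs_i$, $(s_iP)_i=\mathrm R$ and $(s_iP)_{i+1}=\mathrm U$. It then inverts that bijection. If you want to keep your decomposition, you must merge the crossing piece $\Acal^{(s_i),r}_{g,s_iP}$ and the elbow piece $\Acal^{r-1}_{s_ig,s_iP}$ into $\Acal^r_{g,s_iP}$. The map $U_i=F_i\sqcup G_i$ of Corollary~\ref{cor:twistedbijcase2} does this, but it is only known to be injective, so you would still have to prove surjectivity.

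\textbf{The case $s_if>f$.} Two details need correcting.

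In the elbow case the appended corner letter must be a crossing, not an elbow. Appending $e$ to $\mathbf g$ gives product $f$ with $r$ elbows. Appending $s_i$ lands directly in the untwisted set $\Acal^{r-1}_{fs_i,s_iP}$. Only the crossing piece is twisted and needs $F_i$, which is legitimate here because $s_ifs_i>fs_i$.

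The hypothesis $fs_i>f$ is not what makes the appended letter legal; appending a crossing is always allowed, since the condition reads $xs_i\le xs_i$. What $fs_i>f$ gives is surjectivity of the transport. A final elbow in $\Acal^{r-1}_{fs_i,s_iP}$, or in $\Acal^{(s_i),r}_{s_ifs_i,s_iP}$, would require $fs_i\le f$. So every element of these sets ends in a crossing and comes from a unique element of $\Acal^r_{f,P}$. Your sketch never states this, so bijectivity is not established even in this case.
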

\begin{proof}

The first case is also the first case of Lemma \ref{lemma:boxchangesbij}. Suppose $s_if>f$. We split on cases depending on $\mathbf{f}_1=e$ or $\mathbf{f}_1 = s_i$. In either case, we remove the box and append a crossing, to obtain maps $\Acal^r_{f,P}\to \Acal^{r-1}_{fs_i,s_iP}\sqcup \Acal^{(s_i),r}_{s_ifs_i,s_iP}$. As $fs_i>f$, we also have $s_ifs_i>s_if$, and we must have $\mathbf{fs}_{k(n-k)}=s_i$ and $\mathbf{sfs}_{k(n-k)}=s_i$. This implies that our map is bijective. Since $s_ifs_i>fs_i$, we then apply Corollary \ref{cor:twistedbij} to obtain $\Acal^r_{f,P}\to \Acal^{r-1}_{fs_i,s_iP}\sqcup \Acal^{r}_{s_ifs_i,s_iP}$.
\end{proof}

\begin{figure}
\centering
\setlength{\tabcolsep}{1pt}
\begin{tabular}{ccc|ccc}
$s_is_{i+1}s_i$ & $\leftrightarrow$ & $s_{i+1}s_is_{i+1}$ & $s_is_{i+1}s_i$ & $\leftrightarrow$ & $s_{i+1}s_is_{i+1}$ \\\hline

\scalebox{\ybscl}{
\begin{tikzpicture}[xscale=0.5,yscale=0.5,baseline = {(0,0.2)}]
\node[anchor = west] at (-2,0.5) {$a$};
\node[anchor = north] at (-.5,2) {$b$};
\node[anchor = north] at (.5,1.8) {$c$};
\node[anchor = south] at (-.5,-2) {$1$};
\node[anchor = south] at (.5,-2) {$2$};
\node[anchor = east] at (2,0.5) {$3$};
\crosscross
\bottomcross
\end{tikzpicture}
}
&
$\leftrightarrow$

&

\scalebox{\ybscl}{
\begin{tikzpicture}[xscale=0.5,yscale=0.5,baseline = {(0,0.2)}]
\node[anchor = west] at (-2,0.5) {$a$};
\node[anchor = north] at (-.5,3) {$b$};
\node[anchor = north] at (.5,2.8) {$c$};
\node[anchor = south] at (-.5,-1) {$1$};
\node[anchor = south] at (.5,-1) {$2$};
\node[anchor = east] at (2,0.5) {$3$};
\crosscross
\topcross
\end{tikzpicture}
}
&

\scalebox{\ybscl}{
\begin{tikzpicture}[xscale=0.5,yscale=0.5,baseline = {(0,0.2)}]
\node[anchor = west] at (-2,0.5) {$a$};
\node[anchor = north] at (-.5,2) {$b$};
\node[anchor = north] at (.5,1.8) {$c$};
\node[anchor = south] at (-.5,-2) {$1$};
\node[anchor = south] at (.5,-2) {$2$};
\node[anchor = east] at (2,0.5) {$3$};
\crosscross
\bottomelbow
\end{tikzpicture}
}

&

 $\leftrightarrow$ 

&

\scalebox{\ybscl}{
\begin{tikzpicture}[xscale=0.5,yscale=0.5,baseline = {(0,0.2)}]
\node[anchor = west] at (-2,0.5) {$a$};
\node[anchor = north] at (-.5,3) {$b$};
\node[anchor = north] at (.5,2.8) {$c$};
\node[anchor = south] at (-.5,-1) {$1$};
\node[anchor = south] at (.5,-1) {$2$};
\node[anchor = east] at (2,0.5) {$3$};
\crosscross
\topelbow
\end{tikzpicture}
}

\\

\scalebox{\ybscl}{
\begin{tikzpicture}[xscale=0.5,yscale=0.5,baseline = {(0,0.2)}]
\node[anchor = west] at (-2,0.5) {$a$};
\node[anchor = north] at (-.5,2) {$b$};
\node[anchor = north] at (.5,1.8) {$c$};
\node[anchor = south] at (-.5,-2) {$1$};
\node[anchor = south] at (.5,-2) {$2$};
\node[anchor = east] at (2,0.5) {$3$};
\elbowcross
\bottomcross
\end{tikzpicture}
}

&

 $\leftrightarrow$ 

&

\scalebox{\ybscl}{
\begin{tikzpicture}[xscale=0.5,yscale=0.5,baseline = {(0,0.2)}]
\node[anchor = west] at (-2,0.5) {$a$};
\node[anchor = north] at (-.5,3) {$b$};
\node[anchor = north] at (.5,2.8) {$c$};
\node[anchor = south] at (-.5,-1) {$1$};
\node[anchor = south] at (.5,-1) {$2$};
\node[anchor = east] at (2,0.5) {$3$};
\crosselbow
\topcross
\end{tikzpicture}
}

&

\scalebox{\ybscl}{
\begin{tikzpicture}[xscale=0.5,yscale=0.5,baseline = {(0,0.2)}]
\node[anchor = west] at (-2,0.5) {$a$};
\node[anchor = north] at (-.5,2) {$b$};
\node[anchor = north] at (.5,1.8) {$c$};
\node[anchor = south] at (-.5,-2) {$1$};
\node[anchor = south] at (.5,-2) {$2$};
\node[anchor = east] at (2,0.5) {$3$};
\elbowelbow
\bottomelbow
\end{tikzpicture}
}

&
$\leftrightarrow$
&

\scalebox{\ybscl}{
\begin{tikzpicture}[xscale=0.5,yscale=0.5,baseline = {(0,0.2)}]
\node[anchor = west] at (-2,0.5) {$a$};
\node[anchor = north] at (-.5,3) {$b$};
\node[anchor = north] at (.5,2.8) {$c$};
\node[anchor = south] at (-.5,-1) {$1$};
\node[anchor = south] at (.5,-1) {$2$};
\node[anchor = east] at (2,0.5) {$3$};
\elbowelbow
\topelbow
\end{tikzpicture}
}

\\
\scalebox{\ybscl}{
\begin{tikzpicture}[xscale=0.5,yscale=0.5,baseline = {(0,0.2)}]
\node[anchor = west] at (-2,0.5) {$a$};
\node[anchor = north] at (-.5,2) {$b$};
\node[anchor = north] at (.5,1.8) {$c$};
\node[anchor = south] at (-.5,-2) {$1$};
\node[anchor = south] at (.5,-2) {$2$};
\node[anchor = east] at (2,0.5) {$3$};
\crosselbow
\bottomcross
\end{tikzpicture}
}

&
$\leftrightarrow$
&

\scalebox{\ybscl}{
\begin{tikzpicture}[xscale=0.5,yscale=0.5,baseline = {(0,0.2)}]
\node[anchor = west] at (-2,0.5) {$a$};
\node[anchor = north] at (-.5,3) {$b$};
\node[anchor = north] at (.5,2.8) {$c$};
\node[anchor = south] at (-.5,-1) {$1$};
\node[anchor = south] at (.5,-1) {$2$};
\node[anchor = east] at (2,0.5) {$3$};
\elbowcross
\topcross
\end{tikzpicture}
}

&

\scalebox{\ybscl}{
\begin{tikzpicture}[xscale=0.5,yscale=0.5,baseline = {(0,0.2)}]
\node[anchor = west] at (-2,0.5) {$a$};
\node[anchor = north] at (-.5,2) {$b$};
\node[anchor = north] at (.5,1.8) {$c$};
\node[anchor = south] at (-.5,-2) {$1$};
\node[anchor = south] at (.5,-2) {$2$};
\node[anchor = east] at (2,0.5) {$3$};
\elbowcross
\bottomelbow
\end{tikzpicture}
}
&
 $\leftrightarrow$ 
&

\scalebox{\ybscl}{
\begin{tikzpicture}[xscale=0.5,yscale=0.5,baseline = {(0,0.2)}]
\node[anchor = west] at (-2,0.5) {$a$};
\node[anchor = north] at (-.5,3) {$b$};
\node[anchor = north] at (.5,2.8) {$c$};
\node[anchor = south] at (-.5,-1) {$1$};
\node[anchor = south] at (.5,-1) {$2$};
\node[anchor = east] at (2,0.5) {$3$};
\elbowelbow
\topcross
\end{tikzpicture}
}

\text{ or } 

\scalebox{\ybscl}{
\begin{tikzpicture}[xscale=0.5,yscale=0.5,baseline = {(0,0.2)}]
\node[anchor = west] at (-2,0.5) {$a$};
\node[anchor = north] at (-.5,3) {$b$};
\node[anchor = north] at (.5,2.8) {$c$};
\node[anchor = south] at (-.5,-1) {$1$};
\node[anchor = south] at (.5,-1) {$2$};
\node[anchor = east] at (2,0.5) {$3$};
\elbowcross
\topelbow
\end{tikzpicture}
}

\\

\scalebox{\ybscl}{\begin{tikzpicture}[xscale=0.5,yscale=0.5,baseline = {(0,0.2)}]
\node[anchor = west] at (-2,0.5) {$a$};
\node[anchor = north] at (-.5,2) {$b$};
\node[anchor = north] at (.5,1.8) {$c$};
\node[anchor = south] at (-.5,-2) {$1$};
\node[anchor = south] at (.5,-2) {$2$};
\node[anchor = east] at (2,0.5) {$3$};
\elbowelbow
\bottomcross
\end{tikzpicture}
}

\text{ or }

\scalebox{\ybscl}{
\begin{tikzpicture}[xscale=0.5,yscale=0.5,baseline = {(0,0.2)}]
\node[anchor = west] at (-2,0.5) {$a$};
\node[anchor = north] at (-.5,2) {$b$};
\node[anchor = north] at (.5,1.8) {$c$};
\node[anchor = south] at (-.5,-2) {$1$};
\node[anchor = south] at (.5,-2) {$2$};
\node[anchor = east] at (2,0.5) {$3$};
\crosselbow
\bottomelbow
\end{tikzpicture}
}

&
$\leftrightarrow$
&

\scalebox{\ybscl}{\begin{tikzpicture}[xscale=0.5,yscale=0.5,baseline = {(0,0.2)}]
\node[anchor = west] at (-2,0.5) {$a$};
\node[anchor = north] at (-.5,3) {$b$};
\node[anchor = north] at (.5,2.8) {$c$};
\node[anchor = south] at (-.5,-1) {$1$};
\node[anchor = south] at (.5,-1) {$2$};
\node[anchor = east] at (2,0.5) {$3$};
\crosselbow
\topelbow
\end{tikzpicture}
}

&

\end{tabular}

\caption{Local moves for the braid relation.}
\end{figure}

We define the Zipper bijection using the local moves coming from the braid relation $s_{i+1}s_is_{i+1}\mapsto s_is_{i+1}s_i$. 
\begin{lemma}\label{lemma:localmoves}
If $\mathbf{w}$ and $\mathbf{u}$ are reduced words of $w$ connected by braid moves, then for any $v\leq w$, we have a bijection $\mathcal{A}^r_{v,\mathbf{w}}\to \mathcal{A}^r_{v,\mathbf{u}}$.
\end{lemma}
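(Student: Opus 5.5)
By induction on the number of braid moves it suffices to treat a single braid move. Reducedness is not needed, and commutation moves are handled by swapping two adjacent letters with their choices of $e$ or $s$. So suppose $\mathbf{w}=\mathbf{a}\,(s_i,s_{i+1},s_i)\,\mathbf{b}$ and $\mathbf{u}=\mathbf{a}\,(s_{i+1},s_i,s_{i+1})\,\mathbf{b}$, with the braid occupying positions $j+1,j+2,j+3$.

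The bijection will fix the letters chosen in $\mathbf{a}$ and $\mathbf{b}$ and replace only the middle triple. Write $x=\mathbf{v}_{(j)}$ for the prefix product, which is the same for both words. The distinguished condition at positions up to $j$ is then unchanged. If the middle triple has the same product $y$ in both words, then $\mathbf{v}_{(j+3)}=xy$ agrees, so every later prefix product agrees and the condition at positions after $j+3$ is unchanged too. The whole problem therefore reduces to a local statement. Fix $x$ and a target $y\in\langle s_i,s_{i+1}\rangle\cong S_3$. Consider the triples $(t_1,t_2,t_3)$ with $t_m\in\{e,\text{letter}\}$ whose product is $y$ and that satisfy the distinguished condition starting from $x$. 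We need a bijection between such triples for $s_is_{i+1}s_i$ and for $s_{i+1}s_is_{i+1}$ that preserves the number of $e$'s.

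The local condition depends only on the coset data. Let $x=x_0z$ with $x_0$ minimal in its coset $xW_{\{i,i+1\}}$ and $z\in S_3$. Then $\ell(x_0z')=\ell(x_0)+\ell(z')$ for every $z'$ in the parabolic subgroup, and the same factorization works in the affine group by the argument of Corollary~\ref{cor:addition}. Hence $x_0z'\le x_0z''$ if and only if $z'\le z''$, and the condition $x\,\mathbf{v}_{(m)}\le x\,\mathbf{v}_{(m-1)}s$ translates into the $z$-twisted distinguished condition inside $S_3$. The claim becomes finite: for each $z,y\in S_3$, and for each count $r$ of $e$'s, the number of $z$-twisted distinguished $y$-subwords of $(1,2,1)$ with $r$ copies of $e$ equals the corresponding number for $(2,1,2)$.

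This finite check is the main obstacle. I would handle it by the symmetry that swaps $s_i$ and $s_{i+1}$, namely conjugation by the longest element $w_0$ of $S_3$. This is a Bruhat automorphism sending $(1,2,1)$ to $(2,1,2)$, and it carries $z$-twisted $y$-subwords to $w_0zw_0$-twisted $w_0yw_0$-subwords. That alone does not fix $(z,y)$, so a direct enumeration over the 36 pairs is still needed. I expect it to produce exactly the matchings in the table of local moves. The distinguished condition forces the choice uniquely except in two situations: when the two ``or'' options occur, and when counts of $e$'s must be matched between different patterns, for instance elbow–elbow–cross against elbow–cross–elbow.

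Once each $(z,y,r)$ class is shown to have equal sizes on both sides, I would fix the explicit matching given by the figure. This is canonical enough to be well-defined because it depends only on $(z,y)$ and the local pattern. Applying it in place gives the bijection $\mathcal{A}^r_{v,\mathbf{w}}\to\mathcal{A}^r_{v,\mathbf{u}}$.
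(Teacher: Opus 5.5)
Your reduction is sound, and it is the same local strategy the paper uses. You keep all choices outside the braid triple, observe that prefix products before and after the triple are unchanged as long as the triple's product $y$ is, and match triples locally. Factoring $x=x_0z$ with $x_0$ minimal in $xW_{\{i,i+1\}}$ is a clean way to make this locality precise; the paper leaves it implicit. You actually need less than the full Bruhat isomorphism: the distinguished condition only compares an element with its product by one simple reflection. So it suffices that $x_0u<x_0us \iff u<us$ for $u\in W_{\{i,i+1\}}$ and $s\in\{s_i,s_{i+1}\}$, which is immediate from $\ell(x_0u)=\ell(x_0)+\ell(u)$. Your side remarks are also right: reducedness plays no role in the local argument, and commutation moves are handled by a plain swap because commuting simple reflections have orthogonal roots.

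What is missing is the one step with actual content. You only \emph{expect} the counts to agree for every $(z,y,r)$, and the proposal stops there. The check is short once stated as a rule: the letter $s$ may be skipped at the current element $g=zt_1\cdots t_{m-1}$ if and only if $gs>g$. Run the eight patterns of $(s_i,s_{i+1},s_i)$ and of $(s_{i+1},s_i,s_{i+1})$ from each $z$. On each side this gives $7,4,4,2,2,1$ valid triples for $z=e$, the two simple reflections, the two length-two elements, and $w_0$, respectively. The sets of pairs $(y,r)$ realized coincide on the two sides. Moreover, every class $(z,y,r)$ contains \emph{at most one} triple on each side. So the matching is forced, no appeal to the figure is needed, and it reproduces the figure.

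The partner of a pattern depends on $z$ only in the two ``or'' cases. For example, $(e,s_i,e)$ in $(s_{i+1},s_i,s_{i+1})$ is valid only for $z\in\{e,s_i\}$. It goes to $(e,e,s_i)$ when $xs_i>x$ and to $(s_i,e,e)$ when $xs_i<x$, which is exactly the descent comparison carried out in the paper's proof. The other case is its mirror image under $s_i\leftrightarrow s_{i+1}$.
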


\begin{proof}
Let $\mathbf{w}$ and $\mathbf{u}$ be two reduced words of $w$ connected by one braid move. We may assume $\mathbf{w} = (\dots,s_{i+1},s_i,s_{i+1},\dots)$ and $\mathbf{u} = (\dots,s_i,s_{i+1},s_i,\dots)$.

There are only two non-trivial cases; we highlight the proof for one as it is analogous for the other. The distinguished subword $\mathbf{v}=(\dots,e,s_i,e,\dots)$ may be mapped to $\mathbf{v'}=(\dots,s_i,e,e,\dots)$ or $\mathbf{v''}=(\dots,e,e,s_i,\dots)$, where $\mathbf{v}$ is a subword of $\mathbf{w}$ and $\mathbf{v'},\mathbf{v''}$ are subwords of $\mathbf{u}$. If $\mathbf{v'}$ is not distinguished, $v'_{(j+1)}=v'_{(j+2)}> v'_{(j+1)}s_i$. So, $\mathbf{v''}$ must be distinguished as we have
\[
v''_{(j)}=v''_{(j-1)}=v'_{(j+1)}s_i < v'_{(j+1)}=v''_{(j-1)}s_i.
\]
If instead $\mathbf{v'}$ is distinguished, $v'_{(j+1)}=v'_{(j+2)}< v'_{(j+1)}s_i$. By the same logic as above, $\mathbf{v''}$ is not distinguished as
\[
v''_{(j)}=v''_{(j-1)}=v'_{(j+1)}s_i > v'_{(j+1)}=v''_{(j-1)}s_i.
\]
\end{proof}

Using Lemma \ref{lemma:localmoves}, in the case $s_iP=P$, we may add \textit{phantom} crossings and elbows to our affine Deograms in the $i$th and $i+1$st indices and apply our local braid moves to obtain the following bijection.

\begin{lemma}\label{lemma:zipperfull}
For $fs_i,s_if,f\in \Bkn$ with $s_if,fs_i>f$ and $s_iP=P$, we have a bijection
\[
	\Phi: \Acal^r_{fs_i,P} \sqcup \Acal^r_{f,P} \to\Acal^{(s_i),r}_{s_if,P} \sqcup \Acal^r_{f,P}.
\]
\end{lemma}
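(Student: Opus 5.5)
We use phantom boxes and then run the braid-move bijection of Lemma~\ref{lemma:localmoves}. Since $s_iP=P$, we have $P_i=P_{i+1}$, so the strip has no corner at steps $i,i+1$.

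\emph{Extended words.} Take a reduced word $\mathbf{\tau_P}$ and form the longer words $\mathbf{w}=s_i\,\mathbf{\tau_P}$ and $\mathbf{u}=\mathbf{\tau_P}\,s_i$. These correspond to adjoining one phantom box at the top of the strip and one at the bottom. Because $s_iP=P$, conjugation gives $s_i\tau_P s_i=\tau_P$, so $s_i\tau_P=\tau_Ps_i$. Hence $\mathbf{w}$ and $\mathbf{u}$ are two words for the same element $x=s_i\tau_P=\tau_Ps_i$.

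We need $\ell(x)=\ell(\tau_P)+1$ so that both words are reduced. This follows from $i\notin$ the descent set of $\tau_P$: the strands $i,i+1$ never meet inside the $P$-strip, since both enter along parallel steps. Granting this, we would show that $\mathbf{w}$ and $\mathbf{u}$ are connected by braid and commutation moves. Geometrically, the phantom box is slid diagonally through the strip, and each time it passes a box involving $s_{i\pm1}$ a hexagon move $s_is_{i+1}s_i\leftrightarrow s_{i+1}s_is_{i+1}$ occurs. Lemma~\ref{lemma:localmoves} then gives, for every target $v$ and every $r$, bijections $\Acal^{r}_{v,\mathbf{w}}\cong\Acal^{r}_{v,\mathbf{u}}$. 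We also need the analogous statement for twisted distinguished subwords, since the proof of Lemma~\ref{lemma:localmoves} is local; we would carry this along.

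\emph{Decomposing each side by the phantom letter.} In $\mathbf{u}=\mathbf{\tau_P}s_i$, a distinguished subword with product $fs_i$ (taking $v=fs_i$) has two possibilities for its last letter:
\begin{itemize}
\item[(a)] Last letter $s_i$, with prefix an $f$-subword of $\mathbf{\tau_P}$. This needs $f<fs_i$, which holds. These prefixes form $\Acal^r_{f,P}$.
\item[(b)] Last letter $e$, with prefix an $fs_i$-subword. This is allowed only when $fs_i\le fs_i\cdot s_i=f$, which is false. So this case cannot occur.
\end{itemize}
The same analysis with $v=f$ shows the last letter can be $e$ (prefix in $\Acal^{r-1}_{f,P}$) or $s_i$ (prefix an $fs_i$-subword).

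To get exactly the required index $r$ on each side, choose targets $v\in\{f,fs_i\}$ for $\mathbf{u}$ and $v\in\{f,s_if\}$ for $\mathbf{w}$. Note that $s_i x$-type targets must match: $s_if\cdot s_i$ versus $f s_i$. Then match the elbow counts so that the leftover pieces are $\Acal^r_{fs_i,P}\sqcup\Acal^r_{f,P}$ on the $\mathbf{u}$ side and $\Acal^{(s_i),r}_{s_if,P}\sqcup\Acal^r_{f,P}$ on the $\mathbf{w}$ side.

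For $\mathbf{w}=s_i\mathbf{\tau_P}$, the first letter is always allowed. If it is $s_i$, the remainder is an $s_i$-twisted subword of $\mathbf{\tau_P}$ with product $s_i\cdot(\text{target})$. If it is $e$, the remainder is an untwisted subword. So the total target set splits as $\Acal^{(s_i)}_{s_if,P}$ plus an untwisted piece.

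\emph{Main obstacle.} The hard step is the bookkeeping that makes the pieces line up exactly, including the elbow index $r$ and the twist. Comparing target elements $v$ of the form $f$, $fs_i$, $s_if$ requires care, because $s_if\ne fs_i$ in general; the braid bijection preserves the target $v$ but the phantom letter sits at different ends. I would first take $v=s_ifs_i$ with the phantom letter used as a crossing, and $v=f$ with the phantom letter used as an elbow. Then I would check, using $s_if,fs_i>f$ and the distinguished inequalities at the phantom position, that each side decomposes into exactly the two stated sets, with $\Acal^r_{f,P}$ appearing on both.

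I would also verify carefully that Lemma~\ref{lemma:localmoves} extends to the extended (non-$\tau_P$) words and to the twisted setting.
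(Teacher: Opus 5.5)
Your framework is the right one, and it is exactly what the paper's one-line sketch intends. Since $s_iP=P$, Lemma~\ref{lem:translation-descents}(3) gives $s_i\tau_P=\tau_Ps_i$ of length $\ell(\tau_P)+1$. So $\mathbf{w}=s_i\mathbf{\tau_P}$ and $\mathbf{u}=\mathbf{\tau_P}s_i$ are reduced words for one element, and Lemma~\ref{lemma:localmoves} gives $\Acal^r_{v,\mathbf{w}}\cong\Acal^r_{v,\mathbf{u}}$ for each fixed $v$. That lemma is already stated for arbitrary reduced words, so you need no extension to ``non-$\tau_P$'' words and no twisted version.

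The gap is the step you call the main obstacle: you never carry it out, and the plan you sketch would fail.
\begin{itemize}
\item The braid bijection preserves the target $v$. So you cannot pair targets $\{f,fs_i\}$ on $\mathbf{u}$ with $\{f,s_if\}$ on $\mathbf{w}$.
\item Nor can you splice the ``phantom crossing'' part of $v=s_ifs_i$ with the ``phantom elbow'' part of $v=f$, because the bijection does not respect that split.
\item The choice $v=s_ifs_i$ also points the wrong way. Peeling off the phantom letter leaves untwisted subwords with product $s_if$ on the $\mathbf{u}$-side, and $s_i$-twisted subwords with product $fs_i$ on the $\mathbf{w}$-side. This is the opposite of what the lemma asks for.
\item In your case (a), the hypothesis $f<fs_i$ is not what permits the final letter: a crossing never violates the distinguished condition.
\end{itemize}

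The missing observation is that the single target $v=f$ does everything, and you in fact computed both halves of it.

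In $\mathbf{u}$:
\begin{itemize}
\item a final crossing is always allowed and leaves a distinguished $fs_i$-subword of $\mathbf{\tau_P}$;
\item a final elbow leaves an $f$-subword, and it is allowed precisely because $f<fs_i$.
\end{itemize}
In $\mathbf{w}$, the first letter is unconstrained:
\begin{itemize}
\item a leading elbow leaves an ordinary distinguished $f$-subword;
\item a leading crossing leaves an $s_i$-distinguished subword with product $s_if$.
\end{itemize}
Hence
\[
\Acal^r_{f,\mathbf{u}}=\Acal^r_{fs_i,P}\sqcup\Acal^{r-1}_{f,P},\qquad
\Acal^r_{f,\mathbf{w}}=\Acal^{(s_i),r}_{s_if,P}\sqcup\Acal^{r-1}_{f,P},
\]
and composing with the braid bijection $\Acal^r_{f,\mathbf{u}}\to\Acal^r_{f,\mathbf{w}}$ gives $\Phi$.

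What seems to have derailed you is trying to obtain $\Acal^r_{f,P}$ literally as the shared summand. It comes out as $\Acal^{r-1}_{f,P}$: $r-1$ genuine elbows plus the phantom one. Whether it is written with $r$ or $r-1$ only reflects whether the phantom elbow is counted. This is irrelevant for Corollary~\ref{cor:zipperbij}, which only needs the same set on both sides. Note also that $s_if>f$ plays no role in this lemma; it is used only afterwards, to untwist via Corollary~\ref{cor:twistedbij}.
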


\begin{corollary}\label{cor:zipperbij}
For $fs_i,s_if,f\in \Bkn$ with $s_if,fs_i>f$ and $s_iP=P$, we have a bijection
\[
	\Acal^r_{fs_i,P} \to  \Acal^r_{s_if,P}.
\]
\end{corollary}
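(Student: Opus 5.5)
The plan is to combine the bijection $\Phi$ of Lemma~\ref{lemma:zipperfull} with the Foata bijection of Corollary~\ref{cor:twistedbij}. The bijection $\Phi$ has the common summand $\Acal^r_{f,P}$ on both sides, so we first cancel it bijectively. Write
\[
\Phi:\Acal^r_{fs_i,P}\sqcup\Acal^r_{f,P}\to\Acal^{(s_i),r}_{s_if,P}\sqcup\Acal^r_{f,P}.
\]
Since all sets are finite, the Garsia--Milne involution principle (equivalently, following orbits) produces an explicit bijection $\Acal^r_{fs_i,P}\to\Acal^{(s_i),r}_{s_if,P}$. Concretely, take $\mathbf{x}\in\Acal^r_{fs_i,P}$ and apply $\Phi$. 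If the image lies in $\Acal^{(s_i),r}_{s_if,P}$, stop. Otherwise the image lies in $\Acal^r_{f,P}$; regard it as an element of the domain copy of $\Acal^r_{f,P}$ and apply $\Phi$ again, repeating until the output lands in $\Acal^{(s_i),r}_{s_if,P}$.

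This iteration terminates for the standard reason. The iterates stay in the finite set $\Acal^r_{f,P}$, and a repeat would contradict injectivity of $\Phi$: the first repeated element would have two distinct preimages, one of which is $\mathbf{x}$ from the other summand. Running the same procedure with $\Phi^{-1}$ gives the inverse map, so we obtain a bijection $\Acal^r_{fs_i,P}\cong\Acal^{(s_i),r}_{s_if,P}$.

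It remains to remove the twist. Set $g=s_if$. The hypothesis $s_if>f$ gives $g>s_ig$, since $s_ig=f$. Corollary~\ref{cor:twistedbij} applied to $g$ then shows that the Foata map $F_i:\affdeo{g}{P}^{(s_i)}\to\affdeo{g}{P}$ is a bijection. We also need $F_i$ to preserve the number of elbows $r$. This holds because $F_i$ only reverses blocks of the $E/C$-sequence $\mathbf{S}_A$ of strands $i,i+1$, which permutes but does not change the number of $E$'s, and leaves all other boxes unchanged. Hence $F_i$ restricts to a bijection $\Acal^{(s_i),r}_{s_if,P}\to\Acal^r_{s_if,P}$, and composing with the previous bijection gives the claim.

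The main obstacle is that the cancellation step yields a bijection that is well defined but only implicitly described, via orbit chasing. Before settling for this, I would inspect the construction of $\Phi$ via phantom boxes and braid moves. If $\Phi$ maps the summand $\Acal^r_{f,P}$ identically to $\Acal^r_{f,P}$, or maps $\Acal^r_{fs_i,P}$ directly into the twisted set, the cancellation is immediate. Failing that, the involution-principle argument above still gives a valid bijection.
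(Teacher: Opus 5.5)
Your proposal is correct and follows the paper's proof. Like the paper, you iterate $\Phi$ from Lemma~\ref{lemma:zipperfull} until the image lands in $\Acal^{(s_i),r}_{s_if,P}$, then untwist with the Foata bijection of Corollary~\ref{cor:twistedbij} applied to $s_if$ (valid since $s_if>f$), and you also supply the termination, inverse, and elbow-count-preservation checks that the paper leaves implicit.
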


\begin{proof}
Using Lemma \ref{lemma:zipperfull}, for any $A\in\Acal^r_{fs_i,P}$ we apply $\Phi$. If $\Phi(A)\in \Acal^{(s_i),r}_{s_if,P}$, then as $s_if>f$, we apply Corollary \ref{cor:twistedbij}. Otherwise, $\Phi(A)\in \Acal^r_{f,P}$, in which case we repeatedly apply $\Phi$ until $\Phi^m(A)\in \Acal^{(s_i),r}_{s_if,P}$ for some positive $m$.
\end{proof}

\begin{lemma}\label{lemma:doubleelbow}
Every pair of consecutive columns for a maximal affine Deogram has at most one pair of elbows in the same row.
\end{lemma}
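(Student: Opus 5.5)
We argue by contradiction, using the forest structure of the Deograph from Proposition~\ref{prop:forest}. Let $A\in\affdeomax{f}{P}$ and suppose that, in two consecutive columns (steps $i$ and $i+1$, read as the columns of the strip indexed by the corresponding boundary steps), two distinct rows each contain a pair of elbows in the same row. Pick two such rows $y_1$ above $y_2$, and look at the two horizontally adjacent boxes $(i,y_1),(i+1,y_1)$, both elbows. An elbow followed immediately to its east by another elbow means the strand entering the left box from the west turns up, while the strand entering the left box from the south turns east into the right box. There it again meets a strand from below and turns up, and the strand from below exits east.

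First we pin down the labels. Let $a$ be the strand travelling vertically through column $i$ and $b$ the one through column $i+1$ just below row $y_1$. Let $c$ be the horizontal strand entering $(i,y_1)$ from the west. The first elbow joins $c$ and $a$; then $a$ travels east into $(i+1,y_1)$ and forms an elbow with $b$. So row $y_1$ contributes Deograph edges $\{c,a\}$ and $\{a,b\}$. Do the same for row $y_2$, with strands $a',b',c'$. Between rows $y_2$ and $y_1$ the two columns carry fixed strands vertically, except where horizontal strands cross them. The key observation is that the strand exiting column $i$ upward from the row-$y_2$ elbow pair and the strand exiting column $i+1$ are determined, and that after passing through crossings only, the pair of vertical strands in columns $i,i+1$ at row $y_1$ coincides, as a set modulo $n$, with a pair already joined at row $y_2$. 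This holds because horizontal strands crossing both columns via crossings leave the vertical occupants unchanged. Any elbow in between in a single column changes only one occupant; we handle that case separately by following the occupant through the intermediate rows.

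Next we derive the contradiction. Once the two rows share a pair $\{x,z\}$ of residues that are joined by an edge in each row, the multigraph $G(A)$ has two parallel edges $\{x,z\}$, or more generally contains a cycle formed by the edges of the two rows together with the path linking them. Such a cycle contradicts $G(A)$ being a forest. Alternatively, if the same two strands form elbows twice with no crossing in between, the first elbow swaps them and the second swaps them back, so the strand permutation would be unchanged. That yields an elbow count exceeding $n-c(\overline f)$, contradicting maximality via the counting in Remark~\ref{rem:maxiaffdeo}.

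The main obstacle is the bookkeeping in the intermediate rows between $y_2$ and $y_1$: strands may enter and leave columns $i,i+1$ through single elbows. The plan is to induct on the number of intermediate elbows in these two columns, using the distinguished condition \ref{disting} to rule out configurations in which an intermediate elbow would break the link between the two rows. In each case we exhibit a closed walk in $G(A)$ using distinct elbow edges, which contradicts Proposition~\ref{prop:forest}.
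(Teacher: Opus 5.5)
\textbf{Verdict.} Your route is the paper's: its whole proof is the sentence that two such rows would create a cycle in $G(A)$, contradicting Proposition~\ref{prop:forest}. The gap is that you only produce the cycle when the two rows are separated by crossings alone. The general case, which is the actual content of the lemma, is left as a plan, and the plan points the wrong way.

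No intermediate configuration can ``break the link,'' and the distinguished condition~\ref{disting} plays no role; Proposition~\ref{prop:forest} uses only the strand permutation and the elbow count. Your ``alternatively'' paragraph is also misstated, since maximality is not contradicted by an elbow count ``exceeding'' $n-c(\overline f)$. What is true is this: replacing two elbows between the same pair (with no meeting in between) by crossings keeps the strand permutation and leaves $n-c(\overline f)-2$ elbows, which contradicts the lower bound of Remark~\ref{rem:maxiaffdeo}. Even corrected, that only covers the all-crossings case. (Minor: your local picture is the mirror image of the paper's, where strands run southeast from the northern boundary and an elbow joins the bottom and left edges of a box. This is harmless.)

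\textbf{The missing step.} It is a simple invariant.
\begin{itemize}
\item Take two double-elbow rows with none strictly between them. Every row between them is shared by both columns, since each column's rows form an interval.
\item Scan these rows from the first to the second, in the direction the strands travel. Just past the first row, the two columns are occupied by exactly the two strands of the elbow in its column-$i$ box.
\item Claim: the current occupants $p,q$ of columns $i,i+1$ are always joined in $G(A)$ by a trail of edges from elbows already passed.
\item A row with two crossings changes nothing.
\item A lone elbow in column $i$ replaces $p$ by the horizontal strand $h$ entering that box and adds the edge $\{p,h\}$. A lone elbow in column $i+1$ likewise replaces $q$ by $h$ and adds $\{q,h\}$. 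Either way, the new occupants are joined by the old trail extended by the new edge.
\item At the second double-elbow row, $p$ leaves column $i$ through its elbow, enters the column-$(i+1)$ box, and forms an elbow there with $q$. The resulting edge $\{p,q\}$ closes the trail.
\item All the boxes involved lie in at most $k$ consecutive rows. A nonzero multiple of $(n-k,k)$ shifts rows by at least $k$, so no two of these boxes are translates, and they give distinct edges of $G(A)$.
\end{itemize}
A closed trail of distinct edges contains a cycle, contradicting Proposition~\ref{prop:forest}.
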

\begin{proof}
If not, then $G(A)$ would contain a cycle, contradicting Proposition \ref{prop:forest}.
\end{proof}

\begin{corollary}
For $fs_i,s_if,f\in \Bkn$ with $s_if,fs_i>f$ and for maximal Deograms $A\in \Acal_{fs_i,P}$, we have
\[
	\Phi^m(A) \in  \Acal^{(s_i)}_{s_if,P}
\]
for some positive $m\leq2$, with $\Phi$ as defined in Lemma \ref{lemma:zipperfull}.
\end{corollary}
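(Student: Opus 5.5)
We will trace how the Zipper map $\Phi$ of Lemma~\ref{lemma:zipperfull} acts on a maximal $A\in\Acal_{fs_i,P}$. Recall that $\Phi$ is built by adding phantom tiles in the $i$th and $(i+1)$st positions (possible since $s_iP=P$) and sweeping the braid moves of Lemma~\ref{lemma:localmoves} through the diagram. Each application of $\Phi$ either lands in $\Acal^{(s_i)}_{s_if,P}$ and stops, or lands in $\Acal_{f,P}$ and must be applied again. So the claim reduces to showing that the second outcome can occur at most once along the orbit of a maximal $A$.

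\textbf{Step 1: identify when $\Phi$ exits to $\Acal_{f,P}$.} From the local move table, the output is an $f$-subword rather than an $s_if$-subword exactly when the zipper meets a configuration with two elbows in the same row of the columns indexed by $i$ and $i+1$. These are the $\elbow\elbow$ rows in the table, where the braid move can convert one elbow pair into the other. We will check directly from the table that every other local configuration passes the zipper through unchanged. Consequently, a pass that ends in $\Acal_{f,P}$ must have consumed a double-elbow row.

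\textbf{Step 2: apply Lemma~\ref{lemma:doubleelbow}.} By Lemma~\ref{lemma:doubleelbow}, the maximal diagram $A$ has at most one such double-elbow row in the consecutive columns $i,i+1$. We need this to persist along the orbit. To see that it does, note that $\Phi$ preserves the number of elbows $r$, so the output is again maximal for its permutation once the phantom tiles are removed. Moreover, the elbow pairs join the same residues (the moves only permute the labels $i$ and $i+1$). Hence the Deograph stays a forest by the argument of Proposition~\ref{prop:forest}, and the intermediate diagram in $\Acal_{f,P}$ still has at most one double-elbow row. We then argue that the first pass uses up that unique row: after it, the corresponding elbows have been converted into the zipper's crossing/elbow pattern, and no new double-elbow row can be created, since that would create a cycle in the Deograph. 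The second pass therefore has no double-elbow row to consume, and by Step 1 it must land in $\Acal^{(s_i)}_{s_if,P}$. This gives $m\le 2$; if $A$ has no double-elbow row at all, then $m=1$.

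\textbf{Main obstacle.} The delicate point is Step 1. We must show precisely that a return to $\Acal_{f,P}$ requires a double-elbow row, and that the row produced after one pass is not again of double-elbow type. This requires reading off how the phantom tiles propagate through each case of the braid-move table. We expect to handle it by tracking the parity of crossings between strands $i$ and $i+1$, as in Lemma~\ref{lemma:twistedinjection}, together with the forest property.
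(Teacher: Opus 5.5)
Your overall strategy, Lemma~\ref{lemma:doubleelbow} plus a row-by-row reading of the local moves, is the one the paper's one-line proof has in mind. However, Step~1 misreads the table, and the second pass does not go through as you describe it.

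\textbf{Where Step~1 goes wrong.} Work with two consecutive R-steps $P_i=P_{i+1}=\mathrm R$, so the zipper moves the phantom upward through rows; the U case is the transpose. Write $(L,R)\in\{E,C\}^2$ for the tiles of a row in columns $i,i+1$. The table then says the following.
\begin{itemize}
\item A phantom crossing swaps $(E,C)$ and $(C,E)$ rows, leaves $(C,C)$ rows alone, and becomes an elbow only at an $(E,E)$ row, which turns into $(C,E)$.
\item A phantom elbow passes $(C,C)$, $(E,E)$ and $(C,E)$ rows unchanged.
\item At an $(E,C)$ row, a phantom elbow turns into a crossing and the row becomes $(E,E)$ whenever the two vertical strands entering that row have crossed an odd number of times.
\end{itemize}
So it is false that every other configuration passes unchanged, and double-elbow rows can be created, contrary to Step~2. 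Meeting a double-elbow row is necessary but not sufficient for a pass to exit into $\Acal_{f,P}$.

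\textbf{The genuine gap.} The second pass starts from $B=\Phi(A)\in\Acal_{f,P}$. Since $fs_i>f$, the tile appended below $B$ is forced to be a phantom \emph{elbow}, not a crossing. A phantom elbow is never changed by an $(E,E)$ row, so ``no double-elbow row left to consume'' gives nothing. Indeed, if your Step~1 were right, a phantom elbow would pass through every row unchanged, and the orbit would never leave $\Acal_{f,P}$. What is missing is a reason the phantom elbow is forced to become a crossing.

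\textbf{How to repair it.} In the first pass, the phantom crossing can become an elbow only at an $(E,E)$ row. By Lemma~\ref{lemma:doubleelbow} there is at most one such row, say row $j_0$.
\begin{itemize}
\item If the phantom later turns back into a crossing, it stays one, because no $(E,E)$ rows remain above $j_0$. Then $m=1$.
\item Otherwise nothing above $j_0$ changed and no $(E,E)$ row was created, so $B$ has no $(E,E)$ row at all, and row $j_0$ of $B$ is $(C,E)$.
\end{itemize}
In that row, the left crossing is between the horizontal strand $a$ and the vertical strand $b$. These two had crossed an even number of times above, since they formed an elbow in $A$ and the region above is unchanged. Hence, just below row $j_0$, the two columns carry $b$ and $a$ with odd crossing parity, and a phantom elbow there would violate distinguishedness.

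Therefore, during the second pass the phantom is already a crossing when it reaches that level. Since $B$ has no $(E,E)$ row, it remains a crossing to the top, so $\Phi^2(A)\in\Acal^{(s_i)}_{s_if,P}$.

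Your forest and maximality discussion of $B$ is not needed. As written it is also insufficient: it would give at most one double-elbow row in $B$, whereas you need none. The maximality of $B$ would in any case require a cycle-count argument that you do not supply.
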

\begin{proof}
This follows directly from Lemma \ref{lemma:doubleelbow} and some straightforward analysis of the local moves.
\end{proof}

In the case $f>fs_i$, we may never add a phantom elbow as the wires that end at $i$ and $i+1$ cross an odd number of times in every affine Deogram. This gives us the following bijection.

\begin{lemma}\label{lemma:zipperfull2}
For $fs_i,s_if,f\in \Bkn$ with $s_if>f>fs_i$, we have a bijection
\[
	\Psi: \Acal^r_{fs_i,P} \to \Acal^{(s_i),r}_{s_if,P} \sqcup \Acal^r_{f,P}.
\]
\end{lemma}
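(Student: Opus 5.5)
We reuse the construction behind Lemma~\ref{lemma:zipperfull}, specialized to the case $f>fs_i$. Place $P$ so that the steps $i,i+1$ give $s_iP=P$. Then a copy of $s_i$ can be inserted as a phantom letter at the start or end of $\mathbf{\tau_P}$. An $fs_i$-subword of $\mathbf{\tau_P}$ with a phantom crossing appended becomes an $f$-subword of the word $\mathbf{\tau_P}s_i$. We then slide this phantom letter through $\mathbf{\tau_P}$ to the front, using commutation moves and the braid moves of Lemma~\ref{lemma:localmoves}. Once it is at the front, we strip it off and read the result either as an $s_i$-twisted $s_if$-subword (if the front letter is a crossing) or as an ordinary $f$-subword (if it is an elbow). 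This defines $\Psi$.

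The key steps, in order, are as follows.

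\textbf{Step 1.} For $\mathbf{u}\in\Acal^r_{fs_i,P}$, append the letter $s_i$ as a crossing. The prefixes of $\mathbf{u}$ are unchanged, so the distinguished condition at earlier positions is untouched. At the last position we need $f\le fs_i\cdot s_i=f$, which is automatic. The result is a distinguished $f$-subword of $\mathbf{\tau_P}s_i$ with $r$ elbows.

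\textbf{Step 2.} Since $s_iP=P$, the word $\mathbf{\tau_P}s_i$ is equivalent, via commutations and braid moves, to $s_i\mathbf{\tau_P}$. Both are reduced words of the same element, because conjugation by $s_i$ fixes $\tau_P$ in this situation. Lemma~\ref{lemma:localmoves} then gives an $e_{\mathbf f}$-preserving bijection $\Acal^r_{f,\mathbf{\tau_P}s_i}\to\Acal^r_{f,s_i\mathbf{\tau_P}}$.

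\textbf{Step 3.} Split the image according to the first letter. If it is $s_i$, removing it gives an $s_i$-twisted distinguished subword whose remainder multiplies to $s_if$, i.e.\ an element of $\Acal^{(s_i),r}_{s_if,P}$. If it is $e$, the element lies in $\Acal^{r-1}$ over $s_i\mathbf{\tau_P}$ and removing the letter gives a subword of $\mathbf{\tau_P}$. We must check this case gives an element of $\Acal^r_{f,P}$, with the elbow count matching.

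\textbf{Step 4.} Show that $\Psi$ is a bijection by running the procedure in reverse. Given an element of either target set, prepend the appropriate phantom letter, apply the braid moves backwards, and check that the last letter is forced to be a crossing. This forcing is exactly where the hypothesis $f>fs_i$ enters. The wires ending at $i,i+1$ cross an odd number of times in every $f$-subword. The distinguished condition therefore forbids a terminal phantom elbow, since $f\cdot s_i<f$ would require a crossing there. Hence every preimage really comes from $\Acal^r_{fs_i,P}$, and no ``$\Acal_{f,P}$ on the source side'' term appears, unlike in Lemma~\ref{lemma:zipperfull}.

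The main obstacle is the elbow bookkeeping in Step 3, together with making the phantom-letter status precise. A phantom elbow at the front must not count towards $r$, while phantom crossings are free. I would handle this by counting elbows only among the genuine $k(n-k)$ boxes. I would then verify, by checking the local move tables case by case, that the braid moves transport the phantom letter's type consistently. In particular, a moving $s_i$-crossing can turn into an $s_{i+1}$-letter and back, but never converts a genuine crossing into an elbow without a compensating change. Because $s_if>f$, the twisted set on the right is well-defined and compatible with Corollary~\ref{cor:twistedbij}. This lets later results untwist it, but it is not needed for the bijection itself.
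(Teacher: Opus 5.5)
Your construction is the one the paper intends; its only justification is the remark that no phantom elbow can be appended when $f>fs_i$, together with the zipper of Lemma~\ref{lemma:zipperfull}. You append a phantom $s_i$-crossing at the south, so that $\Acal^r_{f,\mathbf{\tau_P}s_i}\cong\Acal^r_{fs_i,P}$, transport it north with Lemma~\ref{lemma:localmoves}, and split by the front letter. Steps 1, 2 and the crossing case of Step 3 are sound, and the inversion in Step 4 works once the grading below is fixed. All of this requires $s_iP=P$, since that is what makes $\mathbf{\tau_P}s_i$ and $s_i\mathbf{\tau_P}$ reduced words for the same element. You should add it as a hypothesis rather than ``place'' $P$; the lemma tacitly assumes it.

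The gap is the elbow case of Step 3, and it cannot be closed the way you plan. Lemma~\ref{lemma:localmoves} preserves the total number of $e$'s in the word, phantom letter included. After Step 1 there are $r$ of them and the phantom letter is a crossing. So if the front letter of the image is $e$, exactly $r-1$ of the genuine $k(n-k)$ letters are $e$. Stripping it gives an element of $\Acal^{r-1}_{f,P}$ (the condition at position one, $e\le s_i$, is automatic), and this is reversible. Your remedy of counting only genuine boxes does not help, because in net one genuine elbow has become a crossing.

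No bookkeeping can produce $\Acal^r_{f,P}$ anyway. An element of $\Acal^r_{g,P}$ has $k(n-k)-r\equiv\ell(g)\pmod 2$ crossings, and $\ell(f)=\ell(fs_i)+1=\ell(s_if)-1$. Hence $\Acal^r_{f,P}$ is empty whenever $\Acal^r_{fs_i,P}$ or $\Acal^{(s_i),r}_{s_if,P}$ is nonempty. The bijection as literally stated would therefore force $\affdeo{f}{P}=\varnothing$. The grading in the statement is off by one.

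Concretely, take $n=4$, $k=2$, $i=1$, $P=\mathrm{UURR}$ and $f=[3,4,6,5]$. Then $fs_1=[3,4,5,6]$ and $s_1f=[4,3,6,5]$, of lengths $0$ and $2$, while $\ell(f)=1$. The all-elbow filling lies in $\Acal^4_{fs_1,P}$, but $\Acal^{(s_1),4}_{s_1f,P}=\Acal^4_{f,P}=\varnothing$.

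What your argument actually proves is $\Acal^r_{fs_i,P}\cong\Acal^{(s_i),r}_{s_if,P}\sqcup\Acal^{r-1}_{f,P}$. This is the grading predicted by the factor $q-1$ in Corollary~\ref{cor:double2_geo} applied to $fs_i$. It is also all that Corollary~\ref{cor:zipperbij2} and Theorem~\ref{thm:eqmoves} use, since they forget $r$. Relatedly, in Step 3 the image lies in $\Acal^r_{f,s_i\mathbf{\tau_P}}$, not in $\Acal^{r-1}$ over $s_i\mathbf{\tau_P}$; the index drops only after stripping.
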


\begin{corollary}\label{cor:zipperbij2}
For $fs_i,s_if,f\in \Bkn$ with $s_if>f>fs_i$, we have a bijection
\[
	\Acal^r_{fs_i,P} \to \Acal^{r}_{s_if,P} \sqcup \Acal^r_{f,P}.
\]
\end{corollary}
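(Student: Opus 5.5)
The plan is to deduce Corollary~\ref{cor:zipperbij2} from Lemma~\ref{lemma:zipperfull2} together with the Foata bijection, following the pattern of Corollary~\ref{cor:zipperbij}. Lemma~\ref{lemma:zipperfull2} already supplies a bijection
\[
\Psi:\Acal^r_{fs_i,P}\to \Acal^{(s_i),r}_{s_if,P}\sqcup\Acal^r_{f,P},
\]
so it suffices to produce a bijection $\Acal^{(s_i),r}_{s_if,P}\to\Acal^r_{s_if,P}$ that preserves the number of elbows $r$, and then compose. The second summand $\Acal^r_{f,P}$ is carried along by the identity.

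For the twisted piece, I will invoke Corollary~\ref{cor:twistedbij} with $s_if$ in the role of $f$. Its hypothesis requires $s_if>s_i(s_if)=f$, and this holds by the assumption $s_if>f$. The resulting map $F_i$ is a bijection $\affdeo{s_if}{P}^{(s_i)}\to\affdeo{s_if}{P}$. Since $F_i$ only rearranges the sequence $\mathbf S_A$ of elbows and crossings between strands $i$ and $i+1$ (inserting bars and reversing within sections), it preserves the number of letters equal to $E$. All other tiles are unchanged, so $F_i$ maps $\Acal^{(s_i),r}_{s_if,P}$ bijectively onto $\Acal^r_{s_if,P}$ for every $r$.

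Composing, $(F_i\sqcup\mathrm{id})\circ\Psi$ gives the desired bijection $\Acal^r_{fs_i,P}\to\Acal^r_{s_if,P}\sqcup\Acal^r_{f,P}$.

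The only step needing care is checking that the Foata map is well defined and bijective at each fixed $r$, rather than just on the full set. This amounts to confirming that the bar-and-reverse operation preserves the count of $E$'s and that the inverse, which reverses each section back, also respects $r$. Both are immediate from the construction in Lemma~\ref{lemma:twistedinjection}. I would also check that no hypothesis of the form $s_iP=P$ is needed, which matches the statement of Lemma~\ref{lemma:zipperfull2}.
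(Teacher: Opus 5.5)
Your proposal is correct and matches the paper's proof: apply $\Psi$ from Lemma~\ref{lemma:zipperfull2}, then use the Foata bijection of Corollary~\ref{cor:twistedbij} (for $s_if$, valid since $s_if>f$) on the twisted summand, and the identity on $\Acal^r_{f,P}$. You also check that $F_i$ only permutes the $E$/$C$ letters and therefore preserves $r$, which the paper leaves implicit.
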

\begin{proof}
For $\Psi(A)\in \Acal^{(s_i),r}_{s_if,P}$, with $\Psi$ as in Lemma \ref{lemma:zipperfull2}, then, since $s_if>f$, we apply Corollary \ref{cor:twistedbij}. 
\end{proof}

\begin{proof}[Proof of Theorem~\ref{thm:eqmoves}]
If $P_i=R$ and $P_{i+1}=U$, we apply Lemma~\ref{lemma:boxchangesbij}. If $P_i = U$ and $P_{i+1}=R$, we apply Lemma~\ref{lemma:boxchangesbijcase2}. If $s_iP=P$, we apply Corollaries~\ref{cor:zipperbij} and \ref{cor:zipperbij2}.
\end{proof}

\section{Anti-Distinguished to Distinguished}\label{sec:antitodist}

The goal of this section is to prove the following proposition.

\begin{proposition}\label{prop:antitodist}
For any $f\in \Bkn$, $P\in\Pkn$, and any reduced word $\mathbf{f}$ of $f$, we have a bijection
\[
	\phi_{\mathbf{f}}:  \antiaffdeo{f}{P}\to\affdeo{f}{P}.
\]
\end{proposition}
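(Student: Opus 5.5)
Anti-distinguished Deograms are the $h$-twisted Deograms for a suitable twist, and the plan is to untwist them one simple reflection at a time using the Foata bijection. The first step is to identify $\antiaffdeo{f}{P}$ with the twisted set $\affdeo{f}{P}^{(f^{-1})}$ (equivalently $\Acal^{(f^{-1})}_{f,P}$). The anti-distinguished condition read from the southern boundary asks that $\mathbf{f}_{(j-1)}^{-1}f \le \mathbf{f}_{(j)}^{-1}f$ with the right multiplication reversed. Inverting and using that $x\mapsto x^{-1}$ preserves Bruhat order, this becomes exactly the $f^{-1}$-twisted condition $f^{-1}\mathbf{f}_{(j)}\le f^{-1}\mathbf{f}_{(j-1)}s_{i_j}$, once the left/right conventions of the paper are matched. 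Concretely, two strands that must cross an odd number of times from the south correspond to a pair that is an inversion of $f^{-1}\mathbf{f}_{(j-1)}$. I would check this carefully against the definition of $h$-distinguished, since a sign or inversion error here is the likeliest slip.

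Next, fix the reduced word $\mathbf{f}=(s_{a_1},\dots,s_{a_\ell})$ of $f$, so that $f^{-1}=s_{a_\ell}\cdots s_{a_1}$ (in the $\iota_k$ normalization). Put $h_m:=s_{a_m}\cdots s_{a_1}$ for $0\le m\le \ell$, so that $h_0=e$ and $h_\ell=f^{-1}$. I would prove a twisted generalization of Lemma~\ref{lemma:twistedinjection} and Corollary~\ref{cor:twistedbij}: if $s h > h$ with $s=s_{a_m}$ and $h=h_{m-1}$, then the Foata map $F_{s}$, which applies the bar-and-reverse operation to the crossing/elbow sequence $\mathbf{S}_A$ of the two strands whose interaction is governed by the twist, is a bijection
\[
\affdeo{f}{P}^{(s h)}\to\affdeo{f}{P}^{(h)}.
\]
The argument follows the proof of Lemma~\ref{lemma:twistedinjection} with the strands $i,i+1$ replaced by the pair $h^{-1}(i),h^{-1}(i+1)$. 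Passing from the $h$-twist to the $sh$-twist flips the distinguished condition only for elbows between exactly that pair, and reversing each barred block exchanges which crossing parity is forbidden for elbows. Since the total number of crossings of that pair is determined by $f$ and $\ell(sh)=\ell(h)+1$ along the reduced word, the parity is forced. This is the analogue of the hypothesis $f>s_if$ in Corollary~\ref{cor:twistedbij}, and it guarantees that the first and last letters of $\mathbf{S}_A$ are $C$ on the appropriate sides, so the map is invertible rather than merely injective.

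Finally, I would define $\phi_{\mathbf{f}}:=F^{(h_0)}\circ F^{(h_1)}\circ\cdots\circ F^{(h_{\ell-1})}$, a composite of $\ell(f)$ Foata bijections running from the $h_\ell=f^{-1}$ twist down to the untwisted set. It therefore depends on $\mathbf{f}$, as the statement indicates. The main obstacle is the middle step: verifying in the twisted setting that the parity of crossings between the relevant pair is the right one at every stage. I would try to derive this from the fact that along a reduced word each new reflection $h_{m-1}^{-1}s_{a_m}h_{m-1}$ is an inversion of $f$, so that pair crosses an odd number of times in every Deogram for $f$. This is exactly what Corollary~\ref{cor:twistedbij} needs to upgrade injectivity to bijectivity.
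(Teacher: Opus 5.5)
Your proposal is correct and follows essentially the same route as the paper. You identify $\antiaffdeo{f}{P}$ with $\affdeo{f}{P}^{(f^{-1})}$ (Lemma~\ref{lemma:antikey}) and prove the twisted Foata injection $\affdeo{f}{P}^{(s_ig)}\to\affdeo{f}{P}^{(g)}$ for $s_ig>g$ (Lemma~\ref{lemma:gentwistedinjection}). You then upgrade it to a bijection by the parity argument (Corollary~\ref{cor:gentwistedbij}) and compose along $h_\ell=f^{-1},\dots,h_0=e$. The only cosmetic difference is that the paper states the bijectivity hypothesis as $h_{m-1}f>s_{a_m}h_{m-1}f$, which, given $s_{a_m}h_{m-1}>h_{m-1}$, is equivalent to your criterion that $h_{m-1}^{-1}s_{a_m}h_{m-1}$ is an inversion of $f$.
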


We first give a precise definition of an anti-distinguished subword. For any $f$-subword $\mathbf{f}$ of $\mathbf{g} = (s_{i_1},\dots,s_{i_m})$ and $1\leq j \leq m$, let $\mathbf{f}_{\lra{j}} = f_j \dots f_m$.

\begin{definition}
For $f\in \tilde{S}_n$, we say that a $f$-subword $\mathbf{f}$ of $\mathbf{g}$ is \textit{anti-distinguished} if $\mathbf{f}_{\lra{j}}\leq s_{i_j}\mathbf{f}_{\lra{j+1}}$ for all $1\leq j\leq m-1$. 
\end{definition}

We make the following key observation.

\begin{lemma}\label{lemma:antikey}
Let $f\in \Bkn$, $P\in \Pkn$, and $\mathbf{\tau_P}$ be a reduced word for $\tau_P$. Then an $f$-subword $\mathbf{f}$ of $\mathbf{\tau_P}$ is anti-distinguished if and only if it is $f^{-1}$-distinguished.

For any $f\in\Bkn$ and $P\in\Pkn$, we have
\[
	\affdeo{f}{P}^{(f^{-1})} = \antiaffdeo{f}{P}.
\]
\end{lemma}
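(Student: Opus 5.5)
The plan is to unwind both conditions letter by letter and check that they coincide at every position $j$. Throughout, work in $\Saffz$ via $\iota_k$, so that $f$ denotes the element $\iota_k(f)$ whose subword we take, and $f^{-1}$ its inverse there. Write $\mathbf{\tau_P}=(s_{i_1},\dots,s_{i_m})$ with $m=k(n-k)$, and let $\mathbf f$ be an $f$-subword.

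The key identity is the splitting
\[
f=\mathbf f_{(j)}\,\mathbf f_{\lra{j+1}} \qquad\text{for every } 0\le j\le m,
\]
with the conventions $\mathbf f_{(0)}=\mathbf f_{\lra{m+1}}=e$. It gives
\[
f^{-1}\mathbf f_{(j)}=\bigl(\mathbf f_{\lra{j+1}}\bigr)^{-1},
\qquad
f^{-1}\mathbf f_{(j-1)}s_{i_j}=\bigl(\mathbf f_{\lra{j}}\bigr)^{-1}s_{i_j}=\bigl(s_{i_j}\mathbf f_{\lra{j}}\bigr)^{-1}.
\]
Inversion is an automorphism of Bruhat order on a Coxeter group, since it preserves length and reverses reduced words. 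Hence the $f^{-1}$-distinguished condition at position $j$ is equivalent to
\[
\mathbf f_{\lra{j+1}}\le s_{i_j}\mathbf f_{\lra{j}} .
\]

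Next, compare this with the anti-distinguished condition $\mathbf f_{\lra{j}}\le s_{i_j}\mathbf f_{\lra{j+1}}$ by cases on the letter $f_j$. If $f_j=e$, then $\mathbf f_{\lra j}=\mathbf f_{\lra{j+1}}$, and both conditions become $\mathbf f_{\lra{j+1}}\le s_{i_j}\mathbf f_{\lra{j+1}}$. If $f_j=s_{i_j}$, then $\mathbf f_{\lra j}=s_{i_j}\mathbf f_{\lra{j+1}}$, and both conditions become the tautology $x\le x$. So the two conditions agree for each $1\le j\le m-1$.

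The endpoint $j=m$ also needs attention, because the anti-distinguished definition only ranges over $j\le m-1$. At $j=m$ the twisted condition reads $e\le s_{i_m}\mathbf f_{\lra m}$, and since $\mathbf f_{\lra m}\in\{e,s_{i_m}\}$ this holds automatically. So the extra condition imposes nothing, and the two notions coincide, proving the first claim.

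The second claim, $\affdeo{f}{P}^{(f^{-1})}=\antiaffdeo{f}{P}$, then follows by passing through the subword–filling correspondence fixed earlier with the chosen reading order. Under that correspondence, $h$-twisted Deograms are by definition the fillings of $\Acal^{(h)}_{f,P}$, and anti-distinguished Deograms are the fillings whose subwords are anti-distinguished, so the equality of subword sets transfers directly. The only point I expect to need care is bookkeeping. First, one must be consistent that twisting and subword products are taken in $\Saffz$ after $\iota_k$, which is compatible with right multiplication. Second, one must check that "reading from the southern boundary" in the pictorial definition matches the suffix products $\mathbf f_{\lra j}$; this is immediate from reversing the reading order.
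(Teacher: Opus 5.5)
Your proof is correct and follows essentially the same route as the paper: both use the splitting $f=\mathbf f_{(j)}\mathbf f_{\lra{j+1}}$ and the invariance of Bruhat order under inversion to rewrite one condition, then a case check on $f_j\in\{e,s_{i_j}\}$ to match it with the other. The differences are cosmetic: you start from the $f^{-1}$-twisted side rather than the anti-distinguished side, and you note explicitly that the twisted condition at $j=m$ is vacuous, which the paper leaves implicit.
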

\begin{proof}
Let $\mathbf{f}$ be an $f$-subword of $\mathbf{\tau_P}$. Then $f = \mathbf{f}_{(j-1)}\mathbf{f}_{\lra{j}}$ for all $1\leq j \leq m$, and we obtain the chain of inequalities
\begin{align*}
\mathbf{f}_{\lra{j}}&\leq s_{i_j}\mathbf{f}_{\lra{j+1}},\\
\mathbf{f}_{\lra{j}}^{-1} & \leq \mathbf{f}_{\lra{j+1}}^{-1}s_{i_j},\\
f^{-1}\mathbf{f}_{(j-1)} & \leq f^{-1}\mathbf{f}_{(j)}s_{i_j}.
\end{align*} 
A simple case check for $f_j\in\{e,s_{i_j}\}$ shows this is equivalent to $f^{-1}\mathbf{f}_{(j)} \leq f^{-1}\mathbf{f}_{(j-1)}s_{i_j}$.
\end{proof}

\begin{remark}
Lemma~\ref{lemma:antikey} also has a simple wiring-diagram interpretation. To test whether an $f$-subword is $f^{-1}$-distinguished, prepend a reduced wiring diagram for $f^{-1}$ above the northern boundary and apply the usual distinguished rule to the concatenation. The total product is then $f^{-1}f=e$, so every pair of wires crosses an even number of times in the concatenated diagram. Hence, for any elbow, the parity of the crossings above the elbow agrees with the parity of the crossings below it. Thus the $f^{-1}$-distinguished condition, read from the northern boundary, is equivalent to the anti-distinguished condition, read from the southern boundary.
\end{remark}

To prove the anti-distinguished to distinguished bijection, we require a stronger version of Lemma \ref{lemma:twistedinjection}. The proof is nearly identical.

\begin{lemma}\label{lemma:gentwistedinjection} For any $f,g \in \Bkn$ and $i\in[n]$ with $s_ig>g$, we have an injection
\[
	F_i: \affdeo{f}{P}^{(s_ig)} \to\affdeo{f}{P}^{(g)}.
\]
\end{lemma}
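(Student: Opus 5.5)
We will mimic the Foata construction of Lemma~\ref{lemma:twistedinjection}, replacing the untwisted/$s_i$-twisted pair by the $g$-twisted/$s_ig$-twisted pair. Write $\mathbf{f}$ for the subword attached to $A\in\affdeo{f}{P}^{(s_ig)}$, and let $j_1<j_2<\dots<j_r$ be the positions of $\mathbf{\tau_P}$ at which the partial products $\mathbf f_{(j-1)}$ and $\mathbf f_{(j-1)}s_{i_j}$ differ by the reflection conjugate to $s_i$. Concretely, these are the boxes where the two wires that are carried to $i$ and $i+1$ by the twist meet; in the wiring picture of the Remark after Lemma~\ref{lemma:antikey}, they are the wires labeled $i,i+1$ at the top of a prepended reduced diagram for $g$. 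Record the string $\mathbf S_A=(S_1,\dots,S_r)\in\{E,C\}^r$ of tiles at these boxes. All other boxes keep their tiles.

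The key comparison is the following. For a box not in this list, the $s_ig$-twisted condition and the $g$-twisted condition coincide. The reason is that $s_ig\,x\le s_ig\,xs$ and $g\,x\le g\,xs$ agree whenever the reflection $x s x^{-1}$ is not conjugate to $s_i$ by $g$. Here we use $s_ig>g$ and the standard fact that left multiplication by a simple reflection preserves the relative Bruhat order of $y$ and $yt$ unless $t$ is that reflection, in the appropriate conjugated sense. For boxes in the list, the two conditions are exactly reversed, and only the parity of crossings among the earlier boxes of the list matters. Thus at such a box, $s_ig$-distinguishedness demands that the wires have crossed an odd number of times among previous list boxes, while $g$-distinguishedness demands an even number. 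We also expect $S_1=C$ to be forced in the $s_ig$-twisted case, by the same argument that gave $S_1=C$ in Lemma~\ref{lemma:twistedinjection}, using $s_ig>g$.

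Next apply the same string operation as before. Place a bar before each $C$, cutting $\mathbf S_A$ into blocks that each begin with $C$. Reverse each block, and write the result into the same boxes to get $\tilde A$. This preserves the number of $C$'s, so $\tilde{A}$ has the same strand permutation $f$, the same numbers of elbows and crossings, and unchanged tiles elsewhere. Reversal turns "an elbow preceded, within its block, by the block's single initial $C$" (odd parity) into "an elbow preceded by no $C$ of its block" (even parity). This swaps the parity condition exactly as required, so $\tilde A\in\affdeo{f}{P}^{(g)}$. Injectivity holds because the reversed blocks each end in $C$, so the original block structure is recoverable by cutting after each $C$ and reversing back.

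The main obstacle is the comparison step: identifying precisely which boxes have their twisted condition flipped when $g$ is replaced by $s_ig$, and showing that the flip depends only on the parity along the distinguished pair of wires. We would prove this by the reflection-order argument sketched above, namely $s_iy<y$ if and only if $y^{-1}(i)>y^{-1}(i+1)$ in affine terms. This reduces the Bruhat comparisons $s_igx$ versus $s_igxs$ to a comparison of positions of two specific values. Once this is in place, the string-reversal argument and injectivity go through as in Lemma~\ref{lemma:twistedinjection}.
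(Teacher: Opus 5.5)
Your proposal is correct and is essentially the paper's proof. It uses the same Foata block-reversal on the tiles where the distinguished pair of wires meets, with $s_ig>g$ forcing $S_1=C$, and only that pair's elbow condition flips between the $s_ig$- and $g$-twists; your root-sign fact is a cleaner form of the paper's remark that $i,i+1$ are consecutive integers.

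One step is worth making explicit. Flipping a listed tile left-multiplies every later partial product by $t=g^{-1}s_ig$, so the list of boxes is unchanged. Since $gt=s_ig$, the $g$-condition at an unlisted box of $\tilde A$ is then either the $g$-condition or the $s_ig$-condition of $A$ at that box, and these agree by your comparison.
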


\begin{proof}
Let $A\in\affdeo{f}{P}^{(s_ig)} $ and define $\mathbf{S}_A := (S_1,S_2,\dots,S_r)$, with $S_j\in\{E,C\}$, the sequence of elbows and crossings of $s_ig(i)$ and $s_ig(i+1)$ in $A$ (ordering from northwest to southeast). Since $s_ig>g$, we have $S_1=C$ for every $A\in \affdeo{f}{P}^{(s_ig)}$. 

We then proceed with the following operations: 
\begin{enumerate}[label=\arabic*.]
\item Add a bar in front of each $S_j$ such that $S_j=C$. 
\item In each section, reverse the string. Label the resulting string $\tilde{\mathbf{S}}_A$.
\item Construct an affine Deogram $\tilde{A}$ identical to $A$, but with the new sequence of crossings and elbows for the paths labeled $s_ig(i)$ and $s_ig(i+1)$ corresponding to $\tilde{\mathbf{S}}_A$.
\end{enumerate}

Then, $\tilde{A}\in\affdeo{f}{P}^{(g)}$, as for any elbow in $A$ with labeling \labelelbow, we have 
\[
	\begin{cases} j_1>j_2, & \text{ if } g^{-1}s_i(j_1)>g^{-1}s_i(j_2), \\ j_1<j_2, & \text{ otherwise.}\end{cases}
\]
The map on $\mathbf{S}_A$ ensures that we swap the condition for elbows labeled by $s_ig(i)$ and $s_ig(i+1)$ and since $i$ and $i+1$ are consecutive integers, we do not change the condition for other $(j_1,j_2)$ pairs. Set $F_i(A)=\tilde{A}$. From the construction, it is clear that $F_i$ is injective.
\end{proof}

\begin{corollary}\label{cor:gentwistedbij}
If $gf>s_igf$, $F_i$ is a bijection.
\end{corollary}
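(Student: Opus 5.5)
We will mirror the proof of Corollary~\ref{cor:twistedbij}. Since $F_i$ is already known to be injective by Lemma~\ref{lemma:gentwistedinjection}, it suffices to show surjectivity. To do this we construct the inverse map explicitly: cut $\tilde{\mathbf S}$ into sections, reverse each section, and recover $\mathbf S_A$.

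The point is to identify which strings $\tilde{\mathbf{S}}$ arise as images. Under the barring-and-reversing operation, each section of $\mathbf S_A$ begins with $C$. After reversal, each section of $\tilde{\mathbf S}_A$ ends with $C$. So the image consists exactly of those fillings in $\affdeo{f}{P}^{(g)}$ whose string $\mathbf S$ for the relevant pair of strands has last entry $S_r=C$. For such a string, the inverse is well defined: cut after each $C$, reverse each block, and one gets a string starting with $C$, i.e.\ an element of $\affdeo{f}{P}^{(s_ig)}$. That the $s_ig$-twisted distinguished condition holds follows from the same swapped-condition argument used in Lemma~\ref{lemma:gentwistedinjection}, run backwards.

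It therefore remains to show that every $A\in\affdeo{f}{P}^{(g)}$ has $S_r=C$ when $gf>s_igf$. I would argue via parity, as in Corollary~\ref{cor:twistedbij}. Consider the pair of strands whose twisted labels are $i,i+1$, i.e.\ the strands entering at $g(i),g(i+1)$ in the $g$-twisted picture. Prepend a wiring diagram for $g$ as in the remark after Lemma~\ref{lemma:antikey}; the total product is then $gf$. The number of crossings of the two strands starting at $i$ and $i+1$ in the concatenated diagram has parity equal to whether $gf$ inverts them. The hypothesis $gf>s_igf$ says exactly that the strands $i,i+1$ are inverted by $gf$, so they cross an odd number of times in total.

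Now consider the last interaction of this pair in $A$. The distinguished rule, read in the twisted sense, forbids an elbow at a moment when the pair has crossed an odd number of times. If $S_r=E$, then the crossings preceding it number an even total, including those in the $g$-prefix. The elbow does not change that parity, and no further crossings of this pair follow. This makes the total even, a contradiction. Hence $S_r=C$.

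The main obstacle is bookkeeping the left-versus-right conventions. One must check that "$gf>s_igf$" (a left descent with the product read left to right) really corresponds to the pair of strands tracked by $\mathbf S_A$ having an odd total crossing count including the $g$-prefix. This is the step I would verify carefully with the convention $(fg)(a)=g(f(a))$. If the indices come out twisted, the fix is to replace the tracked pair by $g(i),g(i+1)$ consistently. The remaining steps are formal.
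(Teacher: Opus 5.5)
Your proposal is correct and is essentially the paper's argument: injectivity comes from Lemma~\ref{lemma:gentwistedinjection}, the image of $F_i$ is exactly the set of fillings whose string for the tracked pair ends in $C$, and the hypothesis $gf>s_igf$ forces an odd crossing parity for that pair, so the distinguished rule makes $S_r=C$ for every element of $\affdeo{f}{P}^{(g)}$. The only difference is bookkeeping. The paper counts crossings inside $A$ alone, implicitly using $s_ig>g$ to make the $g$-prefix contribution even. You instead count total crossings through the prepended $g$-diagram, which is the more careful way to state the same parity fact.
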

\begin{proof}
Since $gf>s_igf$, $\#\{S_j\in \mathbf{S}_A\,|\,S_j=C\}$ is odd, so $S_r=C$ for every $A\in\affdeo{f}{P}^{(g)}$.
\end{proof}

\begin{figure}
\def\arraystretch{1.3}
\begin{tabular}{cc}

$\phi_{\mathbf{f}}$ & $\phi_{\mathbf{f}'}$

\\

\deogcolored{0/0,0/1,0/2,0/3,5/0,5/1,5/2,5/3,0/0,0/1,0/2,0/3,5/0,5/1,5/2,5/3}{0/4,1/4,2/4,3/4,4/4,0/0,1/0,2/0,3/0,4/0}{-0.350000000000000/0.500000000000000/-3,-0.350000000000000/1.50000000000000/-2,-0.350000000000000/2.50000000000000/-1,-0.350000000000000/3.50000000000000/0,0.500000000000000/4.35000000000000/\textcolor{red}{1}, 1.50000000000000/4.35000000000000/\textcolor{red}{2},2.50000000000000/4.35000000000000/3,3.50000000000000/4.35000000000000/4,4.50000000000000/4.35000000000000/5,0.500000000000000/-0.300000000000000/1,1.50000000000000/-0.300000000000000/2,2.50000000000000/-0.300000000000000/3,3.50000000000000/-0.300000000000000/4,4.50000000000000/-0.300000000000000/5,5.30000000000000/0.500000000000000/6,5.30000000000000/1.50000000000000/7,5.30000000000000/2.50000000000000/8,5.30000000000000/3.50000000000000/9}{3/3,4/3,2/2,4/2,1/1,4/1,0/0,4/0,2/3,0/2,3/2,0/1,3/1,1/0,2/0}{0/3,1/3,1/2,2/1,3/0}{1/2,2/1,3/0}

&

\deogcolored{0/0,0/1,0/2,0/3,5/0,5/1,5/2,5/3,0/0,0/1,0/2,0/3,5/0,5/1,5/2,5/3}{0/4,1/4,2/4,3/4,4/4,0/0,1/0,2/0,3/0,4/0}{-0.350000000000000/0.500000000000000/-3,-0.350000000000000/1.50000000000000/-2,-0.350000000000000/2.50000000000000/-1,-0.350000000000000/3.50000000000000/\textcolor{red}{0},0.500000000000000/4.35000000000000/\textcolor{red}{1},1.50000000000000/4.35000000000000/2,2.50000000000000/4.35000000000000/3,3.50000000000000/4.35000000000000/4,4.50000000000000/4.35000000000000/5,0.500000000000000/-0.300000000000000/1,1.50000000000000/-0.300000000000000/2,2.50000000000000/-0.300000000000000/3,3.50000000000000/-0.300000000000000/4,4.50000000000000/-0.300000000000000/5,5.30000000000000/0.500000000000000/6,5.30000000000000/1.50000000000000/7,5.30000000000000/2.50000000000000/8,5.30000000000000/3.50000000000000/9}{3/3,4/3,2/2,4/2,1/1,4/1,0/0,4/0,2/3,0/2,3/2,0/1,3/1,1/0,2/0}{0/3,1/3,1/2,2/1,3/0}{0/3,4/0,2/2}\\

$\downarrow$ & $\downarrow$\\

\deogcolored{0/0,0/1,0/2,0/3,5/0,5/1,5/2,5/3,0/0,0/1,0/2,0/3,5/0,5/1,5/2,5/3}{0/4,1/4,2/4,3/4,4/4,0/0,1/0,2/0,3/0,4/0}{-0.350000000000000/0.500000000000000/-3,-0.350000000000000/1.50000000000000/-2,-0.350000000000000/2.50000000000000/-1,-0.350000000000000/3.50000000000000/\textcolor{red}{0},0.500000000000000/4.35000000000000/1,1.50000000000000/4.35000000000000/\textcolor{red}{2},2.50000000000000/4.35000000000000/3,3.50000000000000/4.35000000000000/4,4.50000000000000/4.35000000000000/5,0.500000000000000/-0.300000000000000/1,1.50000000000000/-0.300000000000000/2,2.50000000000000/-0.300000000000000/3,3.50000000000000/-0.300000000000000/4,4.50000000000000/-0.300000000000000/5,5.30000000000000/0.500000000000000/6,5.30000000000000/1.50000000000000/7,5.30000000000000/2.50000000000000/8,5.30000000000000/3.50000000000000/9}{3/3,4/3,2/2,4/2,1/1,4/1,0/0,4/0,2/3,0/2,3/2,0/1,3/1,1/0,2/0}{0/3,1/3,1/2,2/1,3/0}{1/3,3/1}

&

\deogcolored{0/0,0/1,0/2,0/3,5/0,5/1,5/2,5/3,0/0,0/1,0/2,0/3,5/0,5/1,5/2,5/3}{0/4,1/4,2/4,3/4,4/4,0/0,1/0,2/0,3/0,4/0}{-0.350000000000000/0.500000000000000/-3,-0.350000000000000/1.50000000000000/-2,-0.350000000000000/2.50000000000000/-1,-0.350000000000000/3.50000000000000/\textcolor{red}{0},0.500000000000000/4.35000000000000/1,1.50000000000000/4.35000000000000/\textcolor{red}{2},2.50000000000000/4.35000000000000/3,3.50000000000000/4.35000000000000/4,4.50000000000000/4.35000000000000/5,0.500000000000000/-0.300000000000000/1,1.50000000000000/-0.300000000000000/2,2.50000000000000/-0.300000000000000/3,3.50000000000000/-0.300000000000000/4,4.50000000000000/-0.300000000000000/5,5.30000000000000/0.500000000000000/6,5.30000000000000/1.50000000000000/7,5.30000000000000/2.50000000000000/8,5.30000000000000/3.50000000000000/9}{3/3,4/3,4/2,1/1,4/1,0/0,2/3,0/2,3/2,0/1,3/1,1/0,2/0,0/3,2/2}{2/1,1/3,1/2,3/0,4/0}{1/2,2/1,3/0}\\

$\downarrow$ & $\downarrow$\\

\deogcolored{0/0,0/1,0/2,0/3,5/0,5/1,5/2,5/3,0/0,0/1,0/2,0/3,5/0,5/1,5/2,5/3}{0/4,1/4,2/4,3/4,4/4,0/0,1/0,2/0,3/0,4/0}{-0.350000000000000/0.500000000000000/-3,-0.350000000000000/1.50000000000000/-2,-0.350000000000000/2.50000000000000/-1,-0.350000000000000/3.50000000000000/\textcolor{red}{0},0.500000000000000/4.35000000000000/\textcolor{red}{1},1.50000000000000/4.35000000000000/2,2.50000000000000/4.35000000000000/3,3.50000000000000/4.35000000000000/4,4.50000000000000/4.35000000000000/5,0.500000000000000/-0.300000000000000/1,1.50000000000000/-0.300000000000000/2,2.50000000000000/-0.300000000000000/3,3.50000000000000/-0.300000000000000/4,4.50000000000000/-0.300000000000000/5,5.30000000000000/0.500000000000000/6,5.30000000000000/1.50000000000000/7,5.30000000000000/2.50000000000000/8,5.30000000000000/3.50000000000000/9}{3/3,4/3,2/2,4/2,1/1,4/1,0/0,4/0,2/3,0/2,3/2,0/1,1/0,2/0,1/3}{0/3,1/2,2/1,3/0,3/1}{0/3,1/2,2/1,4/0}

&

\deogcolored{0/0,0/1,0/2,0/3,5/0,5/1,5/2,5/3,0/0,0/1,0/2,0/3,5/0,5/1,5/2,5/3}{0/4,1/4,2/4,3/4,4/4,0/0,1/0,2/0,3/0,4/0}{-0.350000000000000/0.500000000000000/-3,-0.350000000000000/1.50000000000000/-2,-0.350000000000000/2.50000000000000/-1,-0.350000000000000/3.50000000000000/0,0.500000000000000/4.35000000000000/\textcolor{red}{1}, 1.50000000000000/4.35000000000000/\textcolor{red}{2},2.50000000000000/4.35000000000000/3,3.50000000000000/4.35000000000000/4,4.50000000000000/4.35000000000000/5,0.500000000000000/-0.300000000000000/1,1.50000000000000/-0.300000000000000/2,2.50000000000000/-0.300000000000000/3,3.50000000000000/-0.300000000000000/4,4.50000000000000/-0.300000000000000/5,5.30000000000000/0.500000000000000/6,5.30000000000000/1.50000000000000/7,5.30000000000000/2.50000000000000/8,5.30000000000000/3.50000000000000/9}{3/3,4/3,4/2,1/1,4/1,0/0,2/3,0/2,3/2,0/1,3/1,1/0,2/0,0/3,2/2}{2/1,1/3,1/2,3/0,4/0}{1/3,3/1}\\

$\downarrow$ & $\downarrow$\\

\deog{0/0,0/1,0/2,0/3,5/0,5/1,5/2,5/3,0/0,0/1,0/2,0/3,5/0,5/1,5/2,5/3}{0/4,1/4,2/4,3/4,4/4,0/0,1/0,2/0,3/0,4/0}{-0.350000000000000/0.500000000000000/-3,-0.350000000000000/1.50000000000000/-2,-0.350000000000000/2.50000000000000/-1,-0.350000000000000/3.50000000000000/0,0.500000000000000/4.35000000000000/1,1.50000000000000/4.35000000000000/2,2.50000000000000/4.35000000000000/3,3.50000000000000/4.35000000000000/4,4.50000000000000/4.35000000000000/5,0.500000000000000/-0.300000000000000/1,1.50000000000000/-0.300000000000000/2,2.50000000000000/-0.300000000000000/3,3.50000000000000/-0.300000000000000/4,4.50000000000000/-0.300000000000000/5,5.30000000000000/0.500000000000000/6,5.30000000000000/1.50000000000000/7,5.30000000000000/2.50000000000000/8,5.30000000000000/3.50000000000000/9}{3/3,4/3,2/2,4/2,1/1,4/1,0/0,2/3,0/2,3/2,0/1,1/0,2/0,1/3,2/1}{0/3,1/2,3/0,3/1,4/0}

&

\deog{0/0,0/1,0/2,0/3,5/0,5/1,5/2,5/3,0/0,0/1,0/2,0/3,5/0,5/1,5/2,5/3}{0/4,1/4,2/4,3/4,4/4,0/0,1/0,2/0,3/0,4/0}{-0.350000000000000/0.500000000000000/-3,-0.350000000000000/1.50000000000000/-2,-0.350000000000000/2.50000000000000/-1,-0.350000000000000/3.50000000000000/0,0.500000000000000/4.35000000000000/1,1.50000000000000/4.35000000000000/2,2.50000000000000/4.35000000000000/3,3.50000000000000/4.35000000000000/4,4.50000000000000/4.35000000000000/5,0.500000000000000/-0.300000000000000/1,1.50000000000000/-0.300000000000000/2,2.50000000000000/-0.300000000000000/3,3.50000000000000/-0.300000000000000/4,4.50000000000000/-0.300000000000000/5,5.30000000000000/0.500000000000000/6,5.30000000000000/1.50000000000000/7,5.30000000000000/2.50000000000000/8,5.30000000000000/3.50000000000000/9}{3/3,4/3,4/2,1/1,4/1,0/0,2/3,0/2,3/2,0/1,1/3,1/0,2/0,0/3,2/2}{2/1,3/1,1/2,3/0,4/0}\\

\end{tabular}

\caption{The bijections $\phi_{\mathbf{f}}$ and $\phi_{\mathbf{f}'}$ where $\mathbf{f}=s_1s_0s_1$ and $\mathbf{f}'=s_0s_1s_0$ for $f = [5,4,7,8,9,10,11,12,15]$. The highlighted boxes are where the red-indexed wires meet and thus change in the next application of Lemma \ref{lemma:gentwistedinjection}.}

\label{fig:antitodistfig}

\end{figure}

\begin{proof}[Proof of Proposition \ref{prop:antitodist}]
By Lemma \ref{lemma:antikey}, we know $\antiaffdeo{f}{P}=\affdeo{f}{P}^{(f^{-1})}$. Let $\mathbf{f} = s_{i_1}\dots s_{i_{\ell}}$. Then, $f > s_{i_1}f$ and for any $2\leq r \leq \ell$, we have $s_{i_{r-1}}\dots s_{i_1}f>s_{i_r}\dots s_{i_1}f$. So, by repeatedly applying Corollary \ref{cor:gentwistedbij}, we obtain a sequence of bijections
\[
	\affdeo{f}{P}^{(f^{-1})} =  \affdeo{f}{P}^{(s_{i_{\ell}}s_{i_{\ell-1}}\dots s_{i_1})} \to  \affdeo{f}{P}^{(s_{i_{\ell-1}}\dots s_{i_1})}\to \dots\to \affdeo{f}{P}^{(s_{i_1})}\to \affdeo{f}{P}. 
\]
\end{proof}

\begin{proposition}
If $\mathbf{f}$ and $\mathbf{f}'$ are reduced words for $f\in\Bkn$ related by commutation moves, then $\phi_{\mathbf{f}}=\phi_{\mathbf{f}'}$.
\end{proposition}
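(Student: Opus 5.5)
It suffices to treat a single commutation move, since commutation moves generate the relation in the statement. So write $\mathbf{f}=\mathbf{a}\,s_p s_q\,\mathbf{b}$ and $\mathbf{f}'=\mathbf{a}\,s_q s_p\,\mathbf{b}$, with $|p-q|\geq 2$ modulo $n$. By the proof of Proposition~\ref{prop:antitodist}, $\phi_{\mathbf f}$ is a composite of maps $F_{i_r}$ from Corollary~\ref{cor:gentwistedbij}. Each of these passes from $\affdeo{f}{P}^{(s_{i_r}g)}$ to $\affdeo{f}{P}^{(g)}$ with $g=s_{i_{r-1}}\cdots s_{i_1}$. The two composites agree on the maps coming from $\mathbf{b}$ (applied first) and from $\mathbf{a}$ (applied last). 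It therefore suffices to prove the following: for $g$ the common intermediate twist, the square
\[
F_p\circ F_q=F_q\circ F_p:\ \affdeo{f}{P}^{(s_ps_qg)}\to\affdeo{f}{P}^{(g)}
\]
commutes. Here the intermediate sets are $\affdeo{f}{P}^{(s_qg)}$ on one side and $\affdeo{f}{P}^{(s_pg)}$ on the other, and $s_ps_qg=s_qs_pg$.

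The key step is to identify which wire pairs each map touches. The map $F_q$ applied at twist $s_q(s_pg)$ modifies only the elbow/crossing sequence $\mathbf S$ of the two strands labeled $s_qs_pg(q)$ and $s_qs_pg(q+1)$. Since $s_p$ fixes $q$ and $q+1$, these labels equal $\{g(q),g(q+1)\}$ up to order. The same pair is modified when $F_q$ is applied at twist $s_qg$ on the other side. Symmetrically, $F_p$ always acts on the pair $\{g(p),g(p+1)\}$. Because $\{p,p+1\}\cap\{q,q+1\}=\varnothing$, these are disjoint pairs of strands, so the two operations change disjoint sets of boxes. What remains is to check that the modification of one pair does not change the input data of the other map. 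The sequence $\mathbf S$ for the pair $\{g(p),g(p+1)\}$ is read off from the boxes where those two strands meet. Applying $F_q$ permutes crossings and elbows only among the meetings of the other pair, and it preserves the number of crossings of that pair. Hence every strand other than $g(q),g(q+1)$ keeps its trajectory outside the swapped segments.

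This last assertion is the step I expect to be the main obstacle. Changing a box from crossing to elbow reroutes the two strands, so a priori the positions where $g(p)$ meets $g(p+1)$ could move. The plan is to use the structure of $F_q$ as it comes from Lemma~\ref{lemma:gentwistedinjection}: within each barred section, the string is reversed. Between consecutive crossings of $g(q)$ and $g(q+1)$, the two strands simply exchange which of them occupies which path segment. The underlying set of occupied boxes and tile shapes seen by any third strand is therefore unchanged, and only the labels on the two paths swap. Consequently the tiles at all boxes not involving the pair $\{g(q),g(q+1)\}$, and in particular the boxes where $g(p)$ meets $g(p+1)$ together with their crossing/elbow sequence, are identical before and after $F_q$. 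I will verify this by a short case check on a single section, tracking the paths rather than the labels.

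Granting this, $F_q$ and $F_p$ act on disjoint data and each leaves the other's input unchanged, so they commute on underlying fillings. Their domains and codomains match by the twist computation above. Composing with the common maps coming from $\mathbf a$ and $\mathbf b$ then gives $\phi_{\mathbf f}=\phi_{\mathbf f'}$.
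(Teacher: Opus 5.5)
Your proposal is correct and follows the paper's argument. Both reduce to a single commutation $s_ps_q\leftrightarrow s_qs_p$ and observe that the two Foata maps act on disjoint pairs of strands, so neither changes the other's input and they commute. Your version is more careful than the paper's in two places: you identify the affected pairs correctly as $\{g(p),g(p+1)\}$ and $\{g(q),g(q+1)\}$ rather than $\{p,p+1\}$ and $\{q,q+1\}$, and you justify why the other strands' trajectories, meeting boxes and tiles are unaffected (retiling a meeting box of one pair only swaps labels on that pair's downstream segments), a point the paper asserts without proof.
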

\begin{proof}
It suffices to prove that if $\mathbf{f}=s_i s_j$ and $\mathbf{f}'=s_j s_i$ for $|i-j|>1$, then $\phi_{\mathbf{f}}=\phi_{\mathbf{f}'}$. However, as $F_i$ acts only on the two strands labeled $i$ and $i+1$, we note that any pair $k,\ell$ for which $k\neq i,i+1$ and $\ell\neq i,i+1$ is left unchanged. In particular, the pair $j,j+1$ is unchanged and thus the operation of $F_j$ is independent.
\end{proof}

\begin{remark}
If $\mathbf{f}$ and $\mathbf{f}'$ are reduced words for $f\in\Bkn$ related by commutation \textit{and} braid moves, we do not necessarily have $\phi_{\mathbf{f}}=\phi_{\mathbf{f}'}$; see Figure \ref{fig:antitodistfig}.
\end{remark}

\section{Point Count Invariance for Pl\"ucker Charts}\label{sec:invariance}

\begin{definition}
For each $P\in\Pkn$, we define
\[
	g_P(q) = \sum_{f\in\Bkn}R_{f,\tau_P}(q),
\]
with $R_{f,\tau_P}(q)$ defined as in \eqref{eq:rftau}.
\end{definition}

For $I\in\binom{[n]}k$, let $C_I = \{M\in\Grkn\,|\,\Delta_I(M)\neq0\}$.

\begin{proposition}
For any $P\in\Pkn$, $g_P(q)=q^{k(n-k)}$.
\end{proposition}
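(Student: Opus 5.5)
Since the section advertises a purely combinatorial proof, I will work directly with fillings rather than with the geometry of $C_I$. Summing \eqref{eq:rftau} over all $f$ gives
\[
g_P(q)=\sum_{A\in\affdeop{P}}(q-1)^{\#\text{elbows}(A)}\,q^{(\#\text{crossings}(A)-\ell(f_A))/2},
\]
where $f_A$ is the strand permutation of $A$. Here $A$ ranges over all distinguished subwords of the fixed reduced word $\mathbf{\tau_P}$ of length $m=k(n-k)$, with no restriction on the product. I will prove, by induction on $m$, the general statement that for any reduced word $\mathbf g=(s_{i_1},\dots,s_{i_m})$ in the affine Weyl group,
\[
\sum_{\mathbf f\ \text{distinguished in}\ \mathbf g}(q-1)^{e_{\mathbf f}}\,q^{(m-e_{\mathbf f}-\ell(\mathbf f_{(m)}))/2}=q^{m}.
\]
The Proposition is then the special case $\mathbf g=\mathbf{\tau_P}$.

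The induction proceeds by deleting the last letter $s=s_{i_m}$. Write $\mathbf g'$ for the first $m-1$ letters. Every distinguished subword of $\mathbf g$ restricts to a distinguished subword $\mathbf f'$ of $\mathbf g'$ with product $u=\mathbf f'_{(m-1)}$. Conversely, the possible last letters depend only on $u$:
\begin{itemize}
\item If $us<u$, the distinguished condition $\mathbf f_{(m)}\le us$ forces the last letter to be $s$, so the product is $us$. There is one extension, with weight multiplied by $q^{(1-(\ell(u)-1)+\ell(u))/2}=q$; the exponent here uses the crossing count $c'+1$ and $\ell(us)=\ell(u)-1$.
\item If $us>u$, both choices are allowed. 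Taking the letter $s$ gives product $us$ and multiplies the weight by $q^{(1-1)/2}=1$. Taking $e$ gives product $u$ with one extra elbow and no change in crossings, multiplying the weight by $q-1$. The two together contribute the factor $1+(q-1)=q$.
\end{itemize}
In both cases the extensions of $\mathbf f'$ contribute exactly $q$ times the weight of $\mathbf f'$. By induction the sum for $\mathbf g$ is $q\cdot q^{m-1}=q^m$, with base case the empty word, whose sum is $1$.

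The steps, in order, are: state the generalized identity; check that the weight exponent is updated as computed, using $\ell(us)=\ell(u)\pm1$ (true in the Coxeter group after applying $\iota_k$); carry out the two-case extension count; and specialize to $\mathbf g=\mathbf{\tau_P}$ with $m=k(n-k)$.

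The main obstacle is bookkeeping rather than substance. The exponent in \eqref{eq:rftau} uses $\ell(f)$ for the final product, whereas the induction tracks intermediate products. I would therefore define the weight of a partial subword using the length of its partial product and verify that the increments above are consistent with this definition. I would also confirm that the bijection in \eqref{eq:rftau} between affine Deograms for all $f$ and distinguished subwords of $\mathbf{\tau_P}$ is a bijection onto \emph{all} distinguished subwords. In particular, every such product $f$ must lie in $\Bkn$ and satisfy $f\le\tau_P$; this holds because products of subwords of $\mathbf{\tau_P}$ are $\le\tau_P$ and hence bounded. Finally, the result is independent of $P$, as claimed.
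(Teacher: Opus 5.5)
Your argument is correct, but it takes a different route from the paper for this statement.

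The paper's proof is geometric and essentially one line. The chart $C_{P_U}=\{\Delta_{P_U}\neq0\}$ has $q^{k(n-k)}$ points over $\F_q$. It is stratified by the pieces $\Pio_f\cap C_{P_U}$ for $f\in\Bkn$, and Billig--Dyer (Theorem~\ref{thm:billig}) identifies the point count of each piece with $R_{f,\tau_P}(q)$. The combinatorial work in Section~\ref{sec:invariance} then only reproves the weaker path-independence statement, Proposition~\ref{prop:invofpath}. It does this by swapping a single corner $P\mapsto s_iP$ and matching the sets $V_P,\dots,Z_P$ against $A_P,\dots,G_P$ using Foata moves.

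Your letter-by-letter induction is the classical Deodhar recursion behind $\sum_{u\le w}R_{u,w}(q)=q^{\ell(w)}$. It proves the full identity $g_P(q)=q^{k(n-k)}$ purely combinatorially, in a few lines. It uses no geometry, nothing specific to $\tau_P$, and not even reducedness of $\mathbf g$. It also makes Proposition~\ref{prop:invofpath} an immediate corollary.

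The paper's approach gives something in exchange. It explains the identity conceptually, as the point count of an affine chart. Its path-swap argument also produces explicit weight-matching correspondences between $\affdeop{P}$ and $\affdeop{s_iP}$, which your argument does not.

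One citation point. Your claim that every subword product of $\mathbf{\tau_P}$ lies in $\Bkn$ needs justification. Either cite that $\Bkn$ is a lower order ideal of $\Saffk$ (Knutson--Lam--Speyer), or use the strip-geometry bound $a\le g(a)\le a+n$ from the proof of Lemma~\ref{lem:restricted-path-compatible}. The paper takes this fact for granted when it defines $f_P(A)$.
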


\begin{proof}
We have $\#C_{P_U}(\F_q) = q^{k(n-k)}$ and
\begin{align*}
	\#C_{P_U}(\F_q)=\#(\Grkn\cap C_{P_U})(\F_q) = \#\pn{\bigsqcup_{f\in\Bkn}\Pio_f\cap C_{P_U}}(\F_q) &= \sum_{f\in\Bkn}\#\pn{\Pio_f\cap C_{P_U}}(\F_q)\\
	&= \sum_{f\in\Bkn}R_{f,\tau_P}(q) = g_P(q).
\end{align*}
\end{proof}

The rest of this section is devoted to the combinatorial proof of the following proposition.
\begin{proposition}\label{prop:invofpath}
For any $P,P'\in\Pkn$, $g_P(q)=g_{P'}(q)$.
\end{proposition}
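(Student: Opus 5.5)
Since any two paths in $\Pkn$ are connected by a sequence of adjacent swaps $P\mapsto s_iP$ at corners (every path can be moved to $P_{[k]}$ by repeatedly swapping an adjacent pair $P_i=\mathrm U,P_{i+1}=\mathrm R$, indices mod $n$), it suffices to prove $g_P(q)=g_{s_iP}(q)$ when $P_i=\mathrm R$, $P_{i+1}=\mathrm U$. The case $P_i=P_{i+1}$ is trivial, and the case $P_i=\mathrm U,P_{i+1}=\mathrm R$ is the same statement read in reverse. Write $g_P(q)=\sum_{A\in\affdeop{P}}(q-1)^{e(A)}q^{(c(A)-\ell(f_A))/2}$ with $e,c$ the numbers of elbows and crossings. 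By Lemma~\ref{lemma:cornerbox} we may take $\mathbf{\tau_P}=\mathbf{w}s_i$ and $\mathbf{\tau_{s_iP}}=s_i\mathbf{w}$, where $\mathbf{w}$ is the common word on $B_P\setminus\{b\}\cong B_{s_iP}\setminus\{b'\}$. The total number of boxes is $k(n-k)$ for both paths, so $e+c$ is constant and only the pair $(e,\ell(f))$ matters.

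The plan is to partition $\affdeop{P}=\bigsqcup_f\affdeo{f}{P}$ according to the orbit $\{f,s_if,fs_i,s_ifs_i\}$ and compare weights orbitwise using Lemma~\ref{lemma:boxchangesbij}. For $f$ with $s_if>f$, Lemma~\ref{lemma:boxchangesbij} gives $\Acal^r_{f,P}\cong\Acal^r_{s_ifs_i,s_iP}$ if $fs_i<f$, and $\Acal^r_{f,P}\cong\Acal^{r-1}_{s_if,s_iP}\sqcup\Acal^r_{s_ifs_i,s_iP}$ if $fs_i>f$. I would record, for each piece, the change in the exponent $(c-\ell)/2$. In the first case $\ell(s_ifs_i)=\ell(f)$ (conjugation with one ascent and one descent), $e$ is unchanged, so $c$ and the weight are unchanged. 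In the second case the $s_ifs_i$ piece has $\ell$ increased by $2$ and $c$ unchanged, so the $q$-exponent drops by $1$. The $s_if$ piece has one fewer elbow, one more crossing, and $\ell$ increased by $1$, so its weight is $(q-1)^{r-1}q^{(c+1-\ell-1)/2}$. Thus the weights do \emph{not} match term by term.

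Therefore I would not compare $f$ individually but sum over all $f$ in a square $\{f,s_if,fs_i,s_ifs_i\}$, using the symmetric Lemma~\ref{lemma:boxchangesbijcase2} (applied with $P$ and $s_iP$ exchanged, i.e.\ the box removal read from the other corner) to describe $\affdeo{g}{s_iP}$ in terms of the $P$-side sets for the $g$ with $s_ig<g$. Concretely, I expect the following: a box at a $\preceq$-maximal corner contributes the standard Deodhar recursion $R_{f,\mathbf{w}s_i}=R_{fs_i,\mathbf{w}}$ if $fs_i<f$ and $(q-1)R_{f,\mathbf{w}}+qR_{fs_i,\mathbf{w}}$ otherwise, and similarly on the left for $s_i\mathbf{w}$. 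Summing over all $f\in\Bkn$ (equivalently over all $f$ in the closed interval below, since terms not $\le\tau_P$ vanish), both sums reduce to $q\sum_g R_{g,\mathbf{w}}$ up to the same reindexing $g\mapsto gs_i$ versus $g\mapsto s_ig$, because $\sum_f$ over a full coset pair $\{g,gs_i\}$ of $(q-1)R_g+qR_{gs_i}$-type expressions telescopes to $q(R_g+R_{gs_i})$. Combinatorially this means building an explicit bijection between $\affdeop{P}$ and $\affdeop{s_iP}$ that preserves $(e,c-\ell)$: strip the corner box (a crossing or an elbow determined by the distinguished condition), then reattach at the opposite corner, using the Foata bijection (Corollary~\ref{cor:twistedbij}) to untwist.

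The main obstacle is that the subwords of $\mathbf{w}$ arising from the $P$-side are $e$-distinguished ending in a letter, while those on the $s_iP$-side are $s_i$-twisted. Also, the boundary cases where $s_if$ or $fs_i$ leaves $\Bkn$, so that $t_I$-intervals differ, must be handled; there the injections of Corollary~\ref{cor:twistedbijcase2} rather than bijections appear. I would first check the $\Bkn$-closure issue, hoping that the relevant $g$ always stay bounded because all strands live in the strip, and then verify weight matching on the elbow-at-corner versus crossing-at-corner pieces.
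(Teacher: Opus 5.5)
\emph{Verdict:} the proposal has a genuine gap on the $s_iP$ side, and the combinatorial realization it suggests cannot exist; both are easily fixed along your own lines.

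\textbf{What works.} Your reduction to a single swap with $P_i=\mathrm R$, $P_{i+1}=\mathrm U$ is fine. So is the computation for $P$: peeling the last letter of $\mathbf{\tau_P}=\mathbf w s_i$ gives the Deodhar recursion directly from the definition. Pairing $f$ with $fs_i$ then gives $g_P(q)=q\sum_g R_{g,\mathbf w}$.

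\textbf{The gap.} On the $s_iP$ side you only \emph{expect} the left recursion, and in this model it is not automatic. The first box of $s_i\mathbf w$ may always be an elbow. If it is a crossing, the remaining subword of $\mathbf w$ must be $s_i$-twisted distinguished. So peeling the first letter yields $(q-1)R_{f,\mathbf w}$ plus a sum over $s_i$-twisted distinguished $s_if$-subwords of $\mathbf w$, not $R_{s_if,\mathbf w}$. Converting twisted sums into untwisted ones is exactly the ``main obstacle'' you name and leave open, so as written the $s_iP$ side is not proved.

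\textbf{The proposed bijection cannot exist.} In general there is no bijection $\affdeop{P}\to\affdeop{s_iP}$ at all, let alone one preserving $(e,c-\ell)$. Take $P=\mathrm{RRUU}$ and $i=2$. Then $\mathbf{\tau_P}=s_2s_1s_3s_2$ has $15$ distinguished subwords; only $(s_2,e,e,e)$ fails. The four letters of $\mathbf{\tau_{s_2P}}$ are distinct, so all $16$ of its subwords are distinguished. Figure~\ref{fig:invofpathfig} shows the same counts. This is why the paper does not use a bijection. It matches $Z_P=G_P$ and $Y_P\to D_P$ weight-preservingly. Otherwise it matches one Deogram against two, $V_P$ against $A_P\sqcup B_P$ and $W_P\sqcup X_P$ against $C_P$, using the Foata map, with weights combining through $q=(q-1)+1$.

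\textbf{How to close it.} You do not need the left recursion at all.
\begin{enumerate}
\item Your telescoping identity $\sum_f R_{f,\mathbf v s}=q\sum_f R_{f,\mathbf v}$ (last-letter analysis, then pairing $g$ with $gs$) holds for every word $\mathbf v$ and every letter $s$, provided the sum runs over the whole group.
\item Building a reduced word of $\tau_P$ letter by letter from the empty word therefore gives $\sum_f R_{f,\mathbf{\tau_P}}=q^{k(n-k)}$. The same argument applies verbatim to $\mathbf{\tau_{s_iP}}$.
\item Every subword product of $\mathbf{\tau_P}$ is the strand permutation of a filling of the strip, hence lies in $\Bkn$. So this sum is exactly $g_P(q)$.
\end{enumerate}
This removes your $\Bkn$-closure worry and proves the proposition for all $P$ at once, with no path swaps. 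If you want the left recursion anyway, there are two sources:
\begin{itemize}
\item Deodhar's identification of subword sums with Kazhdan--Lusztig $R$-polynomials, together with $R_{x,w}=R_{x^{-1},w^{-1}}$;
\item Corollaries~\ref{cor:twistedbij} and~\ref{cor:twistedbijcase2}, once you check that $U_i$ is onto.
\end{itemize}
What the algebraic route does not give is the explicit correspondence between Deograms for $P$ and for $s_iP$, which the paper's partition is built to supply.
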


\begin{definition}
Let $A$ be a filling of a $k\times (n-k)$ rectangle with crossings \crossing and elbows \elbow. For any $P\in \Pkn$, we let $f_P(A)\in\Bkn$ be the affine permutation determined by filling $A$ inside path $P$. We also let
\[
	g_{P,A}(q) = (q-1)^{\#\text{elbows}(A)}q^{(\#\text{crossings}(A)-\ell(f_P(A)))/2}.
\]
Finally, if $A\in \affdeop{P}$, then for any integer $i\in[n]$, we let $F_{P,i}(A)$ denote the image of $A$ under the Foata map (defined in Lemma \ref{lemma:twistedinjection}) at index $i$ with up-right path $P$.
\end{definition}

We prove Proposition \ref{prop:invofpath} by showing that if $P,s_iP\in\Pkn$ with $P_i=R$ and $P_{i+1}=U$, then $g_P(q)=g_{s_iP}(q)$. To do so, we partition $\affdeop{P}$ and $\affdeop{s_iP}$ into disjoint sets $\{S_i\}_{i\in I}$, $\{T_i\}_{i\in I}$, respectively, such that
\[
	\sum_{A\in S_i}g_{P,A}(q)=\sum_{A\in T_i} g_{s_iP,A}(q).
\]
Before we describe our partition, we make a straightforward observation.
\begin{lemma}\label{lemma:changingbox}
Let $A\in\affdeop{P}$ and let $C_i(A)\in \{E,C\}$ denote the filling of the changing box of $A$. Then,
\begin{enumerate}
\item if $C_i(A)=C$, then $F_{s_iP,i}(A)\in\affdeop{s_iP}$. Additionally,
\item if $C_i(A)=E$, then $A\in\affdeop{s_iP}$.
\end{enumerate} 
\end{lemma}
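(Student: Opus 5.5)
We work throughout with subwords rather than pictures, via Lemma~\ref{lemma:cornerbox}. Since $P_i=\mathrm R$ and $P_{i+1}=\mathrm U$, we may choose $\mathbf{\tau_P}=\mathbf{w}\,s_i$. Here $\mathbf{w}$ indexes the common boxes $B_P\setminus\{b\}\cong B_{s_iP}\setminus\{b'\}$, and the last letter is the changing box $b$. Dually, $\mathbf{\tau_{s_iP}}=s_i\,\mathbf{w}$, with the changing box $b'$ read first. A filling $A$ of $B_P$ is then a choice of tiles on the common boxes together with the tile $C_i(A)$ on $b$. Regarding $A$ as a filling of $s_iP$ means keeping the common tiles and putting the same tile on $b'$. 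The strands meeting in $b$ (and in $b'$) are $i$ and $i+1$.

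For part (2), suppose $C_i(A)=E$, so the last letter is unselected. The subword $\mathbf{f}$ of $\mathbf{w}s_i$ has product $f$, and its restriction to $\mathbf{w}$ also has product $f$ and is distinguished. The elbow at $b$ satisfies $f\le f s_i$, i.e. $fs_i>f$. Now read the same tiles as the subword $(e,\mathbf{f}|_{\mathbf w})$ of $s_i\mathbf{w}$. Its product is again $f$, its first condition $e\le e\cdot s_i$ holds trivially, and every later prefix is unchanged. So it is distinguished and $A\in\affdeop{s_iP}$ with the same $f$.

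The one subtlety in (2) is whether "$A$ as a filling of $s_iP$" really carries the same strand labels into each common box. Placing the box first or last changes which strands enter the common boxes only if the box is a crossing, and here it is an elbow. I would check this directly: an elbow at the corner simply routes strand $i$ and strand $i+1$ straight through the corner.

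For part (1), suppose $C_i(A)=C$. Reading $A$ on $s_iP$ gives the subword $(s_i,\mathbf{f}|_{\mathbf w})$ of $s_i\mathbf w$. Each prefix is $s_i$ times the corresponding prefix of $\mathbf f$, and the total product is $s_i(fs_i)\cdot s_i$ reorganized; concretely, the product of $\mathbf f|_{\mathbf w}$ is $fs_i$, so the new product is $s_ifs_i$. Shifting the corner crossing to the front conjugates the labels, so the distinguished conditions on the common boxes become exactly the $s_i$-twisted distinguished conditions for $\mathbf f|_{\mathbf w}$ relative to product $fs_i$. After normalizing by the leading $s_i$, the filling is an element of $\affdeo{g}{s_iP}^{(s_i)}$ for $g=s_ifs_i$ (equivalently, the twisted set in which strands $i,i+1$ are relabeled). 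The only possible failure is at the first box $b'$ itself, and a crossing there is always allowed.

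The Foata map $F_{s_iP,i}$ of Lemma~\ref{lemma:twistedinjection} sends $\affdeo{g}{s_iP}^{(s_i)}$ into $\affdeo{g}{s_iP}$, because its input only requires $S_1=C$, which holds since the first meeting of strands $i,i+1$ is the crossing at $b'$. Thus $F_{s_iP,i}(A)\in\affdeop{s_iP}$. The main obstacle is precisely this identification of the reread filling with an $s_i$-twisted Deogram. I would verify it through the elbow-orientation criterion from the proof of Lemma~\ref{lemma:gentwistedinjection}: swapping labels $i$ and $i+1$ flips the allowed orientation of elbows for exactly that pair, which is the $s_i$-twist.
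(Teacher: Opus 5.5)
Your proposal is correct and is essentially the paper's argument, made precise in subword language. In the crossing case, the filling reread on $s_iP$ lies in $\affdeo{s_ifs_i}{s_iP}^{(s_i)}$, and the Foata map of Lemma~\ref{lemma:twistedinjection} finishes the proof; in the elbow case, moving the corner elbow from last to first leaves every prefix and every condition unchanged.

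The step you flag as the main obstacle is a one-line identity and does not need the elbow-orientation criterion. For $\mathbf f'=(s_i,\mathbf f|_{\mathbf w})$ one has $s_i\mathbf f'_{(j)}=\mathbf f_{(j-1)}$. Hence the $s_i$-twisted condition for $\mathbf f'$ on $s_i\mathbf w$ is literally the ordinary distinguished condition for $\mathbf f|_{\mathbf w}$ on $\mathbf w$. Your phrase ``$s_i$-twisted conditions for $\mathbf f|_{\mathbf w}$'' therefore attaches the twist to the wrong subword, although your conclusion is right.
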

\begin{proof}
By the combinatorial description of the distinguished condition, if $A\in\affdeop{P}$ and $C_i(A)=C$, then $A\in\affdeop{P}^{(s_i)}$. By construction, $F_{s_iP,i}(A)\in\affdeop{s_iP}$. If $C_i(A)=E$, then the distinguished condition is not affected as we removed a distinguished elbow from the southeastern box and added a northwestern elbow immediately under the boundary.
\end{proof}

We describe our partition using the following sets. These sets partition fillings according to whether they survive the path swap $P\mapsto s_iP$ directly, whether they require a Foata move, and whether the associated length of the underlying bounded affine permutation changes by 0 or $\pm2$.

\begin{definition}
For any $P\in\Pkn$, define the following sets:
\begin{align*}
	A_P&=\{A\in\affdeop{s_iP}\,|\, A\not\in\affdeop{P}, A\not\in F_{s_iP,i}(\affdeop{P})\},\\
	B_P &= \{A\in\affdeop{s_iP}\,|\,A\in F_{s_iP,i}(\affdeop{P}) \text{ and } \ell(f_P(A))+2=\ell(f_{s_iP}(A))\},\\
	C_P &= \{A\in\affdeop{s_iP}\,|\,A\in F_{s_iP,i}(\affdeop{P})  \text{ and } \ell(f_P(A))=\ell(f_{s_iP}(A))+2\},\\
	D_P &= \{A\in\affdeop{s_iP}\,|\,A\in F_{s_iP,i}(\affdeop{P})  \text{ and } \ell(f_P(A))=\ell(f_{s_iP}(A))\},\\
	G_P &= \{A\in\affdeop{s_iP}\,|\, A\in \affdeop{P} \text{ and } C_i(A)=E\},\\
	V_P &=\{A\in\affdeop{P}\,|\,C_i(A)=C \text{ and } \ell(f_P(A))+2=\ell(f_{s_iP}(A))\},\\
	W_P &=\{A\in\affdeop{P}\,|\,C_i(A)=C \text{ and } \ell(f_P(A))=\ell(f_{s_iP}(A))+2\},\\
	X_P &=\{A\in\affdeop{P}\,|\,C_i(A)=C, C_i(F_{s_iP,i}(A))=E, \text{ and } \ell(f_P(A))=\ell(f_{s_iP}(A))\},\\
	Y_P &= \{A\in\affdeop{P}\,|\,C_i(A)=C, C_i(F_{s_iP,i}(A))=C, \text{ and } \ell(f_P(A))=\ell(f_{s_iP}(A))\},\\
	Z_P &= \{A\in\affdeop{P}\,|\,C_i(A)=E\}.
\end{align*}
\end{definition}

\begin{corollary}\label{cor:partition}
For $P\in\Pkn$ with $P_i = R$ and $P_{i+1}=U$, we have the partitions
\begin{align*}
	\affdeop{P} &= V_P\sqcup W_P\sqcup X_P\sqcup Y_P\sqcup Z_P,\\
	\affdeop{s_iP} &=A_P\sqcup B_P\sqcup C_P\sqcup D_P\sqcup G_P.
\end{align*}
\end{corollary}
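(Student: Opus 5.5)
The plan is to check, for each of the two lines, that the listed sets are pairwise disjoint and that their union is everything. Write $b$ for the changing box, i.e.\ the corner box at steps $i,i+1$ of $P$ (the box $b$ of Lemma~\ref{lemma:cornerbox}). The key input is that for any filling $A$, the permutations $f_P(A)$ and $f_{s_iP}(A)$ differ by conjugation or one-sided multiplication by $s_i$. Because $f_P(A)$ and $f_{s_iP}(A)$ are both bounded affine permutations whose lengths have the same parity, their lengths differ by $0$ or $\pm2$. I would get this from the description of removing and re-adding the corner box in Lemmas~\ref{lemma:cornerbox} and~\ref{lemma:boxchangesbij}.

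\textbf{First line.} Take $A\in\affdeop{P}$ and split on the tile $C_i(A)$.
\begin{enumerate}[label=(\roman*)]
\item If $C_i(A)=E$, then $A\in Z_P$. No other set contains $A$, since $V_P,W_P,X_P,Y_P$ all require $C_i(A)=C$.
\item If $C_i(A)=C$, then Lemma~\ref{lemma:changingbox}(1) gives $F_{s_iP,i}(A)\in\affdeop{s_iP}$. By the parity remark, exactly one of three length relations holds: $\ell(f_P(A))+2=\ell(f_{s_iP}(A))$, or $\ell(f_P(A))=\ell(f_{s_iP}(A))+2$, or the two lengths are equal. These place $A$ in $V_P$, $W_P$, or $X_P\sqcup Y_P$ respectively.
\item In the equal-length case, $X_P$ and $Y_P$ are separated by the value of $C_i(F_{s_iP,i}(A))\in\{E,C\}$, which is well defined.
\end{enumerate}
Disjointness is immediate, because the defining conditions are mutually exclusive.

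\textbf{Second line.} Take $A\in\affdeop{s_iP}$ and split according to whether $A$ lies in $F_{s_iP,i}(\affdeop{P})$, in $\affdeop{P}$, or in neither.
\begin{enumerate}[label=(\roman*)]
\item If $A$ is in neither, then $A\in A_P$ by definition.
\item If $A\in F_{s_iP,i}(\affdeop{P})$, the parity trichotomy places it in exactly one of $B_P$, $C_P$, $D_P$.
\item If $A\in\affdeop{P}$, I need $C_i(A)=E$ so that $A\in G_P$, and I also need $G_P$ disjoint from $B_P\cup C_P\cup D_P$.
\end{enumerate}

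\textbf{Main obstacle.} The two claims in case (iii) need real work, and I would handle both by analysing the strands $i,i+1$ at $b$ using the sequence $\mathbf S_A$ from Lemma~\ref{lemma:twistedinjection}.

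First, suppose $A\in\affdeop{P}\cap\affdeop{s_iP}$ and $C_i(A)=C$. Then $A\in\affdeop{P}^{(s_i)}$ by the proof of Lemma~\ref{lemma:changingbox}. A crossing in the corner box would then violate one of the two distinguished conditions, because the parity of crossings between $i$ and $i+1$ flips depending on whether $b$ is read first or last. This contradiction forces $C_i(A)=E$.

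Second, I need that $F_{s_iP,i}$ images are not themselves in $\affdeop{P}$ with an elbow at $b$. For this I would use the observation after Lemma~\ref{lemma:twistedinjection}: images of the Foata map have last entry $S_r=C$. This has to be matched with the position of $b$ in $s_iP$, where $b$ is $\preceq$-minimal by Lemma~\ref{lemma:cornerbox}. That comparison should separate the two kinds of elements, but the bookkeeping of which end of $\mathbf S_A$ corresponds to $b$ is the step I expect to need care.

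If this direct disjointness argument fails, my fallback is to redefine $G_P$ as the complement of the other four sets. The partition statement then holds tautologically, and the real content moves into the later weight comparisons.
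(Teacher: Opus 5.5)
\paragraph{Verdict.} There is a genuine gap, and it cannot be closed as stated: the second decomposition is not disjoint.

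\paragraph{First line and coverage.} Your treatment of the first line is fine and is essentially the paper's one-line argument: split on the corner tile, then use $\ell(s_ifs_i)-\ell(f)\in\{0,\pm2\}$. A small correction: $f_P(A)=f_{s_iP}(A)$ when $C_i(A)=E$, and $f_{s_iP}(A)=s_if_P(A)s_i$ when $C_i(A)=C$. One-sided multiplication never occurs, and it would contradict your parity remark.

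The real problems are in your ``main obstacle''. Your first claim, that $A\in\affdeop{P}\cap\affdeop{s_iP}$ forces $C_i(A)=E$, is false. The all-crossing filling lies in both sets and has a crossing in the corner box. The parity flip only gives a contradiction at an elbow formed by strands $i,i+1$. What is true is the following. For such $A$, the sequence $\mathbf S_A$ read in $s_iP$ contains no $E$. An elbow of that pair would have to be preceded by a number of its crossings that is both odd (the $s_i$-twisted condition inherited from $P$) and even (the untwisted condition in $s_iP$). Hence $\mathbf S_A=C^r$, $F_{s_iP,i}$ fixes $A$, and $A\in B_P\sqcup C_P\sqcup D_P$. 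This is the correct route to coverage.

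\paragraph{Disjointness fails.} Your second claim is $G_P\cap F_{s_iP,i}(\affdeop{P})=\varnothing$. The comparison you sketch cannot give it. In $s_iP$ the corner box is the \emph{first} meeting of strands $i,i+1$, so $C_i(A)=E$ only says $S_1=E$. Lying in the Foata image means $S_r=C$, and $\mathbf S=(E,C)$ satisfies both.

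In fact the claim fails. Take $n=4$, $k=2$, $i=2$, $P=\mathrm{URUR}$, so $s_iP=\mathrm{UURR}$. The representative region of $s_iP$ is a $2\times2$ square whose top-left box is the corner box. Strands $1,2$ enter the bottom-left and top-left boxes from the left, and strands $3,4$ enter the top-left and top-right boxes from the top.
\begin{enumerate}
\item Let $A$ have elbows in the top-left, top-right and bottom-left boxes and a crossing in the bottom-right box.
\item In $s_iP$, strands $2,3$ meet exactly in the top-left and bottom-right boxes, so $\mathbf S_A=(E,C)$. The filling $A$ is distinguished, with $f_{s_iP}(A)=[3,5,4,6]$.
\item In $P$, the corner box is an elbow between strands $0$ and $1$, which have not met before. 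So $A\in\affdeop{P}$, i.e.\ $A\in G_P$.
\item Let $A'$ have a crossing in the corner box and an elbow in the bottom-right box, and otherwise agree with $A$. Then $A'\in\affdeo{[2,4,5,7]}{P}$ with $C_i(A')=C$, and $A=F_{s_iP,i}(A')$. In fact $A'\in X_P$.
\item Since $f_P(A)=f_{s_iP}(A)$, also $A\in D_P$.
\end{enumerate}
So $A\in G_P\cap D_P$, and the second line is not a disjoint union as stated. The paper's proof cites only Lemma~\ref{lemma:changingbox} and the length trichotomy, so it does not address this either.

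\paragraph{Your fallback.} Redefining $G_P$ as the complement of the other four sets changes the statement. It would also break $Z_P=G_P$ (Lemma~\ref{lemma:easycollection}), which is what the partition is used for, since this same $A$ lies in $Z_P$.
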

\begin{proof}
This is clear from Lemma \ref{lemma:changingbox} and noting that either $\ell(f)=\ell(s_ifs_i)$ or $\ell(f)=\ell(s_ifs_i)\pm2$ for any $f\in\Bkn$.
\end{proof}

We now demonstrate the relationships between these sets, beginning with the most straightforward. See Figure \ref{fig:invofpathfig} for a visualization.

\begin{figure}
\centering

\begin{minipage}[h]{0.49\textwidth}
\centering
\scalebox{0.65}
{

\begin{tabular}{lcccccl}

& $g_{P,A}(q)$ & Deogram $A$ & & Deogram $A'$ & $g_{s_iP,A'}(q)$\\

\multirow{1}{*}{\hspace{1em}$Z_P\left.\begin{array}{l}
                \\\\\\\\\\\\\\\\\\\\\\\\\\\\\\\\\\\\\\\\\\\\\\\\\\
                \end{array}\right\lbrace$} 
                &

$(q-1)^4$
&

\deog{2/0,0/0,2/1,0/1}{0/0,0/2,1/0,1/2}{0.500000000000000/-0.250000000000000/1,0.500000000000000/2.25000000000000/1,1.50000000000000/-0.250000000000000/2,1.50000000000000/2.25000000000000/2,2.25000000000000/0.500000000000000/3,-0.35000000000000/0.50000000000000/-1,2.25000000000000/1.50000000000000/4,-0.25000000000000/1.50000000000000/0}{0/1,1/1,0/0,1/0}{}

&
$\rightarrow$

&

\deog{1/0,1/2,2/1,0/1}{0/0,0/2,1/1,1/3}{0.500000000000000/-0.250000000000000/1,0.500000000000000/2.25000000000000/1,1.25000000000000/0.500000000000000/2,0.750000000000000/2.50000000000000/2,1.50000000000000/0.750000000000000/3,1.50000000000000/3.25000000000000/3,2.25000000000000/1.50000000000000/4,-0.25000000000000/1.50000000000000/0}{1/2,0/1,1/1,0/0}{}

&

$(q-1)^4$
 & \multirow{1}{*}{\hspace{-1em}$\left.\begin{array}{l}
                \\\\\\\\\\\\\\\\\\\\\\\\\\\\\\\\\\\\\\\\\\\\\\\\\\
                \end{array}\right\rbrace G_P$} 

\\

&

$(q-1)^3$
&

\deog{2/0,0/0,2/1,0/1}{0/0,0/2,1/0,1/2}{0.500000000000000/-0.250000000000000/1,0.500000000000000/2.25000000000000/1,1.50000000000000/-0.250000000000000/2,1.50000000000000/2.25000000000000/2,2.25000000000000/0.500000000000000/3,-0.35000000000000/0.50000000000000/-1,2.25000000000000/1.50000000000000/4,-0.25000000000000/1.50000000000000/0}{0/1,1/1,1/0}{0/0}

&
$\rightarrow$
&

\deog{1/0,1/2,2/1,0/1}{0/0,0/2,1/1,1/3}{0.500000000000000/-0.250000000000000/1,0.500000000000000/2.25000000000000/1,1.25000000000000/0.500000000000000/2,0.750000000000000/2.50000000000000/2,1.50000000000000/0.750000000000000/3,1.50000000000000/3.25000000000000/3,2.25000000000000/1.50000000000000/4,-0.25000000000000/1.50000000000000/0}{1/2,0/1,1/1}{0/0}

&

$(q-1)^3$
\\
&
$(q-1)^3$
&

\deog{2/0,0/0,2/1,0/1}{0/0,0/2,1/0,1/2}{0.500000000000000/-0.250000000000000/1,0.500000000000000/2.25000000000000/1,1.50000000000000/-0.250000000000000/2,1.50000000000000/2.25000000000000/2,2.25000000000000/0.500000000000000/3,-0.35000000000000/0.50000000000000/-1,2.25000000000000/1.50000000000000/4,-0.25000000000000/1.50000000000000/0}{0/1,0/0,1/0}{1/1}
&
$\rightarrow$
&
\deog{1/0,1/2,2/1,0/1}{0/0,0/2,1/1,1/3}{0.500000000000000/-0.250000000000000/1,0.500000000000000/2.25000000000000/1,1.25000000000000/0.500000000000000/2,0.750000000000000/2.50000000000000/2,1.50000000000000/0.750000000000000/3,1.50000000000000/3.25000000000000/3,2.25000000000000/1.50000000000000/4,-0.25000000000000/1.50000000000000/0}{1/2,0/1,0/0}{1/1}

&

$(q-1)^3$

\\
&

$(q-1)^2$
&
\deog{2/0,0/0,2/1,0/1}{0/0,0/2,1/0,1/2}{0.500000000000000/-0.250000000000000/1,0.500000000000000/2.25000000000000/1,1.50000000000000/-0.250000000000000/2,1.50000000000000/2.25000000000000/2,2.25000000000000/0.500000000000000/3,-0.35000000000000/0.50000000000000/-1,2.25000000000000/1.50000000000000/4,-0.25000000000000/1.50000000000000/0}{0/0,1/0}{0/1,1/1}
&
$\rightarrow$
&

\deog{1/0,1/2,2/1,0/1}{0/0,0/2,1/1,1/3}{0.500000000000000/-0.250000000000000/1,0.500000000000000/2.25000000000000/1,1.25000000000000/0.500000000000000/2,0.750000000000000/2.50000000000000/2,1.50000000000000/0.750000000000000/3,1.50000000000000/3.25000000000000/3,2.25000000000000/1.50000000000000/4,-0.25000000000000/1.50000000000000/0}{1/2,0/0}{1/1,0/1}

&

$(q-1)^2$
\\
&
$(q-1)^2$
&
\deog{2/0,0/0,2/1,0/1}{0/0,0/2,1/0,1/2}{0.500000000000000/-0.250000000000000/1,0.500000000000000/2.25000000000000/1,1.50000000000000/-0.250000000000000/2,1.50000000000000/2.25000000000000/2,2.25000000000000/0.500000000000000/3,-0.35000000000000/0.50000000000000/-1,2.25000000000000/1.50000000000000/4,-0.25000000000000/1.50000000000000/0}{1/1,1/0}{0/1,0/0}
&
$\rightarrow$
&
\deog{1/0,1/2,2/1,0/1}{0/0,0/2,1/1,1/3}{0.500000000000000/-0.250000000000000/1,0.500000000000000/2.25000000000000/1,1.25000000000000/0.500000000000000/2,0.750000000000000/2.50000000000000/2,1.50000000000000/0.750000000000000/3,1.50000000000000/3.25000000000000/3,2.25000000000000/1.50000000000000/4,-0.25000000000000/1.50000000000000/0}{1/2,1/1}{0/1,0/0}

&

$(q-1)^2$

\\
&
$(q-1)^2$
&
\deog{2/0,0/0,2/1,0/1}{0/0,0/2,1/0,1/2}{0.500000000000000/-0.250000000000000/1,0.500000000000000/2.25000000000000/1,1.50000000000000/-0.250000000000000/2,1.50000000000000/2.25000000000000/2,2.25000000000000/0.500000000000000/3,-0.35000000000000/0.50000000000000/-1,2.25000000000000/1.50000000000000/4,-0.25000000000000/1.50000000000000/0}{0/1,1/0}{0/0,1/1}
&
$\rightarrow$
&

\deog{1/0,1/2,2/1,0/1}{0/0,0/2,1/1,1/3}{0.500000000000000/-0.250000000000000/1,0.500000000000000/2.25000000000000/1,1.25000000000000/0.500000000000000/2,0.750000000000000/2.50000000000000/2,1.50000000000000/0.750000000000000/3,1.50000000000000/3.25000000000000/3,2.25000000000000/1.50000000000000/4,-0.25000000000000/1.50000000000000/0}{1/2,0/1}{1/1,0/0}

&

$(q-1)^2$
\\
&
$q-1$
&
\deog{2/0,0/0,2/1,0/1}{0/0,0/2,1/0,1/2}{0.500000000000000/-0.250000000000000/1,0.500000000000000/2.25000000000000/1,1.50000000000000/-0.250000000000000/2,1.50000000000000/2.25000000000000/2,2.25000000000000/0.500000000000000/3,-0.35000000000000/0.50000000000000/-1,2.25000000000000/1.50000000000000/4,-0.25000000000000/1.50000000000000/0}{1/0}{0/0,1/1,0/1}
&
$\rightarrow$
&

\deog{1/0,1/2,2/1,0/1}{0/0,0/2,1/1,1/3}{0.500000000000000/-0.250000000000000/1,0.500000000000000/2.25000000000000/1,1.25000000000000/0.500000000000000/2,0.750000000000000/2.50000000000000/2,1.50000000000000/0.750000000000000/3,1.50000000000000/3.25000000000000/3,2.25000000000000/1.50000000000000/4,-0.25000000000000/1.50000000000000/0}{1/2}{0/0,0/1,1/1}

&

$q-1$

\\
\multirow{1}{*}{\hspace{1em}$W_P\left.\begin{array}{l}
                \\\\
                \end{array}\right\lbrace$} 
&
$(q-1)^2q$
&
\deog{2/0,0/0,2/1,0/1}{0/0,0/2,1/0,1/2}{0.500000000000000/-0.250000000000000/1,0.500000000000000/2.25000000000000/1,1.50000000000000/-0.250000000000000/2,1.50000000000000/2.25000000000000/2,2.25000000000000/0.500000000000000/3,-0.35000000000000/0.50000000000000/-1,2.25000000000000/1.50000000000000/4,-0.25000000000000/1.50000000000000/0}{0/0,1/1}{0/1,1/0}
&
$\rightarrow$
&
\deog{1/0,1/2,2/1,0/1}{0/0,0/2,1/1,1/3}{0.500000000000000/-0.250000000000000/1,0.500000000000000/2.25000000000000/1,1.25000000000000/0.500000000000000/2,0.750000000000000/2.50000000000000/2,1.50000000000000/0.750000000000000/3,1.50000000000000/3.25000000000000/3,2.25000000000000/1.50000000000000/4,-0.25000000000000/1.50000000000000/0}{1/2,1/1,0/0}{0/1}

&

$(q-1)^3$
 & \multirow{1}{*}{\hspace{-1em}$\left.\begin{array}{l}
                \\\\
                \end{array}\right\rbrace A_P$} 
\\

&

&

&
$\searrow$
&

\deog{1/0,1/2,2/1,0/1}{0/0,0/2,1/1,1/3}{0.500000000000000/-0.250000000000000/1,0.500000000000000/2.25000000000000/1,1.25000000000000/0.500000000000000/2,0.750000000000000/2.50000000000000/2,1.50000000000000/0.750000000000000/3,1.50000000000000/3.25000000000000/3,2.25000000000000/1.50000000000000/4,-0.25000000000000/1.50000000000000/0}{0/0,1/1}{0/1,1/2}

&

$(q-1)^2$
 & \multirow{1}{*}{\hspace{-1em}$\left.\begin{array}{l}
                \\\\
                \end{array}\right\rbrace B_P$} 
\end{tabular}
}
\end{minipage}
\hfill
\begin{minipage}[t]{0.49\textwidth}
\centering
\scalebox{0.65}{

\begin{tabular}{lcccccl}
& $g_{P,A}(q)$ & Deogram $A$ & & Deogram $A'$ & $g_{s_iP,A'}(q)$\\

\multirow{1}{*}{\hspace{1em}$Y_P\left.\begin{array}{l}
                \\\\\\\\\\\\\\\\\\\\\\\\\\\\\\\\\\\\\\\\\\\\\\\\\\\\
                \end{array}\right\lbrace$} 
     
&
$(q-1)^3$
&

\deog{2/0,0/0,2/1,0/1}{0/0,0/2,1/0,1/2}{0.500000000000000/-0.250000000000000/1,0.500000000000000/2.25000000000000/1,1.50000000000000/-0.250000000000000/2,1.50000000000000/2.25000000000000/2,2.25000000000000/0.500000000000000/3,-0.35000000000000/0.50000000000000/-1,2.25000000000000/1.50000000000000/4,-0.25000000000000/1.50000000000000/0}{0/1,1/1,0/0}{1/0}

&
$\rightarrow$
&

\deog{1/0,1/2,2/1,0/1}{0/0,0/2,1/1,1/3}{0.500000000000000/-0.250000000000000/1,0.500000000000000/2.25000000000000/1,1.25000000000000/0.500000000000000/2,0.750000000000000/2.50000000000000/2,1.50000000000000/0.750000000000000/3,1.50000000000000/3.25000000000000/3,2.25000000000000/1.50000000000000/4,-0.25000000000000/1.50000000000000/0}{0/0,1/1,0/1}{1/2}

&

$(q-1)^3$
&\multirow{1}{*}{\hspace{-1em}$\left.\begin{array}{l}
                \\\\\\\\\\\\\\\\\\\\\\\\\\\\\\\\\\\\\\\\\\\\\\\\\\\\
                \end{array}\right\rbrace D_P$} 
\\
&
$(q-1)^2$
&

\deog{2/0,0/0,2/1,0/1}{0/0,0/2,1/0,1/2}{0.500000000000000/-0.250000000000000/1,0.500000000000000/2.25000000000000/1,1.50000000000000/-0.250000000000000/2,1.50000000000000/2.25000000000000/2,2.25000000000000/0.500000000000000/3,-0.35000000000000/0.50000000000000/-1,2.25000000000000/1.50000000000000/4,-0.25000000000000/1.50000000000000/0}{0/1,0/0}{1/1,1/0}
&
$\rightarrow$
&
\deog{1/0,1/2,2/1,0/1}{0/0,0/2,1/1,1/3}{0.500000000000000/-0.250000000000000/1,0.500000000000000/2.25000000000000/1,1.25000000000000/0.500000000000000/2,0.750000000000000/2.50000000000000/2,1.50000000000000/0.750000000000000/3,1.50000000000000/3.25000000000000/3,2.25000000000000/1.50000000000000/4,-0.25000000000000/1.50000000000000/0}{0/0,0/1}{1/1,1/2}

&

$(q-1)^2$

\\
&
$(q-1)^2$
&
\deog{2/0,0/0,2/1,0/1}{0/0,0/2,1/0,1/2}{0.500000000000000/-0.250000000000000/1,0.500000000000000/2.25000000000000/1,1.50000000000000/-0.250000000000000/2,1.50000000000000/2.25000000000000/2,2.25000000000000/0.500000000000000/3,-0.35000000000000/0.50000000000000/-1,2.25000000000000/1.50000000000000/4,-0.25000000000000/1.50000000000000/0}{0/1,1/1}{0/0,1/0}
&
$\rightarrow$
&

\deog{1/0,1/2,2/1,0/1}{0/0,0/2,1/1,1/3}{0.500000000000000/-0.250000000000000/1,0.500000000000000/2.25000000000000/1,1.25000000000000/0.500000000000000/2,0.750000000000000/2.50000000000000/2,1.50000000000000/0.750000000000000/3,1.50000000000000/3.25000000000000/3,2.25000000000000/1.50000000000000/4,-0.25000000000000/1.50000000000000/0}{1/1,0/1}{1/2,0/0}

&

$(q-1)^2$

\\
&
$q-1$
&
\deog{2/0,0/0,2/1,0/1}{0/0,0/2,1/0,1/2}{0.500000000000000/-0.250000000000000/1,0.500000000000000/2.25000000000000/1,1.50000000000000/-0.250000000000000/2,1.50000000000000/2.25000000000000/2,2.25000000000000/0.500000000000000/3,-0.35000000000000/0.50000000000000/-1,2.25000000000000/1.50000000000000/4,-0.25000000000000/1.50000000000000/0}{0/1}{0/0,1/1,1/0}
&
$\rightarrow$
&

\deog{1/0,1/2,2/1,0/1}{0/0,0/2,1/1,1/3}{0.500000000000000/-0.250000000000000/1,0.500000000000000/2.25000000000000/1,1.25000000000000/0.500000000000000/2,0.750000000000000/2.50000000000000/2,1.50000000000000/0.750000000000000/3,1.50000000000000/3.25000000000000/3,2.25000000000000/1.50000000000000/4,-0.25000000000000/1.50000000000000/0}{0/1}{0/0,1/1,1/2}

&

$q-1$ 

\\
&
$q-1$
&
\deog{2/0,0/0,2/1,0/1}{0/0,0/2,1/0,1/2}{0.500000000000000/-0.250000000000000/1,0.500000000000000/2.25000000000000/1,1.50000000000000/-0.250000000000000/2,1.50000000000000/2.25000000000000/2,2.25000000000000/0.500000000000000/3,-0.35000000000000/0.50000000000000/-1,2.25000000000000/1.50000000000000/4,-0.25000000000000/1.50000000000000/0}{0/0}{0/1,1/1,1/0}
&
$\rightarrow$
&

\deog{1/0,1/2,2/1,0/1}{0/0,0/2,1/1,1/3}{0.500000000000000/-0.250000000000000/1,0.500000000000000/2.25000000000000/1,1.25000000000000/0.500000000000000/2,0.750000000000000/2.50000000000000/2,1.50000000000000/0.750000000000000/3,1.50000000000000/3.25000000000000/3,2.25000000000000/1.50000000000000/4,-0.25000000000000/1.50000000000000/0}{0/0}{1/1,0/1,1/2}

&

$q-1$
\\
&

$q-1$
&
\deog{2/0,0/0,2/1,0/1}{0/0,0/2,1/0,1/2}{0.500000000000000/-0.250000000000000/1,0.500000000000000/2.25000000000000/1,1.50000000000000/-0.250000000000000/2,1.50000000000000/2.25000000000000/2,2.25000000000000/0.500000000000000/3,-0.35000000000000/0.50000000000000/-1,2.25000000000000/1.50000000000000/4,-0.25000000000000/1.50000000000000/0}{1/1}{0/0,0/1,1/0}
&
$\rightarrow$
&

\deog{1/0,1/2,2/1,0/1}{0/0,0/2,1/1,1/3}{0.500000000000000/-0.250000000000000/1,0.500000000000000/2.25000000000000/1,1.25000000000000/0.500000000000000/2,0.750000000000000/2.50000000000000/2,1.50000000000000/0.750000000000000/3,1.50000000000000/3.25000000000000/3,2.25000000000000/1.50000000000000/4,-0.25000000000000/1.50000000000000/0}{1/1}{0/0,0/1,1/2}

&

$q-1$
\\
&
$1$
&
\deog{2/0,0/0,2/1,0/1}{0/0,0/2,1/0,1/2}{0.500000000000000/-0.250000000000000/1,0.500000000000000/2.25000000000000/1,1.50000000000000/-0.250000000000000/2,1.50000000000000/2.25000000000000/2,2.25000000000000/0.500000000000000/3,-0.35000000000000/0.50000000000000/-1,2.25000000000000/1.50000000000000/4,-0.25000000000000/1.50000000000000/0}{}{1/0,0/0,1/1,0/1}
&
$\rightarrow$
&

\deog{1/0,1/2,2/1,0/1}{0/0,0/2,1/1,1/3}{0.500000000000000/-0.250000000000000/1,0.500000000000000/2.25000000000000/1,1.25000000000000/0.500000000000000/2,0.750000000000000/2.50000000000000/2,1.50000000000000/0.750000000000000/3,1.50000000000000/3.25000000000000/3,2.25000000000000/1.50000000000000/4,-0.25000000000000/1.50000000000000/0}{}{0/1,0/0,1/1,1/2}

&

$1$

\end{tabular}
}

\end{minipage}
\caption{Example of Proposition \ref{prop:invofpath} with $P = \text{RRUU}$. In this case, $C_P=W_P=X_P=\emptyset$.}
\label{fig:invofpathfig}
\end{figure}

\begin{lemma}\label{lemma:easycollection}
For $P\in\Pkn$ with $P_i=R$ and $P_{i+1}=U$, we have a bijection $\zeta_P:Z_P\to G_P$ where for all $Z\in Z_P$,
\[
	g_{P,Z}(q)=g_{s_iP,\zeta_P(Z)}(q).
\]
\end{lemma}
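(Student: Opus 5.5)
The bijection $\zeta_P$ will be the "slide the corner elbow" move suggested by Lemma~\ref{lemma:changingbox}(2) and visible in the $Z_P\to G_P$ rows of Figure~\ref{fig:invofpathfig}. By Lemma~\ref{lemma:cornerbox}, the box $b$ at the corner of steps $i,i+1$ of $P$ is $\preceq$-maximal, and the strands meeting there are $i$ and $i+1$. Deleting $b$ identifies $B_P\setminus\{b\}$ with $B_{s_iP}\setminus\{b'\}$, where $b'$ is the $\preceq$-minimal corner box of $s_iP$, in which the strands $i$ and $i+1$ meet first.

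For $Z\in Z_P$, so that $C_i(Z)=E$, define $\zeta_P(Z)$ as follows:
\begin{enumerate}[label=\arabic*.]
\item Copy the tiles of $Z$ on $B_P\setminus\{b\}$ to $B_{s_iP}\setminus\{b'\}$.
\item Place an elbow in $b'$.
\end{enumerate}
The inverse on $G_P$ reverses this: remove the elbow in $b'$ and place an elbow in $b$.

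The key steps, in order, are as follows.
\begin{itemize}
\item[(i)] \emph{The strand permutation is unchanged.} An elbow at the last box of $P$ and an elbow at the first box of $s_iP$ both act as the identity on the two strands meeting there. In word language, the subword of $\mathbf{\tau_P}=\mathbf{w}s_i$ given by $(\mathbf{u},e)$ corresponds to the subword $(e,\mathbf{u})$ of $\mathbf{\tau_{s_iP}}=s_i\mathbf{w}'$, where $\mathbf{w}'$ is $\mathbf{w}$ with the strand labels carried over. Both have the same product $f$, so $f_P(Z)=f_{s_iP}(\zeta_P(Z))$.
\item[(ii)] \emph{Distinguishedness is preserved in both directions.} This is exactly Lemma~\ref{lemma:changingbox}(2). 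The new elbow in $b'$ sits before any crossing of $i$ and $i+1$, so it is automatically distinguished. The elbow in $b$ of $Z$ was distinguished, meaning $i$ and $i+1$ had crossed an even number of times before $b$. Removing the elbow in $b$ and adding one in $b'$ does not change the crossing parity of any pair of strands at any other box, since both tiles are elbows that do not affect labels. Hence every other elbow keeps its condition. The same argument run backwards shows that the inverse lands in $Z_P$: the final elbow in $b$ is distinguished because the total number of crossings of $i,i+1$ in $\mathbf{w}'$ is even. That parity is equivalent to $G_P$ elements having $f$ consistent with an elbow at $b$, which I would check via the equality $\ell(f)$ versus $f s_i$ comparison.
\item[(iii)] \emph{Weights agree.} The numbers of elbows and crossings are unchanged, and $\ell(f)$ is the same on both sides by (i). Hence $g_{P,Z}(q)=g_{s_iP,\zeta_P(Z)}(q)$.
\end{itemize}

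The main obstacle is step (ii) for the inverse map, i.e.\ surjectivity onto $G_P$. Its definition only says $A\in\affdeop{P}$ with $C_i(A)=E$, which must be read through the identification of Lemma~\ref{lemma:cornerbox}. I would first make precise that identification of a filling of the $s_iP$-strip with a filling of the $P$-strip (moving the elbow from $b'$ to $b$). Once that is done, $G_P$ is literally the image of $Z_P$ under this identification, and both bijectivity and weight preservation follow from (i)--(iii).
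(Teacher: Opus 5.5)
Your proposal is correct and is essentially the paper's proof. Once fillings are viewed as fillings of the $k\times(n-k)$ rectangle placed inside $P$ or $s_iP$, the corner boxes $b$ and $b'$ are the same cell, so your ``slide the elbow'' map is literally the identity; the paper simply notes $Z_P=G_P$ via Lemma~\ref{lemma:changingbox}(2) and takes $\zeta_P=\mathrm{Id}$.

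The obstacle you flag is not one. $G_P$ is defined to require $A\in\affdeop{P}$ with $C_i(A)=E$, so $G_P\subseteq Z_P$ holds by definition, and the parity/length check you propose is unnecessary. It would also fail for a general $A\in\affdeop{s_iP}$ with $C_i(A)=E$: such fillings fall into $A_P$, as in the $W_P$ row of Figure~\ref{fig:invofpathfig}.
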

\begin{proof}
By Lemma \ref{lemma:changingbox}, we see that, as a set of fillings, $Z_P=G_P$, so take $\zeta_P=\text{Id}_{Z_P}$.
\end{proof}

\begin{lemma}\label{lemma:secondeasycollection}
For $P\in\Pkn$ with $P_i=R$ and $P_{i+1}=U$, we have a bijection $\beta_P:Y_P\to D_P$ where for all $Y\in Y_P$,
\[
g_{P,Y}(q) = g_{s_iP,\beta_P(Y)}(q).
\]
\end{lemma}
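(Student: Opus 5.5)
Take $\beta_P:=F_{s_iP,i}$ restricted to $Y_P$, the Foata map of Lemma~\ref{lemma:twistedinjection} applied with respect to $s_iP$.

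\emph{Image lands in $D_P$.} For $Y\in Y_P$ we have $C_i(Y)=C$, so Lemma~\ref{lemma:changingbox} gives $F_{s_iP,i}(Y)\in\affdeop{s_iP}$. The image lies in $F_{s_iP,i}(\affdeop{P})$ by definition. The length condition $\ell(f_P(Y))=\ell(f_{s_iP}(Y))$ is also part of $Y_P$. I would check that $f_P(F_{s_iP,i}(Y))=f_P(Y)$ and $f_{s_iP}(F_{s_iP,i}(Y))=f_{s_iP}(Y)$. The Foata map only permutes the crossing/elbow pattern along the pair of strands $i,i+1$. It preserves the total numbers of crossings and elbows (each barred section is just reversed) and the parity of crossings between these strands, so the strand permutation is unchanged. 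This places the image in $D_P$. The extra requirement $C_i(F_{s_iP,i}(Y))=C$, which separates $Y_P$ from $X_P$, should correspond to the condition in $D_P$ that the box at the corner is not turned into an elbow, so that no $G_P$-type filling is hit.

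\emph{Bijectivity.} Injectivity is immediate from Lemma~\ref{lemma:twistedinjection}. For surjectivity, take $A'\in D_P$. Then $A'=F_{s_iP,i}(Y)$ for some $Y\in\affdeop{P}$ with $C_i(Y)=C$, and the length condition passes back because $F$ preserves both strand permutations. We must still show $C_i(A')=C$, so that $Y\in Y_P$ rather than $X_P$. I would argue this from the observation after Lemma~\ref{lemma:twistedinjection}, namely that $\mathbf{S}_{F_i(A)}$ ends in $C$, together with the fact that in the equal-length case the changing box is the last meeting of strands $i,i+1$. If this identification fails, I would instead redefine the partition so that $X_P$ and the $C_i=E$ part of $D_P$ are handled in a later lemma.

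\emph{Weights.} Since $F$ preserves the numbers of elbows and crossings, and the lengths agree, $g_{P,Y}(q)=g_{s_iP,\beta_P(Y)}(q)$ once we note $\ell(f_P(Y))=\ell(f_{s_iP}(Y))=\ell(f_{s_iP}(\beta_P(Y)))$. The weight of a filling in the $s_iP$-strip uses the same box counts, because the changing box is a single box moved from one corner to the other.

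The main obstacle is the surjectivity step: showing precisely that the condition $C_i(F(A))=C$ on the $P$ side matches membership in $D_P$, and not the $X_P$/$C_P$ boundary.
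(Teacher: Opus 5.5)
Your map is the paper's map: $\beta_P=F_{s_iP,i}$, whose bijectivity the paper asserts ``by definition of $Y_P$ and $D_P$''. Your inclusion $\beta_P(Y_P)\subseteq D_P$ and the weight identity go through. One correction there: $f_P$ is not preserved because Foata acts along strands $i,i+1$ in $P$. In $P$ the corner box is usually not a meeting of those strands. It is preserved because $C_i=C$ both before and after, so $f_P=s_if_{s_iP}s_i$ on both sides.

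The gap is surjectivity, and the argument you sketch for it fails. By Lemma~\ref{lemma:cornerbox}, in $s_iP$ the corner box is $\preceq$-minimal, so it is the \emph{first} meeting of strands $i,i+1$. Hence $C_i(F(A))$ is the first letter of $\tilde{\mathbf S}_A$, and the fact that $\tilde{\mathbf S}_A$ ends in $C$ says nothing about it. If $\mathbf S_A=(C,E^a,C,\dots)$, the first Foata section $CE^a$ is reversed to $E^aC$, so $C_i(F(A))=E$ exactly when $a\ge 1$. For such an image $A'=F(A)$ the corner tile is an elbow, so $f_P(A')=f_{s_iP}(A')$. The length condition defining $D_P$ then holds automatically, and therefore $F(X_P)\subseteq D_P\setminus\beta_P(Y_P)$.

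This case actually occurs. Take $k=2$, $n=5$, $P=\mathrm{RURRU}$, $i=1$. The fundamental domain of $s_1P=\mathrm{URRRU}$ is a $2\times 3$ rectangle with the corner box at the top left. Consider the filling with top row $C,E,C$ and bottom row $E,E,C$, viewed in $P$ via Lemma~\ref{lemma:cornerbox}:
\begin{itemize}
\item it lies in $\affdeop{P}$;
\item it has $f_P=[3,5,7,4,6]$ and $f_{s_1P}=[5,3,6,4,7]$, both of length $3$;
\item in $s_1P$, strands $1,2$ meet at the top-left box (crossing) and then at the bottom-middle box (elbow).
\end{itemize}
So it lies in $X_P$. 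Its Foata image has top row $E,E,C$ and bottom row $E,C,C$. That image lies in $D_P$, and also in $G_P$, but not in $F(Y_P)$. Since $F$ is injective, $|D_P|>|Y_P|$, so with $D_P$ as literally written no bijection exists, and your gap cannot be closed.

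What is true, and what the paper's ``by definition'' tacitly uses, is the version with $D_P$ restricted to fillings with $C_i=C$, i.e.\ $D_P=F_{s_iP,i}(Y_P)$. Your fallback of redefining the partition is therefore the right diagnosis. With that restriction, surjectivity is immediate. If $A'=F(A)$ with $C_i(A')=C_i(A)=C$, then $f_{s_iP}(A')=f_{s_iP}(A)$, hence $f_P(A')=f_P(A)$. The length condition then transfers back, so $A\in Y_P$.
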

\begin{proof}
Let $\beta_P = F_{s_iP,i}$. By definition of $Y_P$ and $D_P$, this is a bijection. The resulting equality of $g$ polynomials follows from the definition of the sets and the fact that $F_{s_iP,i}$ does not change the number of elbows, crossings, or the underlying affine permutation of the filling.
\end{proof}

\begin{lemma}\label{lemma:twoinone}
For any $P,s_iP\in\Pkn$ with $P_i=R$ and $P_{i+1}=U$, we have a bijection $\phi_P:B_P\to A_P$ such that, for any $B\in B_P$,
\begin{enumerate}
\item $\ell(f_{s_iP}(B))= \ell(f_{s_iP}(\phi_P(B)))+1$ and
\item $\#\text{elbows}(B)+1  = \#\text{elbows}(\phi_P(B))$.
\end{enumerate}
Furthermore, the map $F_{s_iP,i}:V_P\to B_P$ is a bijection.
\end{lemma}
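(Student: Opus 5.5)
The plan is to translate everything into the subword language of Section~\ref{sec:affdeo}, using Lemma~\ref{lemma:cornerbox}. Since $P_i=\mathrm R$ and $P_{i+1}=\mathrm U$, we may choose $\mathbf{\tau_P}$ with last letter $s_i$ (the corner box, where strands $i,i+1$ meet). Removing that box and reinserting it as the $\preceq$-minimal box of $B_{s_iP}$ gives $\mathbf{\tau_{s_iP}}$ with first letter $s_i$. For a fixed rectangle filling $A$, the strand permutations are therefore related by moving one letter from the end of the word to the front. This gives $f_{s_iP}(A)=s_i\,f_P(A)\,s_i$ up to the identification $\iota_k$, and the length conditions defining $V_P$ and $B_P$ can be read as relations between $g:=f_{s_iP}(A)$ and $s_ig$, $gs_i$.

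The second assertion I would dispose of first. By Lemma~\ref{lemma:changingbox}, if $A\in V_P$ then $C_i(A)=C$, so $A\in\affdeop{P}^{(s_i)}$ read along $s_iP$, and $F_{s_iP,i}(A)\in\affdeop{s_iP}$. By Lemma~\ref{lemma:twistedinjection} the map $F_{s_iP,i}$ is injective. It also changes neither the boxes off the $i,i+1$ strand pair nor the strand permutation or number of elbows; only the positions of elbows and crossings along that pair move. Hence the length condition $\ell(f_P)+2=\ell(f_{s_iP})$ is transported, and the image lands in $B_P$. Surjectivity onto $B_P$ is then immediate, because $B_P$ is defined as the set of Foata images of elements of $\affdeop{P}$ satisfying exactly this length condition; the only thing to check is that the preimage has $C_i=C$, which holds since $F_{s_iP,i}$ is only defined on such fillings.

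For $\phi_P$, let $B\in B_P$ with $g=f_{s_iP}(B)$. First I would determine the first letter of the subword of $B$ in $\mathbf{\tau_{s_iP}}$. I would then show that the length hypothesis forces this letter to be $s_i$ (a crossing at the changing box) and also forces $s_ig<g$. Then $\phi_P(B)$ is defined by turning that first crossing into an elbow, which is always allowed since the strands have not yet met. The product becomes $s_ig$, giving $\ell$ decreasing by $1$ and elbows increasing by $1$, which are items (1) and (2). The remaining suffix is now an $s_i$-twisted distinguished subword for $s_ig$. Because $s_i(s_ig)=g>s_ig$, Corollary~\ref{cor:twistedbij} (equivalently Corollary~\ref{cor:gentwistedbij}) lets me apply the Foata bijection to the suffix to restore the ordinary distinguished condition. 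The inverse reverses the steps: undo Foata on the suffix, then turn the first elbow back into a crossing.

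The main obstacle is the identification of the image with $A_P$. I have to show that an element of $\affdeop{s_iP}$ is neither in $\affdeop{P}$ nor a Foata image exactly when its changing box is an elbow and its permutation $h$ satisfies the condition corresponding to $s_ih>h$, with the correct parity of crossings of the $i,i+1$ strands. I would attack this by reading off, from the string $\mathbf S_A$ of elbows and crossings along strands $i,i+1$, the three possibilities in Lemma~\ref{lemma:changingbox}. A filling lies in $\affdeop{P}$ iff its last entry is compatible with the corner box being an elbow, and it lies in the Foata image iff $S_r=C$ as observed after Lemma~\ref{lemma:twistedinjection}. I then need to check that the complementary case matches precisely the fillings produced by $\phi_P$, and this bookkeeping with the parities of $\mathbf S_A$ is where I expect the real work to be.
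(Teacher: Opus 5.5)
Your claim that the length hypothesis forces $B$ to have a crossing at the changing box is false for $B_P=F_{s_iP,i}(V_P)$, which is the set your second paragraph produces. The Foata map reverses the first block $CE\cdots E$ of $\mathbf S_V$, i.e.\ everything before the second crossing. So whenever strands $i,i+1$ of $V$, read along $s_iP$, form an elbow at their second meeting, $F_{s_iP,i}(V)$ has an elbow at the changing box.

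This already happens for $k=2$, $n=5$, $P=\mathrm{RRURU}$, $i=2$. Write $(c,r)$ for the box $[c,c+1]\times[r,r+1]$; the fundamental domain is $\{(0,0),(0,1),(1,0),(1,1),(2,1),(2,2)\}$ and the changing box is $(1,0)$. Let $V$ have crossings exactly at $(0,1)$ and $(1,0)$. Then $V$ is distinguished, $f_P(V)=f_{2,5}$ has length $0$, and $f_{s_2P}(V)=[2,5,4,6,8]$ has length $2$, so $V\in V_P$. Along $s_2P$, strands $2,3$ cross at the new corner $(1,2)$ and form an elbow at $(2,1)$. Hence $\mathbf S_V=(C,E)$, and $F_{s_2P,2}(V)$ has $\mathbf S=(E,C)$.

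The underlying error is your identity $f_{s_iP}(A)=s_if_P(A)s_i$. It holds only when the changing box of $A$ is a crossing; otherwise $f_{s_iP}(A)=f_P(A)$. Suppose instead you impose the length condition of $B_P$ on $B$ itself, which is the only reading under which the first letter is forced. Then the same $V$ gives $F_{s_iP,i}(V)\notin B_P$, and your second paragraph fails. The two halves of your argument rely on incompatible readings of $B_P$.

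Even when $B$ does have a crossing at the changing box, you apply the Foata step in the wrong regime. Write $\mathbf S_B=(C,T_2,\dots,T_r)$. The suffix has product $h=s_ig$ with $s_ih=g>h$, whereas Corollary~\ref{cor:twistedbij} requires $h>s_ih$. You are therefore in the situation of Corollary~\ref{cor:twistedbijcase2}: $F_i$ is only injective, and its image consists of fillings in which the last meeting of the pair is a crossing. So whenever strands $i,i+1$ meet again after the changing box, your output $(E,F_i(T_2,\dots,T_r))$ ends in $C$. It then equals $F_{s_iP,i}$ of a filling with a crossing at the changing box and distinguished remainder. That is, it lies in $F_{s_iP,i}(\affdeop{P})$, not in $A_P$. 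At the level of sequences, $(C,C,E,C)\mapsto(E,E,C,C)=F_{s_iP,i}(C,E,E,C)$.

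The paper composes in the opposite order, $\phi_P=\alpha_P\circ F_{s_iP,i}^{-1}$. It first pulls $B$ back to $V\in V_P$, whose changing box is always a crossing, and then turns that crossing into an elbow. This needs no information about the first letter of $B$, and it sends $(C,C,E,C)$ to $(E,C,C,E)$, which ends in an elbow. You defer the check that the image is exactly $A_P$ as ``the real work''; that check is exactly where both problems would have surfaced. As written, the proposal does not establish the bijection.
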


\begin{proof}
The last claim follows by the definition of $V_P$ and $B_P$. Let $B\in B_P$ be given and let $V\in V_P$ be such that $B=F_{s_iP,i}(V)$. We create $\phi_P$ by constructing a bijection $\alpha_P:V_P\to A_P$.

Define $A$ to be the filling identical to $V$, with the exception that $C_i(A)=E$. We claim that $A\in A_P$, and once we show this we have our desired bijection.

Since $V\in V_P$, we have $\ell(f_P(V))+2=\ell(f_{s_iP}(V))$. Let $f = f_P(V)$. Then, $s_ifs_i=f_{s_iP}(V)$, and as $\ell(f)+2=\ell(s_ifs_i)$, we have $s_if,fs_i>f$. By construction of $A$, we have $f_P(A)=f_{s_iP}(A) = s_if$. As $s_if>f$, the wires labeled by $s_if^{-1}(i)$ and $s_if^{-1}(i+1)$ cross an odd number of times. Since $C_i(A)=E$, this implies $A\not\in\affdeop{P}$. Additionally, $A\not\in F_{s_iP,i}(\affdeop{P})$: the affected pair has the wrong initial parity for the inverse Foata construction because the changing box is an elbow rather than a crossing.

So, $A\in A_P$. We define $\alpha_P(V)=A$ and let $\phi_P=\alpha_P\circ(F_{s_iP,i})^{-1}$. The resulting properties of $\phi_P$ follow from the analysis above and recalling that $F_{s_iP,i}$ does not change the number of elbows, crossings, or the underlying affine permutation of the filling.
\end{proof}

\begin{corollary}\label{cor:ABVcor}
For any $P,s_iP\in\Pkn$ with $P_i=R$ and $P_{i+1}=U$, let $B=F_{s_iP,i}(V)\in B_P$ for some $V\in V_P$ and $A = \phi_P(B)\in A_P$. Then, we have
\[
	g_{s_iP,A}(q)+g_{s_iP,B}(q)=g_{P,V}(q).
\]
\end{corollary}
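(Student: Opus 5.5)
The plan is a direct computation of the three weights from the definition
$g_{Q,X}(q)=(q-1)^{\#\text{elbows}(X)}q^{(\#\text{crossings}(X)-\ell(f_Q(X)))/2}$, with $Q\in\{P,s_iP\}$. We express all three weights in terms of data attached to $V$, using the bookkeeping in Lemma~\ref{lemma:twoinone} and the fact that the Foata map $F_{s_iP,i}$ preserves the filling data.

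Fix $V\in V_P$. Write $e=\#\text{elbows}(V)$ and $c=\#\text{crossings}(V)$, so $e+c=k(n-k)$. Set $f=f_P(V)$ and $L=\ell(f)$. Then
\[
g_{P,V}(q)=(q-1)^e q^{(c-L)/2}.
\]

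First we handle $B=F_{s_iP,i}(V)$. The Foata map only rearranges the elbow/crossing sequence of one pair of strands. Hence it preserves the numbers of elbows and crossings, and (as recalled in the proofs of Lemmas~\ref{lemma:secondeasycollection} and~\ref{lemma:twoinone}) the underlying affine permutation. Therefore $\#\text{elbows}(B)=e$ and $\#\text{crossings}(B)=c$. Also $f_{s_iP}(B)=f_{s_iP}(V)=s_ifs_i$, and by definition of $V_P$ its length is $L+2$. This gives
\[
g_{s_iP,B}(q)=(q-1)^e q^{(c-L-2)/2}.
\]

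Next we handle $A=\phi_P(B)$. By Lemma~\ref{lemma:twoinone}, $A$ has $e+1$ elbows and hence $c-1$ crossings, since the total box count is unchanged. The same lemma gives $\ell(f_{s_iP}(A))=\ell(f_{s_iP}(B))-1=L+1$; concretely, $A$ has underlying permutation $s_if$, which satisfies $s_if>f$. So
\[
g_{s_iP,A}(q)=(q-1)^{e+1}q^{(c-1-L-1)/2}=(q-1)^{e+1}q^{(c-L-2)/2}.
\]
Adding the two weights gives
\[
g_{s_iP,A}(q)+g_{s_iP,B}(q)=(q-1)^e q^{(c-L-2)/2}\bigl((q-1)+1\bigr)=(q-1)^e q^{(c-L)/2}=g_{P,V}(q).
\]

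The only step needing care is confirming that $B$ is weighted by $\ell(s_ifs_i)=L+2$ while $V$ is weighted by $\ell(f)=L$. This holds because the same filling is read along $P$ for $V$ and along $s_iP$ for $B$, and the relevant lengths are fixed by the definition of $V_P$ together with Lemma~\ref{lemma:twoinone}(1). Once that is settled, the identity is exactly the factorization $q=(q-1)+1$.
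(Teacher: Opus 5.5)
Your computation is correct and is essentially the paper's proof. Both arguments use Lemma~\ref{lemma:twoinone} together with the fact that $F_{s_iP,i}$ preserves the elbow count, the crossing count and the underlying permutation, and then factor out $(q-1)+1=q$. The only difference is cosmetic: the paper parametrizes everything by the data $(n_e,n_c,\ell)$ of $A$, whereas you parametrize by the data of $V$.
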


\begin{proof}
Let $n_e = \#\text{elbows}(A)$ and $n_c=\#\text{crossings}(A)$. Then by Lemma \ref{lemma:twoinone}, $\#\text{elbows}(B)=n_e-1$ and thus $\#\text{crossings}(B)=n_c+1$. Also as $\ell(f_{s_iP}(A))+1 = \ell(f_{s_iP}(B))$, letting $\ell :=  \ell(f_{s_iP}(A))$, we may write
\[
	g_{s_iP,A}(q)+g_{s_iP,B}(q) = (q-1)^{n_e}q^{(n_c-\ell)/2} + (q-1)^{n_e-1}q^{(n_c+1-(\ell+1))/2} = q\cdot q^{(n_c-\ell)/2}(q-1)^{n_e-1}.
\]
As $B =  F_{s_iP,i}(V)$ and $\ell(f_{s_iP}(A))+1=\ell(f_{s_iP}(B)) = \ell(f_{P}(V))+2$, we have
\[
	g_{P,V}(q) = (q-1)^{n_e-1}q^{(n_c+1-(\ell -1))/2} =  q\cdot q^{(n_c-\ell)/2}(q-1)^{n_e-1}.
\]
\end{proof}

\begin{lemma}\label{lemma:CWX}
For any $P,s_iP\in\Pkn$ with $P_i=R$ and $P_{i+1}=U$, we have a bijection $\psi_P:W_P\to X_P$ such that, for any $W\in W_P$,
\begin{enumerate}
\item $\ell(f_{P}(W))= \ell(f_{P}(\psi_P(W)))+1$ and
\item $\#\text{elbows}(W)+1  = \#\text{elbows}(\psi_P(W))$.
\end{enumerate}
Furthermore, the map $F_{s_iP,i}:W_P\to C_P$ is a bijection.
\end{lemma}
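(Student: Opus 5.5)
The plan is to model the argument on Lemma~\ref{lemma:twoinone}, with the roles of ``length goes up'' and ``length goes down'' reversed. The last claim, that $F_{s_iP,i}:W_P\to C_P$ is a bijection, should come directly from the definitions. If $W\in W_P$, then $C_i(W)=C$, so Lemma~\ref{lemma:changingbox} gives $F_{s_iP,i}(W)\in\affdeop{s_iP}$. The Foata map changes neither the filling's boxes outside the $(i,i+1)$-interactions nor the underlying permutations $f_P,f_{s_iP}$, so the length condition defining $W_P$ matches the one defining $C_P$. Injectivity is Lemma~\ref{lemma:twistedinjection}, and surjectivity onto $C_P$ is built into the definition of $C_P$ as a subset of $F_{s_iP,i}(\affdeop{P})$.

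To build $\psi_P$, I would work through the $s_iP$-frame, as in the construction of $\phi_P$. Write $f=f_P(W)$. The hypothesis $\ell(f)=\ell(s_ifs_i)+2$ gives $s_if<f$ and $fs_i<f$. Setting $h:=f_{s_iP}(W)=s_ifs_i$, this says $s_ih>h$ and $hs_i>h$.

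Now put $C=F_{s_iP,i}(W)\in C_P$. By the observation after Lemma~\ref{lemma:twistedinjection}, its interaction sequence $\mathbf{S}_C$ between the two relevant strands ends in $S_r=C$. I would then apply the map $G_i$ from the proof of Corollary~\ref{cor:twistedbijcase2}, which changes that last entry from $C$ to $E$. The claim to check is that this last interaction sits exactly at the changing box. This should follow from Lemma~\ref{lemma:cornerbox}: the corner box is $\preceq$-maximal for $P$ and $\preceq$-minimal for $s_iP$, and I would track where it lies in the Foata-reversed sequence.

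The resulting filling $C'$ has one more elbow and underlying permutation of length one less. I would then set
\[
\psi_P(W):=F_{s_iP,i}^{-1}(C'),
\]
where $F_{s_iP,i}^{-1}$ is well defined because $C'$ is again a Foata image. Once this is justified, properties (1) and (2) follow immediately, since the Foata map preserves the elbow count and the permutation.

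It then remains to show $\psi_P(W)\in X_P$, that is:
\begin{itemize}
\item $C_i(\psi_P(W))=C$;
\item $C_i(F_{s_iP,i}(\psi_P(W)))=C_i(C')=E$;
\item $\ell(f_P)=\ell(f_{s_iP})$ for $\psi_P(W)$.
\end{itemize}
The second holds by construction. For the third, the new permutation is $fs_i$ in one frame and $s_if$ in the other, and these are conjugate by $s_i$ and both of length $\ell(f)-1$. For bijectivity, I would invert the construction: given $X\in X_P$, the changing box of $F_{s_iP,i}(X)$ is an elbow, and turning it back into a crossing recovers an element of $C_P$, whose Foata preimage lies in $W_P$.

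The main obstacle is the parity and positional bookkeeping in the Foata step. I must show that the changing box is exactly the last entry of $\mathbf{S}_C$. I must also show that after turning it into an elbow, the new sequence still lies in the image of the Foata map and its preimage has a crossing at the changing box. If the $G_i$ route fails, my fallback is to define $\psi_P$ directly on $W$. In that version I would change the box where the two strands meeting at the changing box last interact before it, and use the Foata correspondence only to verify distinguishedness.
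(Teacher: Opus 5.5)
Your argument for the final claim matches the paper's. The construction of $\psi_P$, however, has a genuine gap: the step $C'=G_i(C)$, $\psi_P(W)=F_{s_iP,i}^{-1}(C')$ cannot work, for three reasons.

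\emph{The inverse is undefined.} The observation after Lemma~\ref{lemma:twistedinjection} says every Foata image has last entry $S_r=C$. Your $C'$ has last entry $E$ by construction, so $C'$ is never in the image of $F_{s_iP,i}$, and $\psi_P(W)$ does not exist.

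\emph{Parity.} $G_i$ is legitimate only when the pair has crossed an odd number of times; in Corollary~\ref{cor:twistedbijcase2} it acts on $\affdeo{s_if}{P}$ with $s_if>f$. Here $f_{s_iP}(C)=h$ with $s_ih>h$, as you noted yourself. So the pair crosses an even number of times, and turning its last crossing into an elbow violates the distinguished condition.

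\emph{Position.} By Lemma~\ref{lemma:cornerbox}, the changing box is $\preceq$-minimal in $B_{s_iP}$. In the $s_iP$ frame it is therefore the \emph{first} meeting of the strands $i,i+1$. It is never the last one, because these strands cross an odd number of times in $B_P\setminus\{b\}$ (since $s_if<f$), so they meet again. Hence $G_i$ never touches the changing box, $C_i(C')=C_i(C)$, and nothing in your construction produces the elbow demanded by $C_i(F_{s_iP,i}(\psi_P(W)))=E$.

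Your fallback modifies the pair meeting at the changing box. But that condition is controlled by the wires labeled $i,i+1$, on which the Foata map acts, so the fallback does not address the missing point either.

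The paper instead stays in the $P$ frame and works with the wires labeled $i,i+1$ directly.
\begin{itemize}
\item They cross an odd number of times, because $s_if<f$.
\item By boundedness of $f$, they never meet at the changing box. So $C_i(X)=C_i(W)=C$ is automatic, and $f_{s_iP}(X)=s_if_P(X)s_i$.
\item The paper turns their first crossing $S_j$ into an elbow and Foata-rearranges the tail $\mathcal T=(S_{j+1},\dots,S_r)$. When nonempty, this tail begins with $C$ by distinguishedness.
\item This gives $f_P(X)=s_if$, $f_{s_iP}(X)=fs_i$, and one extra elbow.
\item The $s_iP$-frame sequence of $X$ is $(C,E,\dots)$, so the Foata map puts an elbow in the changing box, i.e.\ $C_i(F_{s_iP,i}(X))=E$.
\end{itemize}

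If you rebuild this, rearrange the tail so that it \emph{begins} with a crossing, by applying the inverse of $F_i$ to $\mathcal T$. Otherwise $j$ cannot be read off from the initial run of elbows: with $F_i$ itself, $(C,C,E,C)$ and $(E,C,C,C)$ both go to $(E,E,C,C)$.

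The corrected map is simply $(S_1,\dots,S_r)\mapsto(E,S_1,\dots,S_{r-1})$. Its inverse deletes the leading elbow and appends a crossing. In your $s_iP$ frame, with $\mathbf S_C=(T_1,\dots,T_{r+1})$, it reads $(T_1,\dots,T_{r+1})\mapsto(E,T_1,\dots,T_r)$. That is, drop the final crossing and put an elbow in the changing box, rather than flipping the final crossing in place.
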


\begin{proof}
The last claim follows by the definition of $W_P$ and $C_P$. We construct the bijection $\psi_P:W_P\to X_P$ as follows:
\begin{enumerate}
\item Fix some $W\in W_P$. Consider the sequence $\mathbf{S}=\{S_1,\dots,S_r\}$ of elbows and crossings of the wires labeled $i$ and $i+1$ in $W$. Let $j$ be the smallest index such that $S_j=C$, and let $\mathcal{T}=\{S_{j+1},\dots,S_r\}$.
\item Apply the Foata map $F_i$ to $\mathcal{T}$.
\item Prepend the string $\{S_1,\dots,S_{j-1},E\}$ to $\mathcal{T}$. Call this $\widetilde{\mathbf{S}}$. 
\item Create an affine Deogram $X$ identical to $W$ with the sequences of elbows and crossings $\widetilde{\mathbf{S}}$ for the wires $i$ and $i+1$.
\end{enumerate}
Define $\psi_P(W)=X$. We show $X\in X_P$.

Firstly, it is clear that $X\in\affdeop{P}$ as, by construction, the wires $i$ and $i+1$ satisfy the distinguished condition, and as $W\in \affdeop{P}$, so do the other pairs of wires in $X$.

Letting $f=f_P(W)$, we see $s_ifs_i=f_{s_iP}(W)$, and as $W\in W_P$, we have $s_ifs_i<f$, implying $s_if,fs_i<f$. Specifically, $\ell(s_if)+1=\ell(fs_i)+1=\ell(f)$. We see that $f_P(X)=s_if$ and $f_{s_iP}(X)=fs_i$.

Finally, this operation is bijective as every $W\in W_P$ has the wires $i$ and $i+1$ crossing an odd number of times, and the string $\mathcal{T}$ will either be empty or begin with a crossing $C$. Thus, the inverse operation is simply described:
\begin{enumerate}
\item Fix some $X\in X_P$. Consider the sequence $\mathbf{S}=(S_1,\dots,S_r)$ of elbows and crossings of the wires labeled $i$ and $i+1$ in $X$. Let $j$ be the largest index such that $S_\ell=E$ for all $1 \leq \ell \leq j$, and let $\mathcal{T}=(S_{j+1},\dots,S_r)$.
\item Apply the inverse Foata map $F_i$ to $\mathcal{T}$.
\item Prepend the string $(S_1,\dots,S_{j-1},C)$ to $\mathcal{T}$. Call this $\widetilde{\mathbf{S}}$. 
\item Create an affine Deogram $W$ identical to $X$ with the sequences of elbows and crossings $\widetilde{\mathbf{S}}$ for the wires $i$ and $i+1$.
\end{enumerate}
Checking $W\in W_P$ is similar to the argument above.
\end{proof}

\begin{corollary}\label{cor:CWXcor}
For any $P,s_iP\in\Pkn$ with $P_i=R$ and $P_{i+1}=U$, let $W\in W_P$ and $X = \psi_P(W)\in X_P$. Then, letting $F_{s_iP,i}(W)=C\in C_P$, we have
\[
	g_{P,W}(q)+g_{P,X}(q)=g_{s_iP,C}(q).
\]
\end{corollary}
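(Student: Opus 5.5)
The identity is a direct computation, modeled on Corollary~\ref{cor:ABVcor}: we express all three quantities through the elbow and crossing counts of $W$ and the length of $f:=f_P(W)$. Write $n_e=\#\text{elbows}(W)$ and $n_c=\#\text{crossings}(W)$, so that $n_e+n_c=k(n-k)$. Since $W\in W_P$, we have $\ell(f_{s_iP}(W))=\ell(s_ifs_i)=\ell(f)-2$; set $\ell:=\ell(f)$.

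\emph{The term $g_{P,W}$.} Directly from the definition,
\[
g_{P,W}(q)=(q-1)^{n_e}q^{(n_c-\ell)/2}.
\]

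\emph{The term $g_{P,X}$.} By Lemma~\ref{lemma:CWX}, $X=\psi_P(W)$ has $n_e+1$ elbows, hence $n_c-1$ crossings, and $\ell(f_P(X))=\ell-1$. (This matches $f_P(X)=s_if$ with $s_if<f$, as observed in the proof of Lemma~\ref{lemma:CWX}.) Therefore
\[
g_{P,X}(q)=(q-1)^{n_e+1}q^{(n_c-1-(\ell-1))/2}=(q-1)^{n_e+1}q^{(n_c-\ell)/2}.
\]

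\emph{The term $g_{s_iP,C}$.} Here $C=F_{s_iP,i}(W)$. The Foata map only reverses segments of the elbow/crossing sequence of a single pair of wires, so it preserves the number of elbows, the number of crossings, and the underlying affine permutation. This is the same fact used in Lemmas~\ref{lemma:secondeasycollection} and~\ref{lemma:twoinone}. Hence $C$ has $n_e$ elbows and $n_c$ crossings, and $\ell(f_{s_iP}(C))=\ell(f_{s_iP}(W))=\ell-2$. This gives
\[
g_{s_iP,C}(q)=(q-1)^{n_e}q^{(n_c-\ell+2)/2}=q\,(q-1)^{n_e}q^{(n_c-\ell)/2}.
\]

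\emph{Conclusion.} Adding the first two terms gives
\[
(q-1)^{n_e}q^{(n_c-\ell)/2}\bigl(1+(q-1)\bigr)=q\,(q-1)^{n_e}q^{(n_c-\ell)/2},
\]
which equals $g_{s_iP,C}(q)$.

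\emph{Main point to check.} The only step needing care is the length bookkeeping for $C$. Its affine permutation must be read inside $s_iP$, not $P$, so its length is $\ell(s_ifs_i)=\ell-2$ rather than $\ell$. This relies on $F_{s_iP,i}$ fixing the filling's strand permutation relative to the path $s_iP$, as already used in the proof that $F_{s_iP,i}:W_P\to C_P$ is a bijection.
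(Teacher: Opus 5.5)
Your proposal is correct and matches the paper's proof essentially step for step. Like the paper, you express all three weights through $n_e$, $n_c$ and $\ell=\ell(f_P(W))$. You use Lemma~\ref{lemma:CWX} for the counts of $X$, and the fact that $F_{s_iP,i}$ preserves the elbow/crossing counts and the strand permutation inside $s_iP$ to get $\ell(f_{s_iP}(C))=\ell-2$. You then factor $1+(q-1)=q$, exactly as the paper does.
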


\begin{proof}
Let $n_e = \#\text{elbows}(W)$ and $n_c=\#\text{crossings}(W)$. Then by Lemma \ref{lemma:CWX}, $\#\text{elbows}(X)=n_e+1$ and thus $\#\text{crossings}(X)=n_c-1$. Also as $\ell(f_{P}(W)) = \ell(f_{P}(X))+1$, letting $\ell :=  \ell(f_{P}(W))$, we may write
\[
	g_{P,W}(q)+g_{P,X}(q) = (q-1)^{n_e}q^{(n_c-\ell)/2} + (q-1)^{n_e+1}q^{(n_c-1-(\ell-1))/2} = q\cdot q^{(n_c-\ell)/2}(q-1)^{n_e}.
\]
As $C =  F_{s_iP,i}(W)$ and $\ell(f_{s_iP}(C)) = \ell(f_{s_iP}(W))=\ell(f_P(W))-2$, we have
\[
	g_{s_iP,C}(q) = (q-1)^{n_e} q^{(n_c-(\ell-2))/2} = q\cdot q^{(n_c-\ell)/2}(q-1)^{n_e}.
\]
\end{proof}

\begin{proof}[Proof of Proposition \ref{prop:invofpath}.]
We partition $\affdeop{P}$ and $\affdeop{s_iP}$ as in Corollary \ref{cor:partition}. The combination of Lemmas \ref{lemma:easycollection} and \ref{lemma:secondeasycollection} and Corollaries \ref{cor:ABVcor} and \ref{cor:CWXcor} shows
\[
	g_P(q) = \sum_{A\in \affdeop{P}}g_{P,A}(q) = \sum_{A\in\affdeop{s_iP}}g_{s_iP,A}(q) = g_{s_iP}(q).
\]
\end{proof}

\section{Geometric Isomorphisms}\label{sec:geometry}

\subsection{Distinguished vs Anti-Distinguished, Geometrically}\label{subsec:antitodistgeo}

The goal of this subsection is to give a geometric proof of \Cref{thm:geometric}~\labelcref{part3}. The first observation is that anti-distinguished $v$-subwords in $\mathbf{w}$ are in bijection with distinguished $v^{-1}$-subwords of $\mathbf{w}^{\text{rev}}$.

\begin{proposition}\label{prop:antirev}
Let $v,w\in S_n$ and $\mathbf{w}$ be a reduced word for $w$. Then a $v$-subword $\mathbf{v}$ of $\mathbf{w}$ is anti-distinguished if and only if $\mathbf{v}^{\text{rev}}$ is a distinguished $v^{-1}$-subword of $\mathbf{w}^{\text{rev}}$.
\end{proposition}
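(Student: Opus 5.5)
We prove this by a direct index-by-index translation of the defining inequalities, using that inversion $x\mapsto x^{-1}$ is an automorphism of the Bruhat order on $S_n$.

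Write $\mathbf{w}=(s_{i_1},\dots,s_{i_m})$ and $\mathbf{v}=(v_1,\dots,v_m)$. Then $\mathbf{w}^{\text{rev}}=(s_{i_m},\dots,s_{i_1})$ and $\mathbf{v}^{\text{rev}}=(v_m,\dots,v_1)$. Since each $v_j$ is an involution, the product of $\mathbf{v}^{\text{rev}}$ is $v_m\cdots v_1=(v_1\cdots v_m)^{-1}=v^{-1}$. So $\mathbf{v}$ is a $v$-subword of $\mathbf{w}$ if and only if $\mathbf{v}^{\text{rev}}$ is a $v^{-1}$-subword of $\mathbf{w}^{\text{rev}}$.

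Next we match prefixes of $\mathbf{v}^{\text{rev}}$ with suffixes of $\mathbf{v}$. Write $\mathbf{u}=\mathbf{v}^{\text{rev}}$, so that $u_t=v_{m+1-t}$. Its prefix of length $t$ is
\[
\mathbf{u}_{(t)}=v_m v_{m-1}\cdots v_{m+1-t}=(v_{m+1-t}\cdots v_m)^{-1}=\mathbf{v}_{\lra{m+1-t}}^{-1}.
\]
Set $j=m+1-t$. The distinguished condition at position $t$ for $\mathbf{u}$ in $\mathbf{w}^{\text{rev}}$, whose $t$th letter is $s_{i_j}$, reads
\[
\mathbf{v}_{\lra{j}}^{-1}\le \mathbf{v}_{\lra{j+1}}^{-1}s_{i_j}.
\]
Inverting both sides, this is equivalent to
\[
\mathbf{v}_{\lra{j}}\le s_{i_j}\mathbf{v}_{\lra{j+1}},
\]
which is exactly the anti-distinguished condition at index $j$.

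It remains to check that the ranges of indices agree. As $t$ runs over $1,\dots,m$, the index $j$ runs over $m,\dots,1$. The anti-distinguished definition only imposes the condition for $1\le j\le m-1$, so we must also handle $j=m$, i.e.\ $t=1$, with the convention $\mathbf{v}_{\lra{m+1}}=e$ and $\mathbf{u}_{(0)}=e$. At $t=1$ the distinguished condition is $u_1\le s_{i_m}$, which holds automatically since $u_1\in\{e,s_{i_m}\}$. Likewise the corresponding condition $v_m\le s_{i_m}$ is vacuous. Hence the two sets of conditions coincide exactly, which proves the equivalence.

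The only delicate point is this bookkeeping of index ranges and conventions; everything else is the fact that inversion preserves Bruhat order.
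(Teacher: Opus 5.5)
Your proof is correct and takes essentially the same route as the paper. Like the paper, you identify each suffix of $\mathbf{v}$ with the inverse of the corresponding prefix of $\mathbf{v}^{\text{rev}}$ and use that inversion preserves Bruhat order. Your extra bookkeeping fills in details the paper leaves implicit: you check that the product of $\mathbf{v}^{\text{rev}}$ is $v^{-1}$, and that the first-position distinguished condition is vacuous, so the index ranges match.
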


\begin{proof}
Writing $\mathbf{w} = (s_{i_1},\dots,s_{i_m})$ and $\mathbf{w}^{\text{rev}}=(s_{i_m},\dots,s_{i_1})$, we have $\bv_{\lra{j}} = (\bv^{\text{rev}}_{(m-j+1)})^{-1}$ and $s_{i_j}\bv_{\lra{j+1}} = (\bv^{\text{rev}}_{(m-j)} s_{i_j})^{-1}$. Then, $\bv_{\lra{j}}  \leq s_{i_j}\bv_{\lra{j+1}}$ is equivalent to $ (\bv^{\text{rev}}_{(m-j+1)})^{-1} \leq  (\bv^{\text{rev}}_{(m-j)} s_{i_j})^{-1}$ which is also equivalent to $\bv^{\text{rev}}_{(m-j+1)}\leq  \bv^{\text{rev}}_{(m-j)} s_{i_j}$.
\end{proof}

Recalling our definition of Richardson varieties in \eqref{eq:richardson}, we also consider the left-sided quotient $B_-\backslash G$. 
\[
	\backvec{R}^{\circ}_{v,w} = B_- \backslash (B_- w B_- \cap B_- v B) \subset B_- \backslash G.
\]
The map $g\mapsto g^{-1}$ induces an isomorphism
\begin{equation}\label{eq:iota}
	\iota:\mathring{R}_{v,w} \xrightarrow{\sim} \backvec{R}^{\circ}_{v^{-1},w^{-1}},
\end{equation}
where $gB_-\mapsto B_-g^{-1}$.

\begin{proposition}[\cite{brown2006poisson,leclerc2016cluster}]\label{prop:chiral}
The map $\chi_v: \backvec{R}^{\circ}_{v,w}\to \mathring{R}_{v,w}$ obtained by identifying both sides with $N\dot{v}\cap \dot{v}N\cap B_-wB_-$ is an isomorphism.
\end{proposition}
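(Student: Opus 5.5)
We need to prove that $\chi_v\colon \backvec{R}^{\circ}_{v,w}\to\mathring{R}_{v,w}$, obtained by identifying both sides with $Z:=N\dot v\cap \dot vN\cap B_-wB_-$, is an isomorphism. The plan is to construct two isomorphisms, $\alpha\colon Z\to \mathring{R}_{v,w}$ given by $g\mapsto gB_-$ and $\beta\colon Z\to\backvec{R}^{\circ}_{v,w}$ given by $g\mapsto B_-g$, and then set $\chi_v=\alpha\circ\beta^{-1}$.

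\textbf{Well-definedness.} Let $g\in Z$. Since $g\in N\dot v\subset B\dot v$, we have $gB_-\in B\dot vB_-/B_-$. Since $g\in B_-wB_-$, the coset $gB_-$ lies in $\mathring{R}_{v,w}$. Symmetrically, $g\in \dot vN\subset \dot vB$ gives $B_-g\in B_-\backslash B_-\dot vB$, and together with $g\in B_-wB_-$ this places $B_-g$ in $\backvec{R}^{\circ}_{v,w}$.

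\textbf{$\alpha$ is an isomorphism.} We use the standard cell structure $B\dot vB_-/B_-\cong (N\cap \dot vN\dot v^{-1})\dot v$. Concretely, every element of $B\dot vB_-/B_-$ has a unique representative of the form $u\dot v$ with $u\in N_v:=N\cap \dot vN\dot v^{-1}$, and the map $u\mapsto u\dot vB_-$ is an isomorphism of varieties.

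Now observe that $u\dot v\in \dot vN$ if and only if $\dot v^{-1}u\dot v\in N$, which holds exactly when $u\in \dot vN\dot v^{-1}$. Hence
\[
N\dot v\cap\dot vN=(N\cap \dot vN\dot v^{-1})\dot v=N_v\dot v.
\]
So $N\dot v\cap \dot vN$ is precisely the set of canonical representatives of $B\dot vB_-/B_-$. Intersecting with the $B_-$-right-invariant condition $B_-wB_-$ then shows that $\alpha$ is an isomorphism onto $\mathring{R}_{v,w}$.

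\textbf{$\beta$ is an isomorphism.} The left-sided statement follows by the same argument, this time using the decomposition $B_-\backslash B_-\dot vB\cong \dot v(N\cap \dot v^{-1}N\dot v)$. Again one checks that $N\dot v\cap\dot vN=\dot v(\dot v^{-1}N\dot v\cap N)$, and that $B_-wB_-$ is left $B_-$-invariant.

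The only delicate point is the uniqueness of these representatives: the stabilizer condition $\dot v^{-1}N_v\dot v\cap B_-=\{e\}$, together with the fact that the multiplication map is an isomorphism onto the cell. This is classical, via the Bruhat decomposition $B=N_v\cdot(N\cap \dot vN_-\dot v^{-1})T$, and we cite it rather than reprove it. Composing gives $\chi_v=\alpha\circ\beta^{-1}$, which is an isomorphism.
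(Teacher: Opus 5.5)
Your proof is correct: the identity $N\dot v\cap\dot vN=(N\cap\dot vN\dot v^{-1})\dot v=\dot v(N\cap\dot v^{-1}N\dot v)$ shows that this set is simultaneously the set of canonical representatives of $B\dot vB_-/B_-$ and of $B_-\backslash B_-\dot vB$. Intersecting with the two-sided $B_-$-invariant set $B_-wB_-$ then gives both isomorphisms $\alpha$ and $\beta$, and hence $\chi_v=\alpha\circ\beta^{-1}$. The paper gives no proof of its own and only cites Brown--Goodearl--Yakimov and Leclerc; your argument is the standard one behind those references.
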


\begin{proposition}
We have an isomorphism $\mathring{R}_{v,w}\xrightarrow{\sim} \mathring{R}_{v^{-1},w^{-1}}$. For $v\leq w$ with $w$ $k$-Grassmannian, this induces the equality $\#\deo{f}=\#\antideo{f}$.
\end{proposition}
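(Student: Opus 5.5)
The isomorphism is the composite $\chi_{v^{-1}}\circ\iota$, where $\iota$ is as in \eqref{eq:iota} and $\chi$ as in Proposition~\ref{prop:chiral}:
\[
\mathring{R}_{v,w}\xrightarrow{\ \iota\ }\backvec{R}^{\circ}_{v^{-1},w^{-1}}\xrightarrow{\ \chi_{v^{-1}}\ }\mathring{R}_{v^{-1},w^{-1}}.
\]
Two checks are needed. First, $\iota$ is well defined: inversion sends $B_-wB_-$ to $B_-w^{-1}B_-$ and $BvB_-$ to $B_-v^{-1}B$, and it turns left $B_-$-cosets into right ones. Second, $\chi_{v^{-1}}$ applies because $v\le w$ implies $v^{-1}\le w^{-1}$. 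Both maps are isomorphisms, so their composite is one too. In particular the two varieties have the same number of $\F_q$-points for every prime power $q$.

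To reach the Deogram count, I will use the Deodhar decomposition (Theorem~\ref{thm:deodharorig}, in the form valid for arbitrary Richardson varieties $\mathring{R}_{v,w}$, not only positroid ones). It gives
\[
\#\mathring{R}_{v,w}(\F_q)=\sum_{\bv\in\Dcal_{v,\bw}}(q-1)^{e_\bv}q^{(m-e_\bv-\ell(v))/2}.
\]
The same formula holds for $(v^{-1},w^{-1})$ with the reduced word $\bw^{\text{rev}}$ of $w^{-1}$. Since $\ell(v^{-1})=\ell(v)$ and both words have length $m$, equality of the two polynomials for all $q$ matches the two generating functions in $e$. Setting $q=1$ in the Laurent-type expansion is not directly available, so instead I will compare the coefficient data. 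Substitute $q=1+t$ and compare the polynomials in $t$. The leading behaviour does not immediately give cardinalities, so the cleaner route is to use the $R$-polynomial interpretation: the equality $\#\Dcal_{v,\bw}=\#\Dcal_{v^{-1},\bw^{\text{rev}}}$ is obtained by evaluating the refined count, in which each cell contributes its torus-times-affine-space class, at the appropriate specialization. The alternative, which I prefer, is to observe that the count of pieces of the Deodhar stratification equals the number of torus-fixed strata, recovered from the $E$-polynomial evaluation at $q=1$ after weighting $(q-1)^{e}\mapsto$ indicator. That is a weak point, and I would instead argue directly as follows.

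By Proposition~\ref{prop:antirev}, reversal $\bv\mapsto\bv^{\text{rev}}$ is a bijection between anti-distinguished $v$-subwords of $\bw$ and distinguished $v^{-1}$-subwords of $\bw^{\text{rev}}$. It preserves $e_\bv$. Hence the point count of $\mathring{R}_{v^{-1},w^{-1}}$ equals the weighted sum over anti-distinguished $v$-subwords of $\bw$. Combined with the isomorphism, this gives an equality of weighted generating polynomials:
\[
\sum_{\bv\in\Dcal_{v,\bw}}(q-1)^{e_\bv}q^{(m-e_\bv-\ell(v))/2}=\sum_{\bv\ \text{anti-dist.}}(q-1)^{e_\bv}q^{(m-e_\bv-\ell(v))/2}.
\]
The monomials $(q-1)^eq^{(m-e-\ell)/2}$ for distinct $e$ (with $m-e\equiv\ell$ mod 2) are linearly independent, since they have distinct orders of vanishing at $q=1$. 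So the two sides agree termwise in $e$, and summing over $e$ gives equal cardinalities. Finally, the bijection $\Dcal_{v,\bw}\leftrightarrow\deo{f}$ and its anti-distinguished analogue (crossings correspond to letters selected) turn this into $\#\deo{f}=\#\antideo{f}$ for $f=f_{v,w}$.

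The main obstacle is justifying Deodhar's decomposition for $\mathring{R}_{v^{-1},w^{-1}}$ when $w^{-1}$ is not Grassmannian. Theorem~\ref{thm:deodharorig} is stated for positroids, but Deodhar's original result covers all Richardson varieties in $G/B_-$, and I would cite it in that generality.
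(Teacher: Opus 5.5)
Your proposal is correct and follows the paper's proof. Both use the composite $\chi_{v^{-1}}\circ\iota$, Deodhar's decomposition on both sides, the equalities $\ell(v^{-1})=\ell(v)$ and $\ell(w^{-1})=\ell(w)$, linear independence of the weights $(q-1)^e q^{(m-e-\ell(v))/2}$, and Proposition~\ref{prop:antirev}; you prove the independence by orders of vanishing at $q=1$, whereas the paper uses distinct degrees, and your point that Deodhar must be cited for general Richardson varieties (since $w^{-1}$ need not be Grassmannian) is one the paper leaves implicit, while the exploratory paragraph about specializing at $q=1$ and $E$-polynomials plays no role and should be deleted.
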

\begin{proof}
The map $\chi_{v^{-1}}\circ \iota$ is an isomorphism via \eqref{eq:iota} and Proposition \ref{prop:chiral}. Then Theorem \ref{thm:deodharorig} coupled with $\ell(w)=\ell(w^{-1})$ and $\ell(v)=\ell(v^{-1})$ and the linear independence of the polynomials $(q-1)^{\ell(w)-\ell(v)-2a} q^{a}$ for any $0\leq a\leq \floor{(\ell(w)-\ell(v))/2}$ (as they have distinct degrees), along with Proposition \ref{prop:antirev}, implies the result.
\end{proof}

\subsection{Affine Richardson Proofs of the Chart Isomorphisms}\label{subsec:affine-richardson-charts}

The purpose of this subsection is to prove the chartwise statements in Corollaries~\ref{cor:ceq_geo}, \ref{cor:double1_geo}, and \ref{cor:double2_geo}. Unlike the preceding subsection, we do not need to track individual Pl\"ucker coordinates. Instead, we use Snider's identification
\[
    \Pio_{f,I}\cong \mathring{\Rcal}_f^{t_I}
\]
from Theorem~\ref{thm:snideriso}, together with the standard rank-one recursion for open Richardson varieties. We use the convention that both sides are empty when $f\not\leq t_I$.

We first recall the rank-one result in the form needed below. This is the one-letter case of Deodhar's decomposition; the same statement for Kac--Moody flag varieties follows from the Billig--Dyer decomposition \cite{deodhar,billig}.

\begin{proposition}\label{prop:rank-one-richardson} Let $u\leq w$ lie in the same component of an extended affine Weyl group, and let $s$ be a simple reflection. Write $\mathring{\Rcal}_u^w=\varnothing$ when $u\not\leq w$.
\begin{enumerate}
\item If $u<us$ and $w<ws$, then
\[
    \mathring{\Rcal}_u^w\cong \mathring{\Rcal}_{us}^{ws}.
\]
The same conclusion holds if $us<u$ and $ws<w$.
\item If $u<us$ and $ws<w$, then there is a closed subvariety $Z\subseteq\mathring{\Rcal}_u^w$ such that
\[
    Z\cong \mathring{\Rcal}_{us}^{ws}\times\C,
    \qquad
    \mathring{\Rcal}_u^w\setminus Z
       \cong \mathring{\Rcal}_u^{ws}\times\C^*.
\]
\end{enumerate}
The analogous statements hold for left multiplication, with $su,sw$ in place of $us,ws$.
\end{proposition}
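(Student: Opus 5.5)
We would prove Proposition~\ref{prop:rank-one-richardson} by working directly with the Kac--Moody flag variety $\Fl=G/B_-$ and the $\mathbb{P}^1$-fibration $\pi_s:G/B_-\to G/P_s$, where $P_s\supset B_-$ is the minimal parabolic for $s$. Write $\mathring{\Rcal}_u^w=\mathring{X}_u\cap\mathring{X}^w$ as an intersection of an opposite Schubert cell and a Schubert cell; the exact conventions can be matched to \eqref{eq:richardson}. Each fiber of $\pi_s$ is a $\mathbb{P}^1$ that meets the $B$- and $B_-$-orbits in a controlled way. On a fiber over a point of $\pi_s(\mathring{X}^w)$ with $ws<w$, the cell $\mathring{X}^w$ is $\mathbb{P}^1$ minus one point, namely the point lying in $\mathring{X}^{ws}$. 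The same holds for $\mathring{X}_u$ with $u<us$, after exchanging the roles of $u$ and $us$.

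\textbf{Case (1).} Suppose $u<us$ and $w<ws$. We would use the standard isomorphism between Richardson varieties given by ``moving along the $s$-line''. For $x\in\mathring{\Rcal}_u^w$, the fiber $\pi_s^{-1}\pi_s(x)$ meets $\mathring{X}^{ws}$ in an affine line, and it meets $\mathring{X}_{us}$ in $\mathbb{P}^1$ minus a point. We would show that there is a unique point $x'$ on this fiber lying in $\mathring{X}_{us}\cap\mathring{X}^{ws}$. The cleanest way to do this is in coordinates: write $x=g\dot{}B_-$ with $g\in B\dot wB_-$ normalized, and define $x'=g\,y_s(t)\dot s B_-$, where $t$ is determined by the condition on the $u$-side. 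Constructing the inverse in the same way gives the isomorphism. The case $us<u$, $ws<w$ follows by applying this to $(us,ws)$.

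\textbf{Case (2).} Suppose $u<us$ and $ws<w$. Here $\pi_s$ restricted to $\mathring{\Rcal}_u^w$ has image contained in $\pi_s(\mathring{\Rcal}_u^{ws})\cup\pi_s(\mathring{\Rcal}_{us}^{ws})$. Define $Z$ as the locus where the point of the fiber lying in $\mathring{X}^{ws}$ also lies in $\mathring{X}_{us}$; this is a closed condition, given by the vanishing of one coordinate. On $Z$, the fiber contributes a full affine line $\mathbb{P}^1\setminus\{pt\}\cong\C$, which gives $Z\cong\mathring{\Rcal}_{us}^{ws}\times\C$. On the complement, the fiber is $\mathbb{P}^1$ minus two distinct points, which gives $\C^*$, and the base is $\mathring{\Rcal}_u^{ws}$. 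To trivialize these bundles we would use a root-subgroup coordinate $t\mapsto y_s(t)$, which gives explicit sections.

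The left-multiplication statements follow by applying the inversion isomorphism $\iota$ of \eqref{eq:iota} together with Proposition~\ref{prop:chiral}, or by running the same argument with the left fibration $B_-\backslash G\to P_s\backslash G$. The main obstacle is verifying global triviality of the $\C$- and $\C^*$-bundles, rather than just fiberwise statements, in the Kac--Moody/affine setting. For this we would lean on the Billig--Dyer decomposition \cite{billig}, whose one-letter case supplies exactly these trivializations. Alternatively, since the paper only needs these isomorphisms for point counts, the fiberwise description already suffices at the level of $\F_q$-points.
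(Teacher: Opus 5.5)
Your route is essentially the paper's. The paper gives no independent argument for this proposition: it cites it as the one-letter case of Deodhar's decomposition, extended to the affine (Kac--Moody) setting by Billig--Dyer \cite{deodhar,billig}. Your $\mathbb{P}^1$-fibration sketch is the standard argument behind that citation, and you likewise delegate the global trivializations to \cite{billig}. Case~(2) is set up correctly. Let $p(x)$ be the point of the fiber through $x$ lying in $\mathring{X}^{ws}$. Then $Z$ is the preimage of $\mathring{X}_{us}$ under the morphism $x\mapsto p(x)$, and $\mathring{X}_{us}$ is closed in $\mathring{X}_u\sqcup\mathring{X}_{us}$. Your fiber analysis there ($F\setminus\{p\}$ versus $F\setminus\{p,q\}$) is right.

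Case~(1), however, contains a misstatement. When $u<us$, the fiber $F=\pi_s^{-1}\pi_s(x)$ through $x\in\mathring{X}_u$ meets $\mathring{X}_{us}$ in a \emph{single point}. It is $F\cap\mathring{X}_u$ that is $\mathbb{P}^1$ minus a point, exactly as you said in your setup paragraph. Taken literally, your description (an affine line intersected with $\mathbb{P}^1$ minus a point) would not give a unique $x'$.

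With the correct fiber picture the argument is cleaner:
\begin{itemize}
\item Since $w<ws$, we have $F\cap\mathring{X}^w=\{x\}$ and $F\cap\mathring{X}^{ws}=F\setminus\{x\}$.
\item Since $u<us$, we have $F\cap\mathring{X}_{us}=\{x'\}$, and $x'\neq x$ because the cells $\mathring{X}_u$ and $\mathring{X}_{us}$ are disjoint. Hence $x'\in\mathring{\Rcal}_{us}^{ws}$ automatically.
\item The inverse sends $x'$ to the unique point of its fiber in $\mathring{X}^w$. That point differs from $x'\in\mathring{X}^{ws}$, so it lies in $F\setminus\{x'\}=F\cap\mathring{X}_u$.
\end{itemize}
No parameter $t$ and no trivialization is needed in this case. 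The map is $x\mapsto(\pi_s|_{\mathring{X}_{us}})^{-1}(\pi_s(x))$, which is a morphism because $\pi_s$ maps $\mathring{X}_{us}$ (and $\mathring{X}^w$) isomorphically onto its image.

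Two smaller points.
\begin{itemize}
\item The inversion map \eqref{eq:iota} and Proposition~\ref{prop:chiral} are stated in the paper only for $SL_n$, so the left-handed version needs their Kac--Moody analogues. Your ``left fibration'' alternative lands in $B_-\backslash G$, so it still needs the chiral identification to return to $\mathring{\Rcal}$.
\item The fiberwise $\F_q$ fallback would give only point-count identities, not the isomorphisms asserted here. The isomorphisms themselves are used in Corollaries~\ref{cor:ceq_geo}--\ref{cor:double2_geo}.
\end{itemize}
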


The next elementary observation determines the left and right descents of $t_I$.

\begin{lemma}\label{lem:translation-descents}
For every $I\in\binom{[n]}k$,
\[
    s_it_Is_i=t_{s_iI}.
\]
Moreover:
\begin{enumerate}
\item if $i\in I$ and $i+1\notin I$, then
\[
    s_it_I<t_I<t_Is_i;
\]
\item if $i\notin I$ and $i+1\in I$, then
\[
    t_Is_i<t_I<s_it_I;
\]
\item if either both or neither of $i,i+1$ belong to $I$, then
\[
    t_I<s_it_I=t_Is_i,
    \qquad s_iI=I.
\]
\end{enumerate}
\end{lemma}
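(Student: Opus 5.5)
We will verify everything directly from the formula for $t_I$ and the inversion description of length, using the paper's convention that products are read left to right, $(fg)(a)=g(f(a))$.

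\emph{The conjugation identity.} For $a\in\Z$ we have $(s_it_Is_i)(a)=s_i(t_I(s_i(a)))$. Write $b=s_i(a)$. Then $t_I(b)=b+n$ if $b\bmod n\in I$ and $t_I(b)=b$ otherwise. Since $s_i$ commutes with translation by $n$, applying $s_i$ gives $a+n$ in the first case and $a$ in the second. The condition $s_i(a)\bmod n\in I$ is the same as $a\bmod n\in s_iI$. Hence $s_it_Is_i=t_{s_iI}$.

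\emph{Case 3.} If $i,i+1$ are both in $I$ or both outside $I$, then $s_iI=I$. The identity just proved gives $s_it_Is_i=t_I$, so $s_it_I=t_Is_i$. To see that $t_I<t_Is_i$, use the criterion already used in the proof of Lemma~\ref{lemma:xwhelper}: $x<xs_i$ if and only if $x^{-1}(i)<x^{-1}(i+1)$. Here $t_I^{-1}$ shifts $i$ and $i+1$ by the same amount ($-n$ or $0$), so $t_I^{-1}(i)<t_I^{-1}(i+1)$ and $t_Is_i>t_I$.

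\emph{Cases 1 and 2.}
\begin{itemize}
\item If $i\in I$ and $i+1\notin I$, then $t_I^{-1}(i)=i-n<i+1=t_I^{-1}(i+1)$, so $t_I<t_Is_i$.
\item If $i\notin I$ and $i+1\in I$, then $t_I^{-1}(i)=i>i+1-n=t_I^{-1}(i+1)$, so $t_Is_i<t_I$.
\end{itemize}

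For left multiplication we compare the values of $t_I$ itself at $i$ and $i+1$, via the dual criterion $s_ix>x$ if and only if $x(i)<x(i+1)$, after checking it against the paper's left-to-right convention.
\begin{itemize}
\item In case 1, $t_I(i)=i+n>i+1=t_I(i+1)$, which gives $s_it_I<t_I$.
\item In case 2, $t_I(i)=i<i+1+n=t_I(i+1)$, which gives $s_it_I>t_I$.
\end{itemize}

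All comparisons are in Bruhat order on $\Saffk$, transported through $\iota_k$. Left and right multiplication by $s_i$ change length by exactly one, so "$<$" here is the same as a length comparison.

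\emph{Expected difficulty.} The only delicate point is pinning down which one-sided criterion (values of $x$ versus values of $x^{-1}$) corresponds to left versus right descents under the left-to-right product convention and $\iota_k$. If the two conventions end up swapped, cases 1 and 2 exchange their left and right statements. As a consistency check we will use Lemma~\ref{lemma:cornerbox}: when $P_i=\mathrm R$ and $P_{i+1}=\mathrm U$ (so $i\notin I$ and $i+1\in I$), $\tau_P$ has a reduced word ending in $s_i$. This forces $t_Is_i<t_I$, in agreement with case 2 above. That confirms the right-descent criterion, and the left one follows by inversion, since $\ell(x^{-1})=\ell(x)$.
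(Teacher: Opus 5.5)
Your proposal is correct and follows the same route as the paper. The paper also gets the conjugation identity directly from the definition of $t_I$ and reads off the three cases from $t_I(i),t_I(i+1),t_I^{-1}(i),t_I^{-1}(i+1)$ via the standard descent criteria; you additionally make explicit which criterion belongs to which side under the left-to-right product convention, and you match them correctly ($xs_i<x$ iff $x^{-1}(i)>x^{-1}(i+1)$, as in the proof of Lemma~\ref{lemma:xwhelper}, and $s_ix<x$ iff $x(i)>x(i+1)$).
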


\begin{proof}
The conjugation identity follows directly from the definition of $t_I$. Choose compatible affine lifts. Then
\[
\begin{aligned}
 t_I(i)&=i+n\mathbf 1_{i\in I},
 &t_I(i+1)&=i+1+n\mathbf 1_{i+1\in I},\\
 t_I^{-1}(i)&=i-n\mathbf 1_{i\in I},
 &t_I^{-1}(i+1)&=i+1-n\mathbf 1_{i+1\in I}.
\end{aligned}
\]
The three cases now follow from the usual left- and right-descent criteria.
\end{proof}

\begin{corollary}\label{cor:ceq_geo}
Suppose $f,s_if,fs_i,s_ifs_i\in\Bkn$ and $\ell(s_if)=\ell(fs_i)$. Then for any $I\in\binom{[n]}k$, we have
\[
    \Pio_{s_if}\cap\{\Delta_I\neq0\}     \xrightarrow{\sim}    \Pio_{fs_i}\cap\{\Delta_{s_iI}\neq0\}.
\]
\end{corollary}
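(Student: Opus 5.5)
The plan is to pass to affine Richardson varieties via Snider's isomorphism (Theorem~\ref{thm:snideriso}) and then conjugate by $s_i$ using two applications of part (1) of Proposition~\ref{prop:rank-one-richardson}, one on each side. Set $u=s_if$ and $w=t_I$. Then
\[
fs_i=s_i u s_i \qquad\text{and}\qquad t_{s_iI}=s_iws_i,
\]
the second by Lemma~\ref{lem:translation-descents}. By Theorem~\ref{thm:snideriso}, with the convention that both sides are empty when the Bruhat inequality fails, it suffices to prove
\[
\mathring{\Rcal}_u^w\cong\mathring{\Rcal}_{s_ius_i}^{s_iws_i}.
\]
This is the usual statement that conjugation by a simple reflection is an isomorphism of open Richardson varieties whenever no rank-one step is of ``mixed'' type. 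The degenerate case $s_if=fs_i$ forces $u=s_ius_i$; I will treat it separately, although it is still covered by the argument below.

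The core of the proof is a case split. Since $\ell(s_if)=\ell(fs_i)$, either both $s_if$ and $fs_i$ lie above $f$ or both lie below it. In each case $us_i=s_ifs_i$ compares to $u$ in the same way that $s_i(us_i)=fs_i$ compares to $us_i$, up to orientation. Concretely:
\begin{itemize}
\item If $us_i>u$, then $fs_i=s_i(us_i)<us_i$, so $u$ goes up under right multiplication and $us_i$ goes down under left multiplication.
\item If $us_i<u$, then $s_i(us_i)>us_i$, and the directions reverse.
\end{itemize}
I then split according to the three cases of Lemma~\ref{lem:translation-descents} for $w=t_I$.

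In case (3) we have $w<ws_i=s_iw$ and $s_iws_i=w$. If $us_i>u$, I apply Proposition~\ref{prop:rank-one-richardson}(1) on the right to get
\[
\mathring{\Rcal}_u^w\cong\mathring{\Rcal}_{us_i}^{ws_i}.
\]
Then I apply the left version, where both $us_i$ and $ws_i$ drop under left multiplication by $s_i$, to get
\[
\mathring{\Rcal}_{us_i}^{ws_i}\cong\mathring{\Rcal}_{s_ius_i}^{w}.
\]
If instead $us_i<u$, I perform the moves in the opposite order, or equivalently start from the target pair $(fs_i,t_I)$ and run the first argument with the roles of left and right exchanged. This uses the symmetry $f\leftrightarrow f^{-1}$, or simply the fact that the hypotheses are symmetric under swapping $s_if$ and $fs_i$.

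Cases (1) and (2) are handled the same way. There $w$ has $s_i$ as a descent on exactly one side, so the pair of one-sided moves must be ordered so that each step has $u$ and $w$ moving in the same direction. Here I will use $s_iws_i=t_{s_iI}$, and the fact that conjugation exchanges cases (1) and (2).

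The main obstacle is this bookkeeping in cases (1) and (2). It is not a priori clear that there is always an order of the left and right moves in which neither step is of the mixed type of part (2) of Proposition~\ref{prop:rank-one-richardson}. Such a mixed step would produce a $\C\sqcup\C^*$ decomposition instead of an isomorphism. My first attempt is to show that a mixed step on one side forces, via $\ell(s_if)=\ell(fs_i)$, a mixed step on the other side as well. In that situation $u\not\leq w$ (or $s_ius_i\not\leq s_iws_i$), and I would derive this from the lifting property of Bruhat order, so that both varieties are empty and the isomorphism holds trivially. If this fails, the fallback is to compose a mixed decomposition on one side with the inverse mixed decomposition on the other and argue that the pieces match. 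That would only give equality of point counts rather than an isomorphism, so I would try hard to make the emptiness argument work.
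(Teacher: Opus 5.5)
\textbf{Gap: the mixed cases of the corollary are false, so they cannot be closed.}

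Where your argument works, it is the paper's argument. You transport the problem to $\mathring{\Rcal}_{s_if}^{t_I}$ via Theorem~\ref{thm:snideriso}, then make one same-orientation move from Proposition~\ref{prop:rank-one-richardson}(1) on each side, ordered so that neither step is mixed. The paper handles $s_if=fs_i$ by swapping columns, and reduces $s_if<f$ to $s_if>f$ by passing to $s_ifs_i$ and the chart $s_iI$; your reordering of moves is equivalent.

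The gap is exactly the obstacle you flag. Suppose either:
\begin{itemize}
\item $s_if\neq fs_i$ both lie above $f$, with $i\notin I$ and $i+1\in I$ (case (2) of Lemma~\ref{lem:translation-descents}); or
\item $s_if\neq fs_i$ both lie below $f$, with $i\in I$ and $i+1\notin I$ (case (1)).
\end{itemize}
Then both one-sided moves out of $(s_if,t_I)$ are mixed, and in these cases the claimed isomorphism is false. Your emptiness argument fails because only one side is empty, and the point-count fallback fails because the counts differ.

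\textbf{Counterexample.} Take $k=2$, $n=4$, $f=f_{2,4}=[3,4,5,6]$, $i=1$, $I=\{2,3\}$. With the paper's left-to-right convention,
\[
s_1f=[4,3,5,6],\qquad fs_1=[3,4,6,5],\qquad s_1fs_1=[4,3,6,5],
\]
all lie in $\mathbf{B}_{2,4}$, and $\ell(s_1f)=\ell(fs_1)=1$.
\begin{itemize}
\item Left side: $\sGrI_4(s_1f)=\{1,3\}$ and $\{2,3\}\not\le_4\{1,3\}$. So $\Pio_{s_1f}=\{\Delta_{23}=0,\ \Delta_{12}\Delta_{34}\Delta_{14}\neq0\}$, and $\Pio_{s_1f}\cap\{\Delta_{23}\neq0\}$ is empty.
\item Right side: $\Pio_{fs_1}=\{\Delta_{14}=0,\ \Delta_{12}\Delta_{23}\Delta_{34}\neq0\}$. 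There $\Delta_{13}\Delta_{24}=\Delta_{12}\Delta_{34}\neq0$; equivalently, $\{1,3\}=\sGrI_2(fs_1)$. So $\Pio_{fs_1}\cap\{\Delta_{13}\neq0\}$ is all of $\Pio_{fs_1}$, which has $(q-1)^3$ points over $\F_q$.
\end{itemize}

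This is not an accident. In the first mixed case, apply Proposition~\ref{prop:rank-one-richardson}(2) to $(s_if,t_I)$ on the right and to $(fs_i,t_{s_iI})$ on the left. Both closed pieces are the same $\mathring{\Rcal}_{s_ifs_i}^{t_Is_i}\times\C$. Rewriting the open pieces with part (1) gives
\[
\#\Pio_{s_if,I}(\F_q)-\#\Pio_{fs_i,s_iI}(\F_q)=(q-1)\bigl(\#\Pio_{s_ifs_i,I}(\F_q)-\#\Pio_{s_ifs_i,s_iI}(\F_q)\bigr),
\]
which need not vanish. In the example it reads $0-(q-1)^3=(q-1)\bigl(0-(q-1)^2\bigr)$.

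\textbf{What the paper actually proves.} The paper's own proof does not cover these $I$ either. Each of its cases ends with a proviso: $i+1\in I\Rightarrow i\in I$ when $s_if>f$, and $i\in I\Rightarrow i+1\in I$ when $s_if<f$. The correct repair is to add that hypothesis to the statement, in parallel with Corollaries~\ref{cor:double1_geo} and~\ref{cor:double2_geo}. With that hypothesis, your proof is complete and coincides with the paper's.
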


\begin{proof}
If $s_if=fs_i$, then $s_i(s_if)s_i=s_if$. Swapping columns $i$ and $i+1$ therefore preserves $\Pio_{s_if}=\Pio_{fs_i}$ and sends $\Delta_I$ to $\pm\Delta_{s_iI}$, so the claim is immediate. We may thus assume $s_if\neq fs_i$.

First suppose that both lengths equal $\ell(f)+1$. Then
\[
    \ell(s_ifs_i)=\ell(f)+2.
\]
Put
\[
    h=s_if,\qquad r=fs_i,\qquad g=s_ifs_i,     \qquad w=t_I,\qquad w'=t_{s_iI}=s_iws_i.
\]
By Theorem~\ref{thm:snideriso}, it suffices to compare $\mathring{\Rcal}_h^w$ and $\mathring{\Rcal}_r^{w'}$.

If $i\in I$ and $i+1\notin I$, then $h$ and $w$ both have a left $s_i$-descent. After removing these descents, $f=s_ih$ and $s_iw$ both have a right $s_i$-ascent. Proposition~\ref{prop:rank-one-richardson} gives
\[
    \mathring{\Rcal}_h^w       \cong \mathring{\Rcal}_f^{s_iw}       \cong \mathring{\Rcal}_r^{s_iws_i}       =\mathring{\Rcal}_r^{w'}.
\]

If both or neither of $i,i+1$ belong to $I$, then $w'=w$ and $s_iw=ws_i>w$. Since $h<g$ and $r<g$, moving first on the right and then on the left gives
\[
    \mathring{\Rcal}_h^w       \cong \mathring{\Rcal}_g^{ws_i}       \cong \mathring{\Rcal}_r^{s_iws_i}       =\mathring{\Rcal}_r^w.
\]
These are precisely the possibilities allowed by $i+1\in I\Rightarrow i\in I$.

Now suppose that both lengths equal $\ell(f)-1$. Put $f'=s_ifs_i$. Then $s_if'=fs_i$, $f's_i=s_if$, and the corresponding lengths for $f'$ are $\ell(f')+1$. Apply the first part to $f'$ and the chart $s_iI$, then invert the resulting isomorphism. The condition $i+1\in s_iI\Rightarrow i\in s_iI$ is exactly $i\in I\Rightarrow i+1\in I$.
\end{proof}

\begin{corollary}\label{cor:double1_geo}
Suppose $f,s_if,fs_i,s_ifs_i\in\Bkn$ and $\ell(s_ifs_i)=\ell(f)+2$. If $i\in I$ implies $i+1\in I,$ then $\Pio_{f,I}$ admits a decomposition
\[
    \Pio_{f,I}=Z_I\sqcup U_I,
\]
where $Z_I$ is closed, $U_I$ is its open complement, and
\[
    Z_I\cong \Pio_{s_ifs_i,s_iI}\times\C,     \qquad     U_I\cong \Pio_{s_if,s_iI}\times\C^*.
\]
\end{corollary}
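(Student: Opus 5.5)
We argue as in the proof of Corollary~\ref{cor:ceq_geo}: via Theorem~\ref{thm:snideriso} we pass to affine Richardson varieties and apply Proposition~\ref{prop:rank-one-richardson} once on the right and once on the left. Set $w=t_I$ and $w'=t_{s_iI}=s_iws_i$ (Lemma~\ref{lem:translation-descents}). Since $\ell(s_ifs_i)=\ell(f)+2$, we have $f<s_if<s_ifs_i$ and $f<fs_i<s_ifs_i$. The hypothesis ``$i\in I\Rightarrow i+1\in I$'' leaves two cases: either $i\notin I$ and $i+1\in I$, or both or neither of $i,i+1$ lie in $I$. By Snider's isomorphism it suffices to produce a closed $Z\subseteq\mathring{\Rcal}_f^{w}$ with
\[
Z\cong\mathring{\Rcal}_{s_ifs_i}^{w'}\times\C,\qquad \mathring{\Rcal}_f^{w}\setminus Z\cong\mathring{\Rcal}_{s_if}^{w'}\times\C^*,
\]
where both sides are read as empty when the Bruhat condition fails.

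In the case where both or neither of $i,i+1$ lie in $I$, we have $w'=w$ and $w<ws_i=s_iw$. First apply Proposition~\ref{prop:rank-one-richardson}(1) on the right. Since $f<fs_i$ and $w<ws_i$, this gives $\mathring{\Rcal}_f^w\cong\mathring{\Rcal}_{fs_i}^{ws_i}$. Next apply part (2) on the left to $u=fs_i$ and $W=ws_i$. Here $s_iu=s_ifs_i>u$, and $s_iW=s_iws_i=w<W$, so the left-handed version of (2) yields a closed $Z$ with
\[
Z\cong\mathring{\Rcal}_{s_ifs_i}^{w}\times\C,\qquad \text{complement}\cong\mathring{\Rcal}_{fs_i}^{w}\times\C^*.
\]
The complement does not yet have the required form, since the statement asks for $s_if$ rather than $fs_i$. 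To fix this, I would perform the moves in the opposite order: first the left move $\mathring{\Rcal}_f^w\cong\mathring{\Rcal}_{s_if}^{s_iw}$, then the right-handed (2) applied to $u=s_if$ and $W=s_iw$. This gives pieces $\mathring{\Rcal}_{s_ifs_i}^{w}\times\C$ and $\mathring{\Rcal}_{s_if}^{w}\times\C^*$, which is exactly the required form because $w'=w$.

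In the case $i\notin I$, $i+1\in I$, Lemma~\ref{lem:translation-descents}(2) gives $ws_i<w<s_iw$. Since $f<s_if$ and $w<s_iw$, the left-handed (1) gives $\mathring{\Rcal}_f^w\cong\mathring{\Rcal}_{s_if}^{s_iw}$. Now $u=s_if$ satisfies $us_i=s_ifs_i>u$. For $W=s_iw$, we have $Ws_i=s_iws_i=w'$, and we need $w'<W$. This holds because $\ell(w')=\ell(w)=k(n-k)<\ell(s_iw)$, and $w'$ and $s_iw$ differ by right multiplication by $s_i$. The right-handed (2) then produces $Z\cong\mathring{\Rcal}_{s_ifs_i}^{w'}\times\C$ with complement $\cong\mathring{\Rcal}_{s_if}^{w'}\times\C^*$, as desired. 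Transporting $Z$ back through the isomorphisms shows that it is closed in $\Pio_{f,I}$, and Snider's theorem applied to $t_{s_iI}$ identifies the pieces with $\Pio_{s_ifs_i,s_iI}\times\C$ and $\Pio_{s_if,s_iI}\times\C^*$.

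The main obstacle is bookkeeping. In each case we must choose the order of left versus right moves so that the complement involves $s_if$ rather than $fs_i$. We must also check the empty-set conventions: when $f\not\leq w$, or when the intermediate Bruhat relations fail, both sides should be empty. This follows from the ``lifting property'' of Bruhat order, and I would verify it alongside each application of Proposition~\ref{prop:rank-one-richardson}, which is stated to include the empty case.
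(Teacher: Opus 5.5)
Your proof is correct and follows the paper's argument: Snider's isomorphism, then one same-orientation move and one mixed move from Proposition~\ref{prop:rank-one-richardson}. In the case where both or neither of $i,i+1$ lie in $I$, your corrected order (left isomorphism first, then the right-handed mixed decomposition) is exactly the paper's.

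The only difference is the case $i\notin I$, $i+1\in I$. There the paper first applies the right-handed mixed decomposition to $\mathring{\Rcal}_f^{w}$ (using $ws_i<w$) and then transports both pieces by the left-handed isomorphism. Your left-then-right order is also valid in that case, since $w<s_iw$ and $s_iws_i<s_iw$, so it handles both cases uniformly.
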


\begin{proof}
Let $h=s_if$, $r=fs_i$, $g=s_ifs_i$, $w=t_I$, and $w'=t_{s_iI}=s_iws_i$.

Suppose first that $i\notin I$ and $i+1\in I$. Then $f<r$ and $ws_i<w$. The right-handed mixed case of Proposition~\ref{prop:rank-one-richardson} decomposes $\mathring{\Rcal}_f^w$ into a closed piece and its open complement with bases
\[
    \mathring{\Rcal}_r^{ws_i}     \qquad\text{and}\qquad     \mathring{\Rcal}_f^{ws_i},
\]
respectively. Since
\[
    r<s_ir=g,\qquad f<s_if=h,    \qquad ws_i<s_iws_i=w',
\]
the left-handed same-orientation isomorphism identifies these bases with $\mathring{\Rcal}_g^{w'}$ and $\mathring{\Rcal}_h^{w'}$.

If both or neither of $i,i+1$ belong to $I$, then $w'=w$ and $w<s_iw=ws_i$. First use the left-handed same-orientation isomorphism
\[
    \mathring{\Rcal}_f^w        \cong \mathring{\Rcal}_h^{s_iw}.
\]
The right-handed mixed decomposition of the latter has closed piece $\mathring{\Rcal}_g^w\times\C$ and open complement $\mathring{\Rcal}_h^w\times\C^*$.

Applying Theorem~\ref{thm:snideriso} in each case gives the asserted closed--open decomposition of $\Pio_{f,I}$.
\end{proof}

\begin{corollary}\label{cor:double2_geo}
Suppose $f,s_if,fs_i,s_ifs_i\in\Bkn$ and $\ell(s_ifs_i)=\ell(f)+2$. If $i+1\in I$ implies $i\in I,$ then $\Pio_{f,I}$ admits a decomposition
\[
    \Pio_{f,I}=Z_I'\sqcup U_I',
\]
where $Z_I'$ is closed, $U_I'$ is its open complement, and
\[
    Z_I'\cong \Pio_{s_ifs_i,s_iI}\times\C,     \qquad     U_I'\cong \Pio_{fs_i,s_iI}\times\C^*.
\]
\end{corollary}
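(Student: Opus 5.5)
We mirror the proof of Corollary~\ref{cor:double1_geo}, swapping the roles of left and right multiplication. As there, set $h=s_if$, $r=fs_i$, $g=s_ifs_i$, $w=t_I$ and $w'=t_{s_iI}=s_iws_i$. By Theorem~\ref{thm:snideriso}, it suffices to decompose $\mathring{\Rcal}_f^w$ into a closed piece isomorphic to $\mathring{\Rcal}_g^{w'}\times\C$ and an open complement isomorphic to $\mathring{\Rcal}_r^{w'}\times\C^*$. The hypothesis $\ell(g)=\ell(f)+2$ gives $f<h<g$ and $f<r<g$. By the hypothesis on $I$, only two configurations occur: either $i\in I$ and $i+1\notin I$, or both or neither of $i,i+1$ lie in $I$. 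We treat these separately using Lemma~\ref{lem:translation-descents}.

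\emph{Case $i\in I$, $i+1\notin I$.} Here $s_iw<w<ws_i$.
\begin{enumerate}
\item Since $f<s_if=h$ and $s_iw<w$, the left-handed mixed case of Proposition~\ref{prop:rank-one-richardson} applies. It produces a closed subvariety $Z\cong\mathring{\Rcal}_h^{s_iw}\times\C$ with open complement $\mathring{\Rcal}_f^{s_iw}\times\C^*$.
\item We identify both bases using the right-handed same-orientation isomorphism. We have $h<hs_i=g$ and $f<fs_i=r$. We also need $s_iw<s_iws_i=w'$, which holds because $w'=t_{s_iI}$ falls under item (2) of Lemma~\ref{lem:translation-descents} for $s_iI$, giving $w's_i<w'$ with $w's_i=s_iw$.
\item The isomorphism then yields $\mathring{\Rcal}_h^{s_iw}\cong\mathring{\Rcal}_g^{w'}$ and $\mathring{\Rcal}_f^{s_iw}\cong\mathring{\Rcal}_r^{w'}$, which is exactly the required decomposition.
\end{enumerate}

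\emph{Case $i,i+1$ both or neither in $I$.} Here $w'=w$ and $w<s_iw=ws_i$.
\begin{enumerate}
\item Since $f<fs_i=r$ and $w<ws_i$, the right-handed same-orientation isomorphism gives $\mathring{\Rcal}_f^w\cong\mathring{\Rcal}_r^{ws_i}$.
\item Since $r<s_ir=g$ and $s_i(ws_i)=w<ws_i$, the left-handed mixed decomposition of $\mathring{\Rcal}_r^{ws_i}$ applies. Its closed piece is $\mathring{\Rcal}_g^{w}\times\C$ and its open complement is $\mathring{\Rcal}_r^{w}\times\C^*$.
\item Transporting this decomposition back along the isomorphism in step 1 gives $Z_I'$ and $U_I'$.
\end{enumerate}

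The main obstacle is bookkeeping rather than new ideas, in three places.
\begin{itemize}
\item We must check the orientation inequalities, in particular $s_iw<w'$ in the first case.
\item We must track the descents of $t_{s_iI}$, since swapping $i$ and $i+1$ switches $I$ from item (1) to item (2) of Lemma~\ref{lem:translation-descents}.
\item We must confirm that the isomorphisms of Proposition~\ref{prop:rank-one-richardson} carry closed sets to closed sets, so that the transported pieces remain a closed subvariety and its open complement.
\end{itemize}
The convention that a Richardson variety is empty when the Bruhat relation fails covers the degenerate cases uniformly. Alternatively, the whole statement can be obtained from Corollary~\ref{cor:double1_geo} by the symmetry $f\mapsto f^{-1}$ together with the isomorphism $\iota$ and Proposition~\ref{prop:chiral}, but the direct argument above is cleaner.
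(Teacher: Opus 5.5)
Your argument is correct and is essentially the paper's proof. In the case $i\in I$, $i+1\notin I$ you apply the left-handed mixed decomposition and then identify both bases via the right-handed same-orientation isomorphism. In the both-or-neither case you first use $\mathring{\Rcal}_f^w\cong\mathring{\Rcal}_r^{ws_i}$ and then the left-handed mixed decomposition, exactly as the paper does. Your justification of $s_iw<w'$ via item (2) of Lemma~\ref{lem:translation-descents} applied to $s_iI$, using $w's_i=s_iw$, fills in a step the paper leaves implicit.
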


\begin{proof}
Let $h=s_if$, $r=fs_i$, $g=s_ifs_i$, $w=t_I$, and $w'=t_{s_iI}=s_iws_i$.

Suppose first that $i\in I$ and $i+1\notin I$. Then $f<h$ and $s_iw<w$. The left-handed mixed case of Proposition~\ref{prop:rank-one-richardson} decomposes $\mathring{\Rcal}_f^w$ into a closed piece and its open complement with bases
\[
    \mathring{\Rcal}_h^{s_iw}     \qquad\text{and}\qquad     \mathring{\Rcal}_f^{s_iw},
\]
respectively. Since
\[
    h<hs_i=g,\qquad f<fs_i=r,     \qquad s_iw<s_iws_i=w',
\]
the right-handed same-orientation isomorphism identifies these bases with $\mathring{\Rcal}_g^{w'}$ and $\mathring{\Rcal}_r^{w'}$.

If both or neither of $i,i+1$ belong to $I$, then $w'=w$ and $w<ws_i=s_iw$. First use the right-handed same-orientation isomorphism
\[
    \mathring{\Rcal}_f^w        \cong \mathring{\Rcal}_r^{ws_i}.
\]
The left-handed mixed decomposition of the latter has closed piece $\mathring{\Rcal}_g^w\times\C$ and open complement $\mathring{\Rcal}_r^w\times\C^*$. Applying Theorem~\ref{thm:snideriso} completes the proof.
\end{proof}

\begin{remark}\label{rem:path-cases}
If $I=P_U$, then $i\in I$ exactly when the $i$th step of $P$ is an up-step. Thus Corollary~\ref{cor:double1_geo} applies when $s_iP=P$ or $(P_i,P_{i+1})=(R,U)$, whereas Corollary~\ref{cor:double2_geo} applies when $s_iP=P$ or $(P_i,P_{i+1})=(U,R)$. Over $\F_q$, the two decompositions give
\[
\begin{aligned}
 \#\Pio_{f,I}(\F_q)  &=q\,\#\Pio_{s_ifs_i,s_iI}(\F_q) +(q-1)\,\#\Pio_{s_if,s_iI}(\F_q),\\
 \#\Pio_{f,I}(\F_q)   &=q\,\#\Pio_{s_ifs_i,s_iI}(\F_q)     +(q-1)\,\#\Pio_{fs_i,s_iI}(\F_q),
\end{aligned}
\]
respectively. These are the geometric identities underlying the two
cases of Theorem~\ref{thm:eqmoves}.
\end{remark}

\begin{corollary}\label{cor:isoforprob}
Let $f,s_ifs_i\in\Bkn$ satisfy
\[
    \ell(f)=\ell(s_ifs_i).
\]
Then
\[
    \Pio_{f,\sGrI_{i+1}(f)} = \Pio_f \xrightarrow{\sim} \Pio_{s_ifs_i} = \Pio_{s_ifs_i,\sGrI_{i+1}(s_ifs_i)}.
\]
Moreover,
\[
    \sGrI_{i+1}(s_ifs_i) = t_{f^{-1}(i),f^{-1}(i+1)} \sGrI_{i+1}(f).
\]
\end{corollary}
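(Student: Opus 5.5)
The proof has three parts: the equalities at the two ends of the displayed chain, the middle isomorphism, and the Grassmann necklace formula.

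\emph{Outer equalities.} For any $h\in\Bkn$, the description
\[
\Pio_h=\{x\,|\,\Delta_J(x)=0 \text{ for } J\notin\Mcal(h),\ \Delta_J(x)\neq0 \text{ for } J\in\sGr(h)\}
\]
already forces $\Delta_{\sGrI_{i+1}(h)}\neq0$ on $\Pio_h$. Intersecting with $\{\Delta_{\sGrI_{i+1}(h)}\neq 0\}$ therefore changes nothing, so $\Pio_{h,\sGrI_{i+1}(h)}=\Pio_h$. Apply this with $h=f$ and with $h=s_ifs_i$.

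\emph{Middle isomorphism.} The isomorphism $\Pio_f\cong\Pio_{s_ifs_i}$ when $\ell(f)=\ell(s_ifs_i)$ is \Cref{thm:geometric}~\ref{part4}. To keep the subsection self-contained, I would also derive it from Snider's isomorphism (Theorem~\ref{thm:snideriso}) together with Proposition~\ref{prop:rank-one-richardson}, as follows.
\begin{itemize}
\item If $s_if=fs_i$, then $s_ifs_i=f$, and swapping columns $i$ and $i+1$ gives the claim directly.
\item Otherwise length equality forces one of two configurations: $s_if<f<fs_i$, or $fs_i<f<s_if$.
\item Take $I=\sGrI_{i+1}(f)$ and $w=t_I$. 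The chart is all of $\Pio_f$, so $\Pio_f\cong\mathring{\Rcal}_f^{w}$.
\item Lemma~\ref{lem:translation-descents} gives the descents of $w$ at $s_i$ on each side. Two same-orientation moves from Proposition~\ref{prop:rank-one-richardson}(1), one on the left and one on the right, then carry $\mathring{\Rcal}_f^{w}$ to $\mathring{\Rcal}_{s_ifs_i}^{s_iws_i}=\mathring{\Rcal}_{s_ifs_i}^{t_{s_iI}}$. This mirrors the proof of Corollary~\ref{cor:ceq_geo}.
\end{itemize}
It then remains to identify $t_{s_iI}$ with $t_{\sGrI_{i+1}(s_ifs_i)}$, which is exactly the necklace formula below. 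If the descent pattern of $w$ does not allow same-orientation moves, I would fall back on citing \cite{fraser2022positroid,muller}.

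\emph{Necklace formula.} Set $g=s_ifs_i$, so that $g(a)=s_i(f(s_i(a)))$ under the left-to-right convention. Then
\[
\sGrI_{i+1}(g)=\{a<i+1 \,|\, g(a)\ge i+1\}.
\]
Substitute $b=s_i(a)$. For $a\not\equiv i,i+1$ nothing changes except that the target $f(b)$ is compared with $i+1$ after applying $s_i$. This comparison flips membership only when $f(b)\in\{i,i+1\}$ modulo $n$, that is, when $b\equiv f^{-1}(i)$ or $b\equiv f^{-1}(i+1)$. For $a\in\{i,i+1\}$, the source side is also swapped. A short case check on the positions of $f^{-1}(i)$ and $f^{-1}(i+1)$ relative to $i+1$ should show that the net effect is to exchange the residues $f^{-1}(i)$ and $f^{-1}(i+1)$. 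This exchange is what $t_{f^{-1}(i),f^{-1}(i+1)}$ denotes.

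\emph{Main obstacle.} The hardest step is this last case analysis. The boundary cases $a\in\{i,i+1\}$ interact with the source swap, and one must also check that $\ell(f)=\ell(s_ifs_i)$ ensures the exchange is a genuine swap, meaning exactly one of the two residues lies in $\sGrI_{i+1}(f)$, rather than a trivial one. I would organize the check by the two length configurations $s_if<f<fs_i$ and $fs_i<f<s_if$. In each, I would translate the configuration into inequalities between $f(i)$ and $f(i+1)$, and between $f^{-1}(i)$ and $f^{-1}(i+1)$, and then read off membership in the necklace.
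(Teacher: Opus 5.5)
Your outer equalities and your citation of \Cref{thm:geometric}~\ref{part4} for the middle isomorphism match the paper's argument. The gap is in the necklace formula. The case check you sketch cannot be completed, because the boundary configurations you flag really do break the statement. In particular, your expectation that $\ell(f)=\ell(s_ifs_i)$ forces exactly one of the residues of $f^{-1}(i),f^{-1}(i+1)$ into $\sGrI_{i+1}(f)$ is false.

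Take $n=4$, $k=1$, $i=1$ and $f=[3,2,4,5]$, so that $s_1fs_1=[1,3,4,6]$. Both lie in $\mathbf{B}_{1,4}$ and both have length $1$. Then $\sGrI_2(f)=\{1\}$ and $\sGrI_2(s_1fs_1)=\{4\}$. However, $f^{-1}(1)=0\equiv 4$ and $f^{-1}(2)=2$ both lie outside $\{1\}$, so $t_{f^{-1}(1),f^{-1}(2)}$ fixes $\{1\}$. Indeed $\Delta_1$ vanishes identically on $\Pio_{s_1fs_1}$, because $1$ is a loop of $s_1fs_1$. A coloop fails in the opposite way: for $f=[5,3,4,6]\in\mathbf{B}_{2,4}$ and $i=1$, both residues lie in $\sGrI_2(f)=\{1,4\}$, yet $\sGrI_2(s_1fs_1)=\{1,2\}$. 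The paper's one-line justification has the same blind spot.

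What your analysis does prove is the formula under the extra hypothesis that $\overline{f}$ fixes neither $i$ nor $i+1$. This holds for the cyclic $f$ to which Section~\ref{sec:problems} reduces by decoupling. Under this hypothesis:
\begin{itemize}
\item $f(i)\notin\{i,i+n\}$ and $f(i+1)\notin\{i+1,i+1+n\}$, and boundedness of $s_ifs_i$ excludes $f(i)=i+1$ and $f(i+1)=i+n$.
\item Hence $f^{-1}(i),f^{-1}(i+1)\in[i+2-n,\,i-1]$, $i\in\sGrI_{i+1}(f)$ and $i+1\notin\sGrI_{i+1}(f)$.
\item The source-side effects cancel. The arrow $i\to f(i)$ is relabeled to start at $i+1$ and stops crossing the threshold. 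The arrow $i+1\to f(i+1)$ is relabeled to start at $i$ and starts crossing. So residue $i$ survives.
\item The target side then inserts the residue of $f^{-1}(i)$ and deletes that of $f^{-1}(i+1)$, which is the claimed swap.
\end{itemize}
In the failing cases this cancellation breaks. If $f(i+1)=i+1$, the relabeled arrow is $i\to i$, so residue $i$ is lost and nothing compensates. If $f(i)=i+n$, the arrow $f^{-1}(i)=i-n\to i$ becomes $i+1-n\to i+1$. The inserted residue is then $i+1$ rather than $\overline{f^{-1}(i)}=i$.

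Separately, your optional rank-one derivation does not close up. Generically $i\in I$ and $i+1\notin I$. When $s_if>f>fs_i$, both available moves are of mixed type, so Proposition~\ref{prop:rank-one-richardson}(1) does not apply. When $s_if<f<fs_i$, you land on the chart $t_{s_iI}$. But $s_iI=I\setminus\{i\}\cup\{i+1\}$ is not $\sGrI_{i+1}(s_ifs_i)$, since the necklace formula exchanges $f^{-1}(i)$ and $f^{-1}(i+1)$, not $i$ and $i+1$. So the identification you defer to "the necklace formula below" is a different claim, and you would need a separate argument that $\Delta_{s_iI}$ is nowhere zero on $\Pio_{s_ifs_i}$. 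Keep the citation.
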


\begin{proof}
The middle isomorphism follows from Theorem~\ref{thm:geometric}\ref{part4}. The first and last equalities follow from the source Grassmann necklace description of an open positroid variety: every Pl\"ucker coordinate indexed by an element of $\sGr(f)$ is nonzero on all of $\Pio_f$.

The final identity follows directly from the definition of the source Grassmann necklace. Conjugation by $s_i$ interchanges the two positions $f^{-1}(i)$ and $f^{-1}(i+1)$ relevant to the threshold at $i+1$, and leaves all other positions unchanged.
\end{proof}

\section{Open Problems and More}\label{sec:problems}

In this section, we collect a few problems that arise naturally from previous sections. The first problem comes directly from the definition of the source Grassmann necklace $\sGr(f)$.

\begin{problem}\label{prob:neck}
For any $f\in\mathbf{B}_{k,n}$ and $i\in[n]$, find a bijection
\begin{equation}\label{eq:neckbij}
	\affdeo{f}{P_{\sGrI_i(f)}} \to \affdeo{f}{P_{\sGrI_{i+1}(f)}}.
\end{equation}
\end{problem}

Corollary~\ref{cor:isoforprob} gives a geometric isomorphism between the open positroid varieties underlying the two affine Deogram decompositions below. The following problem asks for a direct combinatorial realization of this isomorphism.

\begin{problem}\label{prob:weak}
For any $f,s_ifs_i\in\mathbf{B}_{k,n}$ with $\ell(f)=\ell(s_ifs_i)$, find a bijection
\begin{equation}\label{eq:weakbij}
	\affdeo{f}{P_{\sGrI_{i+1}(f)}} \to \affdeo{s_ifs_i}{P_{t_{f^{-1}(i),f^{-1}(i+1)}\sGrI_{i+1}(f)}} = \affdeo{s_ifs_i}{P_{\sGrI_{i+1}(s_ifs_i)}}.
\end{equation}
\end{problem}

\begin{figure}

\setlength{\tabcolsep}{-4pt}
\begin{tabular}{ccc@{}c@{}ccc}

\deog{3/0,3/3,4/1,4/2,3/0,3/3,4/1,4/2,0/1,0/2}{0/0,0/3,1/0,1/3,2/0,2/3,3/1,3/4}{0.5/-0.35/1,0.5/3.35/1,1.5/-0.35/2,1.5/3.35/2,2.5/-0.35/3,2.5/3.35/3,3.25/0.5/4,2.75/3.5/4,3.5/0.65/5,3.5/4.35/5,4.35/1.5/6,-0.35/1.5/-1,4.35/2.5/7,-0.35/2.5/0}{1/2,3/2,0/1,2/1,0/0,1/0}{0/2,2/2,1/1,3/1,2/0,3/3}
&
\deog{3/0,3/3,4/1,4/2,3/0,3/3,4/1,4/2,0/1,0/2}{0/0,0/3,1/0,1/3,2/0,2/3,3/1,3/4}{0.5/-0.35/1,0.5/3.35/1,1.5/-0.35/2,1.5/3.35/2,2.5/-0.35/3,2.5/3.35/3,3.25/0.5/4,2.75/3.5/4,3.5/0.65/5,3.5/4.35/5,4.35/1.5/6,-0.35/1.5/-1,4.35/2.5/7,-0.35/2.5/0}{0/2,2/2,3/2,1/1,3/1,0/0}{1/2,0/1,2/1,1/0,2/0,3/3}
&
\deog{3/0,3/3,4/1,4/2,3/0,3/3,4/1,4/2,0/1,0/2}{0/0,0/3,1/0,1/3,2/0,2/3,3/1,3/4}{0.5/-0.35/1,0.5/3.35/1,1.5/-0.35/2,1.5/3.35/2,2.5/-0.35/3,2.5/3.35/3,3.25/0.5/4,2.75/3.5/4,3.5/0.65/5,3.5/4.35/5,4.35/1.5/6,-0.35/1.5/-1,4.35/2.5/7,-0.35/2.5/0}{0/2,1/2,3/2,1/1,2/1,0/0}{2/2,0/1,3/1,1/0,2/0,3/3}
&
\vline

&
\deog{0/1,0/2,0/3,4/1,4/2,4/3}{0/1,0/4,1/1,1/4,2/1,2/4,3/1,3/4}{4.25/3.5/8,-0.25/3.5/1,0.5/0.75/2,0.5/4.25/2,1.5/0.75/3,1.5/4.25/3,2.5/0.75/4,2.5/4.25/4,3.5/0.75/5,3.5/4.25/5,4.25/1.5/6,-0.3/1.5/-1,4.25/2.5/7,-0.25/2.5/0}{1/3,2/3,2/2,3/2,0/1,3/1}{0/3,3/3,0/2,1/2,1/1,2/1}
&
\deog{0/1,0/2,0/3,4/1,4/2,4/3}{0/1,0/4,1/1,1/4,2/1,2/4,3/1,3/4}{4.25/3.5/8,-0.25/3.5/1,0.5/0.75/2,0.5/4.25/2,1.5/0.75/3,1.5/4.25/3,2.5/0.75/4,2.5/4.25/4,3.5/0.75/5,3.5/4.25/5,4.25/1.5/6,-0.3/1.5/-1,4.25/2.5/7,-0.25/2.5/0}{1/3,2/3,0/2,3/2,0/1,2/1}{0/3,3/3,1/2,2/2,1/1,3/1}

&
\deog{0/1,0/2,0/3,4/1,4/2,4/3}{0/1,0/4,1/1,1/4,2/1,2/4,3/1,3/4}{4.25/3.5/8,-0.25/3.5/1,0.5/0.75/2,0.5/4.25/2,1.5/0.75/3,1.5/4.25/3,2.5/0.75/4,2.5/4.25/4,3.5/0.75/5,3.5/4.25/5,4.25/1.5/6,-0.3/1.5/-1,4.25/2.5/7,-0.25/2.5/0}{0/3,2/3,2/2,3/2,0/1,1/1}{1/3,3/3,0/2,1/2,2/1,3/1}
\\
\multicolumn{3}{c}{$\affdeomax{f}{P_{\sGrI_1(f)}}$} & & \multicolumn{3}{c}{$\affdeomax{f}{P_{\sGrI_2(f)}}$}
\end{tabular}
\caption{Example of Problem \ref{prob:neck} for $f = [3,5,6,8,7,10,12]$.}
\label{fig:cyclicshift}
\end{figure}

\begin{figure}
\bgroup
\renewcommand{\arraystretch}{4}
\begin{tabular}{c|ccc}

$\affdeo{f}{P_{\sGrI_1(f)}}$
&
\deog{3/0,3/3,5/1,0/1,0/2,5/2,3/0,3/3,5/1,5/2}{0/0,0/3,1/0,1/3,2/0,2/3,3/1,3/4,4/1,4/4}{0.5/-0.35/1,0.5/3.35/1,1.5/-0.35/2,1.5/3.35/2,2.5/-0.35/3,2.5/3.35/3,3.25/0.5/4,2.75/3.5/4,3.5/0.65/5,3.5/4.35/5,4.5/0.65/6,4.5/4.35/6,5.35/1.5/7,-0.35/1.5/-1,5.35/2.5/8,-0.35/2.5/0}{1/2,2/2,4/2,0/1,3/1,0/0,1/0}{0/2,3/2,1/1,2/1,4/1,2/0,3/3,4/3}

&
\deog{3/0,3/3,5/1,0/1,0/2,5/2,3/0,3/3,5/1,5/2}{0/0,0/3,1/0,1/3,2/0,2/3,3/1,3/4,4/1,4/4}{0.5/-0.35/1,0.5/3.35/1,1.5/-0.35/2,1.5/3.35/2,2.5/-0.35/3,2.5/3.35/3,3.25/0.5/4,2.75/3.5/4,3.5/0.65/5,3.5/4.35/5,4.5/0.65/6,4.5/4.35/6,5.35/1.5/7,-0.35/1.5/-1,5.35/2.5/8,-0.35/2.5/0}{0/2,3/2,4/2,1/1,2/1,4/1,0/0}{1/2,2/2,0/1,3/1,1/0,2/0,3/3,4/3}
&
\deog{3/0,3/3,5/1,0/1,0/2,5/2,3/0,3/3,5/1,5/2}{0/0,0/3,1/0,1/3,2/0,2/3,3/1,3/4,4/1,4/4}{0.5/-0.35/1,0.5/3.35/1,1.5/-0.35/2,1.5/3.35/2,2.5/-0.35/3,2.5/3.35/3,3.25/0.5/4,2.75/3.5/4,3.5/0.65/5,3.5/4.35/5,4.5/0.65/6,4.5/4.35/6,5.35/1.5/7,-0.35/1.5/-1,5.35/2.5/8,-0.35/2.5/0}{0/2,1/2,2/2,4/2,1/1,3/1,0/0}{3/2,0/1,2/1,4/1,1/0,2/0,3/3,4/3}

\\[4ex]\hline
$\affdeomax{s_ifs_i}{P_{\sGrI_1(s_ifs_i)}}$
&
\deog{5/0,5/1,5/2,5/0,5/1,5/2,0/0,0/1,0/2}{0/0,0/3,1/0,1/3,2/0,2/3,3/0,3/3,4/0,4/3}{0.5/-0.35/1,0.5/3.35/1,1.5/-0.35/2,1.5/3.35/2,2.5/-0.35/3,2.5/3.35/3,3.5/-0.35/4,3.5/3.35/4,4.5/-0.35/5,4.5/3.35/5,5.35/0.5/6,-0.35/0.5/-2,5.35/1.5/7,-0.35/1.5/-1,5.35/2.5/8,-0.35/2.5/0}{0/2,3/2,1/1,2/1,4/1,0/0,4/0}{1/2,2/2,4/2,0/1,3/1,1/0,2/0,3/0}
&
\deog{5/0,5/1,5/2,5/0,5/1,5/2,0/0,0/1,0/2}{0/0,0/3,1/0,1/3,2/0,2/3,3/0,3/3,4/0,4/3}{0.5/-0.35/1,0.5/3.35/1,1.5/-0.35/2,1.5/3.35/2,2.5/-0.35/3,2.5/3.35/3,3.5/-0.35/4,3.5/3.35/4,4.5/-0.35/5,4.5/3.35/5,5.35/0.5/6,-0.35/0.5/-2,5.35/1.5/7,-0.35/1.5/-1,5.35/2.5/8,-0.35/2.5/0}{1/2,2/2,3/2,1/1,4/1,0/0,3/0}{0/2,4/2,0/1,2/1,3/1,1/0,2/0,4/0}
&
\deog{5/0,5/1,5/2,5/0,5/1,5/2,0/0,0/1,0/2}{0/0,0/3,1/0,1/3,2/0,2/3,3/0,3/3,4/0,4/3}{0.5/-0.35/1,0.5/3.35/1,1.5/-0.35/2,1.5/3.35/2,2.5/-0.35/3,2.5/3.35/3,3.5/-0.35/4,3.5/3.35/4,4.5/-0.35/5,4.5/3.35/5,5.35/0.5/6,-0.35/0.5/-2,5.35/1.5/7,-0.35/1.5/-1,5.35/2.5/8,-0.35/2.5/0}{0/2,3/2,0/1,4/1,0/0,1/0,2/0}{1/2,2/2,4/2,1/1,2/1,3/1,3/0,4/0}

\end{tabular}
\egroup
\caption{Example of Problem \ref{prob:weak} for $f = [5,3,6,9,7,8,10,12]$ and $i=0$.}
\label{fig:weaksep}
\end{figure}

Fix $0<k<n$ with $\gcd(k,n)=1$. A $(k,n)$-Dyck path is a lattice path from $(0,0)$ to $(n-k,k)$ which never lies below the diagonal. Let $\Dyck_{k,n}$ denote the set of such paths. Let $f_{k,n}\in\Bkn$ be the bounded affine permutation given by $f_{k,n}(i)=i+k$. In \cite{gallam2024}, Galashin and Lam showed that the sets $\Dyck_{k,n}$ and $\deomax{f_{k,n}}$ are equinumerous. However, they did not do so combinatorially outside of the Catalan case $n=2k+1$ and left it as an open problem to find a bijective proof (see \cite[Problem 7.6]{gallam2024}). Theorem \ref{thm:problem} reduces the construction of such a bijection to Problems \ref{prob:neck} and \ref{prob:weak} to find a bijection between $\deomax{f_{k,n}}$ and $\Dyck_{k,n}$. To do so, we state some definitions from \cite{gallam2024}.

\begin{definition}
We say that $f\in \Bkn$ has a \textit{double crossing} at some $i\in\Z$ if $s_ifs_i<s_if<f$, $s_ifs_i<fs_i<f$, and $s_ifs_i,s_if,fs_i\in \Bkn$.
\end{definition}

In this case, we say $s_ifs_i$ is obtained from $f$ by a \textit{double move}.

\begin{definition}
Let $f\in\Bkn$, $i\in\Z$, and $f':=s_ifs_i$. If $\ell(f)=\ell(f')$ and $f'\in\Bkn$, we say that $f$ and $f'$ are related by a \textit{length-preserving simple conjugation}. We say that $f,g\in\Bkn$ are $c$-\textit{equivalent} and write $f \stackrel{c}{\sim} g$ if $f$ and $g$ can be related by a sequence of length-preserving simple conjugations.
\end{definition}

\begin{proof}[Proof of Theorem \ref{thm:problem}]
Using Corollary \ref{cor:affdeocorrespondence}, we biject $\deomax{f_{k,n}}\to \affdeomax{f_{k,n}}{P_{k,n}}$ where $P_{k,n}$ has $n-k$ right steps followed by $k$ up steps. We then apply the recursion constructed in \cite[Proposition 3.2]{gallam2024} on $\affdeomax{f_{k,n}}{P_{k,n}}$ using $c$-equivalence and double moves, and apply the recursion in \cite[Proposition 6.5]{gallam2024} on $\Dyck_{k,n}$. 

The decoupling bijection, Theorem~\ref{thm:decoupling_intro}, ensures that it suffices to restrict to cyclic bounded affine permutations. The double moves are encapsulated in Theorem \ref{thm:eqmoves}, along with some $c$-equivalence moves.

Throughout the process, we are finding bijections between (potentially unions of) sets of the form $\affdeomax{f}{P_{\sGrI_j(f)}}$.

If Problem \ref{prob:neck} is solved, whenever we reach a $c$-equivalence move that we do not have a bijection for, we apply \eqref{eq:neckbij}. Assuming $s_if>f$, we can always find an index $j$ such that either $i+1\in \sGrI_j(f)$ and $i\not\in \sGrI_j(f)$, or $s_i\sGrI_j(f) = \sGrI_j(f)$, since we may assume $\overline{f}$ has no fixed points due to Theorem~\ref{thm:decoupling_intro}. This allows us to perform the $c$-equivalence bijections of Theorem \ref{thm:eqmoves}.

If Problem \ref{prob:weak} is solved, we then have access to all $c$-equivalence moves, allowing us to move through the recursion of \cite{gallam2024}.
\end{proof}

We demonstrate a simple bijective proof in the case $k=2$. The case $k=n-2$ is analogous.

\begin{lemma}
Let $n\geq3$ be odd and $D\in\deomax{f_{2,n}}$. Every crossing of $D$ is adjacent exclusively to elbows.
\end{lemma}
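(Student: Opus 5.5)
The plan is to use the maximality of $D$ through the Deograph machinery of Section~\ref{sec:affdeo}, after first recording the setup. For $f=f_{2,n}$ we have $\overline f(i)=i+2 \bmod n$. Since $n$ is odd, $\overline f$ is a single $n$-cycle, so $c(\overline f)=1$ and $D$ has exactly $n-1$ elbows. Passing to the affine Deogram via Corollary~\ref{cor:affdeocorrespondence}, Proposition~\ref{prop:forest} says that $G(D)$ is a forest whose unique component has vertex set $[n]$, i.e.\ a spanning tree on $[n]$. I also expect to use the following consequences of this:
\begin{itemize}
\item no vertex of $G(D)$ is isolated, since $n\ge 3$;
\item Lemma~\ref{lemma:doubleelbow}: two consecutive columns contain at most one pair of elbows in the same row;
\item no two elbows join the same pair of labels.
\end{itemize}
In particular, a $2$-row diagram with $n-1$ elbows in $2(n-2)$ boxes has exactly $n-3$ crossings. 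The diagram has only two rows, so there are two kinds of adjacency between crossings, and I will rule each out separately.

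\emph{Vertical adjacency.} Suppose both boxes of some column are crossings. The strand entering that column from the northern boundary passes straight down through both boxes and exits at the southern boundary. It meets no other box, so it forms no elbow anywhere. Its label is then an isolated vertex of $G(D)$, contradicting connectivity. This case should be immediate.

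\emph{Horizontal adjacency.} Suppose two crossings are consecutive in one row, at columns $j$ and $j+1$. I would do a local case analysis on the two boxes of the other row in the same columns.
\begin{itemize}
\item If either of those boxes is a crossing, we have a vertical pair and are back in the first case.
\item So both are elbows, and these are two elbows in the same row of consecutive columns. Here I would track labels: let $a$ be the strand running horizontally through the two crossings, and let $b,c$ be the strands entering from the columns $j$ and $j+1$.
\item Following $b$ and $c$ through the two elbows in the other row, I expect one of two contradictions. Either both elbows join the same pair of labels, which is a cycle (a double edge) in $G(D)$. Or one of $b,c$ meets $a$ at an elbow after crossing it an odd number of times, which violates condition~(2) of Definition~\ref{def:deogram}.
\item Where the boundary of the rectangle cuts the local picture (column $j$ or $j+1$ at an edge), I would instead use the isolated-vertex argument on the strand leaving through the boundary.
\end{itemize}

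The main obstacle is this horizontal case. The two elbows in the other row need not sit directly below (or above) the crossings, because strands shift columns as they pass elbows. So the label bookkeeping may force me to look one column further. If the purely local argument fails, the fallback is a global count. First show that every column contains at least one elbow, by the vertical case. Since there are $n-2$ columns and $n-1$ elbows, exactly one column has two elbows. Then the crossings alternate rows along the remaining columns, and a parity or tree argument on this forced alternating pattern would exclude two horizontally adjacent crossings.
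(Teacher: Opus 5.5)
Your vertical case is correct; the paper argues it even more directly, since the strand running straight down column $i$ would give $f(i)=i\neq i+2$. The gap is in the horizontal case. Carry out the local analysis you describe: take crossings in the top row at columns $j,j+1$ with elbows below them, and let $d$ be the strand entering the lower box of column $j$ from the left. Then the lower elbow in column $j$ joins $d$ and $b$, and the lower elbow in column $j+1$ joins $b$ and $c$. These are distinct pairs, and neither involves $a$. So neither a double edge in $G(D)$ nor a violation of the distinguished condition appears; the case with the crossings in the bottom row is analogous. Your fallback does not close the gap. Once every column is known to have an elbow and exactly one column has two, the claim that ``the crossings alternate rows'' is precisely the absence of horizontally adjacent crossings, which is the statement you are trying to prove.

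In fact no argument built only from maximality, the tree structure of $G(D)$, the distinguished condition and elbow counting can succeed, because all of these hold in a case where the conclusion fails. Take $n=5$ and fill the $2\times 3$ rectangle as follows: in the top row, an elbow followed by two crossings; in the bottom row, three elbows. Tracing strands gives $f=[5,3,4,6,7]\in\mathbf{B}_{2,5}$, with $\overline f=(1\,5\,2\,3\,4)$ a $5$-cycle and $\overleftarrow{I_1}(f)=\{4,5\}$, so the shape is the full rectangle. The filling is distinguished, has $4=n-1$ elbows, and its Deograph is a spanning tree, yet it has two horizontally adjacent crossings. So you must use the specific permutation $f_{2,n}$, which is what the paper does. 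Strands only move right or down, and since $f(i)=i+2$, every strand crosses at most two vertical unit edges of the rectangle, boundary edges included. The strand passing horizontally through two adjacent crossings crosses three, which is the paper's ``$f(j)>j+2$''. This argument needs neither maximality, nor the distinguished condition, nor the parity of $n$.
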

\begin{proof}
Fix a crossing with vertical wire labeled by $i$ and horizontal wire labeled by $j$. There cannot be a vertically adjacent crossing as this would imply $f(i) = i$. There also cannot be a horizontally adjacent crossing as this would imply $f(j) > j+2$.
\end{proof}

Finally, since the bottom-left and top-right boxes must be filled with an elbow, we see we have a $2\times 2$ ``checkerboard'' filling of crossing-elbow pairs. However, since there are $n-2$ columns and we have $n-1$ elbows, there must be a unique odd-indexed column filled with two elbows. These are the $\frac1n\binom{n}{2} = \frac{n-1}2$ maximal Deograms in $\deomax{f_{2,n}}$. We can then define statistics on $D$.

\begin{definition}
Let $D\in\deomax{f_{2,n}}$ be given, and let $\text{col}(D)$ denote the column of the unique double elbow of $D$.

Define $\texttt{area}(D)$ as the number of elbows to the right of the top elbow in $\text{col}(D)$.

Define $\texttt{dinv}(D)$ as the number of elbows to the left of the bottom elbow in $\text{col}(D)$.
\end{definition}

\begin{theorem}\label{thm:direct2} Let $n\geq3$ be odd. We have a statistics-preserving bijection
\[
	\Phi:\deomax{f_{2,n}}\to\Dyck_{2,n}.
\]
Moreover, the map $\textbf{rot}:\deomax{f_{2,n}}\to \deomax{f_{2,n}}$ is a statistics-reversing bijection. That is, $(\texttt{area}(D),\texttt{dinv}(D)) = (\texttt{dinv}(\textbf{rot}(D)),\texttt{area}(\textbf{rot}(D)))$.
\end{theorem}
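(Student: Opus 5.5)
The plan is to make everything explicit. By the preceding lemma and the discussion after it, a maximal Deogram $D\in\deomax{f_{2,n}}$ is a $2\times(n-2)$ checkerboard of crossing--elbow pairs with exactly one double-elbow column. So $D$ is determined by the single parameter $c=\text{col}(D)$, which ranges over an arithmetic progression of $\frac{n-1}{2}$ odd column positions. Dually, a $(2,n)$-Dyck path is determined by one number $a$, the number of right steps before its second up-step. The condition that the path never goes below the diagonal becomes $0\le a\le \frac{n-3}{2}$, so there are again $\frac{n-1}{2}$ paths. Both sets are therefore intervals of the same size, and the task is to choose the matching $c\leftrightarrow a$ so that the statistics correspond.

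\textbf{Step 1: describe $D$ from $c$.} Because the bottom-left and top-right boxes are elbows and elbows alternate with crossings in each row, I expect the following picture. To the left of column $c$, bottom-row elbows sit in columns of one parity and top-row elbows in the other. To the right of $c$, the roles flip. Column $c$ is the switching point, where both boxes are elbows. Counting elbows then gives $\texttt{area}(D)$ and $\texttt{dinv}(D)$ as explicit linear functions of $c$, one increasing and one decreasing, with constant sum. I would make this concrete by reading ``to the right of the top elbow'' in the natural sense used in the figures, and I would sanity-check it against Figure~\ref{fig:deodyckex}, which uses $k=3$, against a direct $k=2$ example such as $n=5$ or $n=7$, and against the maximality count of $n-1$ elbows.

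\textbf{Step 2: define $\Phi$ and verify it.} On the Dyck side I take the standard rational statistics: $\texttt{area}$ is the number of full boxes between the path and the diagonal, and $\texttt{dinv}$ is the rational dinv (for $k=2$ it is a single count of boxes satisfying the arm/leg inequality). Both are explicit linear functions of $a$, with $\texttt{area}+\texttt{dinv}=\frac{n-3}{2}$. I then define $\Phi(D)$ to be the path whose parameter $a$ is the affine function of $c$ that matches $\texttt{area}$, and check that it also matches $\texttt{dinv}$. Since $c\mapsto a$ is an affine bijection between intervals of the same size, $\Phi$ is a bijection. The main obstacle is exactly this normalization: making sure the Deogram statistics, as literally defined, land on $\{0,\dots,\frac{n-3}{2}\}$ with the correct orientation. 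If my first reading of ``elbows to the right'' gives the wrong range, I would count only the elbows in the same row, or only the top-row elbows; I would not expect to have to change the bijection itself.

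\textbf{Step 3: the rotation $\textbf{rot}$.} Rotating the rectangle by $180^\circ$ preserves the strand permutation $f_{2,n}$, because $i\mapsto i+2$ is invariant under reversing labels. It also preserves crossings and elbows, and it preserves the distinguished condition on maximal Deograms: by Proposition~\ref{prop:forest} every elbow joins strands that have crossed an even number of times, and this parity survives rotation. So $\textbf{rot}$ maps $\deomax{f_{2,n}}$ to itself, and it is an involution, hence a bijection. The rotation sends the double-elbow column $c$ to its mirror column, interchanges the top and bottom rows, and interchanges left and right. In particular, the top elbow of the double column becomes the bottom elbow, and the elbows to its right become elbows to its left. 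Hence $\texttt{area}(D)=\texttt{dinv}(\textbf{rot}(D))$ and $\texttt{dinv}(D)=\texttt{area}(\textbf{rot}(D))$, which is the claimed statistics reversal. Under $\Phi$ this corresponds to $a\mapsto\frac{n-3}{2}-a$, giving a consistency check with Step 2.
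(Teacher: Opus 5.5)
Your proposal is correct and is essentially the paper's proof. Read the statistics within a single row. Then a Deogram with $\text{col}(D)=2i+1$ has $\texttt{area}(D)=\frac{n-3}{2}-i$ and $\texttt{dinv}(D)=i$. So your affine matching $c=2a+1$ is exactly the paper's $\Phi$: it sends column $2i+1$ to the path whose second up-step comes after $a=i$ right steps. As you say, $\textbf{rot}$ sends the double column to its mirror.

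One small correction in Step~3. A $180^\circ$ rotation turns the crossings a pair makes before an elbow into the crossings it makes after it, so Proposition~\ref{prop:forest} is not the right justification for closure under $\textbf{rot}$. Closure instead follows from either of two facts:
\begin{itemize}
\item The finite strand permutation of $f_{2,n}$ is the identity, so any two strands cross an even number of times in total. Hence evenness before an elbow gives evenness after it.
\item More simply, the explicit classification already gives closure, since the mirror $n-1-c$ of an odd column $c$ is again odd.
\end{itemize}
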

\begin{proof}

We define $\Phi$ as follows. Let $D\in\deomax{f_{2,n}}$ be given with $\text{col}(D)=2i+1$ for some $i\geq0$. Then $\Phi(D)$ is the Dyck path with the second up-step occurring at column $i+1$. This map is clearly bijective.

$\Phi$ is also statistics-preserving as each elbow not in $\text{col}(D)$ must alternate from bottom row to the top row. So, the number of elbows to the right of $\text{col}(D)$ is equal to the number of full boxes to the right of the up-step at column $i+1$ (up to the diagonal). In this case, the number of full boxes to the left of the up-step at column $i+1$ equals $\texttt{dinv}(\Phi(D))$, so equality is clear.

Finally $\textbf{rot}$ is a statistics-reversing bijection by the definitions of the statistics and of $\deomax{f_{2,n}}$.
\end{proof}

\begin{figure}

\begin{tabular}{ccc}

\deog{0/0,0/1,5/0,5/1,0/0,0/1,5/0,5/1}{0/2,1/2,2/2,3/2,4/2,0/0,1/0,2/0,3/0,4/0}{-0.35/0.5/1,-0.35/1.5/2,0.5/2.35/3,1.5/2.35/4,2.5/2.35/5,3.5/2.35/6,4.5/2.35/7,0.5/-0.3/1,1.5/-0.3/2,2.5/-0.3/3,3.5/-0.3/4,4.5/-0.3/5,5.3/0.5/6,5.3/1.5/7}{0/1,2/1,4/1,0/0,1/0,3/0}{1/1,3/1,2/0,4/0}

&

\deog{0/0,0/1,5/0,5/1,0/0,0/1,5/0,5/1}{0/2,1/2,2/2,3/2,4/2,0/0,1/0,2/0,3/0,4/0}{-0.35/0.5/1,-0.35/1.5/2,0.5/2.35/3,1.5/2.35/4,2.5/2.35/5,3.5/2.35/6,4.5/2.35/7,0.5/-0.3/1,1.5/-0.3/2,2.5/-0.3/3,3.5/-0.3/4,4.5/-0.3/5,5.3/0.5/6,5.3/1.5/7}{1/1,2/1,4/1,0/0,2/0,3/0}{0/1,3/1,1/0,4/0}

&

\deog{0/0,0/1,5/0,5/1,0/0,0/1,5/0,5/1}{0/2,1/2,2/2,3/2,4/2,0/0,1/0,2/0,3/0,4/0}{-0.35/0.5/1,-0.35/1.5/2,0.5/2.35/3,1.5/2.35/4,2.5/2.35/5,3.5/2.35/6,4.5/2.35/7,0.5/-0.3/1,1.5/-0.3/2,2.5/-0.3/3,3.5/-0.3/4,4.5/-0.3/5,5.3/0.5/6,5.3/1.5/7}{1/1,3/1,4/1,0/0,2/0,4/0}{0/1,2/1,1/0,3/0}
\\

$\downarrow$ & $\downarrow$ & $\downarrow$\\

\scalebox{\sclbx}{
\begin{tikzpicture}[xscale=0.5,yscale=0.5]

    \draw[black] (0,0) grid (5,2);

    \draw[red,line width=1.5pt] (0,0) -- (5,2);
    
    \draw[orange, thick,line width=3pt](0,0)--(0,2)--(5,2);
    
\end{tikzpicture}
}

&

\scalebox{\sclbx}{
\begin{tikzpicture}[xscale=0.5,yscale=0.5]

    \draw[black] (0,0) grid (5,2);

    \draw[red,line width=1.5pt] (0,0) -- (5,2);
    
    \draw[orange, thick,line width=3pt](0,0)--(0,1)--(1,1)--(1,2)--(5,2);
    
\end{tikzpicture}
}

&
\scalebox{\sclbx}{
\begin{tikzpicture}[xscale=0.5,yscale=0.5]

    \draw[black] (0,0) grid (5,2);

    \draw[red,line width=1.5pt] (0,0) -- (5,2);
    
    \draw[orange, thick,line width=3pt](0,0)--(0,1)--(2,1)--(2,2)--(5,2);
    
\end{tikzpicture}
}

\end{tabular}

\caption{Visualization of $\Phi:\deomax{f_{2,7}}\to\Dyck_{2,7}$. See Theorem \ref{thm:direct2}.}
\label{fig:directbij}
\end{figure}

\bibliographystyle{alpha}
\bibliography{biblio}

\end{document}